 \documentclass[11pt]{article}

 \usepackage{hyperref} 
 \usepackage{amssymb}
 \usepackage{amsthm}
 \usepackage{mathrsfs}
 \usepackage{amsmath}
  \usepackage{lineno}
\usepackage[text={180mm,220mm},centering]{geometry}

\newcommand{\be}{\begin{equation}}
\newcommand{\ee}{\end{equation}}
\newcommand{\bes}{\begin{equation}\begin{aligned}}
\newcommand{\ees}{\end{aligned}\end{equation}}
\newcommand{\ben}{\begin{equation}\nonumber\begin{aligned}}

\newcommand{\dist}{{\rm dist}}

\renewcommand{\leq}{\leqslant}
\renewcommand{\geq}{\geqslant}  

 \newtheorem{theorem}{Theorem}[section]
\newtheorem{lemma}[theorem]{Lemma}

\newtheorem{definition}[theorem]{Definition}

\newtheorem{corollary}[theorem]{Corollary}
\numberwithin{equation}{section}

\allowdisplaybreaks

\begin{document}

\baselineskip=1.0\baselineskip

\pagestyle{plain}

\title{\bf
Uniform Large Deviations of Mckean-Vlasov
Stochastic Fractional $(\alpha,p)$-Laplacian
Equations Driven by Superlinear Noise on $\mathbb{R}^d$
  }

\author{
Renhai  Wang\footnote{School of Mathematical Sciences, Guizhou Normal University, Guiyang 550025, China,
 Email:   rwang-math@outlook.com},\;
 Zhang Chen\footnote{School of Mathematics,
 Shandong University, Jinan 250100,   China,
 Email:   zchen@sdu.edu.cn},\;
 Bixiang Wang\footnote{
Department of Mathematics, New Mexico Institute of Mining and
Technology,  Socorro,  NM~87801, USA, \
Email: bwang@nmt.edu} }

\date{}

\maketitle

\medskip

\begin{abstract}
The global-in-time well-posedness and uniform
large deviation principles (LDPs) are investigated for a wide class of
\emph{Mckean-Vlasov}
stochastic \emph{non-local fractional} $(\alpha,p)$-Laplacian equations\
with $\alpha \in (0,1)$ and $p>2$ driven by \emph{superlinear}
multiplicative noise
 defined on the whole space $\mathbb{R}^d$, where the non-local nonlinear fractional  $(\alpha,p)$-Laplace operator
is defined by a singular, symmetrical and
translation invariant kernel function,
the distribution-dependent drift terms have arbitrary polynomial growth
  and the distribution-dependent diffusion terms have \emph{superlinear}  growth. The global-in-time well-posedness is established under these  conditions by using the
monotone method and an   domain expansion   argument.
Under additional conditions on
  the growth  of   diffusion  terms,
  we  establish the Freidlin-Wentzell and Dembo-Zeitouni uniform LDPs by using the generalized weak convergence method developed by Salins (Probab.
Surv., 16:99-142, 2019). The idea of uniform tail-ends estimates, the pseudo monotone technique and the Arzel\`{a}-Ascoli theorem are combined to prove the \emph{weak-to-strong} continuity of solution operators of the controlled
equations in order to overcome many difficulties caused by the \emph{noncompactness} of Sobolev embeddings on $\mathbb{R}^d$
and  the \emph{nonlinearity} of the fractional $(\alpha,p)$-Laplace
operator.
The superlinearly growing diffusion
 term is  carefully controlled by
 using the
  dissipative drift terms and several algebraic inequalities.
\end{abstract}

{\bf Key words:} Fractional $(\alpha,p)$-Laplacian; Uniform large deviation principle; Superlinear noise;  Unbounded domain; Mckean-Vlasov equation.

{\bf MSC 2020:} 37L55, 37B55, 35B41, 35B40

\section{Introduction}
\subsection{Background of uniform large
deviation principles}
The theory of large
deviation principles (LDPs), developed by
Freidlin, Wentzell and others, plays a central role in probability,
statistics and other related fields. It describes the exponential
decay rate of rare probabilities of solutions of small noise driven stochastic systems as the noise intensity tends to zero. An elementary understanding in this area is that the LDPs are equivalent to the so-called Laplace principles when the underlying space is a Polish one. Starting from this understanding, the LDP problems are therefore reduced, and then studied by the \emph{weak convergence method} which is based
on a variational representation theorem for positive functionals of Brownian motions \cite{Budhiraja}.
For more detailed discussions on the weak convergence method solving LDP problems, we refer the reader to Budhiraja and Dupuis \cite{Budhiraja}, Dupuis and Ellis
\cite{Dupuis-}, and Freidlin \cite{Freidlin1}.
Compared with the classical method which requires certain exponential tightness criteria \cite{Cerrai1,Chow,Freidlin,Peszat1,Sowers1}, the weak convergence method is more effective, and has been used to discuss the
LDPs of  finite and infinite
dimensional stochastic evolution problems, see \cite{
Brzezniak1,Budhiraja,Budhiraja1,Cerrai2,Chueshov,
LR4,Ren1,Rockner1} and the references therein.

The \emph{uniform} LDP plays a
crucial role
for understanding
the  asymptotic properties of invariant measures
  and the  exit time  of solutions   of a stochastic system   from a
  given  domain  as the noise intensity approaches zero, see \cite{Biswasa,Chenal1,Dembo,Freidlin,Martinelli1}.
  There are several different types of definitions of
  uniform LDPs  introduced
  in the literature.
  The first type of  definition of a uniform LDP is the Freidlin-Wentzell uniform LDP proposed by
Freidlin and Wentzell \cite{Freidlin}.
The second type of definition of a uniform LDP is
the Dembo-Zeitouni uniform LDP introduced by Dembo and Zeitouni \cite{Dembo}. The third type of definition of uniform LDP is called the uniform Laplace principle which can be founded in \cite{Budhiraja1,Dupuis-}. The forth  type of definition of uniform LDP is called the equicontinuous uniform Laplace principle introduced by Salins \cite{Salins2}. These types of uniform LDPs of finite and infinite dimensional stochastic evolution systems have been investigated in \cite{Budhiraja1,Gautier,Salins1, Sowers1}
 and many others. A natural question is   whether these
 concepts  of uniform LDPs are equivalent,
 which   has been
well-answered by Salins in his
paper \cite{Salins2}. He pointed out that the Freidlin-Wentzell uniform LDP and the uniform Laplace principle are not
equivalent in general, but they are equivalent
with an extra compactness condition, see \cite[Theorem 2.5]{Salins2}. He proved that the Freidlin-Wentzell uniform LDP and the equicontinuous uniform Laplace principle are equivalent
without adding any extra conditions, see \cite[Theorem 2.9]{Salins2}. In addition, he demonstrated that the Freidlin-Wentzell uniform LDP is not equivalent to the Dembo-Zeitouni uniform LDP over bounded initial data, but they are equivalent over \emph{compact} initial data under a continuity condition on the level sets of the rate functions, see \cite[Theorem 2.9]{Salins2}.

\subsection{Mckean-Vlasov
stochastic fractional $(\alpha,p)$-Laplacian equations on $\mathbb{R}^d$}
Our main interest in this paper is to discuss
the global-in-time well-posedness as well as the Freidlin-Wentzell and Dembo-Zeitouni uniform
LDPs
for the
\emph{Mckean-Vlasov}
stochastic \emph{non-local fractional} $(\alpha,p)$-Laplacian equations on  $\mathbb{R}^d$ driven by \emph{superlinear}
multiplicative noise:
\begin{equation}\label{Int19-}
\left\{
  \begin{aligned}
  &d \psi(t)+ \mathfrak{L}_{\mathcal{K}_{p}^\alpha} \psi(t) dt
=\sum_{i=1}^2H_i(t,x,\psi(t),\mathcal{L}_{
\psi(t)})dt
+ \sqrt{\epsilon}\sum_{k\in\mathbb{N}} \sigma_{k}(t,x,\psi(t),\mathcal{L}_{
\psi(t)}) d\mathcal{W}_k, \ \ t>0, \\
  &\psi(0,x)=\psi_0(x),\\
  \end{aligned}
\right.
\end{equation}
where $d\in \mathbb{N}$ is arbitrary, $\epsilon\in(0,1]$ is the noise intensity,
$\mathcal{L}_{
\psi(t)}$ denotes the law of the unknown
function $\psi(t)$, $(\mathcal{W}_{k})_{k\in\mathbb{N}}$ is a sequence of independent real-valued
Wiener processes on a given complete filtered probability space $(\Omega,\mathfrak{F}, \{\mathfrak{F}_t\}_{t\in\mathbb{R}^+},\textbf{P})$, the nonlinear functions\footnote{We use $\mathcal{P}_2(L^2(\mathbb{R}^d))$ to denote the space of all probability measures on $L^2(\mathbb{R}^d)$ with finite second moments}
$H_1,H_2:\mathbb{R}^+\times\mathbb{R}^d
\times\mathbb{R}\times\mathcal{P}_2(L^2(\mathbb{R}^d))\rightarrow\mathbb{R}$ have polynomial growth of arbitrary orders
$p-1$ $(p>2)$ and $q-1$ $(q>2)$ in their third arguments, respectively. The diffusion coefficient $\{\sigma_k\}_{k=1}^\infty:\mathbb{R}^+
 \times\mathbb{R}^d
\times\mathbb{R}\times
\mathcal{P}_2(L^2(\mathbb{R}^d))
\rightarrow\mathbb{R}$ is a family of \emph{locally Lipschitz} continuous functions which have \emph{superlinear}
 growth rates $p^*\in[2,p)$ and $q^*\in[2,q)$ in their third arguments, respectively. The \emph{non-local} and \emph{nonlinear} fractional  $(\alpha,p)$-Laplace
operator $\mathfrak{L}_{\mathcal{K}_{p}^\alpha}$
is defined by means of the singular, symmetrical and translation kernel function $\mathcal{K}_{p}^\alpha:\mathbb{R}^d\times
\mathbb{R}^d\rightarrow\mathbb{R}^+$ with $\alpha\in(0,1)$ and $p>2$, that is\footnote{ ``P.V.'' denotes the Cauchy  principal value.},
\begin{align}\label{integro-differential}
\mathfrak{L}_{\mathcal{K}_{p}^\alpha} \psi
(x)
&= \mbox{P.V.}\int_{\mathbb{R}^d}
|\psi(x)-\psi(y)|^{p-2}\big(\psi(x)-\psi(y)\big) \mathcal{K}_{p}^\alpha(x,y)dy,\ \ x\in\mathbb{R}^d.
 \end{align}

\subsection{Background and  recent  results of \eqref{Int19-}}
The McKean-Vlasov stochastic systems are
referred to as distribution-dependent or mean field stochastic systems, which are related to
the limits of interacting particle systems and other related topics in probability and physical sciences, see Jara \cite{Jara}, Lasry and Lions \cite{Lasry1}, McKean \cite{McKean} and  Buckdahn et al.\cite{Buckdahn}.
Partial differential equations (PDEs) with fractional Laplace operators have rich applications in
physics, chemistry, biology, finance, engineering, materials science, probability and
many other fields. In particular,
the fractional Laplace operator is related to the anomalous diffusion (the
diffusion process of complex systems) that do not follow the Gaussian statistics, and it represents the infinitesimal generator of
L\'{e}vy stable diffusion process in the theory of probability, see \cite{Abe,Jara,Sato}.
Several
 topics such as the existence, regularity, H\"{o}lder continuity, free boundary problems, nonlocal tug-of-war, Harnack inequalities, chaotic dynamics, fractional integration by parts formulas, and fractional eigenvalues of PDEs with fractional Laplace
operators have been extensively discussed in \cite{Byun,Brasco1,Barrios,Bjorland,Caffarelli,
Caffarelli-,
Castro1,Di2012bull,
Grigorenko,Lindgren,Muratori,Maz,Moen,Silvestre,Servadei}
and the related references.

When $p>2$, $\alpha\in(0,1)$ and $\mathcal{K}_{p}^\alpha(x,y)=C(d,p,\alpha)|x-y|^{-d-\alpha p}$ with $C(d,p,\alpha)>0$, the generalized fractional $(\alpha,p)$-Laplace operator $\mathfrak{L}_{\mathcal{K}_{p}^\alpha}$ is reduced to the well-known fractional $(\alpha,p)$-Laplace
operator $(-\Delta)_{p}^\alpha$:
$$(-\Delta)_{p}^\alpha\psi(x)
=C(d,p,\alpha)\ \mbox{P.V.}\int_{\mathbb{R}^d}|\psi(x)-\psi(y)|^{p-2}
\big(\psi(x)-\psi(y)\big)|x-y|^{-d-p\alpha }dy, \ \ x\in\mathbb{R}^d.
 $$
The existence and robustness of
pathwise random attractors of stochastic PDEs involving fractional $(\alpha,p)$-Laplace
operators $(-\Delta)_{p}^\alpha$ have been explored in \cite{Chenpa,Wang-wangb,Wang-wangb2}.
In particular, the global-in-time well-posedness, mean random attractors and invariant measures of stochastic PDEs with fractional $(\alpha,p)$-Laplace $\mathfrak{L}_{\mathcal{K}_{p}^\alpha}$ and
$(-\Delta)_{p}^\alpha$ were recently studied in \cite{Wangjde,RWangsubmitted}.

When $p=2$, $\alpha\in(0,1)$ and $\mathcal{K}_{p}^\alpha(x,y)
=C(d,\alpha)|x-y|^{-d-2\alpha}$ with
$C(d,\alpha)>0$, the
nonlinear fractional $(\alpha,p)$-Laplace operator
$\mathfrak{L}_{\mathcal{K}_{p}^\alpha}$ becomes the classical \emph{linear} fractional Laplace operator $(-\Delta)^\alpha$:
$$(-\Delta)^\alpha\psi(x)
=C(d,\alpha)\ \mbox{P.V.}\int_{\mathbb{R}^d}
\big(\psi(x)-\psi(y)\big)|x-y|^{-d-2\alpha }dy, \ \ x\in\mathbb{R}^d.
 $$
The pathwise random attractors, mean random attractors and invariant measures of
stochastic PDEs involving fractional Laplace operator $(-\Delta)^\alpha$ have been investigated in \cite{Chenzhang4,Gu2018jde,wangjde2019,rwang2,WangGUOWANG,Xu1}.

Next,  we discuss the  results on the   LPDs and
 uniform LPDs of \eqref{Int19-} available
 in the literature:

\underline{Case 1:} $\alpha=1$ and $p=2$. If the non-local and nonlinear fractional  $(\alpha,p)$-Laplace
operator $\mathfrak{L}_{\mathcal{K}_{p}^\alpha}$
is replaced by the standard Laplace
operator $-\Delta$, then problem \eqref{Int19-}
becomes the standard stochastic reaction-diffusion equation. The LPDs and uniform LPDs of such stochastic equations defined on bounded domains with globally or locally Lipschitz
drift and diffusion terms have been examined in the literature aforementioned.

\underline{Case 2:}
$\alpha\in(0,1)$ and $p=2$. If the nonlinear fractional $(\alpha,p)$-Laplace
operator $\mathfrak{L}_{\mathcal{K}_{p}^\alpha}$
is reduced to the fractional Laplace
operator $(-\Delta)^\alpha$, then problem \eqref{Int19-} is reduced to the standard
stochastic fractional reaction-diffusion equation. The LDPs of such stochastic fractional equations defined on \emph{unbounded} domains were discussed in \cite{wangLargedeviationsjde} when the
drift term has an arbitrarily
polynomial growth rate and the
diffusion term is globally
Lipschitz. The results of \cite{wangLargedeviationsjde} were recently
improved in \cite{Wang2024submitted} for superlinear diffusion terms, and in
\cite{Chenzhang4b} for distribution dependent drift and diffusion terms. We also mention that
the LDPs of stochastic fractional reaction-diffusion equations on bounded domains driven by L\'{e}vy noise were recently examined by Xu and Caraballo \cite{Xu2}.

\underline{Case 3:}
$\alpha\in(0,1)$ and $p>2$.
No results available for LDPs of stochastic PDEs involving the \emph{nonlinear} fractional  $(\alpha,p)$-Laplace
operators $(-\Delta)_{p}^\alpha$ and $\mathfrak{L}_{\mathcal{K}_{p}^\alpha}$.

\subsection{Motivations}
To the best of our knowledge, there is only one result \cite{Wang2024submitted} in the literature  on the study of LDPs of
superlinear noise driven stochastic PDEs on unbounded domains, and there are no results reported in the literature regarding the LDPs of
\emph{Mckean-Vlasov}
stochastic non-local \emph{fractional} $(\alpha,p)$-Laplacian equation \eqref{Int19-} driven by \emph{superlinear}
multiplicative noise
 defined on the
\emph{unbounded} space $\mathbb{R}^d$ even
 in the case where $s=1$ and $p=2$. In the present article, we want to examine, in a more general setting, the Freidlin-Wentzell and Dembo-Zeitouni uniform LDPs of
Mckean-Vlasov stochastic fractional $(\alpha,p)$-Laplacian equation \eqref{Int19-} on $\mathbb{R}^d$ with polynomial drift terms and  superlinear diffusion terms for any $\alpha\in(0,1)$ and $p>2$. As far as we know, this is the first work on the LDPs and uniform LDPs of \emph{Mckean-Vlasov} stochastic  PDEs with \emph{nonlinear} fractional Laplace operators and \emph{superlinear} diffusion terms even when the underlying domain is bounded. We remark that our results in this paper are also
valid for the case of $\alpha\in(0,1]$
and $p=2$ when the nonlinear fractional  $(\alpha,p)$-Laplace
operator $(-\Delta)_{p}^\alpha$ becomes
the linear fractional Laplace operator $(-\Delta)^\alpha$
and  the standard Laplace operator $-\Delta$. However, we will
focus on    the case  where $\alpha\in(0,1)$ and $p>2$.

\subsection{Main results}
To introduce our main results, we let $\mathbb{H}:=L^2(\mathbb{R}^d)$
with norm $\| \cdot \|$,
$\mathbb{V}_1=
:W^{\alpha,p}(\mathbb{R}^d)$ and
$\mathbb{V}_2:= L^q(\mathbb{R}^d)$,
where $W^{\alpha,p}(\mathbb{R}^d)$ with $\alpha\in(0,1)$ and $p>2$ (see \cite{Di2012bull}) is the
fractional Sobolev space:
\begin{align*}
W^{\alpha,p}(\mathbb{R}^d):= \bigg\{\psi\in L^p(\mathbb{R}^d):\int_{\mathbb{R}^d}
\int_{\mathbb{R}^d}|\psi(x)-\psi(y)|^p|x-y|^{-(d+\alpha p)}
dxdy<\infty  \bigg\}
\end{align*}
with the  norm
\begin{align}\label{int4}
\|\psi\|_{W^{\alpha,p}(\mathbb{R}^d)}&
=\bigg(\int_{\mathbb{R}^d}|\psi(x)|^pdx+
\int_{\mathbb{R}^d}
\int_{\mathbb{R}^d}|\psi(x)-
\psi(y)|^p|x-y|^{-(d+\alpha p)}
dxdy\bigg)^{1/p},\ \ \psi\in W^{\alpha,p}(\mathbb{R}^d).
\end{align}

The first result is the global-in-time well-posedness of \eqref{Int19-}
when the diffusion coefficient $\sigma_k$ has
 \emph{superlinear} growth
  in its third argument.

\begin{theorem}\label{Theorem4}(Global-in-time well-posedness)
Let conditions {\bf A} and {\bf B}
in Section 2 with $p^*\in[2,p)$ and $q^*\in[2,q)$ be satisfied.
If $\psi_0\in L^{2}(\Omega,\mathfrak{F}_0,\mathbb{H})$, then problem \eqref{Int19-} has a unique solution $\psi$ in the sense of Definition \ref{Defsolutionsolution} such that
\begin{align}\label{G8}
&
\mathbf{E}\bigg[\sup_{r\in[0,T]}\|\psi
(r)
\|^{2}
+\int_{0}^{T}\Big(\|\psi(r)
\|_{\mathbb{V}_1}^{p} +\|\psi(r)\|^q_{\mathbb{V}_2}\Big)
 dr\bigg]\leq M(T) (1+\mathbf{E}[\|\psi_0\|^2]),\ \ \forall\ T>0,
\end{align}
where $M(T)>0$ is a constant independent of $\epsilon$.
\end{theorem}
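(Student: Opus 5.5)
The plan is to prove Theorem \ref{Theorem4} in three stages: (i) well-posedness of a decoupled, distribution-frozen equation on bounded domains; (ii) passage to $\mathbb{R}^d$ by a domain expansion argument; (iii) a fixed point in the measure argument to recover the McKean-Vlasov structure.

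\textbf{Frozen equation.} Fix a continuous curve $\mu\in C([0,T],\mathcal{P}_2(\mathbb{H}))$ and replace $\mathcal{L}_{\psi(t)}$ by $\mu_t$ in \eqref{Int19-}. This gives a classical (distribution-free) SPDE. The operator $\mathfrak{L}_{\mathcal{K}_p^\alpha}$ is monotone, hemicontinuous and coercive from $\mathbb{V}_1$ to $\mathbb{V}_1^*$: testing with $\psi$ gives $\tfrac12\iint|\psi(x)-\psi(y)|^p\mathcal{K}_p^\alpha\,dx\,dy$ by symmetry of the kernel. By conditions \textbf{A}, the drifts $H_1,H_2$ are one-sided Lipschitz and dissipative of orders $p$ and $q$.

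The diffusion $\sigma_k$ is only locally Lipschitz with superlinear growth $p^*<p$, $q^*<q$, so the standard monotone framework (globally Lipschitz noise) does not apply directly. I will therefore work first on the balls $\mathcal{O}_n=\{|x|<n\}$ with truncated coefficients: cut off $\sigma_k$ outside $\{|u|\le R\}$, or use a stopping time $\tau_R=\inf\{t:\|\psi(t)\|\ge R\}$ combined with local monotonicity. Itô's formula for $\|\psi\|^2$ gives
\[
\|\psi(t)\|^2+2\int_0^t\big(c\|\psi\|^p_{\mathbb{V}_1}+c\|\psi\|^q_{\mathbb{V}_2}\big)dr\le\|\psi_0\|^2+C\int_0^t(1+\|\psi\|^2+\mathbf{E}\|\psi\|^2)\,dr+\epsilon\sum_k\int_0^t\|\sigma_k\|^2dr+\text{mart}.
\]
The key algebraic step is Young's inequality: $|u|^{p^*}\le\eta|u|^p+C_\eta|u|^2$ and $|u|^{q^*}\le\eta|u|^q+C_\eta|u|^2$, using $p^*<p$ and $q^*<q$. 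This lets the superlinear noise term $\sum_k\|\sigma_k\|^2$ be absorbed by the dissipative drift terms. The same splitting should handle the $\|\sigma\|$ factor in the Burkholder-Davis-Gundy estimate for the sup of the martingale. Gronwall's inequality then yields \eqref{G8} uniformly in $n$, in $R$ and in $\epsilon\in(0,1]$ (since $\epsilon\le1$). The bound is also uniform in $\mu$ as long as $\sup_t\mu_t(\|\cdot\|^2)$ is controlled, which it will be along the fixed-point iteration. With these uniform bounds, $R\to\infty$ gives global existence on $\mathcal{O}_n$.

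\textbf{Domain expansion.} Let $n\to\infty$. The uniform bounds give weak(-star) limits in $L^2(\Omega;L^\infty(0,T;\mathbb{H}))$, $L^p(\Omega\times(0,T);\mathbb{V}_1)$ and $L^q(\Omega\times(0,T);\mathbb{V}_2)$. The nonlinear terms converge weakly to some limits $\chi$; to identify $\chi$ I will use the Minty monotonicity trick. Because $\sigma_k$ is only locally Lipschitz, I expect to need local monotonicity: weights $e^{-\int_0^t\rho(\phi)\,dr}$ in the Minty argument, with $\rho$ depending on $\|\phi\|_{\mathbb{V}_1}^p+\|\phi\|_{\mathbb{V}_2}^q$. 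Alternatively, I could show directly that $(\psi_n)$ is Cauchy via a stopping-time argument together with a tail estimate $\mathbf{E}\int_{|x|>k}|\psi_n|^2\to0$ uniformly in $n$.

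\textbf{McKean-Vlasov fixed point and uniqueness.} Define $\Phi(\mu)=\mathcal{L}_{\psi^\mu}$. The Lipschitz dependence on the measure in the $\mathbb{W}_2$ metric (part of conditions \textbf{A}, \textbf{B}) gives, for two inputs $\mu,\nu$,
\[
\mathbf{E}\|\psi^\mu(t)-\psi^\nu(t)\|^2\le C\int_0^t\mathbb{W}_2(\mu_r,\nu_r)^2dr,
\]
up to localization. Iterating this estimate (a Picard scheme with weighted sup-norm) gives a unique fixed point.

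Pathwise uniqueness of the McKean-Vlasov solution follows from the same difference estimate. Because the noise is only locally Lipschitz, this is done up to stopping times $\tau_R$ and then letting $R\to\infty$ using \eqref{G8}.

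\textbf{Main obstacle.} I expect the hardest step to be the identification of the limit in the domain expansion, combined with the local (not global) monotonicity. The superlinear $\sigma_k$ breaks the global monotonicity inequality, so the difference of diffusion terms $\sum_k\|\sigma_k(u)-\sigma_k(v)\|^2$ has to be controlled by $|u-v|^2(1+|u|^{p-2}+|v|^{p-2}+|u|^{q-2}+|v|^{q-2})$. This in turn has to be compensated by the monotonicity surplus of $H_1,H_2$ and of $\mathfrak{L}_{\mathcal{K}_p^\alpha}$, and this is where the restrictions $p^*<p$ and $q^*<q$ must be used carefully.
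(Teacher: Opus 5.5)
Your architecture (freeze the law, solve the resulting classical SPDE on $\mathbb{R}^d$ by domain expansion, then run a Picard iteration in $C([0,T],\mathcal{P}_2(\mathbb{H}))$) differs from the paper's and can be made to work. However, the localization you rely on fails exactly at the distribution-dependent steps. You assume the superlinear $\sigma_k$ destroys global monotonicity, so you plan truncations, stopping times $\tau_R$ and local weights $e^{-\int\rho(\phi)}$. In particular you propose to prove uniqueness ``up to stopping times $\tau_R$ and then letting $R\to\infty$''.

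For the McKean--Vlasov equation this does not close. The difference of two solutions produces the term $\big(C_5(1)+2\|\phi_7(s)\|_1+2\|\widehat{\phi}_7(s)\|_1\big)d^2_{\mathcal{P}_2}(\mathcal{L}_{\psi_1(s)},\mathcal{L}_{\psi_2(s)})$. By \eqref{Wassersteinaa} this is bounded only by the \emph{unstopped} quantity $f(s)=\mathbf{E}\|\psi_1(s)-\psi_2(s)\|^2$. Under your premise, however, the Gronwall constant $C_R$ of the stopped estimate grows with $R$. You therefore get $f(t)\le C_Re^{C_RT}\int_0^tf(s)ds+r_R$ with $r_R\to0$ but $C_R\to\infty$, which gives nothing on a fixed interval. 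The same objection applies to the contraction of $\Phi(\mu)=\mathcal{L}_{\psi^\mu}$ ``up to localization'': the bound $d^2_{\mathcal{P}_2}(\Phi(\mu)_t,\Phi(\nu)_t)\le\mathbf{E}\|\psi^\mu(t)-\psi^\nu(t)\|^2$ needs an unlocalized estimate with an $R$-independent constant.

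The missing observation is that no localization is needed. The absorption you describe in your last paragraph gives a \emph{global} one-sided estimate with a deterministic weight.
\begin{itemize}
\item By \eqref{Algebraic7}, \eqref{f3} and \eqref{ff3}, Lemma \ref{Monotonicity} contains the surplus $-\frac{\beta}{4}\int(|u|^{p-2}+|v|^{p-2})|u-v|^2dx$ and its $q$-analogue.
\item Take \eqref{h2--} with $M=1$, $\varepsilon_1=\beta/2$, $\varepsilon_2=\widehat\beta/2$. This uses the pointwise Young bound $\sigma_{4,k}^2|s|^{p^*-2}\le\varepsilon\sigma_{4,k}|s|^{p-2}+C_\varepsilon\sigma_{4,k}^{(2p-p^*-2)/(p-p^*)}$ and $\sigma_4\in\ell^1(\mathbb{N},L^\infty(\mathbb{R}^d))$.
\item The superlinear part of the noise difference is then cancelled exactly by twice the surplus, so
\[
2\sum_{i=1}^2{}_{\mathbb{V}_i^*}\big\langle\mathbf{A}_i(t,u,\mu)-\mathbf{A}_i(t,v,\nu),u-v\big\rangle_{\mathbb{V}_i}+\|\mathfrak{L}_{H,S}(t,u,\mu)-\mathfrak{L}_{H,S}(t,v,\nu)\|^2_{\mathcal{L}_2(\ell^2,\mathbb{H})}\le\mathbf{G}(t)\big(\|u-v\|^2+d^2_{\mathcal{P}_2}(\mu,\nu)\big),
\]
with the deterministic $\mathbf{G}\in L^1_{loc}(\mathbb{R}^+)$ from the paper's proof.
\end{itemize}

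This is exactly what the paper uses. It solves the McKean--Vlasov problem directly on each ball $\mathcal{Q}_k$. In the domain-expansion step it runs the Minty argument \emph{in expectation} with the weight $e^{-\int_0^t\mathbf{G}}$, so $d^2_{\mathcal{P}_2}(\mathcal{L}_{\psi_k(s)},\mathcal{L}_{\psi_\varepsilon(s)})\le\mathbf{E}\|\psi_k(s)-\psi_\varepsilon(s)\|^2$ is absorbed. The law is then handled through the measure-hemicontinuity of Lemma \ref{Hemicontinuity}. Uniqueness is the same weighted difference estimate, with no stopping times.

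With this inequality your route also goes through, and more simply than you planned:
\begin{itemize}
\item On balls no truncation is needed, since the pair is weakly monotone and, by \eqref{h2-}, coercive.
\item The Minty step for the frozen equation uses the deterministic weight $e^{-\int_0^t\mathbf{G}}$.
\item The bound $d^2_{\mathcal{P}_2}(\Phi(\mu)_t,\Phi(\nu)_t)\le e^{\int_0^T\mathbf{G}}\int_0^t\mathbf{G}(r)d^2_{\mathcal{P}_2}(\mu_r,\nu_r)dr$ gives a contraction in a weighted sup-metric.
\item Uniqueness of the law, combined with uniqueness for the frozen equation, then gives pathwise uniqueness.
\end{itemize}
Your ordering keeps the identification step free of measure arguments, at the cost of a fixed point on $\mathbb{R}^d$. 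The paper's ordering avoids that fixed point but needs the expectation-level Minty argument.

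One smaller correction: integrating $|u|^{p^*}\le\eta|u|^p+C_\eta|u|^2$ against $\sigma_{1,k}$ would require $\sigma_{1,k}\in L^\infty$, which is not assumed. Use H\"older with $\sigma_{1,k}\in L^{p/(p-p^*)}$ and then scalar Young, as in \eqref{h2-}.
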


The second result is the Freidlin-Wentzell uniform LDPs of \eqref{Int19-} when the diffusion coefficient $\sigma_k$ has  \emph{superlinear} growth rate $p^*\in\big[2,\frac{p+2}{2}\big]\subseteq[2,p)$ or
$q^*\in\big[2,\frac{q+2}{2}\big]\subseteq[2,q)$ in its third argument:
\begin{theorem}\label{MianI}
(Freidlin-Wentzell uniform LDPs) Let conditions  {\bf A}
in section 2  and {\bf B1}
in Section 4    with $p^*\in\big[2,\frac{p+2}{2}\big]$ and $q^*\in\big[2,\frac{q+2}{2}\big]$
be satisfied. Then for every bounded set $\mathcal{B}\subseteq \mathbb{H}$, the family of solutions
$\{\psi^\epsilon(\cdot,\psi_0): \epsilon\in(0,1),\ \psi_0\in \mathcal{B}\}$ of \eqref{Int19-}
satisfies the Freidlin-Wentzell uniform LDPs in $C([0,T],\mathbb{H} )\bigcap\\ L^p([0,T], \mathbb{V}_1  )\bigcap L^q([0,T], \mathbb{V}_2)$ uniformly on $\mathcal{B}$ with the good rate function defined by \eqref{main1}.
\end{theorem}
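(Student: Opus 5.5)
The plan is to use the generalized weak convergence method of Salins \cite{Salins2}. By \cite[Theorem 2.9]{Salins2} (and the sufficient conditions there), the Freidlin--Wentzell uniform LDP over the bounded set $\mathcal{B}$ follows from the equicontinuous uniform Laplace principle. That principle in turn follows from two conditions on the controlled equations. First, freeze the McKean--Vlasov dependence: since $\epsilon\to0$, the law $\mathcal{L}_{\psi^\epsilon(t)}$ should converge to $\delta_{\bar\psi(t)}$, where $\bar\psi$ solves the deterministic (noise-free) equation with initial datum $\psi_0$. For a control $v\in L^2(0,T;\ell^2)$, I introduce the skeleton equation
\[
d\psi_v+\mathfrak{L}_{\mathcal{K}_p^\alpha}\psi_v\,dt=\sum_{i=1}^2H_i(t,x,\psi_v,\delta_{\bar\psi(t)})dt+\sum_k\sigma_k(t,x,\psi_v,\delta_{\bar\psi(t)})v_k\,dt,
\]
and I define the rate function $I_{\psi_0}(\phi)=\inf\{\frac12\int_0^T\|v\|_{\ell^2}^2dt:\ \phi=\psi_v\}$, which should be \eqref{main1}. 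Let $\mathcal{E}=C([0,T],\mathbb{H})\cap L^p(0,T;\mathbb{V}_1)\cap L^q(0,T;\mathbb{V}_2)$. The two Salins conditions are then the following.

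\textbf{(a) Weak-to-strong continuity.} If $v_n\rightharpoonup v$ weakly in $L^2(0,T;\ell^2)$ with $\int\|v_n\|^2\le N$, and $\psi_{0,n}\to\psi_0$ in $\mathbb{H}$, then $\psi_{v_n}(\psi_{0,n})\to\psi_v(\psi_0)$ in $\mathcal{E}$.

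\textbf{(b) Stochastic convergence.} For $\mathfrak{F}_t$-predictable controls $v^\epsilon$ with $\int\|v^\epsilon\|^2\le N$ a.s., and any $\psi_0^\epsilon\in\mathcal{B}$, the difference between the controlled stochastic solution $\psi^{\epsilon}_{v^\epsilon}$ (driven by $\sqrt\epsilon\,dW+v^\epsilon dt$, with true law dependence) and $\psi_{v^\epsilon}(\psi_0^\epsilon)$ tends to $0$ in probability in $\mathcal{E}$, uniformly over $\psi_0^\epsilon\in\mathcal{B}$.

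\textbf{Step 1: uniform a priori bounds on the skeleton.} I would first prove well-posedness of the skeleton equation by the monotone method, as in Theorem~\ref{Theorem4}, together with bounds on $\sup_t\|\psi_v\|^2+\int_0^T(\|\psi_v\|_{\mathbb{V}_1}^p+\|\psi_v\|_{\mathbb{V}_2}^q)$ that are uniform in $\psi_0\in\mathcal{B}$ and $\int\|v\|^2\le N$. The key term is $\int\sigma_k(\psi)v_k\psi$, which grows like $|\psi|^{p^*}\|v\|$. The restriction $p^*\le\frac{p+2}{2}$ is exactly what makes $|\psi|^{p^*}\le|\psi|\,|\psi|^{p/2}$. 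Young's inequality then gives $\|v\|^2\|\psi\|^2+\|\psi\|_{L^p}^p$, the second part is absorbed by the dissipative drift $H_1$ (respectively $H_2$ for $q^*$), and Gronwall with the weight $\exp(\int\|v\|^2)\le e^N$ finishes. In addition I would prove \emph{uniform tail estimates}: testing with $\rho(|x|^2/k^2)\psi_v$ for a cutoff $\rho$ shows that $\sup_t\int_{|x|>k}|\psi_v|^2\to0$ uniformly. For the nonlocal $p$-Laplacian the commutator with the cutoff needs a separate estimate, using the integrability of the kernel away from the diagonal and the $W^{\alpha,p}$ bound.

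\textbf{Step 2: proof of (a), which I expect to be the main obstacle.} On bounded balls, Step 1 together with the compact embedding $W^{\alpha,p}(B_k)\hookrightarrow L^2(B_k)$ and an Aubin--Lions/Arzel\`a--Ascoli argument (equicontinuity from time-derivative bounds in $\mathbb{V}_1^*+\mathbb{V}_2^*$) gives precompactness of $\psi_{v_n}$ in $L^2(0,T;L^2(B_k))$. The tail estimates upgrade this to $L^2(0,T;\mathbb{H})$. I then identify the limit: the pseudo-monotonicity of $\mathfrak{L}_{\mathcal{K}_p^\alpha}$ handles the nonlinear operator limit, and the term $\sigma(\psi_n)v_n\to\sigma(\psi)v$ weakly follows from strong convergence of $\psi_n$ combined with weak convergence of $v_n$. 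Next I upgrade to convergence in $C([0,T],\mathbb{H})\cap L^p(\mathbb{V}_1)\cap L^q(\mathbb{V}_2)$ via an energy estimate for $\psi_n-\psi$, using the strong monotonicity $\langle\mathfrak{L}u-\mathfrak{L}w,u-w\rangle\ge c[u-w]_{W^{\alpha,p}}^p$ for $p>2$ and the analogous property of the $H_i$. The cross term $\int\sigma(\psi)(v_n-v)(\psi_n-\psi)$ is handled by the strong $L^2(0,T;\mathbb{H})$ convergence already obtained, with local Lipschitz bounds controlled by the uniform moments.

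\textbf{Step 3: proof of (b).} Apply It\^o's formula to $\|\psi^\epsilon_{v^\epsilon}-\psi_{v^\epsilon}\|^2$ and stop at $\tau_R=\inf\{t:\|\psi^\epsilon\|+\|\psi_v\|>R\}$ so that the local Lipschitz constants are bounded. The law terms are controlled by $W_2(\mathcal{L}_{\psi^\epsilon(t)},\delta_{\bar\psi(t)})^2\le\mathbf{E}\|\psi^\epsilon(t)-\bar\psi(t)\|^2$, which is $O(\epsilon)$ by a similar It\^o estimate against the uncontrolled equation. The martingale and It\^o-correction terms carry a factor $\sqrt\epsilon$ or $\epsilon$, and $\mathbf{P}(\tau_R<T)$ is small uniformly by moment bounds analogous to \eqref{G8}. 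Combining these with Gronwall yields convergence in probability, uniformly in $\mathcal{B}$. Finally, \cite[Theorem 2.9]{Salins2} converts the equicontinuous uniform Laplace principle into the Freidlin--Wentzell uniform LDP on $\mathcal{B}$; goodness of the rate function follows from (a) together with the weak compactness of $\{\int\|v\|^2\le N\}$.
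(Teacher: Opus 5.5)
Your architecture matches the paper's. You use Salins' sufficient conditions (Theorem~\ref{Tattractor1}). You obtain (C2) and goodness of $\mathcal{I}_{\psi_0}$ from the weak-to-strong continuity of Lemma~\ref{Weak-to-strog}: tail estimates, compactness on balls, Arzel\`a--Ascoli, Minty-type identification and the energy equality. You obtain (C1) from an It\^o estimate for $\psi^\epsilon_u-\psi_u$ together with $d^2_{\mathcal{P}_2}(\mathcal{L}_{\psi^\epsilon(t)},\delta_{\psi^0(t)})\le\mathbf{E}\|\psi^\epsilon(t)-\psi^0(t)\|^2\le C\epsilon$. The continuity in $\psi_0$ that you build into (a) is not needed for (C2); the paper uses it only for the Dembo--Zeitouni result (Lemma~\ref{continuityoflevelsets}). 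However, Step~3 does not work as written.

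\textbf{Step 3 gap.} Stopping at $\tau_R$ defined through $\mathbb{H}$-norms does not make the local Lipschitz constants bounded.
\begin{itemize}
\item By \eqref{h2anew}, the Lipschitz factor of $\psi\mapsto\sigma(t,\cdot,\psi,\mu)$ is $\sigma_{4,k}^2(x)\big(1+|\psi_1(x)|^{p^*-2}+|\psi_2(x)|^{p^*-2}+\cdots\big)$. It depends on pointwise values, which a bound on $\|\psi_i\|$ does not control (take tall spikes of fixed $L^2$ norm).
\item So for $p^*>2$ the Nemytskii map is in general not Lipschitz from $\mathbb{H}$-balls into $\ell^2(\mathbb{N},\mathbb{H})$. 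In general the solutions have no $L^\infty$ bound in any norm available to you.
\item Routing through $\|\psi_i\|_p$ by H\"older and interpolation does not rescue it either. After absorbing into the coercive term $\|\psi_1-\psi_2\|_p^p$, you are left with a sub-quadratic power of $\|\psi_1-\psi_2\|$, and Gronwall does not close.
\end{itemize}

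\textbf{The missing idea.} The paper relies on the following absorption throughout.
\begin{itemize}
\item Lemma~\ref{Monotonicity} (via \eqref{Algebraic7} and \eqref{f3}) gives the extra dissipation $-\frac{\beta}{4}\int(|\psi_1|^{p-2}+|\psi_2|^{p-2})|\psi_1-\psi_2|^2dx$, plus its $q$-analogue. This term is quadratic in the difference.
\item By \eqref{h2--}, since $p^*<p$, $q^*<q$ and $\sigma_4\in\ell^1(\mathbb{N},L^\infty)$, the superlinear part of $\|\sigma(\psi_1,\mu_1)-\sigma(\psi_2,\mu_2)\|^2_{\ell^2(\mathbb{N},\mathbb{H})}$ is bounded by $C\|\psi_1-\psi_2\|^2+Cd^2_{\mathcal{P}_2}(\mu_1,\mu_2)$ plus $\varepsilon_1$ times that weighted integral. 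That weighted integral is then absorbed.
\item With this, \eqref{SCE5}--\eqref{SCE9} closes pathwise with the deterministic weight $e^{C(T+N^2)}$, since $\int_0^T\|u\|_{\ell^2}^2\le N^2$ a.s. It then follows by BDG, with no localization at all.
\end{itemize}

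\textbf{A smaller point.} The restriction $p^*\le\frac{p+2}{2}$ is not what makes Step~1 work. There $\langle\sigma(\psi)v,\psi\rangle\le\frac12\|v\|_{\ell^2}^2\|\psi\|^2+\frac12\|\sigma(\psi)\|^2_{\ell^2(\mathbb{N},\mathbb{H})}$, and \eqref{h2-} absorbs the last term into $\beta\|\psi\|_p^p+\widehat{\beta}\|\psi\|_q^q$ for every $p^*<p$, $q^*<q$. Also, the pointwise growth of $\sigma(\psi)\psi$ is $|\psi|^{1+p^*/2}$, not $|\psi|^{p^*}$. In the paper the restriction enters only through \eqref{B1-}--\eqref{B1--}. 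It is used to pass to the limit in $\mathfrak{L}_{H,S}(\cdot,\psi_{u_n},\delta_{\psi^0})u_n$ and in the energy equality inside Lemma~\ref{Weak-to-strog}; see \eqref{apr--}, \eqref{weak16e} and \eqref{weak24}.
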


The third result is the Dembo-Zeitouni uniform LDPs of \eqref{Int19-}
when the diffusion coefficient $\sigma_k$ has  \emph{superlinear} growth rate
 $p^*\in\big[2,\frac{p+2}{2}\big]\subseteq[2,p)$
 or
 $q^*\in\big[2,\frac{q+2}{2}\big]\subseteq[2,q)$ in its third argument:
\begin{theorem}\label{MianII}
(Dembo-Zeitouni uniform LDPs) Let conditions  {\bf A}
in section 2  and {\bf B1}
in Section 4  with $p^*\in\big[2,\frac{p+2}{2}\big]$ and $q^*\in\big[2,\frac{q+2}{2}\big]$
be satisfied.
Then for every compact set $\mathcal{K}\subseteq \mathbb{H}$, the family of solutions
$\{\psi^\epsilon(\cdot,\psi_0): \epsilon\in(0,1),\ \psi_0\in \mathcal{K}\}$ of \eqref{Int19-}
satisfies the Dembo-Zeitouni uniform LDPs in $C([0,T],\mathbb{H} )\bigcap L^p([0,T], \mathbb{V}_1  )\bigcap \\ L^q([0,T], \mathbb{V}_2)$ uniformly on $\mathcal{K}$ with the good rate function defined by \eqref{main1}.
\end{theorem}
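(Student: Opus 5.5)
The Dembo-Zeitouni uniform LDP over compact initial data will be deduced from the Freidlin-Wentzell uniform LDP of Theorem \ref{MianI}, using Salins' equivalence result \cite[Theorem 2.9]{Salins2}. That result says the two notions coincide on compact sets of initial data provided the level sets of the rate function vary continuously in the initial datum. Let $\mathcal{E}_T:=C([0,T],\mathbb{H})\cap L^p([0,T],\mathbb{V}_1)\cap L^q([0,T],\mathbb{V}_2)$. Fix a compact $\mathcal{K}\subseteq\mathbb{H}$. Since $\mathcal{K}$ is bounded, Theorem \ref{MianI} already gives the Freidlin-Wentzell uniform LDP for $\{\psi^\epsilon(\cdot,\psi_0)\}$ uniformly on $\mathcal{K}$, with the good rate function $I_{\psi_0}$ from \eqref{main1}. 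This rate function is $I_{\psi_0}(\phi)=\inf\{\frac12\int_0^T\|v(s)\|_{\ell^2}^2ds:\ \phi=\mathcal{G}^0_{\psi_0}(v)\}$, where $\mathcal{G}^0_{\psi_0}(v)$ is the solution of the skeleton (controlled) equation. The only thing left to verify is Salins' continuity condition on the level sets
\[
\Phi_{\psi_0}(s):=\{\phi\in\mathcal{E}_T: I_{\psi_0}(\phi)\le s\}=\{\mathcal{G}^0_{\psi_0}(v): \|v\|_{L^2(0,T;\ell^2)}^2\le 2s\}.
\]
Concretely, for every $s\ge0$ and every $\psi_0^n\to\psi_0$ in $\mathcal{K}$, we need $\sup_{\phi\in\Phi_{\psi_0^n}(s)}\dist(\phi,\Phi_{\psi_0}(s))\to0$ and $\sup_{\phi\in\Phi_{\psi_0}(s)}\dist(\phi,\Phi_{\psi_0^n}(s))\to0$, with $\dist$ the distance in $\mathcal{E}_T$.

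Both inclusions reduce to a single estimate: the skeleton map $(\psi_0,v)\mapsto\mathcal{G}^0_{\psi_0}(v)$ is continuous in $\psi_0$, uniformly over controls in the ball $S_N=\{\|v\|^2_{L^2(0,T;\ell^2)}\le N\}$. Indeed, given $\phi=\mathcal{G}^0_{\psi_0^n}(v)$ one takes the candidate $\mathcal{G}^0_{\psi_0}(v)\in\Phi_{\psi_0}(s)$, and the reverse direction is symmetric. So the key step is the bound
\[
\sup_{v\in S_N}\|\mathcal{G}^0_{\psi_0^n}(v)-\mathcal{G}^0_{\psi_0}(v)\|_{\mathcal{E}_T}\le \omega_N\big(\|\psi_0^n-\psi_0\|\big),\qquad \omega_N(r)\to0 \text{ as } r\to0.
\]
To prove it, write $u_n,u$ for the two controlled solutions and apply the energy identity to $u_n-u$. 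The monotonicity of $\mathfrak{L}_{\mathcal{K}_p^\alpha}$ together with the inequality $(|a|^{p-2}a-|b|^{p-2}b)(a-b)\ge c|a-b|^p$ for $p>2$ yields dissipative terms in the $W^{\alpha,p}$-seminorm and, via the drift assumptions in {\bf A}, in the $L^p$ and $L^q$ norms of the difference. The one-sided Lipschitz conditions on $H_i$, with the law dependence controlled in the Wasserstein metric $\mathbb{W}_2$, are bounded by $C\|u_n-u\|^2$; for the deterministic skeleton the measure argument is the deterministic law of $u$, so $\mathbb{W}_2$ is dominated by $\|u_n-u\|$.

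The main obstacle is the control term $\sum_k(\sigma_k(u_n)-\sigma_k(u))v_k$, because $\sigma_k$ is only locally Lipschitz with superlinear growth. Using {\bf B1}, this term is at most $\|v\|_{\ell^2}\,\|u_n-u\|\,\|(1+|u_n|^{p^*-2}+|u|^{p^*-2})|u_n-u|\|$ (and similarly with $q^*$). I will handle it with Young's inequality: absorb a piece into the dissipative term $\int(|u_n|^{p-2}+|u|^{p-2})|u_n-u|^2$ coming from the drift, and leave a remainder $C(1+\|v\|^2_{\ell^2})\|u_n-u\|^2$. The restriction $p^*\le\frac{p+2}{2}$, i.e. $2(p^*-2)\le p-2$, is exactly what makes this interpolation close. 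Gronwall's inequality then gives
\[
\|u_n-u\|_{\mathcal{E}_T}^2\le C e^{C(T+N)}\|\psi_0^n-\psi_0\|^2
\]
uniformly in $v\in S_N$, using the a priori bounds on $u,u_n$ that are uniform over bounded initial data, which are available from the proof of Theorem \ref{MianI}. The $L^p(\mathbb{V}_1)$ and $L^q(\mathbb{V}_2)$ parts of the norm follow from the integrated dissipative terms, with the $W^{\alpha,p}$ part raised to the power $1/p$, which still tends to zero.

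With this uniform continuity in hand, both level-set inclusions hold. Then \cite[Theorem 2.9]{Salins2}, applied on the compact set $\mathcal{K}$ together with Theorem \ref{MianI}, yields the Dembo-Zeitouni uniform LDP in $\mathcal{E}_T$ uniformly on $\mathcal{K}$. If Salins' theorem requires the equicontinuous uniform Laplace principle instead of the Freidlin-Wentzell form, I would cite his equivalence of those two notions, which holds without extra hypotheses.
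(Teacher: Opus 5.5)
Your route is the paper's. Theorem \ref{MianII} is deduced from Theorem \ref{MianI} through Salins' equivalence (Lemma \ref{Tattractor1-}). The Hausdorff continuity of level sets (Lemma \ref{continuityoflevelsets}) is proved as you propose: pair $\psi_u(\cdot,\psi_{0,n})$ with $\psi_u(\cdot,\psi_0)$ for the same control, then use the uniform-in-control Lipschitz estimate \eqref{Con5} of Theorem \ref{Con1}, which you re-derive.

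One step of that re-derivation misreads the skeleton equation, though the fix is easy.
\begin{itemize}
\item In \eqref{Int1--++} the measure argument is $\delta_{\psi^0(t)}$, where $\psi^0$ is the \emph{uncontrolled} solution of \eqref{Int1--++++}. It is not the law of the controlled solution itself.
\item So the measure terms coming from \eqref{f3}, \eqref{ff3} and \eqref{h2anew} have the form $c(t)\|\psi^0(t,\psi_{0,n})-\psi^0(t,\psi_0)\|^2$ with $c$ integrable. They are not dominated by the difference of the controlled solutions, as you claim.
\item They must be bounded separately by \eqref{Con3}. They then enter Gronwall as a source term of size $C\|\psi_{0,n}-\psi_0\|^2$, and your final estimate survives unchanged.
\end{itemize}

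A smaller point: the restriction $p^*\le\frac{p+2}{2}$ is not what closes this estimate. Condition \eqref{h2anew} puts $|s|^{p^*-2}$ under the square, so the Hilbert--Schmidt norm of the $\sigma$-difference involves only its square root. Your bound dropped that square root; it is still a valid upper bound, but it forced you to assume $2(p^*-2)\le p-2$. In fact \eqref{h2--} absorbs the superlinear Lipschitz term into the drift's dissipative term for every $p^*<p$. In the paper, the restriction is needed only for the weak-to-strong continuity (Lemma \ref{Weak-to-strog}) on which Theorem \ref{MianI} rests.
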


\subsection{Difficulties and methods}
The difficulties and methods
for solving the
global-in-time well-posedness and uniform
LDPs of the
\emph{Mckean-Vlasov}
stochastic fractional $(\alpha,p)$-Laplacian equation \eqref{Int19-} on $\mathbb{R}^d$ driven by \emph{superlinear} noise are:

\begin{itemize}
  \item \emph{Unboundedness of $\mathbb{R}^d$}. Because of the unboundedness of $\mathbb{R}^d$, we do not have the Gelfand triples for the three spaces $\mathbb{H}$, $\mathbb{V}_1$ and $\mathbb{V}_2$, and hence we can not directly use the variational
frameworks known in \cite{Barbu2010,Caraballo4,Krylov,LR1,
Minty,rwangma} (adapted to PDEs defined on bounded domains) to discuss the existence of solutions of \eqref{Int19-} defined on $\mathbb{R}^d$. To solve this problem, we consider a Faedo-Galerkin approximation equation of \eqref{Int19-} on the bounded ball
$\mathcal{Q}_{k}:=\{x\in\mathbb{R}^d: |x|< k\}$. Then we establish the existence of global-in-time
solutions $\psi_k$ of the approximation equation under the Gelfand triples:
$\mathbb{V}_{1,k}\subseteq \mathbb{H}_k\equiv \mathbb{H}_k^*\subseteq \mathbb{V}_{1,k}^*$ and $ \mathbb{V}_{2,k}\subseteq \mathbb{H}_k\equiv \mathbb{H}_k^*\subseteq \mathbb{V}^*_{2,k}$,  where
$\mathbb{H}_k:=\big\{\psi\in \mathbb{H}: \psi=0 \ \mbox{a.e. on}\ \ \mathbb{R}^d\backslash\mathcal{Q}_k\big\}$,
$\mathbb{V}_{1,k}:=\big\{\psi\in\mathbb{V}_{1}: \psi=0 \ \mbox{a.e. on}\ \ \mathbb{R}^d\backslash\mathcal{Q}_k\big\}$ and
$\mathbb{V}_{2,k}:=\big\{\psi\in \mathbb{V}_{2}: \psi=0 \ \mbox{a.e. on}\ \ \mathbb{R}^d\backslash\mathcal{Q}_k\big\}$. By deriving  several uniform estimates of  $\psi_k$ with respect to $k$ and establishing
the weak-star convergence of the approximation solutions in $\big(L^ 2(\Omega,
				L^1([0,T],\mathbb{H} ))
				\big)
				^*$, we show
that the weak limit of $\psi_k$ as $k\rightarrow\infty$ in a certain sense
is the desired solution
to \eqref{Int19-} on $\mathbb{R}^d$.  There are many  results
in the literature
 on the LDPs for  stochastic PDEs defined on \emph{bounded} domains, where the solution
operators of the corresponding controlled equations are \emph{compact} due to compact Sobolev embeddings
on bounded domains. This, in fact, plays a crucial role in establishing the tightness and
convergence of solutions of stochastic PDEs on bounded domains, especially for treating the
polynomial drift terms of
arbitrary orders (\cite{Cerrai1,Ren1}). For the controlled equation of \eqref{Int19-} defined on the unbounded domain $\mathbb{R}^d$, the solution
operators fail to gain such compactness property since Sobolev embeddings on $\mathbb{R}^d$ are no longer compact,
and hence one can not use such a compact embedding argument to discuss the LDPs of
\eqref{Int19-}. This is an essential obstacle in establishing the LDPs of
\eqref{Int19-} on $\mathbb{R}^d$. This obstacle is surmounted
by the idea of uniform
tail-ends estimates developed in \cite{Wangphysd}. In fact, by choosing an appropriate smooth cut-off function, we prove that the solutions of the
controlled equation of \eqref{Int19-} are sufficiently
small in $L^2(
 \mathbb{R}^d\setminus
 \mathcal{Q}_{n})$ for
 large enough $n$ uniformly for finite times, compact
initial data and bounded controls, see Lemma \ref{tightness2ddfhtt}. As a result of this and the compactness of the Sobolev embedding
$W^{\alpha,p}(\mathcal{Q}_n)\hookrightarrow L^2(\mathcal{Q}_{n})$, we prove the  continuity of solutions of the controlled equation with respect to the control
$u$ from the weak topology of
$L^2([0,T],\ell^2)$ to the strong topology of the  Polish space $C([0,T],\mathbb{H} )\bigcap L^p([0,T], \mathbb{V}_1  ) \bigcap L^q([0,T], \mathbb{V}_2)$, see Lemma \ref{Weak-to-strog}.
Note that the weak-to-strong continuity of solutions of the controlled equation plays a vital role to prove that the rate function defined in \eqref{main1} is  a
\emph{good} one. This is a
 necessary step to establish the uniform LDPs of \eqref{Int19-}.

\item \emph{Nonlinearity
and nonlocality of $\mathfrak{L
}_{\mathcal{K}_{p}^\alpha}$}. Since the nonlinear operator
$\mathfrak{L}_{\mathcal{K}_{p}^\alpha}$ with $p>2$ fails to generate a
$C_0$-semigroup, the semigroup method \cite{DaPrato1} and
the regularization method \cite{Mohan1,wangjde2019,WangGUOWANG}
can not be used to show the existence of solutions of \eqref{Int19-} involving $\mathfrak{L}_{\mathcal{K}_{p}^\alpha}$.
We will use the monotone method
\cite{Krylov,LR1,Wangjde} to prove the existence of solutions of \eqref{Int19-}.
Unlike the case of the linear fractional Laplace operator \cite{Chenzhang4b,wangLargedeviationsjde,
Wang2024submitted}, the nonlinearity of $\mathfrak{L}_{\mathcal{K}_{p}^\alpha}$ also introduces several difficulties in deriving the weak-to-strong continuity of solutions of the controlled equation. These difficulties are overcome by the \emph{pseudo monotone technique} which is based on the monotonicity and hemicontinuity of $\mathfrak{L}_{\mathcal{K}_{p}^\alpha}$.
In addition, the uniform tail-ends
estimates in Lemma \ref{tightness2ddfhtt} involving the
non-local and nonlinear operator $\mathfrak{L}_{\mathcal{K}_{p}^\alpha}$ for $p>2$ are much more complicated than the linear case for $p=2$ (see \cite{Chenzhang4b,wangLargedeviationsjde,
Wang2024submitted}). By carefully analyzing the
nonlocal and nonlinear structures of $\mathfrak{L}_{\mathcal{K}_{p}^\alpha}$, we eventually  derive uniform tail-ends
estimates of \eqref{Int19-}.

\item \emph{Superlinearity of} $\sigma_k$.
The superlinearly growing $\sigma_k$ introduces several difficulties in proving the global-in-time well-posedness of \eqref{Int19-} and the weak-to-strong
continuity of solutions of the controlled equation.
In this paper, we will
carefully control the
superlinearly growing diffusion term $\sigma_k$ by the \emph{polynomially dissipative} drift terms $H_1$ and $H_2$. In addition, we do not need the \emph{time-H\"{o}lder continuity} condition on the distribution-dependent diffusion terms when proving the uniform LDPs of \eqref{Int19-}
For a technique reason in proving the weak-to-strong
continuity of solutions of the controlled equation, we restrict the growing rate
 of $\sigma_k$ to
 $p^*\in\big[2,\frac{p+2}{2}\big]$
 or  $q^*\in\big[2,\frac{q+2}{2}\big]$.
However, our method fails when  $p^*\in\big(\frac{p+2}{2},p)$ or  $q^*\in\big(\frac{p+2}{2},q)$.
\end{itemize}

\subsection{Outline of the paper}
In the next section, we prove the global-in-time well-posedness of \eqref{Int19-} when the diffusion coefficient has \emph{superlinear} growth   $p^*\in[2,p)$
or  $q^*\in[2,q)$ in its third argument. In section 3, we recall the generalized weak convergence theory
for Freidlin-Wentzell and Dembo-Zeitouni uniform LDPs. In the last section, we establish the Freidlin-Wentzell and Dembo-Zeitouni uniform LDPs of \eqref{Int19-} when the diffusion coefficient has \emph{superlinear} growth   $p^*\in\big[2,\frac{p+2}{2}\big]\subseteq[2,p)$
or  $q^*\in\big[2,\frac{q+2}{2}\big]\subseteq[2,q)$.

\section{Global-in-time well-posedness
of \eqref{Int19-}}
In this section, we combine a
domain expansion argument,  the
monotone method  and the fixed point theorem
to show  the  global-in-time well-posedness of the \emph{distribution dependent}
stochastic \emph{fractional} $(\alpha,p)$-Laplacian equation
\eqref{Int19-} on $\mathbb{R}^d$ with
\emph{superlinear} diffusion terms. Our ideas are motivated by the works in \cite{Browder,Krylov,Minty,Pardoux,
Zeidler} and \cite{Caraballo4,
Krylov,LR1,Wangjde,Wang-wangb,
Wang-wangb2,RWangsubmitted} for determistic and stochastic evolution equations, respectively.

\subsection{Fractional
$(\alpha,p)$-Laplacian and
Sobolev spaces
}Let
 $\mathcal{K}_{p}^\alpha:\mathbb{R}^d\times\mathbb{R}^d\rightarrow\mathbb{R}^+\bigcup{\{+\infty\}}$ with $\alpha\in(0,1)$ and $p>2$
be a \emph{kernel function} with
\begin{itemize}
  \item \emph{Singularity}: $\mathcal{K}_{p}^\alpha(x,y)$
  has singularity at $x=y\in\mathbb{R}^d$.
  \item \emph{Symmetry}: $\mathcal{K}_{p}^\alpha(x,y)=
      \mathcal{K}_{p}^\alpha(y,x)$ for all $x,y\in\mathbb{R}^d$ with $x\neq y$.
  \item \emph{Translation invariance}: $\mathcal{K}_{p}^\alpha(x+z,y+z)=
      \mathcal{K}_{p}^\alpha(x,y)$ for any $x,y,z\in\mathbb{R}^d$ and $x\neq y$.
  \item \emph{Growth property}: there exists a constant $K\geq1$ such that
   $$
   K^{-1}|x-y|^{-(d+\alpha p)}\leq \mathcal{K}_{p}^\alpha(x,y)\leq
   K|x-y|^{-(d+\alpha p)}\ \mbox{for all $x,y\in\mathbb{R}^d$ and $x\neq y$}
   $$
  \item \emph{Continuity}: the mapping $x\rightarrow \mathcal{K}_{p}^\alpha(x,y)$ is continuous in $\mathbb{R}^d\setminus \{y\}$ for any $y\in\mathbb{R}^d$.
\end{itemize}
Based on the definition of the
\emph{kernel function} $\mathcal{K}_{p}^\alpha$,
the non-local fractional $(\alpha,p)$-Laplace
operator $\mathfrak{L}_{\mathcal{K}_{p}^\alpha}$ is defined by the formula \eqref{integro-differential}.
The following Gagliardo semi-norm of $W^{\alpha,p}(\mathbb{R}^d)$ is also used
\begin{align}\label{int5}
\|\psi\|_{\dot{W} ^{\alpha,p}(\mathbb{R}^d)}&=
\Bigg(\int_{\mathbb{R}^d}
\int_{\mathbb{R}^d}|\psi(x)-
\psi(y)|^p|x-y|^{-(d+\alpha p)}
dxdy \Bigg)^{1/p},\ \ \psi\in W^{\alpha,p}(\mathbb{R}^d).
\end{align}
Note that \eqref{int4} and \eqref{int5}
imply that
$
\|\psi\|^p_{W^{\alpha,p}(\mathbb{R}^d)}=
 \|\psi\|_{L^{p}(\mathbb{R}^d)}^p+\|u
 \|^p_{W_S^{\alpha,p}(\mathbb{R}^d)}
$
for $\psi\in W^{\alpha,p}(\mathbb{R}^d)$.

In
what follows,
 for
  simplicity, we  will
   write $\mathbb{H}:=L^2(\mathbb{R}^d)$, $\mathbb{V}_1:=W^{\alpha,p}(\mathbb{R}^d)$  and  $\mathbb{V}_2:=L^q(\mathbb{R}^d)$, where $q>2$
is the number associated with the growth order of $H_2$. The norm and inner product of $\mathbb{H}$ are
written as $\|\cdot\|:=\|\cdot\|_{\mathbb{H}}$
and $\langle \cdot,\cdot\rangle:=\langle \cdot ,\cdot\rangle_{\mathbb{H}}$, respectively. For $r\in[1,\infty]$, we may write $\|\cdot\|_r:=\|\cdot\|_{L^r(\mathbb{R}^d)}$.
For a Banach space $(X,\|\cdot\|_{X})$ and its dual space $X^*$, we use $_{X^*}\langle \cdot,
\cdot\rangle_{X}$ to denote the
duality product of $X$ and $X^*$. In addition, we also introduce a $X$-valued sequence space   $\ell^r(\mathbb{N},X):=\big\{u=(u_k)_{k\in \mathbb{N}} : u_k\in X\ \mbox{and}\ \sum_{k\in \mathbb{N}}\|u_k\|_X^r<+\infty\big\}$ with $r\in[1,\infty)$, which is a Banach space equipped with the norm $\|u\|_{\ell^r(\mathbb{N},X)}
=\big(\sum_{k\in \mathbb{N}}\|u_k\|_X^r\big)^{1/r}$.
Note that $\ell^{r_1}(\mathbb{N},X)\subseteq
\ell^{r_2}(\mathbb{N},X)$ for $1\leq r_1\leq r_2<\infty$. If $(X,\|\cdot\|_{X})=
(\mathbb{R},|\cdot|)$,
then we  write $\ell^r:=\ell^r(\mathbb{N},\mathbb{R})$.

\subsection{$L^r$-Wasserstein distance of probability measures}
Let $\mathcal{P}(\mathbb{H})$ be the space of all probability measures on $\mathbb{H}$ equipped with the weak topology,
which is
a
Polish space.
Given $r\in[1,\infty)$, we consider a subspace $\mathcal{P}_r(\mathbb{H})$ of $\mathcal{P}(\mathbb{H})$:
$$\mathcal{P}_r(\mathbb{H}):
=\Bigg\{ \mu\in  \mathcal{P}(\mathbb{H}):  \mu\big(\|\cdot\|^r\big)
:=\int_{\mathbb{H}}\|x\|^r\mu(dx) <\infty \Bigg\}.$$
It follows from Villani \cite[Definition 6.1]{Villani} that $\mathcal{P}_r(\mathbb{H})$ is also a
 Polish space with the $L^r$-Wasserstein distance, that is, for $\mu_1,\mu_2\in\mathcal{P}_r(\mathbb{H})$,
 \begin{align}\label{Wasserstein}
 d_{\mathcal{P}_r}
 (\mu_1,\mu_2):&=\Bigg(
 \inf_{\mu\in \mathcal{C}(\mu_1,\mu_2)}
 \int_{\mathbb{H}\times\mathbb{H}}\|x-y
 \|^r\mu(dx,dy)\Bigg)^{1/r},
\end{align}
where $\mathcal{C}(\mu_1,\mu_2)$ is
the set of all couplings for  $\mu_1$ and $\mu_2$.
Note  that
 \begin{align}\label{Wassersteinaa}
 d_{\mathcal{P}_r}
 (\mu_1,\mu_2)
= \inf_{\mathcal{L}_{\psi_1}=\mu_1,\        \mathcal{L}_{\psi_2}=\mu_2}
\Big(\mathbf{E}\big[\|\psi_1-
\psi_2\|^r\big]\Big)^{1/r},
\end{align}
where $\psi_i$ is a random variable
in
$\mathbb{H}$
with law
$\mathcal{L}_{\psi_i}=\mu_i$ for $i=1,2$.

\subsection{Conditions on measure-dependent drift and diffusion terms}
We assume that the  nonlinear
drift and diffusion terms
in \eqref{Int19-} satisfy the
conditions:

Condition \textbf{A}. The
functions
$H_1,H_2:\mathbb{R}^+\times\mathbb{R}^d
\times\mathbb{R}\times
\mathcal{P}_2(\mathbb{H})
\rightarrow\mathbb{R}$ are
continuous such that
for all $t\in\mathbb{R}^+$, $x\in\mathbb{R}^d$,
$s,s_1,s_2\in\mathbb{R}$ and $\mu,\mu_1,\mu_2\in\mathcal{P}_2(\mathbb{H})$,
\begin{subequations} \begin{align}\label{f1}&
H_1(t,x,s,\mu)s\leq-\beta|s|^p+
\phi_1(t,x)
\mu\big(\|\cdot\|^2
\big)+
\phi_2(t,x),
\\\label{f2}&|H_1(t,x,s,\mu)
|\leq\phi_3(t,x)|s|^{p-1}+
\phi_4(t,x)
\sqrt{\mu\big(\|\cdot\|^2\big)}
+\phi_5(t,x),
\\\label{f3}&(H_1(t,x,s_1,\mu_1)
-H_1(t,x,s_2,\mu_2))(s_1-s_2)
\leq-\beta
\big(|s_1|^{p-2}s_1-|s_2|^{p-2}s_2\big)(s_1-s_2)
\nonumber\\&\ \ \ \ \ \ \ \ \ \ \ \ \ \ \ \ \ \ \ \ \ \ \ \ \ \ \ \ \ \ \ \ \ \ \ \ \ \ \ \ \   \ \ \ \ \ \ \ \ \ \ \ \ \ \ \ \ \ \ +
\phi_6(t,x)|s_1-s_2|^2+
\phi_7(t,x)
d_{\mathcal{P}_2}^2
(\mu_1,\mu_2),
\\\label{f4}&|H_1(t,x,s_1,\mu_1)
-H_1(t,x,s_2,\mu_2)|
\leq\gamma (\phi_8(t,x) +   |s_1|^{p-2}+|s_2|^{p-2})|s_1-s_2|+
\phi_9(t,x)
d_{\mathcal{P}_2}
(\mu_1,\mu_2),
\end{align}
\end{subequations}
and
\begin{subequations}\begin{align}\label{ff1}&
H_2(t,x,s,\mu)s\leq-\widehat{\beta}|s|^q+
\widehat{\phi}_1(t,x)
\mu\big(\|\cdot\|^2
\big)+
\widehat{\phi}_2(t,x),
\\\label{ff2}&|H_2(t,x,s,\mu)
|\leq\widehat{\phi}_3(t,x)|s|^{q-1}+
\widehat{\phi}_4(t,x)
\sqrt{\mu\big(\|\cdot\|^2\big)}
+\widehat{\phi}_5(t,x),
\\\label{ff3}&(H_2(t,x,s_1,\mu_1)
-H_2(t,x,s_2,\mu_2))(s_1-s_2)
\leq-\widehat{\beta}
\big(|s_1|^{q-2}s_1-|s_2|^{q-2}
s_2\big)(s_1-s_2)\nonumber\\&\ \ \ \ \ \ \ \ \ \ \ \ \ \ \ \ \ \ \ \ \ \ \ \ \ \ \ \ \ \ \ \ \ \ \ \ \ \ \ \ \ \   \ \ \ \ \ \ \ \ \ \ \ \ \ \ \ \ \ \ +
\widehat{\phi}_6(t,x)|s_1-s_2|^2+
\widehat{\phi}_7(t,x)
d_{\mathcal{P}_2}^2
(\mu_1,\mu_2),
\\\label{ff4}&|H_2(t,x,s_1,\mu_1)
-H_2(t,x,s_2,\mu_2)|
\leq \widehat{\gamma}(\widehat{\phi}_8(t,x) +   |s_1|^{q-2}+|s_2|^{q-2})|s_1-s_2|+
\widehat{\phi}_9(t,x)
d_{\mathcal{P}_2}
(\mu_1,\mu_2),
\end{align}\end{subequations}where
$\beta>0$, $\widehat{\beta}>0$, $\gamma>0$, $\widehat{\gamma}>0$,
 $p>2$,
$q>2$, $1/p+1/\widehat{p}=1$ and $1/q+1/\widehat{q}=1$, and
 $\phi_1,\widehat{\phi}_1
 \in  L_{loc}^\infty(\mathbb{R}^+,
 L^1(\mathbb{R}^d))$, $\phi_2,\widehat{\phi}_2\in L_{loc}^1(\mathbb{R}^+,
   L^1(\mathbb{R}^d))$, $
 \phi_3,\widehat{\phi}_3\in  L_{loc}^\infty(\mathbb{R}^+,
 L^\infty(\mathbb{R}^d))$,
   $\phi_4\in L_{loc}^{\infty}(\mathbb{R}^+,L^{\widehat{p}}
(\mathbb{R}^d))$, $\widehat{\phi}_4\in L_{loc}^{\infty}(\mathbb{R}^+,
L^{\widehat{q}}
(\mathbb{R}^d))$, $\phi_5\in L_{loc}^{\widehat{p}}(\mathbb{R}^+,
 L^{\widehat{p}}(\mathbb{R}^d))$,
 $\widehat{\phi}_5\in L_{loc}^{\widehat{q}}(\mathbb{R}^+,
 L^{\widehat{q}}(\mathbb{R}^d))$, $\phi_6,\widehat{\phi}_6\in L_{loc}^1(\mathbb{R}^+,
 L^{\infty}(\mathbb{R}^d))$, $\phi_7,
 \widehat{\phi}_7\in L_{loc}^{1}(\mathbb{R}^+,
 L^{1}(\mathbb{R}^d))$, $\phi_8\in L_{loc}^{\infty}(\mathbb{R}^+, L^{\frac{p}{p-2}}(\mathbb{R}^d))$,
 $\widehat{\phi}_8\in L_{loc}^{\infty}(\mathbb{R}^+, L^{\frac{q}{q-2}}(\mathbb{R}^d))$,
 $\phi_9\in L_{loc}^{\infty}(\mathbb{R}^+, L^{\widehat{p}}(\mathbb{R}^d))$ and
 $\widehat{\phi}_9\in L_{loc}^{\infty}(\mathbb{R}^+, L^{\widehat{q}}(\mathbb{R}^d))$ are all nonnegative functions.

Condition \textbf{B}. For every $k\in \mathbb{N}$,
$\sigma_k:\mathbb{R}^+\times \mathbb{R}^d\times\mathbb{R}\times
\mathcal{P}_2(\mathbb{H})\rightarrow \mathbb{R}$ is
measurable which is
   locally Lipschitz continuous
 in the last two arguments and
  has a \emph{superlinear} growth in the third argument, that is, for
  all $t\in\mathbb{R}^+$,
$x\in\mathbb{R}^d$, $s,s_1,s_2\in\mathbb{R}$ and $\mu,\mu_1,\mu_2
\in\mathcal{P}_2(\mathbb{H})$,
\begin{align}\label{h2}
|\sigma_k(t,x,s,\mu)|^2\leq \sigma_{1,k}(x)
|s|^{p^*}+\sigma_{2,k}(x)
|s|^{q^*}
+\sigma_{3,k}(x)\big(1+\mu\big(\|\cdot\|^2)\big),
\end{align}
and
\begin{align}\label{h2a}
|\sigma_k(t,x,s_1,\mu_1)-\sigma_k(t,x,s_2,\mu_2)
|^2 \leq
& \sigma^2_{4,k}(x)\big( 1+ |s_1|^{p^*-2}+ |s_2|^{p^*-2} +|s_1|^{q^*-2}
   +|s_2|^{q^*-2} \big)|s_1-s_2|^2 \nonumber\\
& +\sigma_{5,k}(x)
d_{\mathcal{P}_2}^2(\mu_1,\mu_2),
\end{align}
where  $p^*\in[2,p)$, $q^*\in[2,q)$,  $\sigma_1\in
 \ell^{1}
 (\mathbb{N},L^{\frac{p}{p-p^*}}(\mathbb{R}^d))$,
$\sigma_2
 \in
 \ell^{1}
 (\mathbb{N},L^{\frac{q}{q-q^*}}(\mathbb{R}^d))$,
 $\sigma_3,\sigma_5
 \in
 \ell^{1}
 (\mathbb{N},L^1(\mathbb{R}^d))$, and $\sigma_4\in
 \ell^{1}
 (\mathbb{N},L^{\infty}
 (\mathbb{R}^d))   $.

\subsection{Algebraic inequalities}
We recall several algebraic inequalities from \cite{RWangsubmitted} (see also \cite{Brasco1}).
By utilizing these algebraic inequalities, we are able to control the superlinear
diffusion terms by using the  polynomial dissipativeness of the drift terms when proving the global well-posedness and LPDs of \eqref{Int19-}.

\begin{lemma}\label{AlgebraicA}
For any $b_1,b_2\in\mathbb{R}$, we have

(i) Upper bounds:
\begin{subequations}
\begin{align}&\label{Algebraic1}\big||b_1|^{r-2}b_1
		-|b_2|^{r-2}b_2\big|\leq (r-1)\big(|b_1|
		+|b_2|\big)^{r-2}|b_1-b_2|, \ \  \forall\ r\in[2,\infty),
\\&\label{Algebraic2}\big||b_1|^{r-2}b_1
		-|b_2|^{r-2}b_2\big|\leq \frac{2}{4^{r-2}} \big(|b_1|
		+|b_2|\big)^{r-2}|b_1-b_2|, \ \  \forall \ b_1\neq0, \ b_2 \neq 0, \ r\in(1,2],
\\&\label{Algebraic2a}\big||b_1|^{r-2}b_1
		-|b_2|^{r-2}b_2\big|\leq \frac{2}{4^{r-2}} |b_1-b_2|^{r-1}, \ \
		\forall \
		 b_1\neq0, \ b_2 \neq 0,
		 \
		  r\in(1,2].
\end{align}
\end{subequations}

(ii) Lower bounds:
\begin{subequations}
\begin{align}\label{Algebraic5}
&(|b_1|^{r-2}b_1-|b_2|^{r-2}b_2)(b_1-b_2)
\geq\frac{1}{2}(|b_1|^{r-2}+|b_2|^{r-2})
(b_1-b_2)^2,
\  \  \forall \  r\in[2,\infty),
\\&\label{Algebraic6}(|b_1|^{r-2}b_1-|b_2|
^{r-2}b_2)(b_1-b_2)
\geq \left \{
\begin{array}{ll}
 2^{2-r}|b_1-b_2|^r,  & \  \forall\ r\in[3,\infty),\\
  2^{1-r}|b_1-b_2|^r,  & \   \forall\ r\in[2,\infty),
  \end{array}
  \right.
\\&\label{Algebraic7}(|b_1|^{r-2}b_1-
|b_2|^{r-2}b_2)(b_1-b_2)
\geq
\left \{
\begin{array}{ll}
2^{1-r}|b_1-b_2|^r
+\frac{1}{4}(|b_1|^{r-2}+|b_2|^{r-2})(b_1-b_2)^2,
 &  \forall\ r\in[3,\infty),\\
 2^{-r}|b_1-b_2|^r
+\frac{1}{4}(|b_1|^{r-2}+|b_2|^{r-2})(b_1-b_2)^2,
 &  \forall\ r\in[2,\infty).
 \end{array}
 \right.
\end{align}
\end{subequations}

\end{lemma}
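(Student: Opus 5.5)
The plan is to reduce every inequality to a one-variable statement by homogeneity and symmetry. Write $g_r(b)=|b|^{r-2}b$. Each inequality is invariant under swapping $b_1,b_2$ and under $(b_1,b_2)\mapsto(-b_1,-b_2)$. The two sides are homogeneous, of degree $r-1$ in (i) and degree $r$ in (ii). So we may assume $b_1>b_2$ and $|b_2|\le |b_1|$, and after scaling $b_1=1$, $b_2=t\in[-1,1)$. Two cases then occur, $t\in[0,1)$ (same sign) and $t\in[-1,0)$ (opposite sign).

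For the upper bound \eqref{Algebraic1} with $r\ge2$, I use the mean value theorem:
\[
g_r(b_1)-g_r(b_2)=(r-1)|\xi|^{r-2}(b_1-b_2)
\]
with $\xi$ between $b_1$ and $b_2$. Since $|\xi|\le\max(|b_1|,|b_2|)\le |b_1|+|b_2|$ and $r-2\ge0$, the bound follows.

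For $r\in(1,2]$ the function $g_r$ is only Hölder continuous, so the plan changes.
\begin{itemize}
\item For \eqref{Algebraic2a}, I reduce to showing $1-t^{r-1}\le C(1-t)^{r-1}$ for $t\in[0,1)$ and $1+|t|^{r-1}\le C(1+|t|)^{r-1}$ for $t<0$. The first follows from subadditivity of $s\mapsto s^{r-1}$, which is concave with $r-1\in(0,1]$. The second follows from $a^{\theta}+b^{\theta}\le 2^{1-\theta}(a+b)^{\theta}$. This gives the constant $2^{2-r}$, which is at most $2\cdot 4^{2-r}=2/4^{r-2}$.
\item For \eqref{Algebraic2}, I note $(|b_1|+|b_2|)^{r-2}|b_1-b_2|\ge |b_1-b_2|^{r-1}$. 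This holds because $r-2\le0$ and $|b_1-b_2|\le|b_1|+|b_2|$. So \eqref{Algebraic2} follows directly from \eqref{Algebraic2a}.
\end{itemize}

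For the lower bounds, the key identity is
\[
g_r(b_1)-g_r(b_2)=(r-1)(b_1-b_2)\int_0^1|b_2+\theta(b_1-b_2)|^{r-2}\,d\theta .
\]
Hence the left side equals $(r-1)(b_1-b_2)^2\int_0^1|\cdot|^{r-2}$. Everything then reduces to lower bounds on $I:=\int_0^1|b_2+\theta(b_1-b_2)|^{r-2}d\theta$.

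\begin{itemize}
\item For \eqref{Algebraic5}, with the normalization $b_1=1$, $b_2=t$, I compute $I=(1-t^{r-1})/((r-1)(1-t))$ for $t\ge0$ and $I=(1+|t|^{r-1})/((r-1)(1+|t|))$ for $t<0$. I then check $(r-1)I\ge\frac12(1+|t|^{r-2})$. In the opposite-sign case this is $2(1+s^{r-1})\ge(1+s)(1+s^{r-2})$ for $s\in[0,1]$, which is equivalent to $(1-s)(1-s^{r-2})\ge0$. In the same-sign case it is the convexity of $s\mapsto s^{r-1}$, applied via the chord-above-tangent comparison at the endpoints, averaged.
\item For \eqref{Algebraic6}, I use $|b_2+\theta(b_1-b_2)|\ge|b_1-b_2|\,|\theta-\theta_0|$, where $\theta_0$ is the point where the segment crosses zero (or lies outside $[0,1]$). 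This gives $I\ge|b_1-b_2|^{r-2}\int_0^1|\theta-\tfrac12|^{r-2}d\theta=|b_1-b_2|^{r-2}\frac{2^{2-r}}{r-1}$, since the worst case is $\theta_0=\frac12$. This yields the constant $2^{2-r}\ge2^{1-r}$, which settles both lines.
\item For \eqref{Algebraic7}, I average the two previous bounds: half of \eqref{Algebraic5} plus half of \eqref{Algebraic6}. This gives $\frac14(|b_1|^{r-2}+|b_2|^{r-2})(b_1-b_2)^2+2^{1-r}|b_1-b_2|^r$, which also implies the weaker $2^{-r}$ version.
\end{itemize}

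The main obstacle is the same-sign case of \eqref{Algebraic5}. There one needs the elementary inequality $2(1-t^{r-1})\ge(1-t)(1+t^{r-2})$ for $t\in[0,1]$ and all $r\ge2$. I would handle it by writing $h(t)=2-2t^{r-1}-(1-t)(1+t^{r-2})$, checking $h(1)=0$, and showing $h'\le0$ on $[0,1]$ via convexity in $t$. Failing that, I would cite \cite{Brasco1} and \cite{RWangsubmitted}, where these inequalities are proved.
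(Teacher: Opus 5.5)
Your arguments for \eqref{Algebraic1} (mean value theorem), \eqref{Algebraic2a} (two normalized estimates, constant $2^{2-r}\le 2/4^{r-2}$), \eqref{Algebraic6} and \eqref{Algebraic7} are sound. For \eqref{Algebraic6}, the identity $b_2+\theta(b_1-b_2)=(b_1-b_2)(\theta-\theta_0)$ gives constant $2^{2-r}$ for every $r\ge 2$, and averaging then yields both lines of \eqref{Algebraic7}.

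\textbf{The gap is your deduction of \eqref{Algebraic2} from \eqref{Algebraic2a}.} The comparison you use runs the wrong way. Since $r-2\le 0$ and $|b_1-b_2|\le |b_1|+|b_2|$, one has $(|b_1|+|b_2|)^{r-2}\le |b_1-b_2|^{r-2}$. Hence $(|b_1|+|b_2|)^{r-2}|b_1-b_2|\le |b_1-b_2|^{r-1}$; for instance $r=\tfrac32$, $b_1=\tfrac52$, $b_2=\tfrac32$ gives $\tfrac12<1$. So the right side of \eqref{Algebraic2} is the smaller one. \eqref{Algebraic2} is the \emph{stronger} inequality: it implies \eqref{Algebraic2a}, not conversely.

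In particular, your same-sign bound $1-t^{r-1}\le (1-t)^{r-1}$ does not suffice. For $t\in(0,1)$ you need $1-t^{r-1}\le C(1+t)^{r-2}(1-t)$. This is easy to supply:
\begin{itemize}
\item $t^{r-1}\ge t$ on $[0,1]$ gives $1-t^{r-1}\le 1-t$;
\item $(1+t)^{r-2}\ge 2^{r-2}$ because $r-2\le 0$;
\item so $C=2^{2-r}$ works.
\end{itemize}
In the opposite-sign case $|b_1-b_2|=|b_1|+|b_2|$, so the two inequalities coincide and your power-mean estimate already applies. Prove \eqref{Algebraic2} this way first, then deduce \eqref{Algebraic2a} from it.

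\textbf{A smaller issue concerns \eqref{Algebraic5}.} The step you call the main obstacle is trivial, but the route you propose fails. Expanding gives the exact factorization
\[
h(t)=2-2t^{r-1}-(1-t)(1+t^{r-2})=(1+t)(1-t^{r-2}),
\]
which is nonnegative on $[0,1]$ for $r\ge 2$. This closes the same-sign case immediately. Your claim that $h'\le 0$ on $[0,1]$ is false for $r>3$: for $r=4$, $h(t)=(1+t)(1-t^2)$ and $h'(0)=1>0$.

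Your tangent comparison at the right endpoint $s=1$ also points the wrong way: it bounds the chord slope of $s\mapsto s^{r-1}$ from above, not below. A correct convexity version combines two bounds and averages them:
\begin{itemize}
\item $\frac{1-t^{r-1}}{1-t}\ge 1$, since $t^{r-1}\le t$;
\item $\frac{1-t^{r-1}}{1-t}\ge (r-1)t^{r-2}\ge t^{r-2}$, from the tangent at $t$.
\end{itemize}

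The paper gives no proof of this lemma; it only quotes it from the literature, so there is no in-paper argument to compare routes with.
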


\subsection{Variational settings}
In this part, we establish several variational settings in order to discuss the
global-in-time well-posedness of \eqref{Int19-}.
We first establish some variational
 properties for the generalized fractional $(\alpha, p)$-Laplace operator $\mathfrak{L}_{\mathcal{K}_{p}^\alpha}$.
Given $t\in\mathbb{R}^+$ and
$\mu\in\mathcal{P}_2
(\mathbb{H})$, we
define two operators $\mathbf{A}_1(t,\cdot,\mu): \mathbb{V}_1\rightarrow
\mathbb{V}_1^*$ and $\mathbf{A}_2(t,\cdot,\mu): \mathbb{V}_2\rightarrow \mathbb{V}_2^*$ by
 \begin{align*}&
_{\mathbb{V}_1^*}\big\langle \mathbf{A}_1(t,\psi_1,\mu),
\psi_2\big  \rangle_{\mathbb{V}_1}
=\int_{\mathbb{R}^d}
H_1(t,x,\psi_1(x),\mu)\psi_2(x)dx
\\&\ \ \ -\frac{1}{2}\int_{\mathbb{R}^d}  \int_{\mathbb{R}^d}|\psi_1(x)-\psi_1(y)|^{p-2}\big
(\psi_1(x)-\psi_1(y)\big)
 \big( \psi_2(x)-\psi_2(y)\big)\mathcal{K}_{p}^\alpha(x,y)dxdy,\ \ \psi_1,\psi_2\in \mathbb{V}_1,
 \end{align*}
 and
\begin{align*}_{\mathbb{V}_2^*}
\langle \mathbf{A}_2(t,\psi_1,\mu) ,\psi_2\rangle_{\mathbb{V}_2}
=\int_{\mathbb{R}^d}H_2(t,x,\psi_1(x),\mu)
\psi_2(x)dx,\ \ \ \psi_1,\psi_2\in\mathbb{V}_2.
\end{align*}

Next, we discuss several useful properties for the nonlinear operators $\mathbf{A}_1(t,\cdot,\mu)$ and $\mathbf{A}_2(t,\cdot,\mu)$.

\begin{lemma}(\texttt{Dissipativeness and Growth})\label{Dissipativeness}
Let condition \textbf{A} be satisfied.
Then we have:

 (i) The operator $\mathbf{A}_1(t,\cdot,\mu):\mathbb{V}_1\rightarrow
\mathbb{V}_1^*$ is well-defined, and for all $t\in\mathbb{R}^+$,
$\mu\in\mathcal{P}_2
(\mathbb{H})$ and $\psi\in \mathbb{V}_1$,
\begin{align}\label{AAoperator1a}&
_{\mathbb{V}_1^*}\big\langle \mathbf{A}_1(t,\psi,\mu),
\psi\big\rangle_{\mathbb{V}_1}
\leq-\frac{1}{2K}\| \psi\|_{\dot{\mathbb{V}}_1}^p-
\beta\| \psi\|_p^{p} +\|\phi_1(t)
\|_1
\mu\big(\|\cdot\|^2\big)
+\|\phi_2(t)
\|_{1},
\end{align}
and
\begin{align}&\label{AAoperator1}
\|\mathbf{A}_1(t,\psi,\mu) \|_{\mathbb{V}^*_1}
\le\frac{K}{2}\| \psi
\|_{\dot{\mathbb{V}}_1
 }^{p-1}+\|\phi_3(t)
\|_\infty
\| \psi\|_p^{p-1}+\|\phi_4(t)
\|_{\widehat{p}}
\sqrt{\mu\big(\|\cdot\|^2\big)}
+\|\phi_5(t)
\|_{\widehat{p}}.
\end{align}

(ii) The operator $\mathbf{A}_2(t,\cdot,\mu):
\mathbb{V}_2\rightarrow
\mathbb{V}_2^*$ is well-defined, and for all $t\in\mathbb{R}^+$,
$\mu\in\mathcal{P}_2
(\mathbb{H})$ and $\psi\in \mathbb{V}_2$,
\begin{align}
&\label{aAAoperator1a}
_{\mathbb{V}^*_2}\big\langle \mathbf{A}_2(t,\psi,\mu),
\psi\big\rangle_{\mathbb{V}_2}
\leq-\widehat{\beta}\| \psi\|_{\mathbb{V}_2}^{q}+\|\widehat{\phi}_1(t)
\|_{1}
\mu\big(\|\cdot\|^2\big)
+\|\widehat{\phi}_2(t)
\|_{1},
\end{align}
and
\begin{align}
&\label{aAAAoperator1}
\|\mathbf{A}_2(t,\psi,\mu) \|_{\mathbb{V}^*_2}
\le \|\widehat{\phi}_3(t)
\|_{\infty}
\| \psi\|_{\mathbb{V}_2}^{q-1}+\|\widehat{\phi}_4(t)
\|_{\widehat{q}}
\sqrt{\mu\big(\|\cdot\|^2\big)}
+\|\widehat{\phi}_5(t)
\|_{\widehat{q}}.
\end{align}

\end{lemma}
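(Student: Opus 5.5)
The plan is to verify each of the four inequalities by testing the defining pairings with $\psi_2=\psi$ (for the dissipativity bounds) or with arbitrary $\psi_2$ (for the growth bounds). The two ingredients are condition \textbf{A} and the growth property $K^{-1}|x-y|^{-(d+\alpha p)}\leq \mathcal{K}_{p}^\alpha(x,y)\leq K|x-y|^{-(d+\alpha p)}$ of the kernel.

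For (i), take $\psi_2=\psi$ first. The nonlocal part becomes
$$-\frac12\int_{\mathbb{R}^d}\int_{\mathbb{R}^d}|\psi(x)-\psi(y)|^p\mathcal{K}_{p}^\alpha(x,y)\,dxdy\leq -\frac{1}{2K}\|\psi\|^p_{\dot{\mathbb{V}}_1},$$
using the lower bound on the kernel. The local part is handled pointwise by \eqref{f1}:
$$\int_{\mathbb{R}^d} H_1(t,x,\psi,\mu)\psi\,dx\leq -\beta\|\psi\|_p^p+\|\phi_1(t)\|_1\mu(\|\cdot\|^2)+\|\phi_2(t)\|_1.$$
Summing these gives \eqref{AAoperator1a}.

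For \eqref{AAoperator1}, take an arbitrary $\psi_2\in\mathbb{V}_1$. Bound the nonlocal term by the upper kernel bound and Hölder's inequality with exponents $\widehat p=p/(p-1)$ and $p$, applied on $\mathbb{R}^{2d}$ with the weight $|x-y|^{-(d+\alpha p)}$. This gives
$$\frac K2\|\psi\|_{\dot{\mathbb{V}}_1}^{p-1}\|\psi_2\|_{\dot{\mathbb{V}}_1}.$$
For the local term, use \eqref{f2} and Hölder in $L^{\widehat p}\times L^p$:
\begin{itemize}
\item $\int\phi_3|\psi|^{p-1}|\psi_2|\leq \|\phi_3\|_\infty\|\psi\|_p^{p-1}\|\psi_2\|_p$;
\item $\int \phi_4|\psi_2|\leq\|\phi_4\|_{\widehat p}\|\psi_2\|_p$;
\item $\int\phi_5|\psi_2|\leq \|\phi_5\|_{\widehat p}\|\psi_2\|_p$.
\end{itemize}
Since $\|\psi_2\|_{\dot{\mathbb{V}}_1}$ and $\|\psi_2\|_p$ are both at most $\|\psi_2\|_{\mathbb{V}_1}$, taking the supremum over the unit ball yields \eqref{AAoperator1}.

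The same Hölder bounds show that the pairing is finite and linear in $\psi_2$. This gives well-definedness, and measurability of $x\mapsto H_1(t,x,\psi(x),\mu)$ follows from the continuity of $H_1$.

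For (ii) the argument is identical and simpler: there is no nonlocal term, and we use \eqref{ff1}, \eqref{ff2} with Hölder in $L^{\widehat q}\times L^q$.

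The only point needing any care, and still a routine one, is that the Hölder estimate for the nonlocal bilinear term relies on the kernel growth bound. The factor $\frac12$ in the definition then yields the stated constants $\frac1{2K}$ and $\frac K2$ directly.
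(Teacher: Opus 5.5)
Your proposal is correct and follows the paper's own proof essentially verbatim. Both use \eqref{f1} (resp.\ \eqref{ff1}) pointwise together with the lower kernel bound for the dissipativity estimates, and \eqref{f2} (resp.\ \eqref{ff2}) with H\"older in $L^{\widehat p}\times L^p$ together with the upper kernel bound and weighted H\"older on $\mathbb{R}^{2d}$ for the growth estimates. The only slip is cosmetic: your $\phi_4$ bullet dropped the constant factor from \eqref{f2} and should read $\int\phi_4\sqrt{\mu(\|\cdot\|^2)}\,|\psi_2|\,dx\leq\|\phi_4(t)\|_{\widehat p}\sqrt{\mu(\|\cdot\|^2)}\,\|\psi_2\|_p$.
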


\begin{proof}
By \eqref{f1} and the definition of $\mathbf{A}_1(t,\cdot,\mu)$, we have \eqref{AAoperator1a}.
By \eqref{f2} and the H\"{o}lder
inequality, we derive that for all $\psi_1,\psi_2\in L^{p}
(\mathbb{R}^d)$,
\begin{align*}&
\int_{\mathbb{R}^d}
H_1(t,x,\psi_1(x),\mu)\psi_2(x)dx\leq \Big(\|\phi_3(t)
\|_{\infty}
\|\psi_1\|^{p-1}_p
+\|\phi_4(t)
\|_{\widehat{p}}
\sqrt{\mu\big(\|\cdot\|^2\big)}
+
\|\phi_5(t)
\|_{\widehat{p}}\Big)
\|\psi_2\|_p.
  \end{align*}
By the definition of $\mathcal{K}^\alpha_{p}$ and the H\"{o}lder
inequality, we find that for all $\psi_1,\psi_2\in \dot{\mathbb{V}}_1$,
 \begin{align*}&
\frac{1}{2}\int_{\mathbb{R}^d}  \int_{\mathbb{R}^d}|\psi_1(x)
-\psi_1(y)|^{p-2}\big
(\psi_1(x)-\psi_1(y)\big)
 \big( \psi_2(x)-\psi_2(y)\big)
 \mathcal{K}_{p}^\alpha(x,y)dxdy
 \leq \frac{K}{2}\| \psi_1  \|_{\dot{\mathbb{V}}_1}^{p-1}\| \psi_2\|_{\dot{\mathbb{V}}_1}.
 \end{align*}
Above two inequalities implies  \eqref{AAoperator1}. By
\eqref{ff1}-\eqref{ff2} and the definition of $\mathbf{A}_2(t,\cdot,\mu)$, we can prove (ii).
\end{proof}

\begin{lemma}(\texttt{Strong monotonicity})
\label{Monotonicity}
Let condition \textbf{A} be satisfied. Then we have:

(i)The operator $\mathbf{A}_1(t,\cdot,\cdot): \mathbb{V}_1\times\mathcal{P}_2
(\mathbb{H})\rightarrow
\mathbb{V}^*_1$ is strongly monotone, that is, for all $t\in\mathbb{R}^+$, $\psi_1,\psi_2\in \mathbb{V}_1$ and $\mu_1,\mu_2\in\mathcal{P}_2
(\mathbb{H})$,
\begin{align*}
& _{\mathbb{V}^*_1}
\big\langle \mathbf{A}_1(t,\psi_1,\mu_1)-
\mathbf{A}_1(t,\psi_2,\mu_2),
\psi_1-\psi_2\big\rangle_{\mathbb{V}_1}
\leq -\frac{1}{2^{p+1}K}\|\psi_1-\psi_2
\|^p_{\dot{\mathbb{V}}_1}
 -2^{-p}\beta\|\psi_1-\psi_2\|^p_p
\nonumber\\& -\frac{1}{8K} \int_{\mathbb{R}^d}\int_{\mathbb{R}^d}
\big( |\psi_1(x)-\psi_1(y)|^{p-2}
+|\psi_2(x)-\psi_2(y)|^{p-2}  \big)
|\psi_1(x)-\psi_1(y)
-(\psi_2(x)-\psi_2(y))|^{2}|x-y|^{-(d+\alpha p)}dxdy
\\&\ \ -\frac{\beta}{4} \int_{\mathbb{R}^d}  \big(|\psi_1(x)|^{p-2}+|\psi_2(x)|^{p-2}\big)
|\psi_1(x)-\psi_2(x)|^2dx
+\|\phi_6(t)\|_{\infty}
\|\psi_1-\psi_2\|^2+
\|\phi_7(t)
\|_{1}
d_{\mathcal{P}_2}^2
(\mu_1,\mu_2).
\end{align*}

(ii)The operator $\mathbf{A}_2(t,\cdot,\cdot): \mathbb{V}_2\times\mathcal{P}_2
(\mathbb{H})\rightarrow
\mathbb{V}^*_2$
is strongly monotone, that is, for all   $t\in\mathbb{R}^+$, $\psi_1,\psi_2\in \mathbb{V}_2$ and $\mu_1,\mu_2\in\mathcal{P}_2
(\mathbb{H})$,
\begin{align*}&
_{\mathbb{V}^*_2}\langle \mathbf{A}_2(t,\psi_1,\mu_1)
-\mathbf{A}_2(t,\psi_2,\mu_2) ,\psi_1-\psi_2\rangle_{\mathbb{V}_2}
 \leq-2^{-q}\widehat{\beta}
\|\psi_1-\psi_2\|_{\mathbb{V}_2}^{q} \\& -\frac{\widehat{\beta}}{4}\int_{\mathbb{R}^d}  \big(|\psi_1(x)|^{q-2}+|\psi_2(x)|^{q-2}\big)
|\psi_1(x)-\psi_2(x)|^2dx
  +\|\widehat{\phi}_6(t)\|_{\infty}
\|\psi_1-\psi_2\|^2+
\|\widehat{\phi}_7(t)
\|_{1}
d_{\mathcal{P}_2}^2
(\mu_1,\mu_2).
\end{align*}
\end{lemma}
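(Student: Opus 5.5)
We split the duality pairing in part (i) into the nonlocal part and the reaction part, and treat each pointwise with Lemma \ref{AlgebraicA}. Write $\mathcal{W}:=\psi_1-\psi_2$, $a(x,y):=\psi_1(x)-\psi_1(y)$ and $b(x,y):=\psi_2(x)-\psi_2(y)$, so that $a-b=\mathcal{W}(x)-\mathcal{W}(y)$. By the definition of $\mathbf{A}_1$,
\begin{align*}
{}_{\mathbb{V}^*_1}\big\langle \mathbf{A}_1(t,\psi_1,\mu_1)-\mathbf{A}_1(t,\psi_2,\mu_2),\mathcal{W}\big\rangle_{\mathbb{V}_1}
&=\int_{\mathbb{R}^d}\big(H_1(t,x,\psi_1,\mu_1)-H_1(t,x,\psi_2,\mu_2)\big)\mathcal{W}\,dx\\
&\quad-\frac12\int_{\mathbb{R}^d}\int_{\mathbb{R}^d}\big(|a|^{p-2}a-|b|^{p-2}b\big)(a-b)\,\mathcal{K}_{p}^\alpha(x,y)\,dx\,dy .
\end{align*}

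For the nonlocal term, apply \eqref{Algebraic7} (second line, valid for $r=p\in[2,\infty)$) pointwise with $b_1=a$, $b_2=b$:
$$
\big(|a|^{p-2}a-|b|^{p-2}b\big)(a-b)\geq 2^{-p}|a-b|^p+\tfrac14\big(|a|^{p-2}+|b|^{p-2}\big)|a-b|^2 .
$$
The left side is nonnegative, so multiplying by $\mathcal{K}_{p}^\alpha$ and using the lower bound $\mathcal{K}_{p}^\alpha\geq K^{-1}|x-y|^{-(d+\alpha p)}$ from the growth property is legitimate. Integrating and multiplying by $-\tfrac12$ then bounds the nonlocal term by
$$
-\frac{1}{2^{p+1}K}\|\mathcal{W}\|_{\dot{\mathbb{V}}_1}^p-\frac{1}{8K}\int_{\mathbb{R}^d}\int_{\mathbb{R}^d}\big(|a|^{p-2}+|b|^{p-2}\big)|a-b|^2|x-y|^{-(d+\alpha p)}\,dx\,dy ,
$$
which is exactly the first and third terms of the claimed estimate.

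For the reaction term, use \eqref{f3} pointwise and then \eqref{Algebraic7} again with $b_1=\psi_1(x)$, $b_2=\psi_2(x)$. Since $-\beta(\cdots)\leq-\beta2^{-p}|\mathcal{W}|^p-\frac{\beta}{4}(|\psi_1|^{p-2}+|\psi_2|^{p-2})|\mathcal{W}|^2$, integrating over $\mathbb{R}^d$ gives the terms $-2^{-p}\beta\|\mathcal{W}\|_p^p$ and $-\frac{\beta}{4}\int(\ldots)$. The remaining pieces are handled by Hölder: $\int\phi_6(t,x)|\mathcal{W}|^2\,dx\leq\|\phi_6(t)\|_\infty\|\mathcal{W}\|^2$ and $\int\phi_7(t,x)\,d_{\mathcal{P}_2}^2(\mu_1,\mu_2)\,dx=\|\phi_7(t)\|_1\,d_{\mathcal{P}_2}^2(\mu_1,\mu_2)$. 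Adding the two parts proves (i).

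Part (ii) is the same computation without the nonlocal part. Use \eqref{ff3} together with \eqref{Algebraic7} at $r=q$; note that $\|\mathcal{W}\|_q=\|\mathcal{W}\|_{\mathbb{V}_2}$.

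The main point needing care is justifying these manipulations for $\psi_i\in\mathbb{V}_1$ (respectively $\mathbb{V}_2$). All integrals must be finite, or at least the splitting must be valid. The double integral is finite by the Hölder bound in the proof of Lemma \ref{Dissipativeness}. The integral of $H_1$-differences times $\mathcal{W}$ is finite by \eqref{f2}. The pointwise lower bounds only involve nonnegative quantities, so monotone integration applies, and terms that might be infinite on the right carry a negative sign, which keeps the inequality meaningful. The integral of $\phi_7$ is taken in $x$ with $\mu_i$ fixed, so no measurability issue arises.
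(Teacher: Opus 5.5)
Your proposal is correct and is essentially the paper's argument: the paper's proof only cites \eqref{Algebraic7}, \eqref{f3}, \eqref{ff3} and the definitions of $\mathbf{A}_1,\mathbf{A}_2$, and you fill in exactly those steps, with the constants $\frac{1}{2^{p+1}K}$, $\frac{1}{8K}$, $2^{-p}\beta$, $\frac{\beta}{4}$ coming out as stated. One small correction to your justification: replacing $\mathcal{K}_{p}^\alpha$ by $K^{-1}|x-y|^{-(d+\alpha p)}$ is legitimate because the lower bound $2^{-p}|a-b|^p+\frac14(|a|^{p-2}+|b|^{p-2})|a-b|^2$ is nonnegative (which is obvious); nonnegativity of the left-hand side is not what is needed.
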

\begin{proof}
By \eqref{Algebraic7}, \eqref{f3},
\eqref{ff3} and the definitions of $\mathbf{A}_1(t,\cdot,\mu)$ and  $\mathbf{A}_2(t,\cdot,\mu)$, we can complete
the proof.
\end{proof}

\begin{lemma}(\texttt{Hemicontinuity})
\label{Hemicontinuity}
Let condition \textbf{A} hold. Then we have:

(i)The map $\mathbb{R}\ni s\mapsto\
_{\mathbb{V}^*_1}\big\langle \mathbf{A}_1(t,\psi_1+s\psi_2,
\mathcal{L}_{
\psi_1 +s \psi_2}) ,\widehat{\psi}\big\rangle_{\mathbb{V}_1}$ is,
$\mathbf{P}$-almost surely,   continuous
 for all $t\in\mathbb{R}^+$,
  $\psi_1,\psi_2\in
  L^p(\Omega, \mathbb{V}_1)
  \bigcap
  L^2(\Omega, \mathbb{H})$
  and
    $\widehat{\psi} \in
    L^p(\Omega,  \mathbb{V}_1)$.

(ii)The map $\mathbb{R} \ni s\mapsto\
_{\mathbb{V}^*_2}\big\langle \mathbf{A}_2(t,
\psi_1+s\psi_2,
\mathcal{L}_{\psi_1
+s \psi_2}) ,\widehat{\psi}\big\rangle_{\mathbb{V}_2}$ is,
$\mathbf{P}$-almost surely,   continuous
 for all $t\in\mathbb{R}^+$,
  $\psi_1,\psi_2\in
  L^q(\Omega, \mathbb{V}_2)
  \bigcap
  L^2(\Omega, \mathbb{H})$
  and
    $\widehat{\psi} \in
    L^q(\Omega,  \mathbb{V}_2)$.
\end{lemma}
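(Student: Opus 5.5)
To prove (i), I fix $t$, $\psi_1,\psi_2\in L^p(\Omega,\mathbb{V}_1)\cap L^2(\Omega,\mathbb{H})$ and $\widehat\psi\in L^p(\Omega,\mathbb{V}_1)$. I work on a full-measure set of $\omega$ on which $\psi_1(\omega),\psi_2(\omega),\widehat\psi(\omega)\in\mathbb{V}_1\cap\mathbb{H}$. The pairing splits into a nonlocal part and a reaction part:
$$
I(s)=-\frac12\int\!\!\int |w_s(x,y)|^{p-2}w_s(x,y)\,\widehat w(x,y)\,\mathcal{K}_p^\alpha(x,y)\,dxdy,
\qquad
J(s)=\int_{\mathbb{R}^d}H_1\big(t,x,\psi_1+s\psi_2,\mathcal{L}_{\psi_1+s\psi_2}\big)\widehat\psi\,dx .
$$
Here $w_s=(\psi_1+s\psi_2)(x)-(\psi_1+s\psi_2)(y)$ and $\widehat w=\widehat\psi(x)-\widehat\psi(y)$. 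Take $s_n\to s_0$ with $|s_n|\le S$. I prove $I(s_n)\to I(s_0)$ and $J(s_n)\to J(s_0)$ separately, by Lebesgue's dominated convergence theorem in each case.

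For $I$, the integrand converges pointwise because $r\mapsto |r|^{p-2}r$ is continuous. For domination, note $|w_{s_n}|\le |\psi_1(x)-\psi_1(y)|+S|\psi_2(x)-\psi_2(y)|$. The growth bound on $\mathcal{K}_p^\alpha$ then gives the majorant
$$
K\big(|\psi_1(x)-\psi_1(y)|+S|\psi_2(x)-\psi_2(y)|\big)^{p-1}|\widehat w|\,|x-y|^{-(d+\alpha p)} .
$$
This is integrable by H\"older's inequality with exponents $\widehat p$ and $p$, since $\psi_1,\psi_2,\widehat\psi\in\dot{\mathbb{V}}_1$. Hence $I$ is continuous.

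For $J$, the main point is the measure argument. Let $\mu_n=\mathcal{L}_{\psi_1+s_n\psi_2}$. The coupling $(\psi_1+s_n\psi_2,\psi_1+s_0\psi_2)$ in \eqref{Wassersteinaa} gives
$$
d_{\mathcal{P}_2}(\mu_n,\mu_{s_0})\le |s_n-s_0|\big(\mathbf{E}\|\psi_2\|^2\big)^{1/2}\to0 .
$$
Also $\mu_n(\|\cdot\|^2)\le 2\mathbf{E}\|\psi_1\|^2+2S^2\mathbf{E}\|\psi_2\|^2=:M$, uniformly in $n$. This is the one place where the hypothesis $\psi_i\in L^2(\Omega,\mathbb{H})$ is used. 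Rather than invoke continuity of $H_1$ on $\mathcal{P}_2$ abstractly, I use the Lipschitz-type bound \eqref{f4}:
$$
|H_1(\cdot,u_n,\mu_n)-H_1(\cdot,u_0,\mu_0)|\le\gamma\big(\phi_8+|u_n|^{p-2}+|u_0|^{p-2}\big)|s_n-s_0||\psi_2|+\phi_9\, d_{\mathcal{P}_2}(\mu_n,\mu_0),
$$
where $u_n=\psi_1+s_n\psi_2$. I multiply by $|\widehat\psi|$ and integrate. H\"older's inequality with $\phi_8\in L^{p/(p-2)}$, $\phi_9\in L^{\widehat p}$ and $u_n,\psi_2,\widehat\psi\in L^p$ gives
$$
|J(s_n)-J(s_0)|\le C\big(1+\|\psi_1\|_p+S\|\psi_2\|_p\big)^{p-2}\|\psi_2\|_p\|\widehat\psi\|_p\,|s_n-s_0|+\|\phi_9(t)\|_{\widehat p}\|\widehat\psi\|_p\,d_{\mathcal{P}_2}(\mu_n,\mu_0).
$$
Both terms tend to $0$. (Alternatively, continuity of $H_1$ together with \eqref{f2} and the bound $M$ would allow dominated convergence; the direct estimate above avoids that.)

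Part (ii) is identical, using \eqref{ff4}, $\widehat\phi_8\in L^{q/(q-2)}$, $\widehat\phi_9\in L^{\widehat q}$ and $L^q$ in place of $L^p$. It is simpler because there is no nonlocal term.

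The only subtle step is the distribution-dependent argument. The law $\mathcal{L}_{\psi_1+s\psi_2}$ is a deterministic object, whereas continuity is required pathwise. This is handled by the coupling estimate above, which is uniform in $\omega$, so continuity holds $\mathbf{P}$-almost surely on the full-measure set fixed at the start.
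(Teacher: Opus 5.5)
Your proof is correct. For the reaction term, though, it takes a different route from the paper. The paper proves only (ii) and calls (i) ``similar.'' Its argument is qualitative. First, $\psi_1+s_n\psi_2\to\psi_1+s\psi_2$ in $L^2(\Omega,\mathbb{H})$, so the laws converge in $\mathcal{P}_2(\mathbb{H})$. Joint continuity of $H_2$ then gives pointwise convergence of the integrand. Finally, the growth bound \eqref{ff2} with Young's inequality gives an $n$-independent integrable majorant of $|H_2(\cdot)\widehat\psi|$, and dominated convergence concludes.

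You instead use the Lipschitz-type condition \eqref{f4} (resp.\ \eqref{ff4}), H\"older's inequality, and an explicit coupling bound on $d_{\mathcal{P}_2}$. This gives a quantitative, locally Lipschitz modulus in $s$ and needs no dominated convergence for that term. The paper's route is more economical in hypotheses: it uses only continuity of $H_i$ and the growth bounds \eqref{f2}/\eqref{ff2}, so it would survive without \eqref{f4}/\eqref{ff4}. Yours reaches a stronger conclusion from assumptions that condition \textbf{A} supplies anyway.

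Your dominated-convergence treatment of the nonlocal term, with the majorant $K(|\psi_1(x)-\psi_1(y)|+S|\psi_2(x)-\psi_2(y)|)^{p-1}|\widehat w|\,|x-y|^{-(d+\alpha p)}$, correctly fills in the part the paper leaves as ``similar.''

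Two cosmetic points. The uniform bound $M$ on $\mu_n(\|\cdot\|^2)$ is needed only for your alternative argument; in your main argument, the essential use of $\psi_2\in L^2(\Omega,\mathbb{H})$ is the coupling estimate. Also, $\widehat\psi(\omega)$ is only guaranteed to lie in $\mathbb{V}_1$, not in $\mathbb{H}$, but $\mathbb{V}_1$ is all you use.
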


\begin{proof}
We only show (ii) and the proof of (i) is
similar.
 Let $s\in\mathbb{R}$ and $s_n \to s$. Since
 $\psi_1,\psi_2\in
  L^2(\Omega, \mathbb{H})$, we find that
  $\psi_1+s_n  \psi_2
  \to \psi_1+s  \psi_2 $ in
  $L^2(\Omega, \mathbb{H})$,
and hence
  $\mathcal{L}_{
  \psi_1+s_n  \psi_2}
  \to
  \mathcal{L}_{
  \psi_1+s  \psi_2}$ in $\mathcal{P}_2 (\mathbb{H})$.
  Then by the continuity of
  $H_2$ we obtain that for  $t\in \mathbb{R}^+$
  and
  $x\in \mathbb{R}^d$,
  $\mathbf{P}$-almost surely,
\be\label{wanga}
  H_2 (t,x , \psi_1(x) +s_n  \psi_2(x),
  \mathcal{L}_{\psi_1+s_n  \psi_2})
  \to    H_2 (t,x , \psi_1(x)+s  \psi_2(x)  ),
  \mathcal{L}_{\psi_1+s  \psi_2 }).
\ee
On the other hand, by \eqref{ff2} we have, for all $t\in\mathbb{R}^+$,
  $\psi_1,\psi_2\in
  L^q(\Omega, \mathbb{V}_2)
  \bigcap
  L^2(\Omega, \mathbb{H})$
  and
    $\widehat{\psi} \in
    L^q(\Omega,  \mathbb{V}_2)$,
$$
\left | H_2 (t,x , \psi_1(x) +s_n  \psi_2(x),
  \mathcal{L}_{\psi_1+s_n  \psi_2})
  \widehat{\psi} (x)\right |
  $$
  $$
  \le
  \left (
   \widehat{\phi}_3 (t,x)
   |\psi_1(x) +s_n  \psi_2(x) |^{q-1}
   +
     \widehat{\phi}_4 (t,x)
     \sqrt{\mathbf{E}
     (\| \psi_1 +s_n  \psi_2\|^2)}
     +   \widehat{\phi}_5 (t,x)
  \right ) |   \widehat{\psi} (x)|
  $$
   $$
  \le
   {\frac 1{\widehat{q}}}
   |\widehat{\phi}_3 (t,x)|^{\widehat{q}}
   |\psi_1(x) +s_n  \psi_2(x) |^{q}
   +  {\frac 1{\widehat{q}}}
    | \widehat{\phi}_4 (t,x)|^{\widehat{q}}
    (  \mathbf{E}
       (\| \psi_1 +s_n  \psi_2\|^2) ) ^{\frac {\widehat{q}}2}
     +    {\frac 1{\widehat{q}}}
 |   \widehat{\phi}_5 (t,x)|^{\widehat{q}}
  +    {\frac 3{ {q}}}
   |   \widehat{\psi} (x)|^q
  $$
  $$
  \le
   c_1
   |\widehat{\phi}_3 (t,x)|^{\widehat{q}}
   (
   |\psi_1(x)|^q + | \psi_2(x) |^{q})
   + c_1
    | \widehat{\phi}_4 (t,x)|^{\widehat{q}}
    ( \mathbf{E}
     (\| \psi_1\|^2
      + \| \psi_2\|^2 ) ) ^{\frac {\widehat{q}}2}
     +   c_1
 |   \widehat{\phi}_5 (t,x)|^{\widehat{q}}
  +  c_1
   |   \widehat{\psi} (x)|^q,
  $$
  where $c_1>0$ is a number depending on $q$
  but not on $n$,
  which along with \eqref{wanga}
  the Lebesgue dominated convergence theorem
  yields (ii).
   \end{proof}

By the argument of  Lemma \ref{Hemicontinuity},
we also have the following results, which will be used
when proving the strong continuity of solutions
of the controlled equation with respect to
the weak topology of
control functions.

\begin{corollary}(\texttt{Hemicontinuity})
\label{cHemicontinuity}
Let condition \textbf{A} hold. Then we have:

(i)The map $\mathbb{R}\ni s\mapsto\
_{\mathbb{V}^*_1}\big\langle \mathbf{A}_1(t,\psi_1+s\psi_2,
\mu  ) ,\widehat{\psi}\big\rangle_{\mathbb{V}_1}$ is
   continuous
 for all $t\in\mathbb{R}^+$,
 $\mu \in \mathcal{P}_2 (\mathbb{H})$ and
  $\psi_1,\psi_2, \widehat{\psi} \in
   \mathbb{V}_1 $.

(ii)The map $\mathbb{R} \ni s\mapsto\
_{\mathbb{V}^*_2}\big\langle \mathbf{A}_2(t,
\psi_1+s\psi_2, \mu ) ,
\widehat{\psi}\big\rangle_{\mathbb{V}_2}$ is
 continuous
 for all $t\in\mathbb{R}^+$,
  $\mu \in \mathcal{P}_2 (\mathbb{H})$ and
  $\psi_1,\psi_2 ,
   \widehat{\psi} \in
    \mathbb{V}_2$.
\end{corollary}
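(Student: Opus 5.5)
The argument is a deterministic version of the proof of Lemma \ref{Hemicontinuity}, now with a fixed measure $\mu$, so no continuity in the measure argument is needed. Fix $t\in\mathbb{R}^+$, $\mu\in\mathcal{P}_2(\mathbb{H})$, and take $s_n\to s$. We may assume $\sup_n|s_n|\le S$ for some $S>0$. For (ii), the continuity of $H_2$ in its third argument gives, for every $x\in\mathbb{R}^d$,
$$
H_2(t,x,\psi_1(x)+s_n\psi_2(x),\mu)\widehat{\psi}(x)\to H_2(t,x,\psi_1(x)+s\psi_2(x),\mu)\widehat{\psi}(x).
$$
For a dominating function we use \eqref{ff2} together with Young's inequality, exactly as in Lemma \ref{Hemicontinuity}. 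This bounds the integrand by
$$
c_1\Big(\|\widehat{\phi}_3(t)\|_\infty^{\widehat q}(1+S)^q\big(|\psi_1(x)|^q+|\psi_2(x)|^q\big)+|\widehat{\phi}_4(t,x)|^{\widehat q}\big(\mu(\|\cdot\|^2)\big)^{\widehat q/2}+|\widehat{\phi}_5(t,x)|^{\widehat q}+|\widehat{\psi}(x)|^q\Big).
$$
This function is integrable because $\psi_1,\psi_2,\widehat\psi\in L^q(\mathbb{R}^d)$ and, by condition \textbf{A}, $\widehat\phi_4(t),\widehat\phi_5(t)\in L^{\widehat q}$ (for a.e. $t$, as the paper implicitly assumes). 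The dominated convergence theorem then yields (ii).

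For (i), the $H_1$ part is handled identically with $p$ in place of $q$, using \eqref{f2}. The remaining piece is the nonlocal term. Write $D\psi(x,y)=\psi(x)-\psi(y)$. The integrand
$$
|D\psi_1+s_nD\psi_2|^{p-2}(D\psi_1+s_nD\psi_2)\,D\widehat\psi\,\mathcal{K}_p^\alpha(x,y)
$$
converges pointwise for $x\ne y$, since $r\mapsto|r|^{p-2}r$ is continuous. Using the growth property of the kernel, $\mathcal{K}_p^\alpha\le K|x-y|^{-(d+\alpha p)}$, it is dominated by
$$
K\,2^{p-2}(1+S)^{p-1}\big(|D\psi_1|^{p-1}+|D\psi_2|^{p-1}\big)|D\widehat\psi|\,|x-y|^{-(d+\alpha p)}.
$$
By Hölder's inequality with exponents $\widehat p=\frac{p}{p-1}$ and $p$ against the measure $|x-y|^{-(d+\alpha p)}dxdy$, its integral is at most a constant times $\big(\|\psi_1\|_{\dot{\mathbb{V}}_1}^{p-1}+\|\psi_2\|_{\dot{\mathbb{V}}_1}^{p-1}\big)\|\widehat\psi\|_{\dot{\mathbb{V}}_1}<\infty$. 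Dominated convergence therefore applies to this term as well.

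The only point requiring some care is finding a dominating function for the nonlocal term that does not depend on $n$. The pointwise inequality $|a+s_nb|^{p-1}\le 2^{p-2}(1+S)^{p-1}(|a|^{p-1}+|b|^{p-1})$ combined with the Gagliardo-seminorm Hölder estimate already used in Lemma \ref{Dissipativeness} settles it. Adding the two limits gives (i).
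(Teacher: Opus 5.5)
Your proposal is correct and follows the paper's route: the paper proves this corollary only by pointing to the argument of Lemma~\ref{Hemicontinuity}, namely pointwise convergence from the continuity of $H_i$ plus an $n$-independent dominating function from \eqref{f2}/\eqref{ff2} and Young's inequality, followed by dominated convergence, now with $\mu$ fixed. Your explicit domination of the nonlocal term in (i), via the Gagliardo-seminorm H\"older bound, correctly fills in the step the paper only calls ``similar''.
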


\begin{lemma}(\texttt{Demicontinuity})
\label{Demicontinuity}
Let condition \textbf{A} hold. Then we have:

(i)The map $ \mathcal{P}_2
(\mathbb{H})\times \mathbb{V}_1 \ni(\psi, \mu )\mapsto\
_{\mathbb{V}^*_1}\big\langle \mathbf{A}_1(t,\psi, \mu) ,\widehat{\psi}\big\rangle_{\mathbb{V}_1}$ is continuous
for all $t\in\mathbb{R}^+$ and
$\widehat{\psi}\in \mathbb{V}_1$.

(ii)The map $\mathcal{P}_2
(\mathbb{H})\times \mathbb{V}_2\ni(\psi, \mu)\mapsto\
_{\mathbb{V}^*_2}\big\langle \mathbf{A}_2(t,\psi, \mu) ,\widehat{\psi}\big\rangle_{\mathbb{V}_2}$ is continuous
for all $t\in\mathbb{R}^+$ and
$\widehat{\psi}\in \mathbb{V}_2$.
\end{lemma}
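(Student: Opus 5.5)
The plan is to take a sequence $(\psi_n,\mu_n)\to(\psi,\mu)$ in $\mathbb{V}_1\times\mathcal{P}_2(\mathbb{H})$ (respectively $\mathbb{V}_2\times\mathcal{P}_2(\mathbb{H})$) and show convergence of the dual pairings against a fixed $\widehat{\psi}$. Since the limit is unique, it suffices to show that every subsequence has a further subsequence along which convergence holds. The pairing splits into a local drift part $\int H_i(t,x,\psi_n(x),\mu_n)\widehat{\psi}(x)\,dx$ and, for $i=1$, the nonlocal part
\begin{equation*}
\frac12\int_{\mathbb{R}^d}\int_{\mathbb{R}^d}|\psi_n(x)-\psi_n(y)|^{p-2}\big(\psi_n(x)-\psi_n(y)\big)\big(\widehat{\psi}(x)-\widehat{\psi}(y)\big)\mathcal{K}_p^\alpha(x,y)\,dxdy .
\end{equation*}
I treat the two parts separately.

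For the drift term, I argue as follows. Convergence $\psi_n\to\psi$ in $L^p$ (or $L^q$) gives a subsequence converging a.e. with a dominating function $g\in L^p$ (partial converse of dominated convergence), so $|\psi_n|\le g$. Convergence $\mu_n\to\mu$ in $d_{\mathcal{P}_2}$ gives $\mu_n(\|\cdot\|^2)\to\mu(\|\cdot\|^2)$, so these quantities are bounded by some constant $C$. By continuity of $H_1$, $H_1(t,x,\psi_n(x),\mu_n)\widehat{\psi}(x)\to H_1(t,x,\psi(x),\mu)\widehat{\psi}(x)$ a.e. By \eqref{f2}, the integrand is bounded by
\begin{equation*}
\big(\phi_3|g|^{p-1}+\phi_4\sqrt{C}+\phi_5\big)|\widehat{\psi}|,
\end{equation*}
which is integrable by H\"older's inequality, as in the proof of Lemma \ref{Dissipativeness}. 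Dominated convergence then gives convergence of this part. Part (ii) is identical, using \eqref{ff2} and $q$ in place of $p$. One could alternatively use \eqref{f4} directly for a quantitative estimate, but the dominated convergence route avoids the integrability bookkeeping for $\phi_8$.

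For the nonlocal term, I work on $\mathbb{R}^{2d}$ with the measure $|x-y|^{-(d+\alpha p)}dxdy$. Set
\begin{equation*}
U_n(x,y)=\big(\psi_n(x)-\psi_n(y)\big)|x-y|^{-(d+\alpha p)/p}.
\end{equation*}
Convergence of $\psi_n$ in the Gagliardo seminorm means $U_n\to U$ in $L^p(\mathbb{R}^{2d})$. The map $U\mapsto|U|^{p-2}U$ is continuous from $L^p$ to $L^{\widehat p}$; this follows either from \eqref{Algebraic1} combined with H\"older's inequality, or from the a.e.-subsequence-plus-domination argument again. Using the bound $\mathcal{K}_p^\alpha\le K|x-y|^{-(d+\alpha p)}$, I write the integrand as $|U_n|^{p-2}U_n\cdot\widehat{U}\cdot\mathcal{K}_p^\alpha|x-y|^{d+\alpha p}$. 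The last factor lies in $L^\infty$ with bound $K$, and $\widehat{U}\in L^p$. H\"older's inequality then yields convergence of the nonlocal part.

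The main obstacle is the nonlocal term, since it is nonlinear in $\psi$ and does not follow from pointwise continuity alone. The estimate \eqref{Algebraic1}, which gives $\big||a|^{p-2}a-|b|^{p-2}b\big|\le(p-1)(|a|+|b|)^{p-2}|a-b|$, followed by H\"older's inequality with exponents $\frac{p}{p-2}$, $p$, $p$, gives the explicit bound
\begin{equation*}
(p-1)K\big(\|\psi_n\|_{\dot{\mathbb{V}}_1}+\|\psi\|_{\dot{\mathbb{V}}_1}\big)^{p-2}\|\psi_n-\psi\|_{\dot{\mathbb{V}}_1}\|\widehat{\psi}\|_{\dot{\mathbb{V}}_1}\to0.
\end{equation*}
This bound settles the nonlocal term cleanly and even gives convergence along the full sequence.
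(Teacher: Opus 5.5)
Your proof is correct. On the nonlocal term it is the paper's argument: the paper also combines \eqref{Algebraic1}, H\"older's inequality with exponents $\frac{p}{p-2},p,p$, and the upper bound on $\mathcal{K}_p^\alpha$. It arrives at a bound of the form $C\big(\|\psi_1\|^{p-2}_{\dot{\mathbb{V}}_1}+\|\psi_2\|^{p-2}_{\dot{\mathbb{V}}_1}\big)\|\widehat{\psi}\|_{\dot{\mathbb{V}}_1}\|\psi_1-\psi_2\|_{\dot{\mathbb{V}}_1}$, which is essentially your estimate.

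The real difference is the drift term. The paper does not use continuity of $H_1$ and the growth bound \eqref{f2}. It applies the local Lipschitz condition \eqref{f4} with H\"older directly:
\begin{align*}
&\Big|\int_{\mathbb{R}^d}\big(H_1(t,x,\psi_1(x),\mu_1)-H_1(t,x,\psi_2(x),\mu_2)\big)\widehat{\psi}(x)\,dx\Big|\\
&\le\gamma\big(\|\phi_8(t)\|_{\frac{p}{p-2}}+\|\psi_1\|_p^{p-2}+\|\psi_2\|_p^{p-2}\big)\|\widehat{\psi}\|_p\|\psi_1-\psi_2\|_p+\|\phi_9(t)\|_{\widehat{p}}\|\widehat{\psi}\|_p\,d_{\mathcal{P}_2}(\mu_1,\mu_2).
\end{align*}

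That route is shorter and quantitative, and needs no subsequence argument. Combined with the nonlocal bound, it shows that $(\psi,\mu)\mapsto\mathbf{A}_1(t,\psi,\mu)$ is locally Lipschitz into $\mathbb{V}_1^*$ in norm, uniformly over bounded $\widehat{\psi}$. That is more than the stated demicontinuity.

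Your Nemytskii-type route uses an a.e.\ convergent subsequence with an $L^p$ majorant, pointwise continuity of $H_1$, domination via \eqref{f2}, and then the subsequence principle. It needs only continuity and growth, so it would survive without \eqref{f4} and the integrability of $\phi_8,\phi_9$, but it gives no rate. If you apply the same domination to $|H_1(t,x,\psi_n,\mu_n)-H_1(t,x,\psi,\mu)|^{\widehat{p}}$, you also get convergence of the drift part in $L^{\widehat{p}}$, hence in the norm of $\mathbb{V}_1^*$.

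One small slip: once the weight $|x-y|^{-(d+\alpha p)/p}$ is built into $U_n$, the space is $L^p(\mathbb{R}^{2d})$ with Lebesgue measure, not with $|x-y|^{-(d+\alpha p)}dxdy$. Your subsequent computation is in fact consistent with Lebesgue measure.
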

\begin{proof}
Given $t\in\mathbb{R}^+$,
$\mu_1,\mu_2\in\mathcal{P}_2
(\mathbb{H})$ and
$\psi,\psi_1,\psi_2\in \mathbb{V}_1$, we
consider
$_{\mathbb{V}_1^*}\big\langle \mathbf{A}_1(t,\mu_1,\psi_1)-
\mathbf{A}_1(t,\mu_2,\psi_2) ,\psi\big\rangle_{\mathbb{V}_1}
=\mathcal{I}_1 -\mathcal{I}_2,$
where $\mathcal{I}_1:=\int_{\mathbb{R}^N}
\big(H_1(t,x,\mu_1,\psi_1(x))-
H_1(t,x,\mu_2,\psi_2(x))\big)\psi(x)dx$, and
 \begin{align*}&
\mathcal{I}_2:=\frac{1}{2}
\int_{\mathbb{R}^d}  \int_{\mathbb{R}^d}
\Big(|\psi_1(x)-\psi_1(y)|^{p-2}\big
(\psi_1(x)-\psi_1(y)\big) \\&\ \ \ \ \ \  \ \ \ \ \ \  \ \ \  \ \ \ \ \ \  \ \ \ -|\psi_2(x)-\psi_2(y)|^{p-2}\big
(\psi_2(x)-\psi_2(y)\big)     \Big)
 \big( \psi(x)-\psi(y)\big)
 \mathcal{K}_{p}^\alpha(x,y)
dxdy.
 \end{align*}
By condition \eqref{f4} and the H\"{o}lder
inequality, it follows that
 \begin{align*}
\mathcal{I}_1&\leq\gamma
\Big(\|\phi_8(t) \|_{\frac{p}{p-2}} +\|\psi_1 \|^{p-2}_{p} +\|\psi_2 \|^{p-2}_{p}        \Big)
\|\psi
\|_{p} \|\psi_1
-\psi_2\|_{p}
+\|\phi_9(t) \|_{\widehat{p}}  \|\psi
\|_{p}
d_{\mathcal{P}_2}
(\mu_1,\mu_2).
 \end{align*}
By the H\"{o}lder
inequality and \eqref{Algebraic1}, we see that
\begin{align*}
\mathcal{I}_2&\leq(p-1)2^{p-3}K\int_{\mathbb{R}^d}  \int_{\mathbb{R}^d}\big(|\psi_1(x)-\psi_1(y)|^{p-2}
+|\psi_2(x)-\psi_2(y)|^{p-2}\big)
\\& \ \ \  \ \ \times\big|\psi_1(x)-\psi_1(y)-
\big(\psi_2(x)-\psi_2(y)\big)\big| |\psi(x)-\psi(y)|
|x-y|^{-(d+\alpha p)}dxdy
\\&\leq(p-1)2^{p-3}K\Bigg(\bigg(
\int_{\mathbb{R}^d}  \int_{\mathbb{R}^d}|x-y|^{-(d+\alpha p)}|\psi_1(x)-\psi_1(y)|^{p}\bigg)^{(p-2)/p}\\& \ \ \  \ \ +
\bigg(
\int_{\mathbb{R}^d}  \int_{\mathbb{R}^d}|x-y|^{-(d+\alpha p)}|\psi_2(x)-\psi_2(y)|^{p}\bigg)^{(p-2)/p}\Bigg)
\\& \ \ \  \ \ \times
\bigg(\int_{\mathbb{R}^d}
\int_{\mathbb{R}^d}|\psi(x)-
\psi(y)|^p|x-y|^{-(d+\alpha p)}
dxdy \bigg)^{1/p}
\\& \ \ \  \ \ \times
\bigg(\int_{\mathbb{R}^d}
\int_{\mathbb{R}^d}|\psi_1(x)-\psi_1(y)-
\big(\psi_2(x)-\psi_2(y)\big)|^p|x-y|^{-(d+\alpha p)}
dxdy \bigg)^{1/p}
\\&=(p-1)2^{p-3}K\Big(   \|\psi_1\|^{p-2}_{\dot{\mathbb{V}}_1}     +  \|\psi_2\|^{p-2}_{\dot{\mathbb{V}}_1}
\Big)
\|\psi\|_{\dot{\mathbb{V}}_1}
\|\psi_1-\psi_2\|_{\dot{\mathbb{V}}_1}.
 \end{align*}
Above results imply (i). The proof of (ii) is similar to that of (i), and we will
not repeat it again.
\end{proof}

We now consider the diffusion terms in \eqref{Int19-}. Define $$\sigma(t,x,s,\mu):=\{\sigma_{k}(t,x, s,\mu)
\}_{k\in\mathbb{N}},\ \ \forall\ t\in\mathbb{R}^+, \ x\in\mathbb{R}^d,\ s\in\mathbb{R},\ \mu
\in\mathcal{P}_2(\mathbb{H}).$$
Then by \eqref{h2} and \eqref{h2a}, one can prove that
for every $M>0$, $\varepsilon_1>0$, $\varepsilon_2>0$ and $T>0$, and  for
any $t\in[0,T]$, $\psi,\psi_1,\psi_2\in
\mathbb{H}\cap L^p(\mathbb{R}^d)\cap \mathbb{V}_2$ and $\mu,\mu_1,\mu_2
\in\mathcal{P}_2(\mathbb{H})$,
\begin{align}\label{h2-}
&M\|\sigma(t,\cdot,\psi,\mu)\|^2_{
\ell^2(\mathbb{N},
\mathbb{H})}
=M\sum_{k\in\mathbb{N}}\int_{\mathbb{R}^d}
|\sigma_k(t,x,\psi(x),\mu)|^2dx
\nonumber\\
&\leq
M\sum_{k\in\mathbb{N}}\int_{\mathbb{R}^d}
\Big(
\sigma_{1,k}(x)
|\psi(x)|^{p^*}+
\sigma_{2,k}(x)
|\psi(x)|^{q^*}
 +
\sigma_{3,k}(x)
\big(1+\mu\big(\|\cdot\|^2\big)\big)\Big)dx
\nonumber\\
&\leq
 M
 \Bigg(
 \sum_{k\in\mathbb{N}}\|\sigma_{1,k} \|
 _{\frac p{p-p^*}}
 \| \psi\|^{p^*}_{p}
 +
 \sum_{k\in\mathbb{N}}\|\sigma_{2,k}\|
 _{\frac q{q-q^*} }
  \| \psi\|^{q^*}_{q}
 +
 \sum_{k\in\mathbb{N}}\|\sigma_{3,k}\|
 _{1}\big(1+\mu\big(\|\cdot\|^2
\big)\big)\Bigg),
\nonumber\\
&\le
\varepsilon_1
\|\psi\|_p^{p}+\varepsilon_2
\|\psi\|_{\mathbb{V}_2}^{q}\
+C_2(M,\varepsilon_1,\varepsilon_2)
\big(1+\mu\big(\|\cdot\|^2
\big)\big),
\end{align}
and
\begin{align}\label{h2--}
&M\|\sigma(t,\cdot,\psi_1,\mu_1)-
\sigma(t,\cdot,\psi_2,\mu_2)
\|^2_{
\ell^2(\mathbb{N},
\mathbb{H})}
\nonumber\\
&=M\sum_{k\in\mathbb{N}}\int_{\mathbb{R}^d}
|\sigma_k(t,x,\psi_1(x),\mu_1)
-\sigma_k(t,x,\psi_2(x),\mu_2)|^2dx
\nonumber\\
&
\leq M\sum_{k\in\mathbb{N}}\int_{\mathbb{R}^d}
\Big(\sigma^2_{4,k}(x)\big(1+ |\psi_1(x)|^{p^*-2}+ |\psi_2(x)|^{p^*-2} +|\psi_1(x)|^{q^*-2}
\nonumber\\& \ \ +|\psi_2(x)|^{q^*-2}
\big)|\psi_1(x)-\psi_2(x)|^2 +\sigma_{5,k}(x)
d_{\mathcal{P}_2}^2(\mu_1,\mu_2)\Big)dx
\nonumber\\
&\leq
\sum_{k\in\mathbb{N}}
\Big(\|\sigma_{4,k}\|^2_{\infty}+
C_3(M,\varepsilon_1)
\|\sigma_{4,k}\|_{\infty}^{\frac{2p-p^*-2}{p-p^*}}
+\widehat{C}_3(M,\varepsilon_2)
\|\sigma_{4,k}\|_{\infty}^{\frac {2q-q^*-2}{q-q^*}}\Big)
\|\psi_1-\psi_2\|^2
\nonumber\\
&\  +M\sum_{k\in\mathbb{N}}
\|\sigma_{5,k}\|_{1}
d_{\mathcal{P}_2}^2(\mu_1,\mu_2)
\nonumber\\
&\  + \frac{\varepsilon_1}{1+ \sum_{k\in\mathbb{N}}
\|\sigma_{4,k}\|_{\infty}}\sum_{k\in\mathbb{N}}
\int_{\mathbb{R}^d}
\sigma_{4,k}(x)\big(|\psi_1(x)|^{p-2}+ |\psi_2(x)|^{p-2}\big)|\psi_1(x)-\psi_2(x)|^2dx
\nonumber\\
&\ + \frac{\varepsilon_2}{1+ \sum_{k\in\mathbb{N}}
\|\sigma_{4,k}\|_{\infty}}
\sum_{k\in\mathbb{N}}\int_{\mathbb{R}^d}
\sigma_{4,k}(x)\big(|\psi_1(x)|^{q-2}+ |\psi_2(x)|^{q-2}\big)|\psi_1(x)-\psi_2(x)|^2dx
\nonumber\\&\leq C_4(M,\varepsilon_1,\varepsilon_2)
\|\psi_1-\psi_2\|^2
 +C_5(M)
d_{\mathcal{P}_2}^2(\mu_1,\mu_2)
\nonumber\\
&\ +\varepsilon_1\int_{\mathbb{R}^d}
 \big(|\psi_1(x)|^{p-2} +|\psi_2(x)|^{p-2}
\big)|\psi_1(x)-\psi_2(x)|^2dx
\nonumber\\
&\  +\varepsilon_2\int_{\mathbb{R}^d}
 \big(|\psi_1(x)|^{q-2} +|\psi_2(x)|^{q-2}
\big)|\psi_1(x)-\psi_2(x)|^2dx,
\end{align}
where
$C_1(M,\varepsilon_1,\varepsilon_2)$ and
$C_2(M,\varepsilon_1,\varepsilon_2)$
are positive constants depending on $M$, $\varepsilon_1$, $\varepsilon_2$, $p$, $p^*$, $q$ and $q^*$, and
\begin{align*}&
C_3(M,\varepsilon_1):=\frac{2(p-p^*)}{p-2}
M^{\frac{p-2}{p-p^*}}
\bigg( \frac{(p-2)\varepsilon_1}{(p^*-2)
(1+ \sum_{k\in\mathbb{N}}
\|\sigma_{4,k}\|_{\infty}) }
\bigg)^{\frac{2-p^*}{p-p^*}},
\\&
\widehat{C}_3(M,\varepsilon_2):=
\frac{2(q-q^*)}{q-2}M^{\frac{q-2}{q-q^*}}
\bigg( \frac{(q-2)\varepsilon_2}{(q^*-2)
(1+ \sum_{k\in\mathbb{N}}
\|\sigma_{4,k}\|_{\infty}) }
\bigg)^{\frac{2-q^*}{q-q^*}},
\\&C_4(M,\varepsilon_1,\varepsilon_2):=
\sum_{k\in\mathbb{N}}
\Big(\|\sigma_{4,k}\|^2_{\infty}+
C_3(M,\varepsilon_1)
\|\sigma_{4,k}\|_{\infty}^{\frac{2p-p^*-2}{p-p^*}}
+\widehat{C}_3(M,\varepsilon_2)
\|\sigma_{4,k}\|_{\infty}^{\frac {2q-q^*-2}{q-q^*}}\Big),
\end{align*}
and $C_5(M):=M\sum_{k\in\mathbb{N}}
\|\sigma_{5,k}\|_{1}$. We remark that, from the derivation of \eqref{h2--}, it is easy to see that $\sigma_4\in
 \ell^{1}
 (\mathbb{N},L^{\infty}
 (\mathbb{R}^d))$ is enough since $(2p-p^*-2)/ (p-p^*)>1$ and $(2q-q^*-2)/(q-q^*)>1$.

Given $t\in\mathbb{R}^+$, $\psi\in\mathbb{H}\cap \mathbb{V}_1 \cap \mathbb{V}_2$ and $\mu
\in\mathcal{P}_2(\mathbb{H})$, we now define an operator
$\mathfrak{L}_{H,S}(t,\psi,\mu):\ell^2\rightarrow \mathbb{H}$ by
\begin{align}\label{jhjfjnfhh}
(\mathfrak{L}_{H,S}(t,\psi,\mu)v)(x)
= \sum_{k\in\mathbb{N}}\sigma_{k}(t,x, \psi(x),\mu)v_k
,\ \ \forall\ v=(v_k)_{k\in \mathbb{N}}\in \ell^2,\ \ x\in \mathbb{R}^d.
\end{align}
Then one can verify that the operator
$\mathfrak{L}_{H,S}(t,\psi,\mu):\ell^2\rightarrow \mathbb{H}$ is
 a Hilbert-Schmidt operator with the Hilbert-Schmidt norm:
\begin{align}\label{ffnewA2a0}
\|\mathfrak{L}_{H,S}(t,\psi,\mu)
\|^2_{\mathcal{L}_2(\ell^2
,\mathbb{H})}
 =\|\sigma(t,\cdot,\psi,\mu)
\|^2_{
\ell^2(\mathbb{N},
\mathbb{H})}.
\end{align}
 In addition, we also have, for any $t\in\mathbb{R}^+$, $\psi_1,\psi_2\in\mathbb{H}\cap \mathbb{V}_1 \cap \mathbb{V}_2$ and $\mu_1,\mu_2
\in\mathcal{P}_2(\mathbb{H})$,
\begin{align}\label{h2----0}
\|\mathfrak{L}_{H,S}(t,\psi_1,\mu_1)-
\mathcal{L}_{H,S}(t,\psi_2,\mu_2)
\|^2_{\mathcal{L}_2(\ell^2
,\mathbb{H})}=\|\sigma(t,\cdot,\psi_1,
\mu_1)-
\sigma(t,\cdot,\psi_2,\mu_2)
\|^2_{
\ell^2(\mathbb{N},
\mathbb{H})}.
\end{align}

By above notations, one can rewrite problem \eqref{Int19-} as the following stochastic system
\begin{equation}\label{Int1--}
\left\{
  \begin{aligned}
  &d \psi(t)
=\sum_{i=1}^2\textbf{A}_i(t,\psi(t),\mathcal{L}_{
\psi(t)})dt
+ \sqrt{\epsilon}\mathfrak{L}_{H,S}(t,\psi(t),
\mathcal{L}_{
\psi(t)})d\mathcal{W}(t), \ \ t>0, \\
  &\psi(0)=\psi_0\in \mathbb{H},\\
  \end{aligned}
\right.
\end{equation}
where $\mathcal{W}(t)=(\mathcal{W}_{k}(t))_{k\in\mathbb{N}}$ is a cylindrical Wiener process
in $\ell^2$ defined on
$(\Omega,\mathfrak{F},
 \{\mathfrak{F}_t\}_{t\geq0},\textbf{P})$.

In this subsection, we present the main results on the global-in-time well-posedness for problem \eqref{Int1--}. In the sequel, we assume that $\epsilon =1$ for   simplicity.

\begin{definition}\label{Defsolutionsolution}
(\texttt{Definition of solutions}) Given
$\psi_0\in L^2(\Omega,\mathfrak{F}_0, \mathbb{H})$, a continuous $\mathbb{H}$-valued
$\mathfrak{F}_t$-adapted stochastic process $\psi$
is called a solution to \eqref{Int1--}
if
\begin{align}&\label{Defsolution0}\psi
 \in L^2(\Omega, C([0,T],\mathbb{H} ))\bigcap L^p([0,T]\times\Omega, \mathbb{V}_1)\bigcap L^q([0,T]\times\Omega, \mathbb{V}_2  ),\ \ \forall\ T>0,
\end{align}
and for all $t\geq0$ and $\xi\in\mathbb{H}\cap\mathbb{V}_1
\cap\mathbb{V}_2$, $\mathbf{P}$-a.s.,
\begin{align}\label{Defsolution1}
&\langle \psi(t) ,\xi\rangle=\langle \phi_0 ,\xi\rangle+\sum_{i=1}^2
 \int_{0}^t\ _{\mathbb{V}_i^*}\big\langle \mathbf{A}_i(r,\psi(r),\mathcal{L}_{
\psi(r)}),
\xi\big  \rangle_{\mathbb{V}_i}        dr
 +\int_{0}^t
 \big\langle \xi, \mathfrak{L}_{H,S}(r,\psi(r),\mathcal{L}_{
\psi(r)})d\mathcal{W}(r) \big\rangle.
\end{align}
\end{definition}

\subsection{Existence of solutions of \eqref{Int1--} on bounded domains}
To prove Theorem \ref{Theorem4}, we first prove the existence of global solutions of the corresponding stochastic equations defined on bounded domains.
Given $k\in\mathbb{N}$,
we consider the Mckean-Vlasov
stochastic non-local \emph{fractional} $(\alpha,p)$-Laplacian equation
 defined on a bounded ball   $\mathcal{Q}_{k}:=\{x\in\mathbb{R}^d: |x|< k\}$:
\begin{equation}\label{Int1aae}
\left\{
  \begin{aligned}
  &d \psi_k(t) + \mathfrak{L}_{\mathcal{K}_{p}^\alpha} \psi_k(t) dt
=\sum_{i=1}^2H_i(t,x,\psi_k(t),\mathcal{L}_{
\psi_k(t)})dt
+ \mathfrak{L}_{H,S}(t,\psi_k(t),
\mathcal{L}_{
\psi_k(t)})d\mathcal{W}(t), \ \ \ t>0, \\
  &\psi_k(0,x)=\varphi(x/k)\psi_0(x),\ \ x\in\mathbb{R}^d,
  \\
  &\psi_k(t,x)=0,\ x\in\mathbb{R}^d\setminus\mathcal{Q}_{k},\ \ t>0,
  \\
  \end{aligned}
\right.
\end{equation}
where
$\varphi:\mathbb{R}^d\mapsto[0,1]$ is a smooth function such that
$\varphi(x)=1$ for $|x|\leq1/2$; and
$\varphi(x)=0$ for $|x|\geq1$.

Now, we consider three spaces:
$
\mathbb{H}_k:=\big\{\psi\in \mathbb{H}: \psi=0 \ \mbox{a.e. on}\ \ \mathbb{R}^d\backslash\mathcal{Q}_k\big\}$,
$\mathbb{V}_{1,k}:=\big\{\psi\in\mathbb{V}_{1}: \psi=0 \ \mbox{a.e. on}\ \ \mathbb{R}^d\backslash\mathcal{Q}_k\big\}$ and
$\mathbb{V}_{2,k}:=\big\{\psi\in \mathbb{V}_{2}: \psi=0 \ \mbox{a.e. on}\ \ \mathbb{R}^d\backslash\mathcal{Q}_k\big\}$.
Then we get the variational triples:
$\mathbb{V}_{1,k}\subseteq \mathbb{H}_k\equiv \mathbb{H}_k^*\subseteq \mathbb{V}_{1,k}^*$ and $ \mathbb{V}_{2,k}\subseteq \mathbb{H}_k\equiv \mathbb{H}_k^*\subseteq \mathbb{V}^*_{2,k}$.
On the other hand, since $\mathbb{H}_k$, $\mathbb{V}_{1,k}$ and $\mathbb{V}_{2,k}$ are subspaces of $\mathbb{H}$,
$\mathbb{V}_{1}$ and $\mathbb{V}_{2}$, respectively, we find that, for every $t\in\mathbb{R}^+$ and $\mu\in\mathcal{P}_2
(\mathbb{H})$, the operators $\mathbf{A}_1(t,\cdot,\mu):
\mathbb{V}_{1,k}\rightarrow \mathbb{V}^*_{1,k}$, $\mathbf{A}_2(t,\cdot,\mu):
\mathbb{V}_{2,k}\rightarrow \mathbb{V}^*_{2,k}$
and $\mathfrak{L}_{H,S}(t,\psi,\mu)
:\ell^2\rightarrow \mathbb{H}_k$ are well-defined. In addition, we have the relations:
\begin{subequations}\begin{align}\label{Re1}
 &\langle \psi_1 ,\psi_2\rangle_{\mathbb{H}_{k}}:=
 \int_{\mathcal{Q}_k}\psi_1(x)\psi_2(x)dx=
 \langle \psi_1 ,\psi_2\rangle_{\mathbb{H}},\ \ \forall\ \psi_1,\psi_2\in\mathbb{H}_{k},
 \\&\label{Re2}
 _{\mathbb{V}_{1,k}^*}\big\langle \mathbf{A}_1(t,\psi_1,\mu),
\psi_2\big  \rangle_{\mathbb{V}_{1,k}}=_{\mathbb{V}_1^*}
\big\langle \mathbf{A}_1(t,\psi_1,\mu),
\psi_2\big  \rangle_{\mathbb{V}_1},\ \ \forall\ \psi_1,\psi_2\in \mathbb{V}_{1,k},
\\&\label{Re3}
 _{\mathbb{V}_{2,k}^*}\big\langle \mathbf{A}_2(t,\psi_1,\mu),
\psi_2\big  \rangle_{\mathbb{V}_{2,k}}=_{\mathbb{V}_2^*}
\big\langle \mathbf{A}_2(t,\psi_1,\mu),
\psi_2\big  \rangle_{\mathbb{V}_2},\ \ \forall\ \psi_1,\psi_2\in \mathbb{V}_{2,k},
\\&\label{Re4}\|\mathfrak{L}_{H,S}(t,\psi,\mu)
\|_{\mathcal{L}_2(\ell^2,
\mathbb{H}_{k})}
\leq\|\mathfrak{L}_{H,S}(t,\psi,\mu)
\|_{\mathcal{L}_2(\ell^2,
\mathbb{H})},\ \ \forall\ \psi\in\mathbb{H}_{k},
 \end{align}
and
 \begin{align}\label{Re5}
 \big\langle \psi_1, \mathfrak{L}_{H,S}(t,\psi_2,\mu)v
\big\rangle_{\mathbb{H}_{k}}=\big\langle \psi_1, \mathfrak{L}_{H,S}(t,\psi_2,\mu)v
\big\rangle, \ \ \forall\ \psi_1,\psi_2\in\mathbb{H}_{k},\ v\in \ell^2.
 \end{align}
\end{subequations}

Since $\mathbb{V}_{1,k}\cap \mathbb{V}_{2,k}$ is
separable and dense
in $\mathbb{H}_k$, for every $\psi_0\in L^2(\Omega,\mathfrak{F}_0,\mathbb{H})$ and $k\in\mathbb{N}$,
by applying the fixed point theorem
 as in \cite{Chenzhang4b}
 and  the arguments of
  \cite{BWangsubmitted,Wangjde},
   one can prove that  problem \eqref{Int1aae} has a unique solution $\psi_k$ which is a continuous $\mathbb{H}_k$-valued
$\mathfrak{F}_t$-adapted stochastic process such that
\begin{align}\label{eihkhnergy5new}
&\psi_k
 \in L^2(\Omega, C([0,T], \mathbb{H}_k ))\bigcap L^p([0,T]\times\Omega, \mathbb{V}_{1,k}  )\bigcap L^q([0,T]\times\Omega, \mathbb{V}_{2,k}),
\end{align}
and for all $t\geq0$ and $\xi\in \mathbb{V}_{1,k}\cap \mathbb{V}_{2,k}$, $\mathbf{P}$-a.s.,
\begin{align}
\label{Defsolution1jff}
\langle \psi_k(t) ,\xi\rangle_{\mathbb{H}_{k}}-\langle
  \varphi(\cdot/k)\phi_0 ,\xi\rangle_{\mathbb{H}_{k}}
&=
\sum_{i=1}^2\int_{0}^t \ _{\mathbb{V}_{i,k}^*}\big\langle \mathbf{A}_i(r,\psi_k(r),\mathcal{L}_{
\psi_k(r)}),
\xi\big  \rangle_{\mathbb{V}_{i,k}}
 dr
\nonumber\\&\ +\int_{0}^t
 \big\langle \xi, \mathfrak{L}_{H,S}(r,\psi_k(r),\mathcal{L}_{
\psi_k(r)})d\mathcal{W}(r) \big\rangle_{\mathbb{H}_{k}},
\end{align}
and $\mathbf{P}$-a.s.,
\begin{align}\label{energy5new}
& \|\psi_k(t)\|_{\mathbb{H}_{k}}^2-\|
  \varphi(\cdot/k)\psi_0\|_{\mathbb{H}_{k}}^2
  =2\sum_{i=1}^2\int_{0}^t \ _{\mathbb{V}_{i,k}^*}\big\langle \mathbf{A}_i(r,\psi_k(r),\mathcal{L}_{
\psi_k(r)}),
\psi_k(r)\big  \rangle_{\mathbb{V}_{i,k}}
 dr
\nonumber\\&  \  +\int_{0}^t\|\mathfrak{L}_{H,S}(r,
\psi_k(r),\mathcal{L}_{
\psi_k(r)})\|^2_{\mathcal{L}_2(\ell^2,
\mathbb{H}_{k})}dr
  +2\int_{0}^t
 \big\langle \psi_k(r), \mathfrak{L}_{H,S}(r,\psi_k(r),\mathcal{L}_{
\psi_k(r)})d\mathcal{W}(r) \big\rangle_{\mathbb{H}_{k}}.
\end{align}

\subsection{Uniform estimates of $\{\psi_k\}_{k\in\mathbb{N}}$}
The next lemma is concerned with the uniform estimates  of solutions $\{\psi_k\}_{k\in\mathbb{N}}$ to \eqref{Int1aae}.

\begin{lemma}\label{ffTheorem4kk}
Let conditions {\bf A}
and {\bf B} be satisfied.
If $\psi_0\in L^{2}(\Omega,\mathfrak{F}_0,\mathbb{H})$,
then for all $k\in\mathbb{N}$,
\begin{align*}
&\mathbf{E}\bigg[\sup_{r\in[0,T]}\|\psi_k(r)
\|^{2}\bigg]
+
\mathbf{E}\bigg[\int_{0}^{T}\Big(\|\psi_k(r)
\|_{\mathbb{V}_1}^{p} +\|\psi_k(r)\|^q_{\mathbb{V}_2}\Big)
 dr\bigg]\leq  C(T) (1+\mathbf{E}[\|\psi_0\|^2]),
\end{align*}
and
\begin{align*}
&
\mathbf{E}\bigg[\int_0^{T}
\|\mathbf{A}_1(r,\psi_k(r),\mathcal{L}_{
\psi_k(r)}) \|^{\widehat{p}}_{\mathbb{V}^*_1}dr\bigg]
+
\mathbf{E}\bigg[\int_0^{T}
\|\mathbf{A}_2(r,\psi_k(r),\mathcal{L}_{
\psi_k(r)}) \|^{\widehat{q}}_{\mathbb{V}^*_2}dr\bigg]
\nonumber\\&+\mathbf{E}\left[
\int_0^{T}
\|\mathfrak{L}_{H,S}(r,\psi_k(r),\mathcal{L}_{
\psi_k(r)})
\|^2_{\mathcal{L}_2(\ell^2,
\mathbb{H})}dr
\right]
\le  C(T) (1+\mathbf{E}[\|\psi_0\|^2]),
\end{align*}
where $C(T)>0$ is a constant independent of $k$.

\end{lemma}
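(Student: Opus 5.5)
We start from the energy identity \eqref{energy5new} for $\psi_k$. Using \eqref{Re1}–\eqref{Re4}, every term can be written in the $\mathbb{H}$, $\mathbb{V}_1$, $\mathbb{V}_2$ pairings. We then apply the dissipativity bounds \eqref{AAoperator1a} and \eqref{aAAoperator1a} with $\mu=\mathcal{L}_{\psi_k(r)}$, so that $\mu(\|\cdot\|^2)=\mathbf{E}\|\psi_k(r)\|^2$. This gives, $\mathbf{P}$-a.s., for all $t\in[0,T]$,
\begin{align*}
\|\psi_k(t)\|^2&+\frac{1}{K}\int_0^t\|\psi_k\|^p_{\dot{\mathbb{V}}_1}dr+2\beta\int_0^t\|\psi_k\|_p^pdr+2\widehat{\beta}\int_0^t\|\psi_k\|^q_{\mathbb{V}_2}dr
\le\|\psi_0\|^2+c\int_0^t\big(\mathbf{E}\|\psi_k(r)\|^2+\|\phi_2(r)\|_1+\|\widehat{\phi}_2(r)\|_1\big)dr\\
&+\int_0^t\|\sigma(r,\cdot,\psi_k,\mathcal{L}_{\psi_k})\|^2_{\ell^2(\mathbb{N},\mathbb{H})}dr+2\sup_{s\le t}\Big|\int_0^s\langle\psi_k,\mathfrak{L}_{H,S}(r,\psi_k,\mathcal{L}_{\psi_k})d\mathcal{W}\rangle\Big|.
\end{align*}
Here we used $\|\varphi(\cdot/k)\psi_0\|\le\|\psi_0\|$ and $\phi_1,\widehat{\phi}_1\in L^\infty_{loc}(\mathbb{R}^+,L^1)$.

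The superlinear diffusion term is controlled by \eqref{h2-}. Take $M=1$ there and choose $\varepsilon_1=\beta/2$, $\varepsilon_2=\widehat{\beta}/2$. The $\|\psi_k\|_p^p$ and $\|\psi_k\|_{\mathbb{V}_2}^q$ contributions are then absorbed into the dissipative terms on the left, and what remains is $C(1+\mathbf{E}\|\psi_k\|^2)$. This step uses the Hölder and Young inequalities with the integrability $\sigma_1\in\ell^1(\mathbb{N},L^{p/(p-p^*)})$, and it is exactly where the restriction $p^*<p$, $q^*<q$ enters.

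For the stochastic integral we take expectations of the supremum and apply the Burkholder–Davis–Gundy inequality:
$$
2\mathbf{E}\sup_{s\le t}|\cdots|\le c\,\mathbf{E}\Big(\int_0^t\|\psi_k\|^2\|\sigma\|^2_{\ell^2(\mathbb{N},\mathbb{H})}dr\Big)^{1/2}\le\frac12\mathbf{E}\sup_{s\le t}\|\psi_k(s)\|^2+c'\,\mathbf{E}\int_0^t\|\sigma\|^2_{\ell^2(\mathbb{N},\mathbb{H})}dr.
$$
Now use \eqref{h2-} again, this time with $M=c'$ and $\varepsilon_1,\varepsilon_2$ small. This is the main delicate point. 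The constant $c'$ produced by BDG must still be dominated by the dissipation, and this is possible precisely because \eqref{h2-} allows an arbitrary $M$ at the cost of a larger $C_2$. Combining everything and using $\mathbf{E}\|\psi_k(r)\|^2\le\mathbf{E}\sup_{s\le r}\|\psi_k(s)\|^2$, we obtain an inequality of the form $y(t)\le C(1+\mathbf{E}\|\psi_0\|^2)+C\int_0^ty(r)dr$ with $y(t)=\mathbf{E}\sup_{s\le t}\|\psi_k(s)\|^2$. The constants here do not depend on $k$, since all bounds are taken in the norms on $\mathbb{R}^d$. Gronwall's inequality then gives the first estimate. The quantity $y(t)$ is finite a priori by \eqref{eihkhnergy5new}, so this step is rigorous; if needed, we first localize with stopping times $\tau_R=\inf\{t:\|\psi_k(t)\|>R\}$ and let $R\to\infty$.

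For the second estimate, raise \eqref{AAoperator1} to the power $\widehat{p}$. Since $(p-1)\widehat{p}=p$, we get $\|\mathbf{A}_1\|^{\widehat{p}}_{\mathbb{V}_1^*}\le c(\|\psi_k\|^p_{\dot{\mathbb{V}}_1}+\|\psi_k\|_p^p+(\mathbf{E}\|\psi_k\|^2)^{\widehat{p}/2}+\|\phi_5\|^{\widehat{p}}_{\widehat{p}})$. The term $(\mathbf{E}\|\psi_k\|^2)^{\widehat{p}/2}$ is bounded by $1+\mathbf{E}\|\psi_k\|^2$ because $\widehat{p}<2$. The bound \eqref{aAAAoperator1} is treated in the same way. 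Integrating in time, taking expectations and using the first estimate gives the bound for the $\mathbf{A}_1$ and $\mathbf{A}_2$ terms. Finally, the Hilbert–Schmidt term follows from \eqref{ffnewA2a0} and \eqref{h2-} with $M=1$, again combined with the first estimate.
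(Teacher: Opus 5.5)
Your proposal is correct and follows the same route as the paper. The paper invokes the energy identity \eqref{energy5new}, the relations \eqref{Re1}--\eqref{Re5}, the dissipativity and growth bounds of Lemma \ref{Dissipativeness}, and the absorption of the superlinear noise via \eqref{h2-} and \eqref{ffnewA2a0}, and omits the details; your BDG-plus-Gronwall bookkeeping supplies exactly those details. The only adjustment needed is the Young constant in the BDG step (e.g.\ $\frac14$ in place of $\frac12$), because bounding both $\sup_{s\le t}\|\psi_k(s)\|^2$ and the dissipation integrals by the same right-hand side costs a factor $2$.
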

\begin{proof}
By the energy equation \eqref{energy5new} and the relations \eqref{Re1}-\eqref{Re5}, we can use \eqref{h2-}, \eqref{ffnewA2a0} and Lemma
\ref{Dissipativeness} to derive the uniform
estimates.
The details of the proof  are  omitted here.
\end{proof}

\subsection{Proof of Theorem \ref{Theorem4}}
With the results in
the previous subsections, we are in a
 position to establish the global-in-time existence of solutions of \eqref{Int1--}  on $\mathbb{R}^d$ by examining the
limiting behavior of solutions
of \eqref{Int1aae} on $\mathcal{Q}_{k}$ as $k\rightarrow\infty$. The pathwise uniqueness of global-in-time solutions of \eqref{Int1--} is also proved.

\texttt{Proof of Theorem \ref{Theorem4}}.
\emph{Existence of global solutions}:
 Given $\xi\in \mathbb{H}\bigcap\mathbb{V}_1
\bigcap\mathbb{V}_2$,
we define $\xi_{m}:=\varphi(\cdot/m)\xi\in \mathbb{H}\bigcap\mathbb{V}_1
\bigcap\mathbb{V}_2$,
where $\varphi$ is the cut-off function as in \eqref{Int1aae}. Then, by \cite{Wang-wangb,RWangsubmitted}, we find that $\xi_m\rightarrow \xi \ \mbox{in}\ \mathbb{H}\bigcap\mathbb{V}_1
\bigcap\mathbb{V}_2$.
In addition, we have $\xi_m\in\mathbb{H}_k\bigcap\mathbb{V}_{1,k}
\bigcap\mathbb{V}_{2,k}$ for $m\leq k$.
Thus, by integration by parts and
the	Fubini theorem, we infer from \eqref{Defsolution1jff} and \eqref{Re1}-\eqref{Re5} that for any $\zeta\in L^\infty([0,T]
	\times\Omega,\mathbb{R})$,
	\begin{align}\label{Desyfsolutjk;jion25djdjd}
		& \mathbf{E}
		\bigg[\int_{0}^{T}\zeta(t)\langle \psi_k(t) ,\xi_m\rangle dt\bigg]-
		\mathbf{E}
		\bigg[\int_{0}^{T}\zeta(t) \langle
		\varphi(|\cdot|/k)\psi_0 ,\xi_m\rangle dt\bigg]
		\nonumber\\&=
		\sum_{i=1}^2\mathbf{E}
		\bigg[\int_{0}^{T}\zeta(t) \int_{0}^t\
		_{\mathbb{V}_i^*}\langle \mathbf{A}_i(r,\psi_k(r),\mathcal{L}_{\psi_k(r)}) ,\xi_m\rangle_{\mathbb{V}_i}
	dr dt\bigg]
\nonumber\\&\   +\mathbf{E}
		\bigg[\int_{0}^{T}\zeta(t)
\int_{0}^t
		\big\langle \xi_m, \mathfrak{L}_{H,S}(r,\psi_k(r),\mathcal{L}_{\psi_k(r)})d\mathcal{W}(r) \big\rangle dt\bigg]
		\nonumber\\&=\sum_{i=1}^2
		\mathbf{E}
		\bigg[\int_{0}^{T}
		\ _{\mathbb{V}_i^*}\Big\langle \mathbf{A}_i(t,\psi_k(t),\mathcal{L}_{\psi_k(r)}) ,\xi_m\int_{t}^{T}
		\zeta(r)dr\Big\rangle_{\mathbb{V}_i}dt\bigg]
		\nonumber\\&\   +\int_{0}^{T}
		\mathbf{E}
		\bigg[
		\Big\langle \zeta(t)\xi_m, \int_{0}^t\mathfrak{L}_{H,S}(r,\psi_k(r),
\mathcal{L}_{\psi_k(r)})d\mathcal{W}(r) \Big\rangle\bigg]
		dt.
	\end{align}

We want to take the limit in \eqref{Desyfsolutjk;jion25djdjd} as $k\rightarrow\infty$. To this end, we shall establish several weak convergence results for the approximate
 solutions $\{\psi_k\}_{k=1}^\infty$ of \eqref{Int1aae}.
By the uniform estimates
in  Lemma \ref{ffTheorem4kk} and the arguments of
 \cite{Wangjde,RWangsubmitted},
we find that there exist
	$\psi^*
	\in
	\big(L^ {2}(\Omega,
	L^1([0,T],\mathbb{H} ))\big)^*$,
	$\psi\in L^{2}(  [0,T]\times\Omega, \mathbb{H})
	\bigcap L^p([0,T]\times\Omega, \mathbb{V}_1)\bigcap L^q([0,T]\times\Omega, \mathbb{V}_2)$,
$\mathfrak{A}_1\in L^{\widehat{p}}([0,T]\times\Omega, \mathbb{V}^*_1)$,
$\mathfrak{A}_2\in L^{\widehat{q}}([0,T]\times\Omega, \mathbb{V}^*_2)$, $\widehat{\mathfrak{L}}_{H,S}\in
	L^2([0,T] \times\Omega,\mathcal{L}_2(
\ell^2, \mathbb{H}))$, and a subsequence which is not relabeled, such that
\begin{subequations}\begin{align}\label{Defia}
			&\psi_{k}\rightarrow \psi^*\ \
			\mbox{weak-star in $\big(L^ 2(\Omega,
				L^1([0,T],\mathbb{H} ))
				\big)
				^*$},
			\\&\label{Defibyy}
			\psi_{k}\rightarrow \psi\ \
			\mbox{weakly in } \
			L^{2}(  [0,T]\times\Omega, \mathbb{H}
			),
			\\&\label{Defsolution25b}
			\psi_{k}\rightarrow \psi\ \  \mbox{weakly in $L^p([0,T]\times\Omega, \mathbb{V}_1)$},		\\&\label{Defsolution25b1}\psi_{k}\rightarrow \psi\ \  \mbox{weakly in $L^q([0,T]\times\Omega, \mathbb{V}_2  )$},
			\\& \label{Defsolution25c}
			\mathbf{A}_1(\cdot,\psi_k,\mathcal{L}_{\psi_k}) \rightarrow \mathfrak{A}_1\ \  \mbox{weakly in $ L^{\widehat{p}}([0,T]\times\Omega, \mathbb{V}^*_1)$},
			\\& \label{Defsolution25c2}
			\mathbf{A}_2(\cdot,\psi_k,\mathcal{L}_{\psi_k}) \rightarrow \mathfrak{A}_2\ \  \mbox{weakly in $ L^{\widehat{q}}([0,T]\times\Omega, \mathbb{V}^*_2)$},
			\\&\label{inequalities1}
			\mathfrak{L}_{H,S}(\cdot,\psi_k,
\mathcal{L}_{\psi_k})\rightarrow \widehat{\mathfrak{L}}_{H,S} \ \ \mbox{weakly in $L^2([0,T]			\times\Omega,\mathcal{L}_2(\ell^2,
\mathbb{H}))$},
			\\\label{improved}
			&\mathfrak{L}_{H,S}(\cdot,\psi_k,\mathcal{L}_{\psi_k}
)v\rightarrow \widehat{\mathfrak{L}}_{H,S}v \ \ \mbox{weakly in $L^2([0,T]
				\times\Omega,\mathbb{H})$}, \ \ \forall\ v\in \ell^2,
			\\\label{improvedb}
			&\int_0^\cdot
			\mathfrak{L}_{H,S}(\cdot,\psi_k(r),
\mathcal{L}_{\psi_k(r)})
			d\mathcal{W}(r)
			\rightarrow\int_0^\cdot \widehat{\mathfrak{L}}_{H,S}(r)d\mathcal{W}(r)\ \mbox{weakly in $L^2([0,T], L^2(\Omega,\mathbb{H}))$},
		\end{align}
		and
		\begin{align}
			\label{improvedbcc}
			&\int_0^\cdot
			\mathfrak{L}_{H,S}(\cdot,\psi_k(r),
\mathcal{L}_{\psi_k(r)})
			d\mathcal{W}(r)
			\rightarrow\int_0^\cdot \widehat{\mathfrak{L}}_{H,S}(r)d\mathcal{W}(r)\ \mbox{weak star  in $L^\infty([0,T], L^2(\Omega,\mathbb{H}))$}.
		\end{align}
	\end{subequations}
Passing
to  the limit in \eqref{Desyfsolutjk;jion25djdjd} as $k\rightarrow\infty$, by above
convergence results, it follows that
\begin{align}\label{Desyjd-}
		& \mathbf{E}
		\bigg[\int_{0}^{T}\zeta(t)\langle \psi(t) ,\xi_m\rangle dt\bigg]-
		\mathbf{E}
		\bigg[\int_{0}^{T}\zeta(t) \langle
		\psi_0 ,\xi_m\rangle dt\bigg]
		\nonumber\\&=
		\sum_{i=1}^2\mathbf{E}
		\bigg[\int_{0}^{T}
		\ _{\mathbb{V}^*_i}\Big\langle \mathfrak{A}_i(t) ,\xi_m\int_{t}^{T}
		\zeta(r)dr\Big\rangle_{\mathbb{V}_i}dt\bigg]
		  +\int_{0}^{T}
		\mathbf{E}
		\bigg[
		\Big\langle \zeta(t)\xi_m, \int_{0}^t\widehat{\mathfrak{L}}_{H,S}(r)
d\mathcal{W}(r) \Big\rangle\bigg]
		dt.
	\end{align}
Passing
to the limit in \eqref{Desyjd-} as
$m\rightarrow\infty$, by $\xi_m\rightarrow \xi$  in $\mathbb{H}\bigcap\mathbb{V}_1
\bigcap\mathbb{V}_2$,
we arive at
\begin{align}\label{Desyjd--}
		& \mathbf{E}
		\bigg[\int_{0}^{T}\zeta(t)\langle \psi(t) ,\xi\rangle dt\bigg]-
		\mathbf{E}
		\bigg[\int_{0}^{T}\zeta(t) \langle
		\psi_0 ,\xi\rangle dt\bigg]
		\nonumber\\&=\sum_{i=1}^2
		\mathbf{E}
		\bigg[\int_{0}^{T}
		\ _{\mathbb{V}_i^*}\Big\langle \mathfrak{A}_i(t) ,\xi\int_{t}^{T}
		\zeta(r)dr\Big\rangle_{\mathbb{V}_i}dt\bigg]
		   +\int_{0}^{T}
		\mathbf{E}
		\bigg[
		\Big\langle \zeta(t)\xi, \int_{0}^t\widehat{\mathfrak{L}}_{H,S}(r)
d\mathcal{W}(r) \Big\rangle\bigg]
		dt
\nonumber\\&=\sum_{i=1}^2
		\mathbf{E}
		\bigg[\int_{0}^{T}\zeta(t)
		\ _{\mathbb{V}_i^*}
\Big\langle \int_{0}^{t}\mathfrak{A}_i(r) dr ,\xi
		\Big\rangle_{\mathbb{V}_i}dt\bigg]
		  +\mathbf{E}
		\bigg[\int_{0}^{T} \zeta(t)
		\Big\langle \int_{0}^t\widehat{\mathfrak{L}}_{H,S}(r)
d\mathcal{W}(r),\xi \Big\rangle
		dt\bigg].
	\end{align}
This, in fact, implies that
\begin{align}\label{;jionjd}
		& \psi(\cdot)
		=
		\psi_0  +\int_{0}^{\cdot}
\mathfrak{A}_1(r)dr+\int_{0}^{\cdot}
\mathfrak{A}_2(r)dr
		+\int_{0}^\cdot\widehat{\mathfrak{L}}_{H,S}(r)
d\mathcal{W}(r)\  \mbox{in}\ \big(\mathbb{H}\bigcap\mathbb{V}_1
\bigcap\mathbb{V}_2\big)^*,
	\end{align}
$dt\times\mathbb{P}$-\mbox{a.e.} on $[0,T]\times\Omega$.
Based on \eqref{;jionjd},
we infer that
$ \psi$ has a version (still denoted by
$\psi$) which is  a
continuous   $\mathbb{H}$-valued
	 $\mathfrak{F}_t$-adapted stochastic processes
	such that
	$\psi\in L^2(\Omega, L^\infty([0,T],\mathbb{H}) )$	
and $\mathbf{P}$-a.s.,
	\begin{align}\label{enHRTrFYDDRergy5}	\|\psi(t)\|^2
		&=\|\psi_0\|^2+
		\int_{0}^t\bigg( 2\sum_{i=1}^2\ _{\mathbb{V}^*_i}\langle \mathfrak{A}_i(r) ,\psi(r)\rangle_{\mathbb{V}_i}
		+\|\widehat{\mathfrak{L}}_{H,S}(r)
\|^2_{\mathcal{L}_2(\ell^2,
			\mathbb{H})}\bigg)dr
+2\int_{\tau}^t
		\big\langle (\widehat{\mathfrak{L}}_{H,S}(r))^*\psi(r), d\mathcal{W}(r) \big\rangle_{\ell^2}.
	\end{align}
Since $\psi\in L^2(\Omega, L^\infty([0,T],\mathbb{H}) )$,
from  \eqref{Defia} and \eqref{Defibyy}, it follows that $\psi=\psi^*$.
	
Moreover, from
  \eqref{enHRTrFYDDRergy5} we have
\begin{align}\label{enerudufoogy}
\mathbf{E}\big[\|\psi(t)\|^{2}\big]
		& =\mathbf{E}\big[
\|\psi_0\|^{2}\big]	
+ \mathbf{E}\bigg[\int_{0}^t\bigg( 2\sum_{i=1}^2\ _{\mathbb{V}^*_i}\langle \mathfrak{A}_i(r) ,\psi(r)\rangle_{\mathbb{V}_i}
		+\|\widehat{\mathfrak{L}}_{H,S}(r)
\|^2_{\mathcal{L}_2(\ell^2,
			\mathbb{H})}\bigg)dr\bigg].
	\end{align}

Let $\psi_\varepsilon:=\psi-\varepsilon
	\widehat{\zeta} u$
	with
	$\varepsilon \in [0,1]$,
	$\widehat{\zeta}\in L^\infty([0,T]
	\times\Omega,\mathbb{R})$
	and $u\in\mathbb{H}\bigcap\mathbb{V}_1
\bigcap\mathbb{V}_2$. Let
$\mathbf{G}(t):=
C_4(1,\beta/2,\widehat{\beta}/2)+C_5(1)
+2\big(
\|\phi_6(t)
\|_{\infty}
+\|\widehat{\phi}_6(t)
\|_{\infty}+\|\phi_7(t)
\|_{1}+\|\widehat{\phi}_7(t)
\|_{1}\big) $ for
$t\in \mathbb{R}^+$, where
  $C_4(1,\beta/2,\widehat{\beta}/2)$ and $C_5(1)$
are the
constants
in   \eqref{h2--} for $M=1$, $\varepsilon_1=\beta/2$ and
$\varepsilon_2=\widehat{\beta}/2$.
Then we find that $\psi_\epsilon\in L^2(\Omega, L^\infty([0,T],\mathbb{H}) )$ and
$\mathbf{G}\in
L_{loc}^{1}(\mathbb{R}^+,\mathbb{R}^+)$.
Thus, it follows from \eqref{energy5new} and \eqref{Re1}-\eqref{Re5}
that
\begin{align}&\label{ffzxa2}
		\mathbf{E} \left[e^{-\int_0^{t}
\mathbf{G}(s)ds}
		\|\psi_{k}(t)
\|^2-
		\|\varphi(\cdot/k)\psi_0\|^2\right]
		\nonumber\\
		& =
		\mathbf{E}
		\Bigg[
		\int_0^{t}
		e^{-\int_0^s\mathbf{G}(r)dr}
		\Bigg(2 \sum_{i=1}^2\ _{\mathbb{V}^*_i}\langle \mathbf{A}_i(s,\psi_k(s),\mathcal{L}_{\psi_k(s)}) ,\psi_k(s)\rangle_{\mathbb{V}_i}
 \nonumber\\&\   \ +\|\mathfrak{L}_{H,S}(s,\psi_k(s),
\mathcal{L}_{\psi_k(s)})\|^2_{\mathcal{L
}_2(\ell^2,
			\mathbb{H})}  -
	\mathbf{G}(s)
		\|\psi_{k}(s)\|^2
		\Bigg)ds\Bigg]
		\nonumber\\&=
		\mathbf{E}
		\Bigg[
		\int_0^{t}
		e^{-\int_0^s\mathbf{G}(r)dr} \Bigg(    2 \sum_{i=1}^2\Big(
_{\mathbb{V}_i^*}\langle \mathbf{A}_i(s,
\psi_\varepsilon(s),\mathcal{L}_{\psi_\varepsilon(s)}) ,\psi_k(s)\rangle_{\mathbb{V}_i}
	\nonumber\\&\ \ 	+\ _{\mathbb{V}^*_i}
		\langle \mathbf{A}_i(s,\psi_k(s),\mathcal{L}_{\psi_k(s)})
-\mathbf{A}_i(s,\psi_\varepsilon(s),
\mathcal{L}_{\psi_\varepsilon(s)}) ,\psi_\varepsilon(s)\rangle_{\mathbb{V}_i}\Big)
		\nonumber\\&\ \    -\|\mathfrak{L}_{H,S}(s,\psi_\varepsilon(s),
\mathcal{L}_{\psi_\varepsilon(s)})
		\|^2_{\mathcal{L}_2(\ell^2,
			\mathbb{H})}
		+2 \langle  \mathfrak{L}_{H,S}(s,\psi_k(s),
\mathcal{L}_{\psi_k(s)})
		, \mathfrak{L}_{H,S}(s,\psi_\varepsilon(s),
\mathcal{L}_{\psi_\varepsilon(s)})
		\rangle_{\mathcal{L}_2(\ell^2,
			\mathbb{H})}
		\nonumber\\&\ \   -
		\mathbf{G}(s) \Big(2\langle \psi_k(s)
		, \psi_\varepsilon(s)
		\rangle-
		\|\psi_\varepsilon(s)\|^2
		\Big)
		\Bigg)ds\Bigg]
		\nonumber\\&\ \ +
		\mathbf{E}
		\Bigg[
		\int_0^{t}
e^{-\int_0^s\mathbf{G}(r)dr}
		\Bigg(
		2 \sum_{i=1}^2\ _{\mathbb{V}_i^*}
		\langle \mathbf{A}_i(s,\psi_k(s),\mathcal{L}_{\psi_k(s)}
)-\mathbf{A}_i(s,\psi_\varepsilon(s),
\mathcal{L}_{\psi_\varepsilon(s)}) ,\psi_k(s)-\psi_\varepsilon(s)\rangle_{\mathbb{V}_i}
\nonumber\\&  \  \ +\|\mathfrak{L}_{H,S}(s,\psi_k(s),
\mathcal{L}_{\psi_k(s)})
		-\mathfrak{L}_{H,S}(s,\psi_\varepsilon(s),
\mathcal{L}_{\psi_\varepsilon(s)})
		\|^2_{\mathcal{L}_2(\ell^2,
			\mathbb{H})}   -
		\mathbf{G}(s)
		\|\psi_{k}(s)-\psi_\varepsilon(s)
\|^2
		\Bigg)ds\Bigg].
	\end{align}
Note that the last term in \eqref{ffzxa2}
	is less or equal to zero due to
\eqref{h2--} (for $M=1$, $\varepsilon_1=\beta/2$ and
$\varepsilon_2=\widehat{\beta}/2$), \eqref{h2----0}, Lemma \ref{Monotonicity} and the  property of the
$L^2$-Wasserstein distance \eqref{Wassersteinaa}.
According to the Fubini's theorem and
integration by parts, by \eqref{ffzxa2}, we derive that for all $\zeta\in L^\infty([0,T],\mathbb{R}^+)$,
	\begin{align}&\label{ffzrrrgggxa2}		\mathbf{E}\Bigg[\int_0^{T}
\zeta(t)\Bigg(
		e^{-\int_0^{t}
\mathbf{G}(s)ds}
		\|\psi_{k}(t)
\|^2-
		\|\varphi(\cdot/k)\psi_0\|^2\Bigg)dt\Bigg]
		\nonumber\\&\  \leq
		\mathbf{E}\Bigg[\int_0^{T} \zeta(t)
\Bigg(
		\int_0^{t}
e^{-\int_0^s\mathbf{G}(r)dr} \Bigg( 2\sum_{i=1}^2\Big( _{\mathbb{V}_i^*}
		\langle \mathbf{A}_i(s,
\psi_\varepsilon(s),
\mathcal{L}_{\psi_\varepsilon(s)}) ,\psi_k(s)\rangle_{\mathbb{V}_i}
\nonumber\\&\  	+\ _{\mathbb{V}_i^*}
		\langle \mathbf{A}_i(s,\psi_k(s),\mathcal{L}_{\psi_k(s)})
-\mathbf{A}_i(s,\psi_\varepsilon(s),
\mathcal{L}_{\psi_\varepsilon(s)}) ,\psi_\varepsilon(s)\rangle_{\mathbb{V}_i}\Big)
		\nonumber\\&\    -\|\mathfrak{L}_{H,S}(s,\psi_\varepsilon(s),
\mathcal{L}_{\psi_\varepsilon(s)})
		\|^2_{\mathcal{L}_2(\ell^2,
			\mathbb{H})}
		+2 \langle  \mathfrak{L}_{H,S}(s,\psi_k(s),
\mathcal{L}_{\psi_k(s)})
		, \mathfrak{L}_{H,S}(s,\psi_\varepsilon(s),
\mathcal{L}_{\psi_\varepsilon(s)})
		\rangle_{\mathcal{L}_2(\ell^2,
			\mathbb{H})}
		\nonumber\\&\    -
		\mathbf{G}(s) \Big(2\langle \psi_k(s)
		, \psi_\varepsilon(s)
		\rangle-
		\|\psi_\varepsilon(s)\|^2
		\Big)
		\Bigg)ds\Bigg)dt\Bigg]
\nonumber\\&=
		\mathbf{E}\Bigg[\int_0^{T} \Bigg(\int_{t}^{T}\zeta(r)dr\Bigg)
\Bigg(e^{-\int_0^{t}\mathbf{G}(r)dr} \Bigg(   2\sum_{i=1}^2\Big( _{\mathbb{V}_i^*}
		\langle \mathbf{A}_i(t,
\psi_\varepsilon(t),\mathcal{L}_{
\psi_\varepsilon(t)}) ,\psi_k(t)\rangle_{\mathbb{V}_i}
\nonumber\\&\  +\ _{\mathbb{V}^*_i}
		\langle \mathbf{A}_i(t,\psi_k(t),\mathcal{L}_{\psi_k(t)})
-\mathbf{A}_i(t,\psi_\varepsilon(t),
\mathcal{L}_{\psi_\varepsilon(t)}) ,\psi_\varepsilon(t)\rangle_{\mathbb{V}_i}\Big)
		\nonumber\\&\   -\|\mathfrak{L}_{H,S}(t,\psi_\varepsilon(t),
\mathcal{L}_{\psi_\varepsilon(t)})
		\|^2_{\mathcal{L}_2(\ell^2,
			\mathbb{H})}
		+2 \langle  \mathfrak{L}_{H,S}(t,\psi_k(t),
\mathcal{L}_{\psi_k(t)})
		, \mathfrak{L}_{H,S}(t,\psi_\varepsilon(t),
\mathcal{L}_{\psi_\varepsilon(t)})
		\rangle_{\mathcal{L}_2(\ell^2,
			\mathbb{H})}
		\nonumber\\&\    -
		\mathbf{G}(t)\Big(2\langle \psi_k(t)
		, \psi_\varepsilon(t)
		\rangle-
		\|\psi_\varepsilon(t)\|^2
		\Big)
		\Bigg)\Bigg)dt\Bigg].
	\end{align}

We now want to pass
to  the limit in \eqref{ffzrrrgggxa2} as $k\rightarrow\infty$.
Since
	$ \psi_k
	\to \psi$
	weakly in
	$L^2(\Omega, L^2([0, T],
	\mathbb{H})),
	$
we see  that $
\zeta^{1/2} e^{-\frac{1}{2}\int_0^{\cdot}
\mathbf{G}(s)ds} \psi_k \rightarrow \zeta^{1/2}  e^{-\frac{1}{2}\int_0^{\cdot}
\mathbf{G}(s)ds}\psi$
weakly in
	$L^2(\Omega, L^2([0, T],
	\mathbb{H}))$, for all $\zeta\in L^\infty([0,T],\mathbb{R}^+)$. Based on this weak convergence and \eqref{Defia}-\eqref{improvedb},
	by passing
	to the limit in \eqref{ffzrrrgggxa2} as $k\rightarrow\infty$, we find that
	\begin{align}&\label{ffzrrrgxa2}
\mathbf{E}\Bigg[\int_0^{T}
\zeta(t)\Bigg(
		e^{-\int_0^{t}\mathbf{G}(s)ds}
		\|\psi(t)
\|^2-
		\|\psi_0\|^2\Bigg)dt\Bigg]
\nonumber\\&
\leq\liminf_{k\rightarrow\infty}
\mathbf{E}\Bigg[\int_0^{T}
\zeta(t)\Bigg(
		e^{-\int_0^{t}
\mathbf{G}(s)ds}
		\|\psi_{k}(t)
\|^2-
		\|\varphi(\cdot/k)\psi_0\|^2\Bigg)dt\Bigg]
	\nonumber\\&\leq
		\mathbf{E}\Bigg[\int_0^{T} \zeta(t)
\Bigg(
		\int_0^{t}
e^{-\int_0^s\mathbf{G}(r)dr}\Bigg(   2\sum_{i=1}^2 \Big(  _{\mathbb{V}_i^*}
		\langle \mathbf{A}_i(s,
\psi_\varepsilon(s),\mathcal{L}_{\psi_\varepsilon(s)}) ,\psi(s)\rangle_{\mathbb{V}_i}
		+ _{\mathbb{V}_i^*}
		\langle \mathfrak{A}_i
-\mathbf{A}_i(s,\psi_\varepsilon(s),
\mathcal{L}_{\psi_\varepsilon(s)}) ,\psi_\varepsilon(s)\rangle_{\mathbb{V}_i}\Big)
		\nonumber\\&\ \ \ \ \ \    -\|\mathfrak{L}_{H,S}(s,\psi_\varepsilon(s),
\mathcal{L}_{\psi_\varepsilon(s)})
		\|^2_{\mathcal{L}_2(\ell^2,
			\mathbb{H})}
		+2 \langle \widehat{\mathfrak{L}}_{H,S}(s)
		, \mathfrak{L}_{H,S}(s,\psi_\varepsilon(s),
\mathcal{L}_{\psi_\varepsilon(s)})
		\rangle_{\mathcal{L}_2(\ell^2,
			\mathbb{H})}
		\nonumber\\&\ \    -
		\mathbf{G}(s) \Big(2\langle \psi(s)
		, \psi_\varepsilon(s)
		\rangle-
		\|\psi_\varepsilon(s)\|^2
		\Big)
		\Bigg)ds\Bigg)dt\Bigg],		
	\end{align}
where we treat the convergence in the last line by using the weak star convergence \eqref{Defia}
and the conclusion:
$$\bigg(\int_{\cdot}^{T}\zeta(r)dr\bigg)
e^{-\int_0^{\cdot}\mathbf{G}(r)dr}
\mathbf{G}(\cdot)\psi_\varepsilon\in L^ 2(\Omega,
				L^1([0,T],\mathbb{H} )),$$
which is a consequence of $\psi_\epsilon\in L^2(\Omega, L^\infty([0,T],\mathbb{H}) )$ and
$\mathbf{G}\in
L_{loc}^{1}(\mathbb{R}^+,\mathbb{R}^+)$.

By \eqref{enerudufoogy}, we
infer that for all $\zeta\in L^\infty([0,T],\mathbb{R}^+)$,
	\begin{align*}&	\mathbf{E}\Bigg[\int_0^{T}
\zeta(t)\Bigg(
	e^{-\int_0^{t}
\mathbf{G}(s)ds} \|\psi(t)
\|^2-
\|\psi_0\|^2\Bigg)dt\Bigg]
		\nonumber\\&
		=\mathbf{E}\Bigg[\int_0^{T}\zeta(t)\Bigg(
		\int_0^{t
		}
		e^{-\int_0^s\mathbf{G}(r)
dr}
	\Bigg(2
\sum_{i=1}^2\   _{\mathbb{V}^*_i}
		\langle \mathfrak{A}_i(s) ,\psi(s)\rangle_{\mathbb{V}_i}
		+\|\widehat{\mathfrak{L}}_{H,S}(s)
		\|^2_{\mathcal{L}_2(\ell^2,
			\mathbb{H})}
	-
		\mathbf{G}(s)
		\|\psi(s)\|^2
		\Bigg)ds\Bigg)dt \Bigg],
	\end{align*}
which along with \eqref{ffzrrrgxa2}  implies that \begin{align}&\label{ffzrddroyofo}
		0\geq
		\mathbf{E}\Bigg[\int_0^{T}
\zeta(t)\Bigg(
		\int_0^{t}
e^{-\int_0^s\mathbf{G}(r)dr}
		\Bigg(2\sum_{i=1}^2\ _{\mathbb{V}_i^*}
		\langle \mathfrak{A}_i(s)-
\mathbf{A}_i(s,\psi_\varepsilon(s),
\mathcal{L}_{\psi_\varepsilon(s)}) ,\psi(s)-\psi_\varepsilon(s)
\rangle_{\mathbb{V}_i}
		\nonumber\\&  \ \ +\|\mathfrak{L}_{H,S}(s,\psi_\varepsilon(s),
\mathcal{L}_{\psi_\varepsilon(s)})-
		\widehat{\mathfrak{L}}_{H,S}(s)
		\|^2_{\mathcal{L}_2(\ell^2,
			\mathbb{H})} -
		\mathbf{G}(s)
		\|\psi(s)-\psi_\varepsilon(s)\|^2
		\Bigg)ds\Bigg)dt\Bigg].
	\end{align}

	Letting $\varepsilon =0$
	in   \eqref{ffzrddroyofo},
since $\psi_\varepsilon =\psi$
for $\varepsilon =0$, we find  from
 the arbitrariness of $\zeta\in L^\infty([0,T],\mathbb{R}^+)$ that $\widehat{\mathfrak{L}}_{H,S}=
\mathfrak{L}_{H,S}(\cdot,
\psi,\mathcal{L}_{\psi})$, $dt\times\mathbb{P}$-\mbox{a.e.} on $[0,T]\times\Omega$ as desired.

For $\varepsilon>0$, we first
 divide
	both sides of  \eqref{ffzrddroyofo}  by $\varepsilon$,
	and
	then taking   the limit as $\varepsilon\rightarrow0$,
	by
  the hemicontinuity of
$\textbf{A}_i$ ($i=1,2$) in Lemma \ref{Hemicontinuity}
and
the	Lebesgue dominated convergence theorem,
we obtain
\begin{align}&\label{ffzrdfo}
		0\geq
		\mathbf{E}\Bigg[\int_0^{T}
\zeta(t)\Bigg(
		\int_0^{t}
e^{-\int_0^s\mathbf{G}(r)dr}
		\sum_{i=1}^2\ _{\mathbb{V}_i^*}
		\langle \mathfrak{A}_i(s)-
\mathbf{A}_i(s,\psi(s),
\mathcal{L}_{\psi(s)}) ,u
\rangle_{\mathbb{V}_i}
\widehat{\zeta}(s)
		ds\Bigg)dt\Bigg].
	\end{align}
	Choosing  $\zeta \equiv 1$,
by  Fubini's theorem,   the arbitrariness of
$\widehat{\zeta}$ and $u\in\mathbb{H}\cap\mathbb{V}_1
\cap\mathbb{V}_2$, we  infer
 that $$\mathfrak{A}_1+\mathfrak{A}_2=
\mathbf{A}_1(\cdot,\psi,\mathcal{L}_{\psi}) +\mathbf{A}_2(\cdot,\psi,\mathcal{L}_{\psi})\ \ \mbox{in}\  \Big(\mathbb{V}_1
\bigcap\mathbb{V}_2\Big)^*, \ \ \mbox{ $dt\times\mathbb{P}$-\mbox{a.e.} on $[0,T]\times\Omega$}.
$$
This, along with \eqref{;jionjd} and $\widehat{\mathfrak{L}}_{H,S}=
\mathfrak{L}_{H,S}(\cdot,
\psi,\mathcal{L}_{\psi})$, $dt\times\mathbb{P}$-\mbox{a.e.} on $[0,T]\times\Omega$, shows that $\psi$ is a solution to \eqref{Int1--} in the sense of Definition \ref{Defsolutionsolution}.
By the arguments of Lemma \ref{ffTheorem4kk}, we can obtain  \eqref{G8}.

\emph{Pathwise uniqueness of global solutions}.
Let $\psi_1(t):=\psi_1(t,\psi_{0,1})$ and $\psi_2(t):=\psi_2(t,\psi_{0,2})$ for all $t\in [0, T]$ be two solutions to \eqref{Int1--} in the sense of
	Definition \ref{Defsolutionsolution}.
If $\psi_{0,1}=\psi_{0,2}$, $\textbf{P}$-a.s., then it yields from \eqref{h2--}, \eqref{h2----0} and Lemma \ref{Monotonicity}
	that
	\begin{align*}
	 e^{-\int_0 ^{t}
		\mathbf{G}(r)dr}	\|\psi_1(t)-
		\psi_2(t)\|^2 &\leq
		2
		\int_0^{t}
		e^{-\int^s_{0}
			\mathbf{G}(r)dr}
		\\&\ \ \  \times\big\langle \psi_1(s)-\psi_2(s),
		\big(\mathfrak{L}_{H,S}(s,\psi_1(s),
\mathcal{L}_{\psi_1(r)})
		-\mathfrak{L}_{H,S}(s,\psi_2(s),
\mathcal{L}_{\psi_2(r)})\big)
		d\mathcal{W}(s)\big
\rangle, \ \ \mbox{$\textbf{P}$-a.s.},
	\end{align*}
by  which , one can derive from
\eqref{h2-}, \eqref{ffnewA2a0} and \eqref{G8} that
	\begin{align}\label{wan10301}
	\textbf{E}\bigg [e^{-\int_0 ^{t}
		\mathbf{G}(r)dr}	\|\psi_1(t)-
		\psi_2(t)\|^2\bigg ]=0.
\end{align}	
For every
	$t\in [0, T]$, by \eqref{wan10301}, we find that $\|\psi_1(t)-
		\psi_2(t)\|=0$,
$\mathbb{P}$-a.s.. This, together with the pathwise continuity of
$\psi_1$ and $\psi_2$, implies the pathwise uniqueness of the solutions.

\section{Weak convergence theory for Freidlin-Wentzell and Dembo-Zeitouni uniform large deviations}
For the reader's convenience, we recall the weak convergence theory
for both Freidlin-Wentzell and Dembo-Zeitouni uniform large deviations, see \cite{Budhiraja,Dembo,Freidlin,Salins1,Salins2}.

Let $(\mathbb{X},\|\cdot\|_{\mathbb{X}})$ be a Banach space, and $(\Omega,\mathfrak{F},
 \{\mathfrak{F}_t\}_{t\geq0},\textbf{P})$ be a complete filtered probability space
supporting a cylindrical Wiener process  $\mathcal{W}(t)$ in $\ell^2$.
Then we can find a sparable Hilbert space $(\mathcal{U}, \langle \cdot, \cdot\rangle_{\mathcal{U}})$ such that
the embedding $\ell^2\hookrightarrow\mathcal{U}$ is Hilbert-Schmidt, and hence $\mathcal{W}(t)$ takes values in $\mathcal{U}$. Let $T>0$,
$(\mathfrak{E},\rho)$ and $(\mathfrak{E}_0,\rho_0)$ be two Polish spaces, and $\{Y^\epsilon_y: \epsilon>0,\ y\in\mathfrak{E}_0\}$ be a family of random variables taking values in $\mathfrak{E}$. We suppose that for every $\epsilon>0$ and $y\in\mathfrak{E}_0$, there exists a measurable map $\mathfrak{M}_y^\epsilon: C([0,T],\mathcal{U})\rightarrow\mathfrak{E}$.
such that $Y^\epsilon_y=\mathfrak{M}_y^\epsilon(\mathcal{W})$.
Let $\mathfrak{A}$ be the set of $\ell^2$-valued
progressively measurable processes
$\psi\in L^2([0,T],\ell^2 )$, $\mathbf{P}$-a.s.. For $N>0$,
we set
$$\overline{B}_N(L^2([0,T],\ell^2 )):=\bigg\{\psi\in L^2([0,T],\ell^2 ):
\bigg(\int_0^T\|\psi(s)\|^2_{\ell^2}ds\bigg)^{1/2}\leq N\bigg\}.$$
Note that $\overline{B}_N(L^2([0,T],\ell^2 ))$ is a closed ball in $L^2([0,T],\ell^2 )$ with center zero and radius $N$, and it is a Polish space
endowed with the weak topology. Based on the definitions of $\mathfrak{A}$ and  $\overline{B}_N(L^2([0,T],\ell^2 ))$, we define a subset of $\mathfrak{A}$ by
$$\mathfrak{A}_N:=\big\{\psi\in \mathfrak{A}:
\psi\in\overline{B}_N(L^2([0,T],\ell^2 )),\ \mbox{$\mathbf{P}$-a.s.} \big \}.$$

\begin{definition}(Rate function)
Given $y\in\mathfrak{E}_0$, we
say a map $\mathcal{I}_y:\mathfrak{E}\rightarrow[0,\infty]$
 is a rate
function if it is lower semi-continuous in $\mathfrak{E}$. A rate
function $\mathcal{I}_y:\mathfrak{E}\rightarrow[0,\infty]$
 is called a good one if for every $s\in[0,\infty)$, the level set $I^s_y:=\{ \psi\in \mathfrak{E}: \mathcal{I}_y(\psi)\leq s\}$ is a compact subset of $\mathfrak{E}$.
\end{definition}

\begin{definition}\label{LDP}
(Freidlin-Wentzell uniform LDP, \cite[Section 3.3]{Freidlin})
We say the family $\{Y^\epsilon_y: \epsilon>0,\ y\in\mathfrak{E}_0\}$ satisfies the Freidlin-Wentzell uniform LDP in $\mathfrak{E}$ with rate functions $\mathcal{I}_y:\mathfrak{E}
\rightarrow[0,\infty]$ uniformly
on $\mathfrak{E}_0$ if the following two
conditions are fulfilled:

(i) For every $\delta>0$ and $s>0$, we have
\begin{align*}
\liminf_{\epsilon\rightarrow0}
\inf_{y\in\mathfrak{E}_0}
\inf_{\psi\in I^s_y}\Big(\epsilon \ln
\mathbf{P}\big\{\rho(Y^\epsilon_y,\psi)<\delta\big\}         +\mathcal{I}_y(\psi)\Big)\geq0.
\end{align*}

(ii) For every $\delta>0$ and $s_0>0$, we have
\begin{align*}
\limsup_{\epsilon\rightarrow0}
\sup_{y\in\mathfrak{E}_0}
\sup_{s\in[0,s_0]}\Big(\epsilon \ln
\mathbf{P}\big\{ \dist(Y_y^\epsilon, I_y^s)\geq\delta       \big\}         +s\Big)\leq0.
\end{align*}
\end{definition}

\begin{definition}\label{LDP-}
(Dembo-Zeitouni uniform LDP \cite[Corollary 5.6.15]{Dembo})
We say the family $\{Y^\epsilon_y: \epsilon>0,\ y\in\mathfrak{E}_0\}$ satisfies the Dembo-Zeitouni uniform LDP in $\mathfrak{E}$ with rate functions $\mathcal{I}_y:\mathfrak{E}
\rightarrow[0,\infty]$ uniformly
on $\mathfrak{E}_0$ if the following two
conditions are fulfilled:

(i) For every open subset $G\subseteq\mathfrak{E}$, we have
\begin{align*}
\liminf_{\epsilon\rightarrow0}
\inf_{y\in\mathfrak{E}_0}\Big(\epsilon \ln
\mathbf{P}\big\{ Y^\epsilon_y\in G\big\}         \Big)\geq -\sup_{y\in\mathfrak{E}_0}\inf_{\psi\in G}     \mathcal{I}_y(\psi).
\end{align*}

(ii) For every closed subset $F\subseteq\mathfrak{E}$, we have
\begin{align*}
\limsup_{\epsilon\rightarrow0}
\sup_{y\in\mathfrak{E}_0}\Big(\epsilon \ln
\mathbf{P}\big\{ Y_y^\epsilon\in F      \big\}         \Big)\leq-\inf_{y\in\mathfrak{E}_0}\inf_{\psi\in F}     \mathcal{I}_y(\psi).
\end{align*}
\end{definition}

In general, the Freidlin-Wentzell and  Dembo-Zeitouni uniform LDPs
are not equivalent to each other. However, they are
equivalent to each other when $(\mathfrak{E}_0,\rho_0)$ is a \emph{compact} Polish space and the
level sets of the rate functions $I_y$
are continuous in $y\in\mathfrak{E}_0$ with respect to the Hausdorff metric of  $\mathfrak{E}$.

\begin{lemma}\label{Tattractor1-}
(Equivalence of the Freidlin-Wentzell and  Dembo-Zeitouni uniform LDPs, \cite[Theorem 2.7]{Salins2}) Suppose that $(\mathfrak{E}_0,\rho_0)$ is a compact Polish space, and that the
level sets of the rate functions $I_y$
are continuous in $y\in\mathfrak{E}_0$ with respect to the Hausdorff metric of $\mathfrak{E}$, that is, for every $s\in[0,\infty)$,
$$\lim_{n\rightarrow\infty}\max\bigg\{
\sup_{\psi\in I_y^s}\dist_{\mathfrak{E}}(\psi,I^s_{y_n}\big),
\sup_{\psi\in I_{y_n}^s}\dist_{\mathfrak{E}}(\psi,I^s_{y}\big)    \bigg\}=0,\ \ \mbox{as}\ \ \lim_{n\rightarrow\infty}\rho_0(y_n,y)=0.$$
Then the Freidlin-Wentzell uniform LDP
 and the Dembo-Zeitouni uniform LDP
are equivalent to each other.
\end{lemma}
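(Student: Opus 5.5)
The plan is to show that both uniform principles follow from each other once uniform convergence over the compact parameter space is exploited, using the Hausdorff continuity of the level sets $y\mapsto I_y^s$. Throughout, write $\mathcal{I}_y$ for the rate function, so that $I^s_y=\{\mathcal{I}_y\le s\}$. We also assume, as is implicit in the setting, that each $\mathcal{I}_y$ is a good rate function. The argument splits into the lower bound and the upper bound, each proved in both directions.

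\emph{Freidlin-Wentzell $\Rightarrow$ Dembo-Zeitouni.}

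For the lower bound, let $G$ be open and set $s^*=\sup_{y}\inf_{G}\mathcal{I}_y$, assuming $s^*<\infty$. Fix $\eta>0$. For each $y$ choose $\psi_y\in G\cap I_y^{s^*+\eta}$ and a radius $\delta_y$ with $B(\psi_y,2\delta_y)\subseteq G$. By Hausdorff continuity, for $y'$ near $y$ there is $\psi'\in I^{s^*+\eta}_{y'}$ with $\rho(\psi',\psi_y)<\delta_y$, so that $B(\psi',\delta_y)\subseteq G$. Compactness of $\mathfrak{E}_0$ gives a finite subcover and hence a uniform $\delta>0$. Applying condition (i) of Definition \ref{LDP} with $s=s^*+\eta$ then yields
\[
\liminf_{\epsilon\to0}\inf_y \epsilon\ln\mathbf{P}\{Y^\epsilon_y\in G\}\ge -(s^*+\eta).
\]

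For the upper bound, let $F$ be closed and set $s_0<\inf_y\inf_F\mathcal{I}_y$. Then $I^{s_0}_y\cap F=\emptyset$ for every $y$, so $\dist(I^{s_0}_y,F)>0$ because $I^{s_0}_y$ is compact. The key step is to show $\inf_y \dist(I_y^{s_0},F)=:2\delta>0$. This follows from Hausdorff continuity and compactness of $\mathfrak{E}_0$: the map $y\mapsto\dist(I^{s_0}_y,F)$ is continuous and positive on a compact set. Then $\{Y^\epsilon_y\in F\}\subseteq\{\dist(Y^\epsilon_y,I^{s_0}_y)\ge\delta\}$, and condition (ii) of Definition \ref{LDP} gives the bound $-s_0$.

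\emph{Dembo-Zeitouni $\Rightarrow$ Freidlin-Wentzell.}

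For condition (ii), fix $\delta$ and $s_0$, and argue by contradiction. Suppose there are sequences $\epsilon_n\to0$, $y_n$ and $s_n\in[0,s_0]$ along which the bound fails by some $\eta$. Passing to subsequences, $y_n\to y$ and $s_n\to s$. Hausdorff continuity implies that for large $n$ we have $I^{s_n}_{y_n}\supseteq$ a $\delta/2$-net of $I^{s-\eta'}_y$. Level-set monotonicity requires care here, since continuity is only asserted at fixed $s$; we use $s-\eta'\le s_n$. Hence
\[
\{\dist(Y,I^{s_n}_{y_n})\ge\delta\}\subseteq F:=\{\dist(\cdot,I^{s-\eta'}_y)\ge\delta/2\}.
\]
We then apply the Dembo-Zeitouni upper bound on the compact parameter set $\{y_n\}\cup\{y\}$. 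This requires checking that $\inf_{y'}\inf_F\mathcal{I}_{y'}\ge s-\eta'$ for $y'$ near $y$, which again uses Hausdorff continuity of level sets at levels slightly above $s$. The resulting bound contradicts the assumed failure.

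Condition (i) is handled symmetrically. Take the open set $G=B(\psi,\delta)$ with $\psi\in I^s_{y_n}$, and compare it with a fixed ball around a nearby point of $I^s_y$ via Hausdorff continuity.

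\emph{Main obstacle.} The delicate point is that the Dembo-Zeitouni bounds involve $\sup_y$ and $\inf_y$ of the rate over the whole parameter set, so they must be localized. I would handle this by applying them on small compact neighbourhoods $\{y'\colon\rho_0(y',y)\le r\}$; the definition applies to any subfamily. Alternatively, following \cite[Theorem 2.7]{Salins2}, I would simply invoke that result once the hypotheses are verified.
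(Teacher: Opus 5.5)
Your Freidlin--Wentzell $\Rightarrow$ Dembo--Zeitouni half is correct. In the lower bound, the uniform radius comes from compactness of $\mathfrak{E}_0$ together with $\sup_{\psi\in I^s_y}\dist(\psi,I^s_{y'})\to0$. In the upper bound, $y\mapsto\dist(I^{s_0}_y,F)$ is continuous because $|\dist(A,F)-\dist(B,F)|$ is bounded by the Hausdorff distance of $A$ and $B$. This is also the only direction the paper actually uses: Theorem \ref{MianII} is deduced from Theorem \ref{MianI}. The paper gives no proof of the lemma and just cites \cite[Theorem 2.7]{Salins2}.

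The converse half breaks at your localization step: it is not true that ``the definition applies to any subfamily''. Restricting $y$ to $K\subset\mathfrak{E}_0$ in Definition \ref{LDP-} keeps the inequalities true, but with the same global right-hand sides $-\inf_{y\in\mathfrak{E}_0}\inf_F\mathcal{I}_y$ and $-\sup_{y\in\mathfrak{E}_0}\inf_G\mathcal{I}_y$. You never obtain $-\inf_{y'\in K}\inf_F\mathcal{I}_{y'}$ or $-\sup_{y'\in K}\inf_G\mathcal{I}_{y'}$, and these are what your contradiction arguments on $\{y_n\}\cup\{y\}$ need.

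This cannot be repaired from the stated hypotheses. Take $\mathfrak{E}_0=\{0,1\}$, $\mathfrak{E}=\mathbb{R}$, $Y^\epsilon_0=Y^\epsilon_1=\sqrt{\epsilon}Z$ with $Z$ standard normal, $\mathcal{I}_0(x)=x^2/2$ and $\mathcal{I}_1(x)=(x-1)^2/2$.
\begin{itemize}
\item Both rate functions are good.
\item Level-set continuity is trivial, since convergent sequences in $\{0,1\}$ are eventually constant.
\item Both bounds of Definition \ref{LDP-} hold: the left-hand sides are Gaussian quantities controlled by $\inf x^2/2$, while $\sup_y\inf_G\mathcal{I}_y\ge\inf_G x^2/2$ and $\inf_y\inf_F\mathcal{I}_y\le\inf_F x^2/2$.
\item Condition (i) of Definition \ref{LDP} fails at $y=1$, $\psi=1$, $\delta=1/2$, since $\epsilon\ln\mathbf{P}\{|\sqrt{\epsilon}Z-1|<1/2\}\to-1/8$.
\end{itemize}

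So for Dembo--Zeitouni $\Rightarrow$ Freidlin--Wentzell you must assume the Dembo--Zeitouni bounds for every compact $K\subseteq\mathfrak{E}_0$. This is the formulation of \cite[Corollary 5.6.15]{Dembo}, where the bounds are required for every compact set of initial data. It is also how Theorem \ref{MianII} is actually phrased, ``for every compact set''. With that hypothesis, your subsequence argument on the compact sets $K_N=\{y_n:n\ge N\}\cup\{y\}$ works, but two steps need fixing.

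In (ii), take $F=\{\dist(\cdot,I^{s-\eta'}_y)\ge\delta/2\}$. The estimate $\inf_F\mathcal{I}_{y_n}\ge s-\eta'$ comes from $\sup_{\varphi\in I^{s-\eta'}_{y_n}}\dist(\varphi,I^{s-\eta'}_y)\to0$, i.e.\ from continuity at the level $s-\eta'$, not at levels above $s$.

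In (i), comparing with a ball around a point of $I^s_y$ only yields $\epsilon\ln\mathbf{P}\ge-s$. That is too weak when $\mathcal{I}_{y_n}(\psi_n)<s$, because the Freidlin--Wentzell lower bound involves the actual rate value. Proceed instead as follows:
\begin{enumerate}
\item Extract $\psi_n\to\psi$, using compactness of $I^s_y$ and $\sup_{\varphi\in I^s_{y_n}}\dist(\varphi,I^s_y)\to0$.
\item Pass to $a=\lim_n\mathcal{I}_{y_n}(\psi_n)$.
\item Use continuity at the level $a+\eta'$ to get $\mathcal{I}_y(\psi)\le a+\eta'$.
\item Apply the lower bound on $K_N$ with $G=B(\psi,\delta/2)$. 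For large $N$ this $G$ satisfies $\sup_{y'\in K_N}\inf_G\mathcal{I}_{y'}\le a+\eta'$.
\end{enumerate}
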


Next, we recall the classical weak convergence theory for establishing the Freidlin-Wentzell uniform LDPs of $\{Y^\epsilon_y: \epsilon>0,\ y\in\mathfrak{E}_0\}$.

\begin{theorem}\label{Tattractor1}
(Weak convergence theory for the Freidlin-Wentzell uniform LDP, \cite[Theorem 2.13]{Salins2}) Given $y\in\mathfrak{E}_0$, we assume that there exists a measurable map
$\mathfrak{M}_y^0: C([0,T],\mathcal{U})\rightarrow\mathfrak{E}$
 such that
\begin{description}
  \item[(C1)] For every $N\in(0,\infty)$ and $\delta>0$,
\begin{align*}
\lim_{\epsilon\rightarrow0}
\sup_{y\in\mathfrak{E}_0}\sup_{v\in\mathfrak{A}_N}
\mathbf{P}\Bigg\{ \rho\Bigg( \mathfrak{M}_y^\epsilon\bigg(\mathcal{W}+
\frac{1}{\sqrt{\epsilon}}\int_0^{\cdot}v(s)ds\bigg)  , \ \mathfrak{M}_y^0
\bigg(
\int_0^{\cdot}v(s)ds\bigg)\Bigg)>\delta
\Bigg\}=0.
\end{align*}
\item[(C2)] For every $N\in(0,\infty)$ and $y\in\mathfrak{E}_0$, the set $$\Bigg\{\mathfrak{M}_y^0
\bigg(
\int_0^{\cdot}v(s)ds\bigg):v\in \overline{B}_N(L^2([0,T],\ell^2 ))\Bigg\}\ \ \mbox{is a compact subset of $\mathfrak{E}$}.$$
\end{description}
Then the family $\{Y^\epsilon_y: \epsilon>0,\ y\in\mathfrak{E}_0\}$ satisfies the Freidlin-Wentzell uniform LDP in $\mathfrak{E}$ uniformly on $\mathfrak{E}_0$ with good rate functions $\mathcal{I}_y:\mathfrak{E}
\rightarrow[0,\infty]$ defined by
\begin{align*}
\mathcal{I}_y(\psi)=\inf\left\{\frac{1}{2}
\int_{0}^{T}||v(s)||_{\ell^{2}}^{2}ds:
v\in L^{2}([0,T],\ell^{2})~
\text{such~that}~\mathfrak{M}_y^0
\bigg(
\int_0^{\cdot}v(s)ds\bigg)=\psi\right\},\ \ \forall\ \psi\in\mathfrak{E},
\end{align*}
where $\inf\emptyset=+\infty$.
\end{theorem}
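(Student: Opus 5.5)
The plan is to go through the \emph{equicontinuous uniform Laplace principle} of Salins \cite{Salins2} rather than verify conditions (i)--(ii) of Definition \ref{LDP} directly. The reason is that \cite[Theorem 2.9]{Salins2} states that the Freidlin-Wentzell uniform LDP is equivalent to the following: for every $M>0$ and every family $\mathcal{H}$ of functions $h:\mathfrak{E}\to[-M,M]$ that is uniformly bounded and equicontinuous,
\begin{align*}
\lim_{\epsilon\rightarrow0}\sup_{y\in\mathfrak{E}_0}\sup_{h\in\mathcal{H}}\Big|-\epsilon\ln\mathbf{E}\big[e^{-h(Y^\epsilon_y)/\epsilon}\big]-\inf_{\psi\in\mathfrak{E}}\big(\mathcal{I}_y(\psi)+h(\psi)\big)\Big|=0 .
\end{align*}
The main tool is the Budhiraja-Dupuis variational representation \cite{Budhiraja}:
\begin{align*}
-\epsilon\ln\mathbf{E}\big[e^{-h(\mathfrak{M}^\epsilon_y(\mathcal{W}))/\epsilon}\big]=\inf_{v\in\mathfrak{A}}\mathbf{E}\Big[\frac12\int_0^T\|v\|_{\ell^2}^2ds+h\Big(\mathfrak{M}^\epsilon_y\big(\mathcal{W}+\epsilon^{-1/2}\textstyle\int_0^\cdot v\,ds\big)\Big)\Big].
\end{align*}
Its two directions will give the uniform upper and lower bounds.

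\textbf{Step 1: goodness of $\mathcal{I}_y$.} Write $K_N^y$ for the compact set in (C2). I claim
$$I^s_y=\bigcap_{n\ge1}K^y_{\sqrt{2s+2/n}}.$$
If $\mathcal{I}_y(\psi)\le s$, then for each $n$ there is a $v$ with $\frac12\|v\|^2\le s+1/n$ and $\mathfrak{M}^0_y(\int_0^\cdot v)=\psi$, so $\psi$ lies in every set of the intersection. Conversely, a point of the intersection has such a $v$ for every $n$, so $\mathcal{I}_y(\psi)\le s$. Hence $I^s_y$ is an intersection of compact sets, so it is compact, and in particular closed. This gives lower semicontinuity and goodness at once.

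\textbf{Step 2: the two Laplace bounds.}

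For the \emph{lower bound}, note first that, because $|h|\le M$, the infimum in the representation can be restricted, at cost $\eta$, to controls with $\mathbf{E}\int_0^T\|v\|^2\le 4M+2\eta$. A further truncation by a stopping time moves these controls into $\mathfrak{A}_N$ with $N$ depending only on $M$ and $\eta$, again at cost $\eta$, uniformly in $y$ and $h$. For $v\in\mathfrak{A}_N$, condition (C1) says $\mathfrak{M}^\epsilon_y(\cdots)$ is within $\delta$ of $\mathfrak{M}^0_y(\int_0^\cdot v)$ except on an event whose probability tends to $0$ uniformly in $y$ and $v$. By equicontinuity of $\mathcal{H}$, $h$ changes by at most $\eta$ on the good event, for $\delta$ chosen uniformly over $\mathcal{H}$. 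On the bad event the error is at most $2M$ times its probability. Pathwise,
$$\tfrac12\int_0^T\|v\|^2+h\big(\mathfrak{M}^0_y(\textstyle\int_0^\cdot v)\big)\ \ge\ \inf_\psi\big(\mathcal{I}_y+h\big)$$
by the definition of $\mathcal{I}_y$, which gives the lower bound.

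For the \emph{upper bound}, for each $y$ and $h$ I pick a deterministic $v_{y,h}$ with $\frac12\|v_{y,h}\|^2+h(\mathfrak{M}^0_y(\int_0^\cdot v_{y,h}))\le\inf(\mathcal{I}_y+h)+\eta$. Since $|h|\le M$, necessarily $\|v_{y,h}\|^2\le 4M+2\eta$. Inserting $v_{y,h}$ into the representation and applying (C1) in the same way gives the matching bound, uniformly in $y$ and $h$.

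\textbf{Main obstacle.} I expect the hardest step to be the uniformity bookkeeping in the lower bound. Three points must be checked carefully. The truncation to $\mathfrak{A}_N$ must be uniform in $y$. The $\delta$ coming from equicontinuity must not depend on $h$. And the convergence in (C1) is only in probability, not almost surely; this is handled by bounding the bad event by $2M\,\mathbf{P}\{\rho>\delta\}$, which tends to $0$ uniformly in $y$ and $v$ by (C1). Once the two bounds hold, \cite[Theorem 2.9]{Salins2} converts the equicontinuous uniform Laplace principle into the Freidlin-Wentzell uniform LDP with the rate function $\mathcal{I}_y$ displayed in the statement.
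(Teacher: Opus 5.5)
The paper gives no proof of this statement; it quotes it from \cite[Theorem 2.13]{Salins2}. Your argument is correct and is essentially the proof in that reference: goodness of $\mathcal{I}_y$ by writing $I^s_y$ as an intersection of the compact sets from (C2); the equicontinuous uniform Laplace principle from the Budhiraja--Dupuis representation, with truncation into $\mathfrak{A}_N$ and (C1); then Salins's implication from that principle to the Freidlin--Wentzell uniform LDP. The one point to state explicitly is that the $\delta$ from equicontinuity must be independent of the point as well as of $h$, i.e.\ uniform equicontinuity, which is how Salins defines it. Pointwise equicontinuity would not do, because $\mathfrak{M}^0_y(\int_0^\cdot v\,ds)$ ranges over a possibly non-compact set as $y$ varies in $\mathfrak{E}_0$.
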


\section{Uniform large deviations for Mckean-Vlasov
stochastic fractional $(\alpha,p)$-Laplacian equations on $\mathbb{R}^d$}

In this part, we establish the
Freidlin-Wentzell and Dembo-Zeitouni uniform large deviations
 for the Mckean-Vlasov
stochastic non-local \emph{fractional} $(\alpha,p)$-Laplacian equation
defined \eqref{Int19-} on $\mathbb{R}^d$ when the nonlinear diffusion term has superlinear
growth rates $p^*\in\big[2,\frac{p+2}{2}\big]\subset[2,p)$ and $q^*\in\big[2,\frac{q+2}{2}\big]\subset[2,q)$.

Condition \textbf{B1}. For every $k\in \mathbb{N}$,
$\sigma_k:\mathbb{R}^+\times \mathbb{R}^d\times\mathbb{R}\times
\mathcal{P}_2(\mathbb{H})\rightarrow \mathbb{R}$ satisfies that for
  all $t\in\mathbb{R}^+$,
$x\in\mathbb{R}^d$, $s,s_1,s_2\in\mathbb{R}$ and $\mu,\mu_1,\mu_2
\in\mathcal{P}_2(\mathbb{H})$,
\begin{align}\label{h2new}
|\sigma_k(t,x,s,\mu)|^2\leq \sigma_{1,k}(x)
|s|^{p^*}+\sigma_{2,k}(x)
|s|^{q^*}
+\sigma_{3,k}(x)\big(1+\mu\big(\|\cdot\|^2\big)\big),
\end{align}
and
\begin{align}\label{h2anew}
|\sigma_k(t,x,s_1,\mu_1)-\sigma_k(t,x,s_2,\mu_2)
|^2&\leq
\sigma^2_{4,k}(x)\big( 1+ |s_1|^{p^*-2}+ |s_2|^{p^*-2} +|s_1|^{q^*-2}
\nonumber\\&\ \ +|s_2|^{q^*-2}
\big)|s_1-s_2|^2 +\sigma_{5,k}(x)
d_{\mathcal{P}_2}^2(\mu_1,\mu_2),
\end{align}
where $p^*\in\big[2,\frac{p+2}{2}\big]$ and $q^*\in\big[2,\frac{q+2}{2}\big]$,
$\sigma_1
 \in
 \ell^{1}
 (\mathbb{N},
 L^{\frac{p}{p-p^*}}(\mathbb{R}^d)
 \bigcap
 L^{{\frac{p}{p-2(p^*-1)}}}(\mathbb{R}^d))$,
$\sigma_2
 \in
 \ell^{1}
 (\mathbb{N},L^{\frac{q}{q-q^*}}
 (\mathbb{R}^d)\bigcap
 L^{{\frac{q}{q-2(q^*-1)}}}(\mathbb{R}^d))$, $\sigma_3
 \in
 \ell^{1}
 (\mathbb{N},L^1(\mathbb{R}^d))$, $\sigma_4\in \ell^{1}
 (\mathbb{N},
 L^{\infty}(\mathbb{R}^d)
 \bigcap
 \ell^2
 (\mathbb{N},L^4(\mathbb{R}^d)
 \bigcap
 L^{{\frac{4p}{p-2(p^*-1)}}}
 (\mathbb{R}^d)\bigcap
 L^{{\frac{4q}{q-2(q^*-1)}}}
 (\mathbb{R}^d))$ and $\sigma_5
 \in
 \ell^{1}
 (\mathbb{N},L^{1}(\mathbb{R}^d))$.

One can verify that condition \textbf{B1}
implies condition \textbf{B}, and hence all results in the previous sections are also valid under conditions \textbf{A} and \textbf{B1}.
Since   \eqref{h2-}-\eqref{h2--} are
not sufficient for
establishing  the uniform LDPs of \eqref{Int19-}, we need
to improve both inequalities
under  condition \textbf{B1}.
Indeed,  for every $M>0$,  $T>0$,
  $t\in[0,T]$, $\psi,\psi_1,\psi_2\in
\mathbb{H}\cap L^p(\mathbb{R}^d)\cap \mathbb{V}_2$ and $\mu
\in\mathcal{P}_2(\mathbb{H})$, we
find    that
\begin{align}\label{B1-}
&M\|\mathfrak{L}_{H,S}(t,\psi,\mu)
\|^2_{\mathcal{L}_2(\ell^2
,\mathbb{H})}=
M\|\sigma(t,\cdot,\psi,\mu)\|^2_{
\ell^2(\mathbb{N},\mathbb{H})}
\nonumber\\&\leq
M\sum_{k\in\mathbb{N}}\int_{\mathbb{R}^d}
\Big(
\sigma_{1,k}(x)
|\psi(x)|^{p^*}+
\sigma_{2,k}(x)
|\psi(x)|^{q^*}
+
\sigma_{3,k}(x)
\big(1+\mu\big(\|\cdot\|^2\big)\big)\Big)dx
\nonumber\\
&\leq
 M
 \Bigg(
 \sum_{k\in\mathbb{N}}\|\sigma_{1,k} \|
 _{\frac{2p}{p-2(p^*-1)}}
 \| \psi\|^{p^*-1}_{p}\| \psi\|
 +\sum_{k\in\mathbb{N}}\|\sigma_{2,k} \|
 _{\frac{2q}{q-2(q^*-1)}}
 \| \psi\|^{q^*-1}_{q}\| \psi\|+\sum_{k\in\mathbb{N}}\|\sigma_{3,k} \|
 _{1}\big(1+\mu\big(\|\cdot\|^2
\big)\big)\Bigg)
\nonumber\\
&\leq C_6(M)(1+\| \psi\|^{p/2}_{p}+
 \| \psi\|^{q/2}_{q})\| \psi\|+C_6(M)\big(1+\mu\big(\|\cdot\|^2
\big)\big),
\end{align}
and
\begin{align}\label{B1--}
&M\|\mathfrak{L}_{H,S}(t,\psi_1,\mu)-
\mathfrak{L}_{H,S}(t,\psi_2,\mu)
\|^2_{\mathcal{L}_2(\ell^2
,\mathbb{H})}\nonumber\\&=M\|\sigma(t,\cdot,\psi_1,\mu)-
\sigma(t,\cdot,\psi_2,\mu)
\|^2_{
\ell^2(\mathbb{N},\mathbb{H})}
\nonumber\\&\leq M\sum_{k\in\mathbb{N}}\int_{\mathbb{R}^d}
\sigma^2_{4,k}(x)\big(1+ |\psi_1(x)|^{p^*-2}+ |\psi_2(x)|^{p^*-2} \nonumber\\& \ +|\psi_1(x)|^{q^*-2}
+|\psi_2(x)|^{q^*-2}
\big)|\psi_1(x)-\psi_2(x)|^2 dx
\nonumber\\&\leq
M\sum_{k\in\mathbb{N}}\int_{\mathbb{R}^d}
\sigma^2_{4,k}(x)
\big(|\psi_1(x)|+
|\psi_2(x)|
+ |\psi_1(x)|^{p^*-1}+ |\psi_2(x)|^{p^*-1} \nonumber\\& \ +|\psi_1(x)|^{q^*-1}
  +|\psi_2(x)|^{q^*-1}
 +|\psi_1(x)||\psi_2(x)|^{p^*-2}+
|\psi_2(x)||\psi_1(x)|^{p^*-2}
\nonumber\\& \ +|\psi_1(x)||\psi_2(x)|^{q^*-2}
 +
|\psi_2(x)||\psi_1(x)|^{q^*-2}
\big)|\psi_1(x)-\psi_2(x)|dx
\nonumber\\&\leq
4M\sum_{k\in\mathbb{N}}\int_{\mathbb{R}^d}
\sigma^2_{4,k}(x)
\big(1
+  |\psi_1(x)|^{p^*-1}+ |\psi_2(x)|^{p^*-1} \nonumber\\& \ +|\psi_1(x)|^{q^*-1}
 +|\psi_2(x)|^{q^*-1}
\big)|\psi_1(x)-\psi_2(x)| dx
\nonumber\\&\leq 4M
\bigg(  \sum_{k\in\mathbb{N}}\|\sigma_{4,k}
\|^{2}_{4} +\sum_{k\in\mathbb{N}}\|\sigma_{4,k} \|^2
 _{\frac{4p}{p-2(p^*-1)}}\Big(\|\psi_1\|_p^{p^*-1}
+\|\psi_2\|_p^{p^*-1} \Big)\nonumber\\&+
 \sum_{k\in\mathbb{N}}\|\sigma_{4,k} \|^2
 _{\frac{4q}{q-2(q^*-1)}}\Big(\|\psi_1\|_q^{q^*-1}
+\|\psi_2\|_q^{q^*-1} \Big)
     \bigg)
 \|\psi_1-\psi_2\|
\nonumber\\&\leq C_7(M)
\Big(1+\|\psi_1\|_p^{\frac{p}{2}}
+\|\psi_2\|_p^{\frac{p}{2}}
+\|\psi_1\|_q^{\frac{q}{2}}
 +\|\psi_2\|_q^{\frac{q}{2}}
   \Big)\|\psi_1-\psi_2\|,
\end{align}
where $C_6(M)$ and $C_7(M)$ are positive constants
depending on $M$.

In the rest of this paper, we will write the solution of \eqref{Int19-} and \eqref{Int1--} as  $\psi^\epsilon(t,\psi_0)$ in order to indicate  the dependence of the solution on the noise intensity $\epsilon\in(0,1)$, and we will use
  \eqref{h2-}-\eqref{h2--}
and \eqref{B1-}-\eqref{B1--} to establish the
Freidlin-Wentzell and Dembo-Zeitouni uniform LDPs for the family $\{\psi^\epsilon(\cdot,\psi_0): \epsilon>0,\ \psi_0\in \mathbb{H}\}$
  in $C([0,T],\mathbb{H} )\bigcap L^p([0,T], \mathbb{V}_1  )\bigcap L^q([0,T], \mathbb{V}_2)$.
\subsection{Global-in-time well-posedness of   controlled equations}
Given a control
$u\in L^2([0,T],\ell^2)$, we consider the
 controlled equation corresponding to problem  \eqref{Int1--}:
\begin{equation}\label{Int1--++}
\left\{
  \begin{aligned}
  &\frac{d \psi_u(t)}{dt}
=\sum_{i=1}^2\textbf{A}_i(t,\psi_u(t),\delta_{\psi^0(t)})
+ \mathfrak{L}_{H,S}(t,\psi_u(t),
\delta_{\psi^0(t)})u(t), \ \ t>0, \\
  &\psi_u(0)=\psi_0\in \mathbb{H},\\
  \end{aligned}
\right.
\end{equation}
where $\delta_{\psi^0(t)}$ is the Dirac measure\footnote{The Dirac
 measure $\delta_{x}$ at $x\in \mathbb{H}$ is defined by $\delta_{x}(A)=1$ if $x\in A$; and $\delta_{x}(A)=0$ if $x\notin A$, where $A\in \mathcal{B}(\mathbb{H})$.} at
 the solution $\psi^0(t)$ of the
 deterministic  equation:
\begin{equation}\label{Int1--++++}
\left\{
  \begin{aligned}
  &\frac{d \psi^0(t)}{dt}
=\sum_{i=1}^2\textbf{A}_i(t,\psi^0(t),\delta_{\psi^0(t)})
, \ \ t>0, \\
  &\psi^0(0)=\psi_0\in \mathbb{H}.\\
  \end{aligned}
\right.
\end{equation}

Next, we derive the global-in-time well-posedness and uniform estimates of solutions of \eqref{Int1--++} and \eqref{Int1--++++}.

\begin{theorem}\label{Con1}
Let conditions {\bf A} and {\bf B1} be satisfied.
Then we have:

(i) For every $T>0$ and $\psi_0\in \mathbb{H}$, problem \eqref{Int1--++++} has a unique solution $\psi^0(\cdot,\psi_0)\in
C([0,T],\mathbb{H} )\bigcap L^p([0,T], \mathbb{V}_1  )\\ \bigcap L^q([0,T], \mathbb{V}_2)$. In addition, for every $T>0$ and $R>0$, there exist $C_1(T)>0$ and $C_1(T,R)>0$ such that for all $\psi_{0}\in\mathbb{H}$ and $\psi_{0,1},\psi_{0,2}\in \overline{B}_{R}(\mathbb{H})$,
the solutions $\psi^0(\cdot,\psi_0)$, $\psi^0(\cdot,\psi_{0,1})$
and $\psi^0(\cdot,\psi_{0,2})$ of  \eqref{Int1--++++} satisfy that
\begin{align}\label{Con2}
&\|\psi^0(\cdot,\psi_0)\|^2_{C([0,T],\mathbb{H} )}+
\|\psi^0(\cdot,\psi_0)\|^p_{L^p([0,T],\mathbb{V}_1 )}+\|\psi^0(\cdot,\psi_0)\|^q_{L^q([0,T], \mathbb{V}_2)}\leq C_1(T)(1+\|\psi_0\|^2),
\end{align}
and
\begin{align}\label{Con3}&
\|\psi^0(\cdot,\psi_{0,1})-
\psi^0(\cdot,\psi_{0,2})
\|^2_{C([0,T],\mathbb{H} )}+
\|\psi^0(\cdot,\psi_{0,1})-
\psi^0(\cdot,\psi_{0,2}\|^p_{L^p([0,T], \mathbb{V}_1)}\nonumber\\&   +\|\psi^0(\cdot,\psi_{0,1})-
\psi^0(\cdot,\psi_{0,2}\|^q_{L^q([0,T], \mathbb{V}_2)}\leq C_1(T,R)\|
\psi_{0,1}-
\psi_{0,2}
\|^2.
\end{align}

(ii) For every $T>0$, $\psi_0\in \mathbb{H}$ and $u\in L^2([0,T],\ell^2)$, problem \eqref{Int1--++} has a unique solution $\psi_u(\cdot,\psi_0) \in
C([0,T],\mathbb{H} )\\ \bigcap L^p([0,T], \mathbb{V}_1  ) \bigcap L^q([0,T], \mathbb{V}_2)$. In addition, for every $T>0$ and $R>0$, there exists $C_2(T,R)>0$ such that for all $\psi_{0}\in\mathbb{H}$, $\psi_{0,1},\psi_{0,2}\in \overline{B}_{R}(\mathbb{H})$ and
$u,u_1,u_2\in \overline{B}_{R}(L^2([0,T],\ell^2))$,
the solutions $\psi_u(\cdot,\psi_0)$, $\psi_{u_1}(\cdot,\psi_{0,1})$ and $\psi_{u_2}(\cdot,\psi_{0,2})$ of \eqref{Int1--++} satisfy that
\begin{align}\label{Con4}
&\|\psi_u(\cdot,\psi_0)
\|^2_{C([0,T],\mathbb{H} )}+
\|\psi_u(\cdot,\psi_0)\|^p_{L^p([0,T],\mathbb{V}_1 )}+\|\psi_u(\cdot,\psi_0)\|^q_{L^q([0,T], \mathbb{V}_2)}\leq C_2(T,R)(1+\|\psi_0\|^2),
\end{align}
and
\begin{align}\label{Con5}&
\|\psi_{u_1}(\cdot,\psi_{0,1})-
\psi_{u_2}(\cdot,\psi_{0,2})\|^2_{C([0,T],
\mathbb{H} )}+
\|\psi_{u_1}(\cdot,\psi_{0,1})-
\psi_{u_2}(\cdot,\psi_{0,2})\|^p_{L^p([0,T], \mathbb{V}_1)}\nonumber\\&   +\|\psi_{u_1}(\cdot,\psi_{0,1})-
\psi_{u_2}(\cdot,\psi_{0,2})\|^q_{L^q([0,T], \mathbb{V}_2)}\leq C_2(T,R)
\Big(\|
\psi_{0,1}-
\psi_{0,2}
\|^2+\|u_1-u_2
\|^2_{L^2([0,T],\ell^2)}\Big).
\end{align}\end{theorem}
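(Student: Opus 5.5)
We follow the scheme of Section 2 with the noise removed. First we solve \eqref{Int1--++++} and \eqref{Int1--++} on the balls $\mathcal{Q}_k$ and then let $k\to\infty$.

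\emph{Step 1: well-posedness of \eqref{Int1--++++}.} The Dirac measure $\delta_{\psi^0(t)}$ depends on the unknown, so we use a Picard (fixed point) iteration as in the construction of $\psi_k$. For a fixed $\nu\in C([0,T],\mathbb{H})$ we consider the equation with frozen measure $\delta_{\nu(t)}$. On $\mathcal{Q}_k$ the operators $\mathbf{A}_i(t,\cdot,\delta_{\nu(t)})$ are coercive (Lemma \ref{Dissipativeness}), monotone (Lemma \ref{Monotonicity}) and hemicontinuous (Corollary \ref{cHemicontinuity}), with $\mu(\|\cdot\|^2)=\|\nu(t)\|^2$. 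The classical monotone theory for the Gelfand triples $\mathbb{V}_{i,k}\subseteq\mathbb{H}_k\subseteq\mathbb{V}_{i,k}^*$ therefore gives a solution on $\mathcal{Q}_k$. We let $k\to\infty$ exactly as in the proof of Theorem \ref{Theorem4}: weak limits, then Minty's trick, which is simpler here because there is no stochastic integral.

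Next we show the map $\nu\mapsto\psi$ is a contraction on $C([0,T_0],\mathbb{H})$ for small $T_0$. For this we use Lemma \ref{Monotonicity} together with $d_{\mathcal{P}_2}(\delta_a,\delta_b)=\|a-b\|$, and we glue the pieces to cover $[0,T]$.

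For \eqref{Con2} we apply the energy equality together with \eqref{AAoperator1a} and \eqref{aAAoperator1a}, where $\mu(\|\cdot\|^2)=\|\psi^0\|^2$, and close with Gronwall. For \eqref{Con3} we take the difference of two solutions and apply Lemma \ref{Monotonicity}. The term $\|\phi_7\|_1d^2_{\mathcal{P}_2}$ is then bounded by $\|\phi_7\|_1\|\psi^0_1-\psi^0_2\|^2$. The dissipative terms $-\frac{1}{2^{p+1}K}\|\cdot\|^p_{\dot{\mathbb{V}}_1}-2^{-p}\beta\|\cdot\|_p^p-2^{-q}\widehat\beta\|\cdot\|_q^q$ give the $L^p$ and $L^q$ parts. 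Since these appear with power $p$ rather than $2$, integrating gives $\int\|\cdot\|^p_{\mathbb{V}_1}\le C\|\psi_{0,1}-\psi_{0,2}\|^2$, which is exactly the form required.

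\emph{Step 2: well-posedness of \eqref{Int1--++} and \eqref{Con4}.} Here the measure argument $\delta_{\psi^0(t)}$ is already known by Step 1. The problem is therefore a non-distribution-dependent monotone equation with the extra drift $\mathfrak{L}_{H,S}(t,\psi_u,\delta_{\psi^0})u(t)$. Existence again follows by the domain expansion and monotone method. The new term is bounded by $|\langle\mathfrak{L}_{H,S}u,\psi\rangle|\le\|\mathfrak{L}_{H,S}\|^2_{\mathcal{L}_2}+\frac14\|u\|_{\ell^2}^2\|\psi\|^2$, and for the first piece we use \eqref{h2-} with $\varepsilon_1=\beta/2$ and $\varepsilon_2=\widehat\beta/2$, so that it is absorbed by the dissipation. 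Gronwall with weight $\exp(\int_0^T(1+\|u\|_{\ell^2}^2))\le e^{T+R^2}$ then yields \eqref{Con4}, using \eqref{Con2} for $\|\psi^0\|^2$.

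\emph{Step 3: the Lipschitz estimate \eqref{Con5}, which we expect to be the main obstacle.} Set $w=\psi_{u_1}-\psi_{u_2}$. The drift difference is handled by Lemma \ref{Monotonicity}. The measure difference vanishes only when $\psi_{0,1}=\psi_{0,2}$; otherwise it is controlled by \eqref{Con3}. The control term splits as
\[
\big\langle(\mathfrak{L}_{H,S}(\psi_{u_1})-\mathfrak{L}_{H,S}(\psi_{u_2}))u_1,w\big\rangle+\big\langle\mathfrak{L}_{H,S}(\psi_{u_2})(u_1-u_2),w\big\rangle .
\]
For the first part we bound it by $\|u_1\|_{\ell^2}\|\mathfrak{L}_{H,S}(\psi_{u_1})-\mathfrak{L}_{H,S}(\psi_{u_2})\|_{\mathcal{L}_2}\|w\|$. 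Then \eqref{h2--} with $M=\|u_1\|^2_{\ell^2}$ would make the constant $C_4$ depend on $u_1(t)$ pointwise, which is not integrable in general.

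We will therefore instead use \eqref{B1--}, which is where condition \textbf{B1} is needed. It gives
\[
\|\mathfrak{L}_{H,S}(\psi_{u_1})-\mathfrak{L}_{H,S}(\psi_{u_2})\|_{\mathcal{L}_2}\le C\big(1+\|\psi_{u_1}\|_p^{p/4}+\|\psi_{u_2}\|_p^{p/4}+\|\psi_{u_1}\|_q^{q/4}+\|\psi_{u_2}\|_q^{q/4}\big)\|w\|^{1/2}.
\]
Here is the catch: this bound carries only $\|w\|^{1/2}$, so a naive application leads to a $\|w\|^{3/2}$ term. The fallback is to combine with \eqref{h2--}, applied with $\varepsilon_1,\varepsilon_2$ small so that the weighted terms $\int(|\psi_1|^{p-2}+|\psi_2|^{p-2})|w|^2$ are absorbed by the corresponding terms in Lemma \ref{Monotonicity}. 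To get an integrable coefficient we use Young's inequality: the control term is at most $\frac{1}{2}\|\mathfrak{L}_{H,S}(\psi_{u_1})-\mathfrak{L}_{H,S}(\psi_{u_2})\|^2_{\mathcal{L}_2}+\frac12\|u_1\|^2_{\ell^2}\|w\|^2$. The first term is handled by \eqref{h2--} with $M=1/2$ and absorbed, and the second has the integrable Gronwall coefficient $\|u_1(t)\|^2_{\ell^2}$.

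For the other part, $\langle\mathfrak{L}_{H,S}(\psi_{u_2})(u_1-u_2),w\rangle\le\|\mathfrak{L}_{H,S}(\psi_{u_2})\|^2_{\mathcal{L}_2}\|w\|^2+\|u_1-u_2\|^2_{\ell^2}$ is not enough, because $\|\mathfrak{L}_{H,S}(\psi_{u_2})\|^2_{\mathcal{L}_2}$ is only known to be $L^1$ in time, via \eqref{B1-} and \eqref{Con4}. That still suffices as a Gronwall coefficient, since $\int_0^T(1+\|\psi_{u_2}\|_p^{p/2}+\|\psi_{u_2}\|_q^{q/2})\|\psi_{u_2}\|dt\le C(T,R)$. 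Gronwall with coefficient $\mathbf{G}+\|u_1\|_{\ell^2}^2+\|\mathfrak{L}_{H,S}(\psi_{u_2})\|^2_{\mathcal{L}_2}$ then gives the $C([0,T],\mathbb{H})$ bound, and the $p$- and $q$-dissipation terms give the remaining norms in \eqref{Con5}. Uniqueness follows from \eqref{Con5} with equal data.
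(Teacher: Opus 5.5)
Your proposal is correct and follows the paper's proof. It uses the same energy and Gronwall arguments for \eqref{Con2}--\eqref{Con4}, and for \eqref{Con5} it uses the same steps:
\begin{itemize}
\item split the control term into $\langle\mathfrak{L}_{H,S}(\psi_{u_2})(u_1-u_2),w\rangle$ and $\langle(\mathfrak{L}_{H,S}(\psi_{u_1})-\mathfrak{L}_{H,S}(\psi_{u_2}))u_1,w\rangle$;
\item apply Young's inequality so that $\|u_1\|_{\ell^2}^2$ multiplies $\|w\|^2$;
\item use \eqref{h2--} with $M=1/2$ and absorb the weighted terms into Lemma \ref{Monotonicity};
\item control the measure terms by \eqref{Con3};
\item close with Gronwall using an $L^1$-in-time coefficient.
\end{itemize}

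You can delete the detour through \eqref{B1--}: only \eqref{h2-}--\eqref{h2--}, i.e. condition \textbf{B}, are actually needed here. The paper bounds $\|\mathfrak{L}_{H,S}(\psi_{u_2})\|^2_{\mathcal{L}_2(\ell^2,\mathbb{H})}$ via \eqref{h2-} rather than \eqref{B1-}, and either works.
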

\begin{proof}
Following the proof of Theorem \ref{Theorem4}, one can prove the existence and uniqueness of solutions of \eqref{Int1--++} and \eqref{Int1--++++} for  every $\psi_0\in \mathbb{H}$ and $u\in L^2([0,T],\ell^2)$.
The proof of \eqref{Con2}-\eqref{Con3} is samilar to that of \eqref{Con4}-\eqref{Con5}, and so we only prove \eqref{Con4}-\eqref{Con5}.

Let $\psi_u(t):=\psi_u(t,\psi_0)$ and
$\psi^0(t):=\psi^0(t,\psi_0)$ for $t\in[0,T]$.
By \eqref{Int1--++}, we find that
\begin{align}\label{qqqqqq1}	\frac{1}{2}\frac{d}{dt}\|\psi_u(t)
\|^2
		&= \underbrace{\sum_{i=1}^2\ _{\mathbb{V}_i^*}\langle \mathbf{A}_i(t,\psi_u(t),\delta_{\psi^0(t)}),
\psi_u(t)\rangle_{\mathbb{V}_i}}_{:=\textbf{L}_1(t)}
     +\underbrace{\langle\mathfrak{L}_{H,S}(t,\psi_u(t),
\delta_{\psi^0(t)})u(t) ,\psi_u(t)\rangle}_{:=\textbf{L}_2(t)}.
\end{align}
By Lemma \ref{Dissipativeness},
we have
\begin{align}\label{qqqqqq2}
\textbf{L}_1(t)&\leq
-\frac{1}{2K}\| \psi_u(t)\|_{\dot{\mathbb{V}}_1}^p-
\beta\| \psi_u(t)\|_{p}^{p}-
\widehat{\beta}\| \psi_u(t)\|_{\mathbb{V}_2}^{q}
 +(\|\phi_1(t)
\|_{1}+\|\widehat{\phi}_1(t)
\|_{1})
\|\psi^0(t)\|^2
+\|\phi_2(t)
\|_{1}+\|\widehat{\phi}_2(t)
\|_{1}.
	\end{align}
By \eqref{h2-}(for $M=1/2$, $\varepsilon_1=\beta/2$ and
$\varepsilon_2=\widehat{\beta}/2$) and \eqref{ffnewA2a0}, we know that
\begin{align}\label{qqqqqq3}
\textbf{L}_2(t)&\leq
\|\mathfrak{L}_{H,S}(t,\psi_u(t),
\delta_{\psi^0(t)})\|_{_{\mathcal{L}_2(\ell^2
 ,\mathbb{H})}}\|u(t)\|_{\ell^2}\|\psi_u(t)\|
 \nonumber\\& \leq\frac{1}{2}\|u(t)\|_{\ell^2}^2
\|\psi_u(t)\|^2 +
\frac{\beta}{2}\| \psi_u(t)\|_{p}^{p}+
\frac{\widehat{\beta}}{2}\| \psi_u(t)\|_{\mathbb{V}_2}^{q}+
 C_2(1/2,\beta/2,\widehat{\beta}/2)
\big(1+\|\psi^0(t)\|^2\big).
	\end{align}
Then it follows from \eqref{Con2} and
\eqref{qqqqqq1}-\eqref{qqqqqq3} that there exists a constant $K_1(T)>0$ such that for all $\psi_{0}\in \mathbb{H}$, $u\in \overline{B}_{R}(L^2([0,T],\ell^2))$ and $t\in [0,T]$,
\begin{align}\label{qqqqqq4}	\frac{d}{dt}\|\psi_u(t)
\|^2+(K^{-1}\wedge\beta)\| \psi_u\|_{\mathbb{V}_1}^p+
\widehat{\beta}\| \psi_u\|_{\mathbb{V}_2}^q&
\leq\|u(t)\|_{\ell^2}^2
\|\psi_u(t)\|^2+K_1(T)(1+\|\psi_0\|^2)\mathbb{G}(t),
\end{align}
where $\mathbb{G}(t):=1+ \|\phi_1(t)
\|_{1}+\|\widehat{\phi}_1(t)
\|_{1}
+\|\phi_2(t)
\|_{1}+\|\widehat{\phi}_2(t)
\|_{1}$ is an integrable function
on $[0,T]$. Then \eqref{Con4} follows from
\eqref{qqqqqq4}.
	
Let
$\psi_{1}(t):=\psi_{u_1}(t,\psi_{0,1})$ and $\psi_{2}(t):=\psi_{u_2}(t,\psi_{0,2})$ for $t\in[0,T]$. Then by \eqref{Int1--++}, we find that
\begin{align}\label{qqqqqq5}	\frac{1}{2}\frac{d}{dt}\|\psi_1(t)-
\psi_2(t)
\|^2
		&= \underbrace{\sum_{i=1}^2\ _{\mathbb{V}_i^*}\langle \mathbf{A}_i(t,\psi_1(t),\delta_{\psi_1^0(t)})
-\mathbf{A}_i(t,\psi_2(t),\delta_{\psi_2^0(t)})
,
\psi_1(t)-\psi_2(t)
\rangle_{\mathbb{V}_i}}_{:=\textbf{L}_3(t)}
 \nonumber\\
 &\
    +\underbrace{\langle \mathfrak{L}_{H,S}(t,\psi_2(t),
\delta_{\psi_2^0(t)})\big(u_1(t)-u_2(t)\big) ,\psi_1(t)-\psi_2(t)
\rangle}_{:=\textbf{L}_4(t)}
\nonumber\\
&+
\underbrace{\langle(
    \mathfrak{L}_{H,S}(t,\psi_1(t),
\delta_{\psi_1^0(t)}) - \mathfrak{L}_{H,S}(t,\psi_2(t),
\delta_{\psi_2^0(t)}))u_1(t) ,\psi_1(t)-\psi_2(t)
\rangle}_{:=\textbf{L}_5(t)}
.
\end{align}
By the strong monotonicity of $\mathbf{A}_1$ and $\mathbf{A}_2$ in Lemma \ref{Monotonicity}, one can deduce that
\begin{align}\label{qqqqqq6}	
\textbf{L}_3(t)\leq&-\frac{1}{2^{p+1}K}\|\psi_1(t)-
\psi_2(t)\|^p_{\dot{\mathbb{V}}_1}-
2^{-p}\beta\|\psi_1(t)-\psi_2(t)\|^p_p
-2^{-q}\widehat{\beta}
\|\psi_1(t)-\psi_2(t)\|_{\mathbb{V}_2}^{q}
\nonumber\\& -\frac{\beta}{4} \int_{\mathbb{R}^d}  \big(|\psi_1(t,x)|^{p-2}+|\psi_2(t,x)|^{p-2}\big)
|\psi_1(t,x)-\psi_2(t,x)|^2dx
\nonumber\\& -\frac{\widehat{\beta}}{4} \int_{\mathbb{R}^d}  \big(|\psi_1(t,x)|^{q-2}+|\psi_2(t,x)|^{q-2}\big)
|\psi_1(t,x)-\psi_2(t,x)|^2dx
\nonumber\\&+(\|\phi_6(t)\|_{\infty}+\|\widehat{\phi}_6(t)\|_{\infty})
\|\psi_1(t)-\psi_2(t)\|^2
+(\|\phi_7(t)\|_{1}+\|\widehat{\phi}_7(t)\|_{1})
\|\psi_1^0(t)-\psi_2^0(t)\|^2.
\end{align}
It follows from \eqref{h2-}(for $M=1/2$, $\varepsilon_1=1/2$ and
$\varepsilon_2=1/2$) and \eqref{ffnewA2a0} that
\begin{align}\label{qqqqqq7}
\textbf{L}_4(t)&\leq
\frac{1}{2}
\| \mathfrak{L}_{H,S}(t,\psi_2(t),
\delta_{\psi^0_2(t)})\|_{\mathcal{L}_2(\ell^2
 ,\mathbb{H})}^2\|\psi_1(t)-\psi_2(t) \|^2
+
\frac{1}{2}\|u_1(t)-u_2(t)\|_{\ell^2}^2
\nonumber\\&\leq
\Big(\frac{1}{2}\| \psi_2(t)\|_{p}^p+
\frac{1}{2}\| \psi_2(t)\|_{\mathbb{V}_2}^{q}+
C_2(1/2,1/2,1/2)
\big(1+\|\psi^0_2(t)\|^2
\big)\Big)\|\psi_1(t)-\psi_2(t) \|^2
\nonumber\\&\ +\frac{1}{2}\|u_1(t)-u_2(t)\|_{\ell^2}^2.
\end{align}
By \eqref{h2--} (for $M=1/2$, $\varepsilon_1=\beta/4$ and
$\varepsilon_2=\widehat{\beta}/4$) and \eqref{h2----0}, we find that
\begin{align}\label{qqqqqq8}
\textbf{L}_5(t)&\leq
\frac{1}{2}\| \mathfrak{L}_{H,S}(t,\psi_1(t),
\delta_{\psi^0_1(t)})-\mathfrak{L}_{H,S}(t,\psi_2(t),
\delta_{\psi^0_2(t)})
\|_{_{\mathcal{L}_2(\ell^2
 ,\mathbb{H})}}^2+\frac{1}{2}
\|u_1(t)\|_{\ell^2}^2\|\psi_1(t)-\psi_2(t) \|^2
\nonumber\\&\leq\frac{\beta}{4} \int_{\mathbb{R}^d}  \big(|\psi_1(t,x)|^{p-2}+|\psi_2(t,x)|^{p-2}\big)
|\psi_1(t,x)-\psi_2(t,x)|^2dx
\nonumber\\& +\frac{\widehat{\beta}}{4} \int_{\mathbb{R}^d}  \big(|\psi_1(t,x)|^{q-2}+|\psi_2(t,x)|^{q-2}\big)
|\psi_1(t,x)-\psi_2(t,x)|^2dx
\nonumber\\&+\bigg(\frac{1}{2}
\|u_1(t)\|_{\ell^2}^2+C_4(1/2,\beta/4,
\widehat{\beta}/4)
\bigg)\|\psi_1(t)-\psi_2(t) \|^2
+C_5(1/2)\|\psi_1^0(t)-\psi_2^0(t) \|^2 .
\end{align}

As a cnsequence of \eqref{Con2}-\eqref{Con3}
 and \eqref{qqqqqq5}-\eqref{qqqqqq8}, we find that there exists $K_2(R, T)>0$ such that
for all $\psi_{0,1},\psi_{0,2}\in \overline{B}_{R}(\mathbb{H})$ and
$u_1,u_2\in \overline{B}_{R}(L^2([0,T],\ell^2))$,
\begin{align}\label{qqqqqq9}	& \frac{d}{dt}\|\psi_1(t)-
\psi_2(t)\|^2+\Big(\frac{1}{2^pK}\wedge\frac{\beta}{2^{p-1}}
\Big)\|\psi_1(t)-
\psi_2(t)\|^p_{\mathbb{V}_1}
+2^{1-q}\widehat{\beta}
\|\psi_1(t)-\psi_2(t)\|_{\mathbb{V}_2}^{q}
\nonumber\\&
 \leq \big(\| \psi_2(t)\|_{\mathbb{V}_1}^p+
\| \psi_2(t)\|_{q}^{q}+\|u_1(t)\|_{\ell^2}^2+
K_2\widehat{\mathbb{G}}(t)\big)\|\psi_1(t)-\psi_2(t) \|^2
\nonumber\\&
+
K_2 \|\psi_{0,1} -\psi_{0,2}
 \|^2
+\|u_1(t)-u_2(t)\|_{\ell^2}^2,
\end{align}
where
$\widehat{\mathbb{G}}(t):=1 +
\|\phi_6(t)\|_{\infty}
+\|\widehat{\phi}_6(t)\|_{\infty}
+
  \| \phi_7(t)\|_1
+ \| \widehat{\phi}_7(t)\|_1
 $ in an integrable function
on $[0,T]$. Then \eqref{Con5} follows from
\eqref{qqqqqq9} and \eqref{Con4}. The proof is completed.
\end{proof}

\subsection{Uniform tail-ends estimates for deterministic controlled equations}
We derive uniform
tail-ends estimates of solutions to
\eqref{Int1--++} for the purpose of showing
the continuity of the solution $\psi_u$ of \eqref{Int1--++} in $u$
from the weak topology of
$L^2([0,T],\ell^2)$ to the strong topology of  $C([0,T],\mathbb{H} )\bigcap L^p([0,T], \mathbb{V}_1  ) \bigcap L^q([0,T], \mathbb{V}_2)$. The idea of uniform tail-ends estimates developed in \cite{Wangphysd} has
been used to discuss other problems with lack of compactness such as the existence and stability of attractors and invariant measures
 of infinite-dimensional systems
 \cite{chenp0, Chenpa,Caraballo4,
Gu2018jde,Kinra24,LIding1,Wang2011Tran,wangjfa,
wangjde2019,WangGUOWANG,Wang-wangb,
Wang-wangb2}.

Let $\rho:\mathbb{R}^d\mapsto[0,1]$ be a smooth function such that
$\rho(x)=0$ for $|x|\leq1/2$; and
$\rho(x)=1$ for $|x|\geq1$. Denote by $\rho_n(x):=\rho(x/n)$ for $x\in\mathbb{R}^d$ and $n\in\mathbb{N}$.
By
the definitions of $\rho_n$ and $\mathcal{K}_{p}^\alpha$, as in \cite{Wang-wangb,Wang-wangb2,RWangsubmitted},
one can verify:

\begin{lemma}\label{Prpgiikgk.jkj}
For the cut-off function $\rho_n$ and the kernel function
$\mathcal{K}_{p}^\alpha$, we have:
\begin{align*}&
\sup_{x\in\mathbb{R}^d}\int_{\mathbb{R}^d}
|\rho_n(x)-
\rho_n(y)|^p
\mathcal{K}_{p}^\alpha(x,y)dy\leq
C_{p,\alpha}n^{-p\alpha},
\\&\sup_{y\in\mathbb{R}^d}
\int_{\mathbb{R}^d}|\rho_n(x)
-\rho_n(y)|^p
\mathcal{K}_{p}^\alpha(x,y)dx\leq C_{p,\alpha}n^{-p\alpha},
\end{align*}
and for any $\psi\in\mathbb{V}_1$,
\begin{align*}&
-\int_{\mathbb{R}^d}\int_{\mathbb{R}^d}
|\psi(x)-\psi(y)|^{p-2}\big
(\psi(x)-\psi(y)\big)
\big(\rho^2_n(x)\psi(x)-
\rho^2_n(y)\psi(y)\big)
\mathcal{K}_p^\alpha(x,y)
\\&\leq C_{p,\alpha}
n^{-\alpha}
\|\psi\|^{p
}_{\mathbb{V}_1}
-\int_{\mathbb{R}^d}\int_{\mathbb{R}^d}
\rho^2_n(x)|\psi(x)-\psi(y)|^{p}
\mathcal{K}_p^\alpha(x,y)
 dxdy,\end{align*}
where $C_{p,\alpha}>0$ is a constant independent of $n$ and $\psi$.
\end{lemma}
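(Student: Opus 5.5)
The two kernel bounds follow from a near/far splitting, and the third from an exact algebraic identity for the test function $\rho_n^2\psi$ followed by Young's inequality.

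\emph{Kernel bounds.} By symmetry of $\mathcal{K}_p^\alpha$ it suffices to bound the first supremum. Fix $x$ and use the growth property $\mathcal{K}_p^\alpha(x,y)\le K|x-y|^{-(d+\alpha p)}$. Since $\rho_n(x)=\rho(x/n)$, we have $|\rho_n(x)-\rho_n(y)|\le \|\nabla\rho\|_\infty n^{-1}|x-y|$, and also $|\rho_n(x)-\rho_n(y)|\le 1$. Split the $y$-integral at $|x-y|=n$:
\begin{align*}
\int_{|x-y|<n} n^{-p}|x-y|^{p-d-\alpha p}\,dy+\int_{|x-y|\ge n}|x-y|^{-d-\alpha p}\,dy\le C\big(n^{-p}n^{p-\alpha p}+n^{-\alpha p}\big)=Cn^{-\alpha p}.
\end{align*}
The near integral converges because $p-\alpha p>0$, and the far one because $\alpha p>0$.

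\emph{The $\psi$-inequality.} Write $\delta\psi:=\psi(x)-\psi(y)$. Use the algebraic identity
\begin{align*}
\rho_n^2(x)\psi(x)-\rho_n^2(y)\psi(y)=\rho_n^2(x)\,\delta\psi+\psi(y)\big(\rho_n^2(x)-\rho_n^2(y)\big).
\end{align*}
Multiplying by $-|\delta\psi|^{p-2}\delta\psi\,\mathcal{K}_p^\alpha$ and integrating, the first piece produces exactly the negative term $-\iint\rho_n^2(x)|\delta\psi|^p\mathcal{K}_p^\alpha$ in the statement. For the remainder, use $|\rho_n^2(x)-\rho_n^2(y)|\le 2|\rho_n(x)-\rho_n(y)|$, since $0\le\rho_n\le1$. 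Then apply H\"older with exponents $\widehat p=p/(p-1)$ and $p$, with respect to the measure $\mathcal{K}_p^\alpha\,dx\,dy$:
\begin{align*}
2\iint|\delta\psi|^{p-1}|\psi(y)||\rho_n(x)-\rho_n(y)|\mathcal{K}_p^\alpha\le 2\Big(\iint|\delta\psi|^p\mathcal{K}_p^\alpha\Big)^{\frac{p-1}{p}}\Big(\iint|\psi(y)|^p|\rho_n(x)-\rho_n(y)|^p\mathcal{K}_p^\alpha\Big)^{\frac1p}.
\end{align*}
The first factor is at most $K^{(p-1)/p}\|\psi\|_{\dot{\mathbb{V}}_1}^{p-1}$. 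For the second factor, integrate in $x$ first and apply the second kernel bound (the $\sup_y$ one). This gives at most $(C_{p,\alpha}n^{-\alpha p}\|\psi\|_p^p)^{1/p}=C n^{-\alpha}\|\psi\|_p$. Since $\|\psi\|_{\dot{\mathbb{V}}_1}^{p-1}\|\psi\|_p\le\|\psi\|_{\mathbb{V}_1}^p$, the remainder is bounded by $C_{p,\alpha}n^{-\alpha}\|\psi\|_{\mathbb{V}_1}^p$, as required.

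\emph{Expected difficulty.} There is no real obstacle; the only care needed is in the bookkeeping. Specifically, one must keep $\psi(y)$ rather than $\psi(x)$ in the cross term so that the $\sup_y$ kernel bound applies. One must also check that the factor $\rho_n^2(x)$ in the retained term matches the statement exactly. Finally, convergence of the near integral is guaranteed by $\alpha<1$, which ensures $p-\alpha p>0$.
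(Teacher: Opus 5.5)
Your proof is correct and is essentially the verification the paper intends. The paper writes out no details and only defers to earlier works, where the cut-off bounds come from the same near/far splitting at $|x-y|=n$ (using $|\rho_n(x)-\rho_n(y)|\le\min\{1,\|\nabla\rho\|_\infty|x-y|/n\}$). The $\psi$-inequality there comes from the same decomposition of $\rho_n^2\psi$, H\"older's inequality with respect to $\mathcal{K}_p^\alpha\,dx\,dy$, and the $\sup_y$ cut-off bound.

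Two cosmetic points. First, no Young inequality is needed, since $\|\psi\|_{\dot{\mathbb{V}}_1}^{p-1}\|\psi\|_p\le\|\psi\|_{\mathbb{V}_1}^p$ directly. Second, keeping $\psi(x)$ instead of $\psi(y)$ in the cross term would work just as well: the symmetry of $\mathcal{K}_p^\alpha$ turns the resulting $\rho_n^2(y)$ term into the $\rho_n^2(x)$ term, and the $\sup_x$ bound then handles the cross term.
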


By using the results in Lemma \ref{Prpgiikgk.jkj} and the idea of uniform tail-ends
estimates, we next show that the solution $\psi_u$ of \eqref{Int1--++} is uniformly small in $L^2(\mathbb{R}^d)$ when the space variable is sufficiently large.

\begin{lemma}\label{tightness2ddfhtt}
Let conditions  {\bf A} and {\bf B1}
be satisfied. Then for every  $\delta>0$, $T>0$, $R>0$
and compact subset
$\mathcal{K}\subseteq \mathbb{H}$,
there exists $N:=N(\delta,T,R,\mathcal{K})
\in\mathbb{N}$ such that the solution $\psi_u$ of \eqref{Int1--++} satisfies
\begin{align*}&
\sup_{n\geq N}\sup_{t\in[0,T]}\sup_{u\in \overline{B}_{R}(L^2([0,T],\ell^2))}
\sup_{\psi_0\in\mathcal{K}}\int_{ {\mathbb{R}}^d \setminus
\mathcal{Q}_n}|\psi_u(t,\psi_0,x)|^2
dx
<\delta,
\end{align*}
where $\mathcal{Q}_n:=\{x\in\mathbb{R}^d:|x|< n\}$.
\end{lemma}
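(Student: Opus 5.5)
The plan is the standard tail-ends argument: test \eqref{Int1--++} against $\rho_n^2\psi_u$, where $\rho_n$ is the cut-off of Lemma \ref{Prpgiikgk.jkj}, and show that $\|\rho_n\psi_u(t)\|^2$ is small uniformly in $t\in[0,T]$, $u\in\overline{B}_R(L^2([0,T],\ell^2))$ and $\psi_0\in\mathcal{K}$ once $n$ is large. Since $\rho_n=1$ on $\mathbb{R}^d\setminus\mathcal{Q}_n$, this bounds the tail integral in the statement.

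\emph{Step 1 (energy identity).} Write
\begin{align*}
\frac{1}{2}\frac{d}{dt}\|\rho_n\psi_u\|^2=\sum_{i=1}^2\ _{\mathbb{V}_i^*}\langle \mathbf{A}_i(t,\psi_u,\delta_{\psi^0(t)}),\rho_n^2\psi_u\rangle_{\mathbb{V}_i}+\langle \mathfrak{L}_{H,S}(t,\psi_u,\delta_{\psi^0(t)})u(t),\rho_n^2\psi_u\rangle .
\end{align*}
Since $\rho_n^2\psi_u\in\mathbb{V}_1\cap\mathbb{V}_2$, this follows from the chain rule in the $\mathbb{V}$-$\mathbb{H}$ setting.

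\emph{Step 2 (the nonlocal term).} The fractional part of $\mathbf{A}_1$ contributes $-\frac12\iint|\psi(x)-\psi(y)|^{p-2}(\psi(x)-\psi(y))(\rho_n^2(x)\psi(x)-\rho_n^2(y)\psi(y))\mathcal{K}_p^\alpha\,dxdy$. Lemma \ref{Prpgiikgk.jkj} bounds this by $C n^{-\alpha}\|\psi_u\|_{\mathbb{V}_1}^p$ plus a nonpositive term, which we drop.

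\emph{Step 3 (the remaining terms).} For the reaction terms, \eqref{f1} and \eqref{ff1} give
\begin{align*}
\int\rho_n^2H_i\psi_u\,dx\leq -\beta\int\rho_n^2|\psi_u|^p-\widehat\beta\int\rho_n^2|\psi_u|^q+\int_{|x|\geq n/2}(\phi_1+\widehat\phi_1)\,dx\,\|\psi^0\|^2+\int_{|x|\geq n/2}(\phi_2+\widehat\phi_2)\,dx .
\end{align*}
For the control term, use Cauchy--Schwarz together with \eqref{h2new}, keeping the weight:
\begin{align*}
\langle\mathfrak{L}_{H,S}u,\rho_n^2\psi_u\rangle\leq\frac12\|u(t)\|_{\ell^2}^2\|\rho_n\psi_u\|^2+\frac12\sum_k\int\rho_n^2|\sigma_k|^2\,dx .
\end{align*}
The last sum is bounded by
\begin{align*}
\sum_k\int\rho_n^2\bigl(\sigma_{1,k}|\psi_u|^{p^*}+\sigma_{2,k}|\psi_u|^{q^*}+\sigma_{3,k}(1+\|\psi^0\|^2)\bigr)dx .
\end{align*}
Young's inequality gives $\sigma_{1,k}|s|^{p^*}\leq\varepsilon|s|^p+C_\varepsilon\sigma_{1,k}^{p/(p-p^*)}$, and similarly for $\sigma_{2,k}$. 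The $\varepsilon$-parts are absorbed by the weighted dissipation $-\beta\int\rho_n^2|\psi_u|^p$ and $-\widehat\beta\int\rho_n^2|\psi_u|^q$. Everything left is either a tail integral over $\{|x|\geq n/2\}$ of a fixed integrable function (of $\phi_1,\phi_2,\widehat\phi_1,\widehat\phi_2,\sigma_{3,k}$ and $\sum_k\sigma_{1,k}^{p/(p-p^*)}$), or the factor $n^{-\alpha}\|\psi_u\|_{\mathbb{V}_1}^p$.

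\emph{Step 4 (the main obstacle).} The Young step needs $\sum_k\|\sigma_{1,k}\|^{p/(p-p^*)}_{L^{p/(p-p^*)}(|x|\geq n/2)}\to0$. This requires care with the $\ell^1$-summation, but dominated convergence over $k$ works because $\sigma_1\in\ell^1(\mathbb{N},L^{p/(p-p^*)})$. The tail integrals of the $\phi_j$ also need a bit of care since these functions are only $L^1$ in time. I will use $\phi_1\in L^\infty_{loc}(\mathbb{R}^+,L^1)$, and for $\phi_2\in L^1_{loc}(\mathbb{R}^+,L^1)$ apply dominated convergence in $t$ to get $\int_0^T\int_{|x|\geq n/2}\phi_2\,dx\,dt\to0$.

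\emph{Step 5 (conclusion).} Gronwall's inequality with weight $\|u\|_{\ell^2}^2$ yields
\begin{align*}
\|\rho_n\psi_u(t)\|^2\leq e^{R^2}\Bigl(\|\rho_n\psi_0\|^2+Cn^{-\alpha}\int_0^T\|\psi_u\|_{\mathbb{V}_1}^p\,dr+\eta_n\Bigr),
\end{align*}
where $\eta_n\to0$ collects the tails from Steps 3--4, with the $\|\psi^0\|^2$ factors bounded by \eqref{Con2} and $\mathcal{K}$ bounded. The middle term is bounded by \eqref{Con4}, uniformly over the bounded set $\mathcal{K}$ and $\overline{B}_R$. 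Finally, $\sup_{\psi_0\in\mathcal{K}}\|\rho_n\psi_0\|\to0$ because $\mathcal{K}$ is compact: cover it by a finite $\delta$-net and use the tail smallness of each net element.
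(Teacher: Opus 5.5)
Your overall route is the paper's: test \eqref{Int1--++} with $\rho_n^2\psi_u$, and control the nonlocal term by Lemma \ref{Prpgiikgk.jkj}. Then absorb the superlinear diffusion into the weighted dissipation and apply Gronwall with weight $\|u(t)\|_{\ell^2}^2$. Finish with the compactness of $\mathcal{K}$, \eqref{Con2}, \eqref{Con4} and the tail integrals of the $\phi_j$ and $\sigma_{j,k}$. One step, however, fails as written.

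In Step 3 you apply Young's inequality separately for each $k$ with a fixed $\varepsilon$, namely $\rho_n^2\sigma_{1,k}|\psi_u|^{p^*}\le\varepsilon\rho_n^2|\psi_u|^p+C_\varepsilon\rho_n^2\sigma_{1,k}^{p/(p-p^*)}$, and then sum over $k$. The $\varepsilon$-parts then add up to $\big(\sum_{k\in\mathbb{N}}\varepsilon\big)\int\rho_n^2|\psi_u|^p\,dx$. This is infinite as soon as infinitely many $\sigma_{1,k}$ are nonzero, so nothing can be absorbed by $-\beta\int\rho_n^2|\psi_u|^p\,dx$. The same problem occurs for $\sigma_{2,k}$ and the $q$-dissipation.

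The fix is to sum over $k$ before using Young. There are two ways to do this:
\begin{itemize}
\item Apply Young pointwise to $\Sigma_1:=\sum_k\sigma_{1,k}$. By Minkowski, $\|\Sigma_1\|_{L^{p/(p-p^*)}(|x|\ge n/2)}\le\sum_k\|\sigma_{1,k}\|_{L^{p/(p-p^*)}(|x|\ge n/2)}$.
\item As the paper does, apply H\"older for each $k$, using $\rho_n^{2p/p^*}\le\rho_n^2$ to keep the weight. Then sum over $k$, and only afterwards apply Young.
\end{itemize}

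Consequently, the quantity that must tend to zero is $\big(\sum_k\|\sigma_{1,k}\|_{L^{p/(p-p^*)}(|x|\ge n/2)}\big)^{p/(p-p^*)}$, which is the first piece of the paper's $\mathbb{F}_n$. It is not the $\sum_k\|\sigma_{1,k}\|^{p/(p-p^*)}_{L^{p/(p-p^*)}(|x|\ge n/2)}$ you wrote in Step 4. The correct quantity does vanish, by dominated convergence in $k$, since $\sigma_1\in\ell^1(\mathbb{N},L^{p/(p-p^*)}(\mathbb{R}^d))$. With this correction the rest of your argument goes through unchanged.
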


\begin{proof}
By \eqref{Int1--++}, we find
\begin{align}\label{qqqqqq164644}	\frac{1}{2}\frac{d}{dt}\|\rho_n\psi_u(t)
\|^2
		&= \underbrace{\sum_{i=1}^2\ _{\mathbb{V}_i^*}\langle \mathbf{A}_i(t,\psi_u(t),\delta_{\psi^0(t)}),
\rho^2_n\psi_u(t)\rangle_{\mathbb{V}_i}
}_{:=\textbf{L}_6(t,n)}
     +\underbrace{\langle\mathfrak{L}_{H,S}(t,\psi_u(t),
\delta_{\psi^0(t)})u(t) ,\rho^2_n\psi_u(t)\rangle}_{:=\textbf{L}_7(t,n)}.
\end{align}
By \eqref{f1}, \eqref{ff1} and Lemma \ref{Prpgiikgk.jkj}, one can infer that the term $\textbf{L}_6(t,n)$ with the nonlinear non-local
fractional $(\alpha,p)$-Laplace
operator $\mathfrak{L}_{\mathcal{K}_{p}^\alpha}$ satisfies that
\begin{align}\label{qqqqqq164644-}
\textbf{L}_6(t,n)&\leq-
 \int_{\mathbb{R}^d}\rho^2_n(x)(
 \beta|\psi_u(t,x)|^p+\widehat{\beta}
 |\psi_u(t,x)|^q)dx
\nonumber \\&\ \ -\frac{1}{2}\int_{\mathbb{R}^d}\int_{\mathbb{R}^d}
\rho^2_n(x)|\psi_u(t,x)-\psi_u(t,y)|^{p}
\mathcal{K}_p^\alpha(x,y)dxdy
 \nonumber \\&\
\ +\frac{1}{2} C_{p,\alpha}
n^{-\alpha}
\|\psi_u(t)\|^{p
}_{\mathbb{V}_1}
 +
 \int_{|x|\geq n/2}
 \big(|\phi_1(t,x)|
 +|\widehat{\phi}_1(t,x)| +|\phi_2(t,x)|
 +|\widehat{\phi}_2(t,x)|
 \big)dx\big(1+\|\psi^0(t)\|^2\big).
\end{align}
Using the arguments of  \eqref{h2-} and \eqref{ffnewA2a0}, by \eqref{h2}, we find that the term $\textbf{L}_7(t,n)$ involing the superlinearly growing diffusion term is bounded by
\begin{align}\label{qqqqqq164644--}
\textbf{L}_7(t,n)&\leq\frac{1}{2}\|\rho_n\psi_u(t)
\|^2\|u(t)\|^2_{\ell^2}+\frac{1}{2}\|
\rho_n\mathfrak{L}_{H,S}(t,\psi_u(t),
\delta_{\psi^0(t)})\|^2_{\mathcal{L}_2(\ell^2
 ,\mathbb{H})}
\nonumber \\&=\frac{1}{2}\|\rho_n\psi_u(t)
\|^2\|u(t)\|^2_{\ell^2}+\frac{1}{2}
\|\rho_n\sigma(\cdot,
\psi_u(t),
\delta_{\psi^0(t)})
\|^2_{
\ell^2(\mathbb{N},\mathbb{H})}
\nonumber \\&=\frac{1}{2}\|\rho_n\psi_u(t)
\|^2\|u(t)\|^2_{\ell^2}+\frac{1}{2}
\sum_{k\in\mathbb{N}}
\int_{\mathbb{R}^d}
\rho^2_n(x)
|\sigma_k(x,\psi_u(t,x),\delta_{\psi^0(t)})|^2dx
\nonumber \\& \leq\frac{1}{2}\|\rho_n\psi_u(t)
\|^2\|u(t)\|^2_{\ell^2}+
 \int_{\mathbb{R}^d}\rho^2_n(x)\Big(
 \frac{\beta}{2}|\psi_u(t,x)|^p+
 \frac{\widehat{\beta}}{2}
 |\psi_u(t,x)|^q\Big)dx
 + K_1\mathbb{F}_n
 (1+\|\psi^0(t)\|^2),
\end{align}
where $K_1>0$ is a constant independent of $n$, and
\begin{align*}
\mathbb{F}_n:&=\Bigg(\sum_{k\in\mathbb{N}}\bigg(\int_{|x|\geq n/2}
|\sigma_{1,k}(x)|^{\frac{p}{p-p^*}}
 dx\bigg)^{\frac{p-p^*}{p}}
 \Bigg)^{\frac{p}{p-p^*}}
\nonumber \\& \ +\Bigg(\sum_{k\in\mathbb{N}}\bigg(\int_{|x|\geq n/2}
|\sigma_{2,k}(x)|^{\frac{q}{q-q^*}}
 dx\bigg)^{\frac{q-p^*}{q}}
 \Bigg)^{\frac{q}{q-q^*}}+\sum_{k\in\mathbb{N}}\int_{|x|\geq n/2}
|\sigma_{3,k}(x)|
 dx.
\end{align*}

As a consequence of \eqref{qqqqqq164644}-\eqref{qqqqqq164644--}, we find that there exists a constant $K_2(\mathcal{K})>0$, indepenent of $n$, such that
for any $t\in[0,T]$, $\psi_0\in\mathcal{K}$ and $u\in \overline{B}_{R}(L^2([0,T],\ell^2))$,
\begin{align}\label{qqqqqq164644---}	\frac{d}{dt}\|\rho_n\psi_u(t)
\|^2
&\leq\|u(t)\|^2_{\ell^2}\|\rho_n\psi_u(t)
\|^2+C_{p,\alpha}
n^{-\alpha}
\|\psi_u(t)\|^{p
}_{\mathbb{V}_1}
 \nonumber \\&\ +K_2(\mathcal{K})\mathbb{F}_n+
K_2(\mathcal{K})
\int_{|x|\geq n/2}
 \big(|\phi_1(t,x)|
 +|\widehat{\phi}_1(t,x)|+|\phi_2(t,x)|
 +|\widehat{\phi}_2(t,x)|
 \big)dx.
\end{align}
This further shows that for any $t\in[0,T]$, $\psi_0\in\mathcal{K}$ and $u\in \overline{B}_{R}(L^2([0,T],\ell^2))$,
\begin{align}\label{cutInt3}
 \mathbb{E}\Big[
\|\rho_n\psi_u(t)\|^{2}\Big]
&\leq e^{\int_0^T\|u(t)\|^2_{\ell^2}dt}
\bigg(\|\rho_n\psi_0\|^{2}+T
K_2(\mathcal{K})\mathbb{F}_n
+C_{p,\alpha}
n^{-\alpha}\int_0^T
\|\psi_u(t)\|^{p
}_{\mathbb{V}_1}dt
\nonumber \\&+K_2(\mathcal{K})\int_0^T
\int_{|x|\geq n/2}
 \big(|\phi_1(t,x)|
 +|\widehat{\phi}_1(t,x)|+|\phi_2(t,x)|
 +|\widehat{\phi}_2(t,x)|
 \big)dxdt\bigg).
\end{align}
Given $\delta>0$. By the compactness of $\mathcal{K}\subseteq \mathbb{H}$, we find that there exists $N_1:=N_1(\delta, R,\mathcal{K})\in\mathbb{N}$ such that for all $\psi_0\in\mathcal{K}$, $u\in \overline{B}_{R}(L^2([0,T],\ell^2))$ and $n\geq N_1$,
\begin{align}\label{cutInt4}
e^{\int_0^T\|u(t)\|^2_{\ell^2}dt}
\|\rho_n\psi_0\|^{2}
\leq e^{R^2}\int_{|x|\geq n/2}
|\psi_0(x)|^{2}dx<\delta/4.
\end{align}
Since $\sigma_1\in
 \ell^{1}
 (\mathbb{N},L^{\frac{p}{p-p^*}}(\mathbb{R}^d))$,
$\sigma_2
 \in
 \ell^{1}
 (\mathbb{N},L^{\frac{q}{q-q^*}}(\mathbb{R}^d))$ and
 $\sigma_3
 \in
 \ell^{1}
 (\mathbb{N},L^1(\mathbb{R}^d))$, we find that
 $\mathbb{F}_n\rightarrow0$ as $n\rightarrow\infty$. This shows that
 there exists $N_2:=N_2(\delta, T, R,\mathcal{K})\geq N_1$ such that for all $u\in \overline{B}_{R}(L^2([0,T],\ell^2))$ and $n\geq N_2$,
\begin{align}\label{cutInt4-}
TK_2(\mathcal{K})
e^{\int_0^T\|u(t)\|^2_{\ell^2}dt}
\mathbb{F}_n\leq
TK_2(\mathcal{K})e^{R^2}\mathbb{F}_n<\delta/4.
\end{align}
Using \eqref{Con4},
we find that there exists $N_3:=N(\delta,T,R,\mathcal{K})\geq N_2$ such that for all $\psi_0\in\mathcal{K}$, $u\in \overline{B}_{R}(L^2([0,T],\ell^2))$ and $n\geq N_3$,
\begin{align}\label{cutInt5}
C_{p,\alpha}
n^{-\alpha}
e^{\int_0^T\|u(t)\|^2_{\ell^2}dt}\int_0^T
\|\psi_u(t)\|^{p
}_{\mathbb{V}_1}dt\leq C_{p,\alpha}e^{R^2}
C(T,R,\mathcal{K})
n^{-\alpha}
<\delta/4,
\end{align}
where $C(T,R,\mathcal{K})>0$ is a constant independent of $n$.
Since $\phi_1,\widehat{\phi}_1
 \in  L_{loc}^\infty(\mathbb{R}^+,
 L^1(\mathbb{R}^d))$ and $\phi_2,\widehat{\phi}_2\in L_{loc}^1(\mathbb{R}^+,
   L^1(\mathbb{R}^d))$, we find that  there exists $N_4:=N(\delta,T,R,\mathcal{K})\geq N_3$ such that for all $u\in \overline{B}_{R}(L^2([0,T],\ell^2))$ and $n\geq N_4$,
\begin{align}\label{cutInt6}
K_2(\mathcal{K})
e^{\int_0^T\|u(t)\|^2_{\ell^2}dt}
\int_0^T
\int_{|x|\geq n/2}
 \big(|\phi_1(t,x)|
 +|\widehat{\phi}_1(t,x)|+|\phi_2(t,x)|
 +|\widehat{\phi}_2(t,x)|
 \big)dxdt
<\delta/4.
\end{align}
As a result of \eqref{qqqqqq164644---}-\eqref{cutInt6},
we obtain the  desired uniform estimates..
\end{proof}

\subsection{Weak-to-strong continuity
for deterministic controlled equations
}
In this part, we establish the weak-to-strong
continuity of the solution
$\psi_u$ of \eqref{Int1--++} with respect
to the control
$u$ from the weak toplogy of
$L^2([0,T],\ell^2)$ to the strong toplogy of $C([0,T],\mathbb{H} )\bigcap L^p([0,T], \mathbb{V}_1  ) \bigcap L^q([0,T], \mathbb{V}_2)$. This is a key step to establish the uniform LDPs of fractional $(\alpha,p)$-Laplacian equation \eqref{Int19-} on $\mathbb{R}^d$ with polynomial drift terms and  superlinear diffusion terms.

\begin{lemma}\label{Weak-to-strog}
Let conditions \textbf{A} and \textbf{B1}
be satisfied. Let $\psi_u$ and
$\psi_{u_n}$ ($n\in\mathbb{N}$) be two solutions of \eqref{Int1--++} with controls
$u$ and $u_n$ in $L^2([0,T],\ell^2)$. If
$u_n\rightarrow u$ weakly in $L^2([0,T],\ell^2)$, then \begin{align}\label{weak000}\psi_{u_n}\rightarrow\psi_u \ \mbox{strongly in}\
C([0,T],\mathbb{H} )\bigcap L^p([0,T], \mathbb{V}_1  ) \bigcap L^q([0,T], \mathbb{V}_2).
\end{align}
\end{lemma}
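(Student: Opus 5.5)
Since $u_n\rightharpoonup u$ weakly in $L^2([0,T],\ell^2)$, the sequence is bounded, say $u_n,u\in\overline{B}_R(L^2([0,T],\ell^2))$. Theorem \ref{Con1} then bounds $\psi_{u_n}$ uniformly in $C([0,T],\mathbb{H})\cap L^p([0,T],\mathbb{V}_1)\cap L^q([0,T],\mathbb{V}_2)$ by \eqref{Con4}. The measure argument is always $\delta_{\psi^0(t)}$, which is fixed and independent of the control, so no McKean-Vlasov coupling enters the difference estimates.

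The argument has two stages. First we obtain a strongly convergent subsequence in $L^2([0,T],\mathbb{H})$, or better in $C([0,T],\mathbb{H})$. Then we upgrade to the full norm by an energy estimate, and the limit is identified as $\psi_u$ by uniqueness.

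\textbf{Stage 1: compactness.} Fix $\delta>0$. Lemma \ref{tightness2ddfhtt}, applied with the compact set $\mathcal{K}=\{\psi_0\}$, gives $N$ such that $\int_{|x|\ge N}|\psi_{u_n}(t,x)|^2dx<\delta$ uniformly in $n$ and $t$. On the ball $\mathcal{Q}_N$ we need:
\begin{itemize}
\item uniform bounds of $\psi_{u_n}$ in $L^p([0,T],W^{\alpha,p}(\mathcal{Q}_N))$;
\item a bound on the time derivative $\frac{d}{dt}\psi_{u_n}$ in $L^{1}([0,T],(\mathbb{V}_1\cap\mathbb{V}_2)^*)$.
\end{itemize}
The drift parts of the derivative are controlled by Lemma \ref{Dissipativeness}. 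The control term $\mathfrak{L}_{H,S}u_n$ lies in $L^1([0,T],\mathbb{H})$ by \eqref{B1-} and Cauchy--Schwarz. By the compact embedding $W^{\alpha,p}(\mathcal{Q}_N)\hookrightarrow L^2(\mathcal{Q}_N)$ and Aubin--Lions--Simon, $\psi_{u_n}$ is precompact in $L^2([0,T],L^2(\mathcal{Q}_N))$. Combined with the tail estimate and a diagonal argument, this yields $\psi_{u_n}\to\widetilde\psi$ strongly in $L^2([0,T],\mathbb{H})$ along a subsequence. By Arzel\`a--Ascoli, together with equicontinuity in $\mathbb{H}$ on $\mathcal{Q}_N$, one can also get convergence in $C([0,T],\mathbb{H})$, but the $L^2$ statement already suffices for what follows.

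\textbf{Identifying the limit.} We pass to the limit in the weak formulation using weak limits of $\mathbf{A}_i(\cdot,\psi_{u_n},\delta_{\psi^0})$. The control term $\mathfrak{L}_{H,S}(\psi_{u_n})u_n$ is a product of a strongly and a weakly convergent factor. Indeed, \eqref{B1--} gives
\[
\|\mathfrak{L}_{H,S}(\psi_{u_n})-\mathfrak{L}_{H,S}(\widetilde\psi)\|^2_{\mathcal{L}_2}\le C\bigl(1+\|\psi_{u_n}\|_p^{p/2}+\|\widetilde\psi\|_p^{p/2}+\|\psi_{u_n}\|_q^{q/2}+\|\widetilde\psi\|_q^{q/2}\bigr)\|\psi_{u_n}-\widetilde\psi\|.
\]
After integrating in time, Hölder bounds the right-hand side by a uniformly bounded factor times $\|\psi_{u_n}-\widetilde\psi\|_{L^2([0,T],\mathbb{H})}\to0$. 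Hence $\mathfrak{L}_{H,S}(\psi_{u_n})\to\mathfrak{L}_{H,S}(\widetilde\psi)$ strongly in $L^2([0,T],\mathcal{L}_2(\ell^2,\mathbb{H}))$, and the product converges weakly to $\mathfrak{L}_{H,S}(\widetilde\psi)u$. This is exactly where the restriction $p^*\le\frac{p+2}{2}$ is needed: the growth exponents in \eqref{B1-}–\eqref{B1--} must stay at most $p/2$ and $q/2$. The nonlinear drift limits are identified by the pseudo-monotone (Minty) argument already used in the proof of Theorem \ref{Theorem4}. That argument uses Lemma \ref{Monotonicity}, Corollary \ref{cHemicontinuity}, and the energy equality, with the measure frozen at $\delta_{\psi^0}$. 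This gives $\widetilde\psi=\psi_u$, and the whole sequence converges since the limit is unique.

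\textbf{Stage 2: upgrade to the full norm.} We mirror \eqref{qqqqqq5}–\eqref{qqqqqq9} with $\psi_1=\psi_{u_n}$ and $\psi_2=\psi_u$. The monotonicity term $\mathbf{L}_3$ yields the dissipation $\|\psi_{u_n}-\psi_u\|^p_{\mathbb{V}_1}+\|\psi_{u_n}-\psi_u\|^q_{\mathbb{V}_2}$, and the $\mathbf{L}_5$-type term is absorbed as before. The only new term is
\[
\int_0^t\langle\mathfrak{L}_{H,S}(s,\psi_u(s),\delta_{\psi^0(s)})(u_n(s)-u(s)),\psi_{u_n}(s)-\psi_u(s)\rangle ds.
\]
We split $\psi_{u_n}-\psi_u$ in this pairing. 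The part against $\psi_{u_n}-\psi_u$ is bounded by
\[
\|\mathfrak{L}_{H,S}(\psi_u)\|_{L^2\mathcal{L}_2}\,\|u_n-u\|_{L^2\ell^2}\,\sup_s\|\psi_{u_n}(s)-\psi_u(s)\|,
\]
or handled through the strong $L^2$ convergence from Stage 1 with the $\sup$ factor controlled by the uniform bound. The part $\int_0^t\langle\mathfrak{L}_{H,S}(\psi_u)^*\psi_u,u_n-u\rangle_{\ell^2}ds$, and its analogue with $\psi_u$ replaced by $\psi_{u_n}$, tends to $0$ uniformly in $t$. This follows from weak convergence of $u_n$ together with compactness of the family of truncated integrals, the maps $t\mapsto\int_0^t(\cdot)$ forming an equicontinuous family. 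Gronwall then gives convergence in $C([0,T],\mathbb{H})$ and, from the dissipation terms, in $L^p(\mathbb{V}_1)\cap L^q(\mathbb{V}_2)$.

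\textbf{Main obstacle.} The step I expect to be hardest is Stage 2's handling of the cross term uniformly in $t$, since the weak convergence of $u_n$ has to be paired with the moving factor $\mathfrak{L}_{H,S}(\psi_{u_n})^*\psi_{u_n}$. I would first rewrite it so that only $\mathfrak{L}_{H,S}(\psi_u)$, which does not depend on $n$, pairs with $u_n-u$. The remainder is then estimated via \eqref{B1--} and the Stage 1 strong convergence.
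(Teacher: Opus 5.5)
Your proposal is correct and has the same architecture as the paper's proof. Tail estimates plus compactness on balls give strong convergence in $L^2([0,T],\mathbb{H})$. The control term passes to the limit via \eqref{B1--} and the weak convergence of $u_n$, and the drift via a Minty argument. Uniqueness then gives the whole sequence, and an energy estimate for $\psi_{u_n}-\psi_u$ upgrades the convergence.

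\textbf{Compactness step.} Here the tool differs. The paper does not use Aubin--Lions--Simon. It proves equicontinuity of $\{\psi_{u_n}\}$ in $(\mathbb{H}\cap\mathbb{V}_1\cap\mathbb{V}_2)^*$ directly from the equation. It gets precompactness of $\{\psi_{u_n}(t)\}$ in that dual space for each $t$ from Lemma \ref{tightness2ddfhtt} and the compact embedding on $\mathcal{Q}_k$, then applies Arzel\`a--Ascoli. Finally it interpolates against the uniform $\mathbb{H}\cap\mathbb{V}_1\cap\mathbb{V}_2$ bounds to obtain convergence in $L^{2\kappa}([0,T],\mathbb{H})$.

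Your route (Aubin--Lions--Simon on $\mathcal{Q}_N$ plus a diagonal argument) uses the same inputs and works equally well, with two corrections:
\begin{itemize}
\item Take the derivative bound in $(\mathbb{H}\cap\mathbb{V}_1\cap\mathbb{V}_2)^*$, or in the dual of test functions supported in $\mathcal{Q}_N$. The reason is that $\mathfrak{L}_{H,S}u_n$ is only in $L^1([0,T],\mathbb{H})$, and $\mathbb{H}\not\subset(\mathbb{V}_1\cap\mathbb{V}_2)^*$ on $\mathbb{R}^d$.
\item Your aside that equicontinuity in $\mathbb{H}$ yields $C([0,T],\mathbb{H})$ convergence at that stage is unsupported. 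No such equicontinuity is available; $C([0,T],\mathbb{H})$ convergence only comes out of the final energy estimate.
\end{itemize}

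\textbf{Final upgrade.} The ``main obstacle'' you anticipate does not arise. Since $\{\psi_{u_n}\}$ is bounded in $C([0,T],\mathbb{H})$, the $L^2([0,T],\mathbb{H})$ convergence upgrades to $L^4([0,T],\mathbb{H})$. Also, \eqref{B1-} makes $\|\mathfrak{L}_{H,S}(\cdot,\psi_{u_n},\delta_{\psi^0})\|^2_{\mathcal{L}_2(\ell^2,\mathbb{H})}$ bounded in $L^2(0,T)$ uniformly in $n$. Hence, exactly as in \eqref{weak16e}, the whole cross term $\int_0^T|\langle\mathfrak{L}_{H,S}(\psi_{u_n})u_n-\mathfrak{L}_{H,S}(\psi_u)u,\psi_{u_n}-\psi_u\rangle|\,dt$ is at most $K\|\psi_{u_n}-\psi_u\|_{L^4([0,T],\mathbb{H})}$. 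This uses only the boundedness of $u_n$: no splitting, no weak-convergence argument, and no Gronwall (the $\phi_6$ term vanishes by dominated convergence).

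In your Stage 2, two of the stated estimates do not work:
\begin{itemize}
\item The bound $\|\mathfrak{L}_{H,S}(\psi_u)\|_{L^2\mathcal{L}_2}\|u_n-u\|_{L^2\ell^2}\sup_s\|\psi_{u_n}(s)-\psi_u(s)\|$ does not tend to zero. It also cannot be absorbed, since $\|u_n-u\|_{L^2}$ is not small.
\item The claim $\int_0^t\langle\mathfrak{L}_{H,S}(\psi_u)^*\psi_{u_n},u_n-u\rangle_{\ell^2}ds\to0$ does not follow from weak convergence, because the integrand depends on $n$.
\end{itemize}
What does work is your second alternative: $\int_0^T\|\mathfrak{L}_{H,S}(\psi_u)\|_{\mathcal{L}_2}\|u_n-u\|_{\ell^2}\|\psi_{u_n}-\psi_u\|\,dt\le 2R(\int_0^T\|\mathfrak{L}_{H,S}(\psi_u)\|^2_{\mathcal{L}_2}\|\psi_{u_n}-\psi_u\|^2dt)^{1/2}\to0$, by dominated convergence.

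Your Gronwall variant (absorbing $\mathbf{L}_5$ by monotonicity through \eqref{h2--}) does buy one thing. Only the fixed operator $\mathfrak{L}_{H,S}(\psi_u)$ remains, so this step needs just $\|\mathfrak{L}_{H,S}(\psi_u)\|^2_{\mathcal{L}_2}\in L^1(0,T)$ rather than \eqref{B1-}. The restriction $p^*\le\frac{p+2}{2}$ is still used in your identification step through \eqref{B1--}.
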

\begin{proof}
Let
$u_n\rightarrow u$ weakly in $L^2([0,T],
\ell^2)$. We show \eqref{weak000}
step by step as follows.

\underline{\emph{Step 1}}: \emph{weak convergence}. Since
$u_n\rightarrow u$ weakly in $L^2([0,T],
\ell^2)$, we know that $\{u_n\}_{n=1}^\infty$ is bounded in $L^2([0,T],
\ell^2)$. From this and Theorem \ref{Con1}, we known that $\{\psi_{u_n}\}_{n=1}^\infty$ is bounded in $C([0,T],\mathbb{H} )\bigcap L^p([0,T], \mathbb{V}_1  )\\ \bigcap L^q([0,T], \mathbb{V}_2)$. This, together with \eqref{h2-}, \eqref{ffnewA2a0}, Lemma \ref{Dissipativeness} and Theorem \ref{Con1}, implies that $\{\mathbf{A}_1(\cdot,\psi_{u_n},
\delta_{\psi^0})\}_{n=1}^\infty$, $\{\mathbf{A}_2(\cdot,\psi_{u_n},
\delta_{\psi^0})\}_{n=1}^\infty$ and $\{\mathfrak{L}_{H,S}(\cdot,\psi_{u_n},
\delta_{\psi^0})\}_{n=1}^\infty$ are bounded in
$L^{\widehat{p}}([0,T], \mathbb{V}_1^*)$, $L^{\widehat{q}}([0,T], \mathbb{V}_2^*)$ and $L^2([0,T],\mathcal{L}_2(\ell^2,
\mathbb{H}))$, respectively. Thus, we can find $z_{_{T}}\in H$, $z\in L^\infty([0,T],\mathbb{H} )\bigcap L^p([0,T], \mathbb{V}_1  ) \bigcap L^q([0,T],
\mathbb{V}_2)$, $\mathfrak{A}_1\in L^{\widehat{p}}([0,T], \mathbb{V}^*_1)$, $\mathfrak{A}_2\in L^{\widehat{q}}([0,T], \mathbb{V}^*_2)$, $\widehat{\mathfrak{L}}_{H,S}
\in L^2([0,T],\mathcal{L}_2(\ell^2,
\mathbb{H}))$, and a subsequence of $\{\psi_{u_n}\}_{n=1}^\infty$ (not relabeled) such that
\begin{subequations}
\begin{align}
\label{weak0}&
\psi_{u_n}(T)\rightarrow z_{_{T}} \ \
\mbox{weakly in $H$},
\\&\label{weak1}
\psi_{u_n}\rightarrow z\ \
			\mbox{weak-star in $L^\infty([0,T],\mathbb{H} )
				$},
			\\&\label{weak2}
			\psi_{u_n}\rightarrow z\ \
\mbox{weakly in $L^p([0,T], \mathbb{V}_1)$},		\\&\label{weak2a}\psi_{u_n}\rightarrow z\ \  \mbox{weakly in $L^q([0,T], \mathbb{V}_2  )$},
			\\& \label{weak3}			\mathbf{A}_1(\cdot,\psi_{u_n},
\delta_{\psi^0}) \rightarrow \mathfrak{A}_1\ \  \mbox{weakly in $ L^{\widehat{p}}([0,T], \mathbb{V}^*_1)$},
\\&\label{weak4}	 \mathbf{A}_2(\cdot,\psi_{u_n},
\delta_{\psi^0}) \rightarrow \mathfrak{A}_2\ \  \mbox{weakly in $ L^{\widehat{q}}([0,T], \mathbb{V}^*_2)$},
		\end{align}
		and
		\begin{align}
			\label{weak5}
		\mathfrak{L}_{H,S}(\cdot,\psi_{u_n},
\delta_{\psi^0})\rightarrow \widehat{\mathfrak{L}}_{H,S} \ \ \mbox{weakly in $L^2([0,T],\mathcal{L}_2(\ell^2,
\mathbb{H}))$}.
		\end{align}
	\end{subequations}

\underline{\emph{Step 2}}: \emph{equicontinuity of $\{\psi_{u_n}\}_{n=1}^\infty$ in $\big(\mathbb{H}\bigcap\mathbb{V}_1
\bigcap\mathbb{V}_2\big)^*$ on $[0,T]$}.
Since $\psi_{u_n}$ is the solution of \eqref{Int1--++}, we find that for all $t\geq0$,
\begin{align*}
&\psi_{u_n}(t)=\psi_0 +\sum_{i=1}^2\int_0^t
 \mathbf{A}_i(s,\psi_{u_n}(s),
\delta_{\psi^0(s)})ds
 + \int_0^t\mathfrak{L}_{H,S}(s,\psi_{u_n}(s),
\delta_{\psi^0(s)})u_n(s)ds\ \ \mbox{in}\ \ \big(\mathbb{H}\cap\mathbb{V}_1
\cap\mathbb{V}_2\big)^*.
\end{align*}
From this equality, we find that for all $0\leq t_1\leq t_2\leq T$,
\begin{align} \label{weak6}
\|\psi_{u_n}(t_2)-\psi_{u_n}(t_1)
\|_{(\mathbb{H}\bigcap\mathbb{V}_1
\bigcap\mathbb{V}_2)^*}&\leq
\sum_{i=1}^2\int_{t_1}^{t_2}
 \|\mathbf{A}_i(s,\psi_{u_n}(s),
\delta_{\psi^0(s)})\|_{\mathbb{V}_i^*}ds
\nonumber\\&\ + \int_{t_1}^{t_2}\|\mathfrak{L}_{H,S}(s,\psi_{u_n}(s),
\delta_{\psi^0(s)})u_n(s)
\|ds
\nonumber\\&\leq(t_2-t_1)^{1/p}
 \bigg(\int_{t_1}^{t_2}\|\mathbf{A}_1(s,\psi_{u_n}(s),
\delta_{\psi^0(s)})\|^{\widehat{p}
}_{\mathbb{V}_1^*}ds\bigg)^{1/\widehat{p}}
\nonumber\\&\ +(t_2-t_1)^{1/q}
\bigg(\int_{t_1}^{t_2}\|\mathbf{A}_2(s,\psi_{u_n}(s),
\delta_{\psi^0(s)})\|^{\widehat{q}
}_{\mathbb{V}_2^*}ds\bigg)^{1/\widehat{q}}
\nonumber\\&\
 + \bigg(\int_{t_1}^{t_2}\|
 \mathfrak{L}_{H,S}(s,\psi_{u_n}(s),
\delta_{\psi^0(s)})\|_{\mathcal{L}_2(\ell^2
 ,\mathbb{H})}^2ds
\bigg)^{1/2}
\bigg(\int_{t_1}^{t_2}\|u_n(s)
\|_{\ell^2}^2ds\bigg)^{1/2}
\nonumber\\&\leq(t_2-t_1)^{1/p}
\|\mathbf{A}_1(\cdot,\psi_{u_n},
\delta_{\psi^0})\|_{L^{\widehat{p}}([0,T], \mathbb{V}^*_1)}
\nonumber\\&\ +(t_2-t_1)^{1/q}
\|\mathbf{A}_2(\cdot,\psi_{u_n},
\delta_{\psi^0})\|_{L^{\widehat{q}}([0,T], \mathbb{V}^*_2)}
\nonumber\\&\
 + \bigg(\underbrace{\int_{t_1}^{t_2}\|
 \mathfrak{L}_{H,S}(s,\psi_{u_n}(s),
\delta_{\psi^0(s)})\|_{\mathcal{L}_2(\ell^2
 ,\mathbb{H})}^2ds
}_{:=\textbf{L}_8(t_1,t_2,n)}\bigg)^{1/2}
\|u_n
\|_{L^2([0,T],
\ell^2)}.
\end{align}
For the term $\textbf{L}_8(t_1,t_2,n)$, by \eqref{h2-} with $M=1$,
$\sigma_1
 \in \ell^1
 (\mathbb{N},L^{\frac{p}{p-p^*}}(\mathbb{R}^d))$,
$\sigma_2
 \in
 \ell^1
 (\mathbb{N},L^{\frac{q}{q-q^*}}(\mathbb{R}^d))$ and
$\sigma_3
 \in
 \ell^1
 (\mathbb{N},L^1(\mathbb{R}^d))$, it follows that
\begin{align} \label{weak6-}
\textbf{L}^2_8(t_1,t_2,n)
&\leq
 \int_{t_1}^{t_2}\Bigg(
 \sum_{k\in\mathbb{N}}
 \|\sigma_{1,k} \|
 _{\frac p{p-p^*}}
 \|\psi_{u_n}(s)\|^{p^*}_{p}
\nonumber\\
 &  \ \ +
 \sum_{k\in\mathbb{N}}\|\sigma_{2,k} \|
 _{\frac q{q-q^*} }
  \| \psi_{u_n}(s)\|^{q^*}_{q}
  +
 \sum_{k\in\mathbb{N}}\|\sigma_{3,k}\|
 _{1}\big(1+\|\psi^0(s)\|^2\big)\Bigg)ds
\nonumber\\
 &\leq
 \Bigg(\int_{t_1}^{t_2} \bigg(\sum_{k\in\mathbb{N}}
 \|\sigma_{1,k} \|
 _{\frac p{p-p^*}}\bigg)^{\frac{p}{p-p^*}}ds
 \Bigg)^{\frac{p-p^*}{p}}
 \Bigg(\int_{t_1}^{t_2} \|\psi_{u_n}(s)\|^{p}_{p}ds
 \Bigg)^{\frac{p^*}{p}}
\nonumber\\
 &\ +
  \Bigg(\int_{t_1}^{t_2} \bigg(\sum_{k\in\mathbb{N}}
 \|\sigma_{2,k}\|
 _{\frac q{q-q^*}}\bigg)^{\frac{q}{q-q^*}}ds
 \Bigg)^{\frac{q-q^*}{q}}
 \Bigg(\int_{t_1}^{t_2} \|\psi_{u_n}(s)\|^{q}_{q}ds
 \Bigg)^{\frac{q^*}{q}}
\nonumber\\
 &\ +
  \Bigg(\int_{t_1}^{t_2} \bigg(\sum_{k\in\mathbb{N}}
 \|\sigma_{3,k} \|
 _{1}\bigg)^{2}ds
 \Bigg)^{1/2}
 \Bigg(\int_{t_1}^{t_2} \big(1+\|\psi^0(s)\|^2\big)^2ds
 \Bigg)^{1/2}
\nonumber\\
 &\leq
 \|\sigma_1\|_{
 \ell^1
 (\mathbb{N},L^{\frac{p}{p-p^*}}(\mathbb{R}^d))}
\|\psi_{u_n}\|^{p^*}_{L^p([0,T], L^p(\mathbb{R}^d))}(t_2-t_1)^{\frac{p-p^*}{p}}
\nonumber\\
 &\ +
 \|\sigma_2\|_{
 \ell^1
 (\mathbb{N},L^{\frac{q}{q-q^*}}(\mathbb{R}^d))}
\|\psi_{u_n}\|^{q^*}_{L^q([0,T], L^q(\mathbb{R}^d))}(t_2-t_1)^{\frac{q-q^*}{q}}
\nonumber\\
 &\ + \|\sigma_3
 \|_{
 \ell^1
 (\mathbb{N},L^1(\mathbb{R}^d))}
 \Big(1+
 \|\psi^0\|_{L^\infty([0,T],\mathbb{H} )}^2\Big)(t_2-t_1).
\end{align}
Since
$\{\psi_{u_n}\}_{n=1}^\infty$,
$\{u_n\}_{n=1}^\infty$, $\psi^0$,
$\{\mathbf{A}_1(\cdot,\psi_{u_n},
\delta_{\psi^0})\}_{n=1}^\infty$ and  $\{\mathbf{A}_2(\cdot,\psi_{u_n},
\delta_{\psi^0})\}_{n=1}^\infty$ are
bounded in $L^p([0,T], \mathbb{V}_1  ) \bigcap L^q([0,T],
\mathbb{V}_2)$, $L^2([0,T],\ell^2 )$, $L^\infty([0,T],\mathbb{H} )$, $L^{\widehat{p}}([0,T], \mathbb{V}^*_1)$ and $L^{\widehat{q}}([0,T], \mathbb{V}^*_2)$, respectively, by \eqref{weak6} and \eqref{weak6-}, we deduce that for all $0\leq t_1\leq t_2\leq T$,
\begin{align} \label{weak7-}
\|\psi_{u_n}(t_2)-\psi_{u_n}(t_1)
\|_{(\mathbb{H}\bigcap\mathbb{V}_1
\bigcap\mathbb{V}_2)^*}&\leq
 K_1\Big( (t_2-t_1)^{1/2}
+(t_2-t_1)^{1/p} +(t_2-t_1)^{1/q}
\nonumber\\ &\ \ \ \   \ \ \ \   \ \ \ \   +(t_2-t_1)^{\frac{p-p^*}{2p}}
+(t_2-t_1)^{\frac{q-q^*}{2q}}     \Big),
\end{align}
where $K_1>0$ is a constant independent of $n$,
$t_1$ and $t_2$. By \eqref{weak7-} we find that  $\{\psi_{u_n}\}_{n=1}^\infty$ is equicontinous  in $\big(\mathbb{H}\bigcap\mathbb{V}_1
\bigcap\mathbb{V}_2\big)^*$ on $[0,T]$.

\underline{\emph{Step 3}}: \emph{precompactness of $
\{\psi_{u_n}(t)\}_{n=1}^\infty$ in
$\big(\mathbb{H}\bigcap\mathbb{V}_1
\bigcap\mathbb{V}_2\big)^*$ for every $t\in[0,T]$.} This is a consequence of (ii) of Theorem \ref{Con1} and the uniform tail-ends estimates in
Lemma \ref{tightness2ddfhtt}. To see this, we first note that
$\psi_{u_n}(t,x)$ can be written as $\psi_{u_n}(t,x)=\widetilde{\psi}_{u_n}^k
(t,x)+
\widehat{\psi}_{u_n}^k(t,x)$ with
$\widetilde{\psi}_{u_n}^k(t,x):=\rho_k(x)
\psi_{u_n}(t,x)$ and $\widehat{\psi}_{u_n}^k(t,x):=(1-\rho_k(x))
\psi_{u_n}(t,x)$ for $t\in[0,T]$, $x\in\mathbb{R}^d$ and $k\in\mathbb{N}$, where
$\rho_k$ is the cut-off function defined in Lemma \ref{Prpgiikgk.jkj}.
By (ii) of Theorem \ref{Con1}, we find that for all $n,k\in\mathbb{N}$,
\begin{align}\label{weak7}
&\|\widehat{\psi}_{u_n}^k
\|^2_{C([0,T],\mathbb{H} )}\leq\|\psi_{u_n}
\|^2_{C([0,T],\mathbb{H} )}\leq K_2,
\end{align}
where $K_2>0$ is a constant independent of $n$ and $k$. Since $\widehat{\psi}_{u_n}^k(t,x)=0$
for $|x|\geq k$, and the Sobolev embeddings
$W^{\alpha,p}(\mathcal{Q}_k)\hookrightarrow L^2(\mathcal{Q}_k)\hookrightarrow (W^{\alpha,p}(\mathcal{Q}_k))^*$ are compact, we infer from \eqref{weak7}
that $\{\widehat{\psi}_{u_n}^k(t)\}_{n=1}^\infty$ is
precompact in  $\mathbb{V}_1^*$, and so
is
 in $\big(\mathbb{H}\bigcap\mathbb{V}_1
\bigcap\mathbb{V}_2\big)^*$. In addition, since $\{u_n\}_{n=1}^\infty$ is bounded in $L^2([0,T],
\ell^2)$, by Lemma \ref{tightness2ddfhtt}, we find that for every  $\delta>0$, there exists
$N:=N(\delta,T,\|\psi_0\|)
\in\mathbb{N}$ such that for all $k\geq N$, $n\in \mathbb{N}$ and $t\in[0,T]$,
\begin{align*}
& \|\widehat{\psi}_{u_n}^k(t)
\|_{(\mathbb{H}\bigcap\mathbb{V}_1
\bigcap\mathbb{V}_2)^*}^2\leq\|\widehat{\psi}_{u_n}^k(t)
\|^2\leq\int_{ {\mathbb{R}}^d \setminus
\mathcal{Q}_{k/2}}|\psi_{u_n}(t,x)|^2
dx<\delta^2/4.
\end{align*}
Therefore,  for every $t\in[0,T]$, we find that $\big\{\psi_{u_n}(t)=
\widetilde{\psi}_{u_n}^k(t)+
\widehat{\psi}_{u_n}^k(t)\big\}_{n=1}^\infty$ has a finite open cover with radius $\delta$ in
$\big(\mathbb{H}\bigcap\mathbb{V}_1
\bigcap\mathbb{V}_2\big)^*$, and hence it is compact in
$\big(\mathbb{H}\bigcap\mathbb{V}_1
\bigcap\mathbb{V}_2\big)^*$.

\underline{\emph{Step 4}}: \emph{$\psi_{u_n}\rightarrow z$ strongly in $C\big([0,T],\big(\mathbb{H}\bigcap\mathbb{V}_1
\bigcap\mathbb{V}_2\big)^*\big)$}. This is a direct consequence of step $2$, step $3$ and
the Arzel\`{a}-Ascoli Theorem.

\underline{\emph{Step 5}}: \emph{$\psi_{u_n}\rightarrow z$ strongly in $L^{2\kappa}([0,T],\mathbb{H})$
for every  $\kappa\in[1,p\wedge q]$}.
Since $\psi_{u_n}\rightarrow z$ strongly in the space
$C\big([0,T],\big(\mathbb{H}\bigcap\mathbb{V}_1
\bigcap\mathbb{V}_2\big)^*\big)$ and $\{\psi_{u_n}\}_{n=1}^\infty$ is bounded in $C([0,T],\mathbb{H} )\bigcap L^p([0,T], \mathbb{V}_1  ) \bigcap L^q([0,T], \mathbb{V}_2)$, by the relation:
\begin{align*}
 \int_0^T\|\psi_{u_n}(t)-z(t)
\|^{2\kappa}dt&= \int_0^T\  _{(\mathbb{H}\bigcap\mathbb{V}_1
\bigcap\mathbb{V}_2)^*}
\big\langle \psi_{u_n}(t)-z(t),
\psi_{u_n}(t)-z(t)\big  \rangle_{\mathbb{H}\bigcap\mathbb{V}_1
\bigcap\mathbb{V}_2}^\kappa dt
\nonumber\\&\leq
\int_0^T\|\psi_{u_n}(t)-z(t)
\|_{\mathbb{H}\bigcap\mathbb{V}_1
\bigcap\mathbb{V}_2}^\kappa dt
\|\psi_{u_n}-z
\|_{C([0,T],(\mathbb{H}\bigcap\mathbb{V}_1
\bigcap\mathbb{V}_2)^*)}^\kappa,
\end{align*}
we find that $\psi_{u_n}\rightarrow z$ strongly in $L^{2\kappa}([0,T],\mathbb{H})$ for every $\kappa\in[1,p\wedge q]$.

\underline{\emph{Step 6}}: $z(0)=\psi_0$, $z(T)=z_{_{T}}$ and $z=\psi_u$.
Since $\psi_{u_n}$ is the solution of \eqref{Int1--++}, it yields that for all
$\phi\in C_0^\infty((0,T))$
and $\xi\in \mathbb{H}\bigcap\mathbb{V}_1
\bigcap\mathbb{V}_2$,
\begin{align}\label{weak9}
-\int_0^T\langle \psi_{u_n}(t) ,\xi\rangle\phi'(t)dt
&=\sum_{i=1}^2
 \int_{0}^T\ _{\mathbb{V}_i^*}\big\langle \mathbf{A}_i(t,\psi_{u_n}(t),
\delta_{\psi^0(t)}),
\xi\big  \rangle_{\mathbb{V}_i}  \phi(t)      dt
\nonumber\\& +\int_{0}^T
 \big\langle \mathfrak{L}_{H,S}(t,\psi_{u_n}(t),
\delta_{\psi^0(t)})u_n(t)),\xi \big\rangle\phi(t) dt.
\end{align}
We claim that
\begin{align}&\label{weak9a}
\lim_{n\rightarrow\infty}\int_{0}^T
 \big\langle \mathfrak{L}_{H,S}(t,z(t),
\delta_{\psi^0(t)})\big(u_n(t)-u(t)\big),\xi \big\rangle \phi(t) dt=0,
\\&
\label{weak9b}
\lim_{n\rightarrow\infty}\int_{0}^T
 \big\langle \big(\mathfrak{L}_{H,S}(t,\psi_{u_n}(t),
\delta_{\psi^0(t)})-\mathfrak{L}_{H,S}(t,z(t),
\delta_{\psi^0(t)})\big)u_n(t),\xi \big\rangle \phi(t) dt=0,
\end{align}
and
\begin{align}\label{weak9c}
\lim_{n\rightarrow\infty}\int_{0}^T
 \big\langle \mathfrak{L}_{H,S}(t,\psi_{u_n}(t),
\delta_{\psi^0(t)})u_n(t)),\xi \big\rangle \phi(t) dt=\int_{0}^T
 \big\langle \mathfrak{L}_{H,S}(t,z(t),
\delta_{\psi^0(t)})u(t)),\xi \big\rangle\phi(t) dt.
\end{align}
Note that \eqref{weak9a}-\eqref{weak9b} implies
\eqref{weak9c}. We first prove \eqref{weak9a}.
Define an operator
$\mathcal{I}:L^2([0,T],\ell^2 )\rightarrow \mathbb{H} $ by
\begin{align}\label{weak9d}
\mathcal{I}(\widehat{u})=\int_{0}^T
  \mathfrak{L}_{H,S}(s,z(s),
\delta_{\psi^0(s)})\widehat{u}(s)\phi(s) ds,\ \
\forall\ \widehat{u}\in L^2([0,T],\ell^2 ).
\end{align}
Since $z\in L^p([0,T], \mathbb{V}_1  ) \bigcap L^q([0,T],
\mathbb{V}_2)$, as in \eqref{qqqqqq3}, we
find  $\mathfrak{L}_{H,S}(\cdot,z,
\delta_{\psi^0})
\in L^2([0,T],\mathcal{L}_2(\ell^2,
\mathbb{H}))$. Thus, for every
$\widehat{u}\in L^2([0,T],\ell^2 )$,
\begin{align}\label{weak9e}
\|\mathcal{I}(\widehat{u})\|&=\bigg\|\int_{0}^T
  \mathfrak{L}_{H,S}(s,z(s),
\delta_{\psi^0(s)})\widehat{u}(s)\phi(s) ds\bigg\|
\nonumber\\&\leq\int_{0}^T
  \|\mathfrak{L}_{H,S}(s,z(s),
\delta_{\psi^0(s)})\widehat{u}(s)\phi(s)\| ds
\nonumber\\&\leq \int_{0}^T
  \|\mathfrak{L}_{H,S}(s,z(s),
\delta_{\psi^0(s)})\|_{\mathcal{L}_2(\ell^2
 ,\mathbb{H})} \|\widehat{u}(s)\|_{\ell^2}|\phi(s)|ds
\nonumber\\&\leq \max_{s\in[0,T]} |\phi(s)| \|\widehat{u}\|_{L^2([0,T],\ell^2 )}
\|\mathfrak{L}_{H,S}(\cdot,z,
\delta_{\psi^0})\|_{L^2([0,T],\mathcal{L}_2(\ell^2,
\mathbb{H}))}
\nonumber\\& \leq K_3(T,\psi_0) \|\widehat{u}\|_{L^2([0,T],\ell^2 )},
\end{align}
where $K_3(T,\psi_0)>0$ is a constant.
By \eqref{weak9d} and \eqref{weak9e}, we find that the operator $\mathcal{I}:L^2([0,T],\ell^2 )\rightarrow \mathbb{H} $ is linear and bounded, and hence is weakly continuous.
 Since $u_n\rightarrow u$ weakly in $L^2([0,T],
\ell^2)$, we then find that
$\mathcal{I}(u_n)\rightarrow
\mathcal{I}(u)$ weakly in $
\mathbb{H}$. From this weak convergence, the definition of $\mathcal{I}$ in \eqref{weak9d}, and $\xi\in
\mathbb{H}$, we obtain
\eqref{weak9a}.

We  now
 prove \eqref{weak9b}. Since $\psi_{u_n}\rightarrow z$ strongly in $L^2([0,T],\mathbb{H})$, by \eqref{B1--}, we deduce that
\begin{align}\label{apr--}&
\int_{0}^T
 \big\langle \big(\mathfrak{L}_{H,S}(t,\psi_{u_n}(t),
\delta_{\psi^0(t)})-
\mathfrak{L}_{H,S}(t,z(t),
\delta_{\psi^0(t)})\big)u_n(t)
,\xi \big\rangle\phi(t) dt
\nonumber\\&\leq\|\xi\|\max_{t\in[0,T]}|\phi(t)|
\int_{0}^T \big\|\mathfrak{L}_{H,S}(t,\psi_{u_n}(t),
\delta_{\psi^0(t)})-
\mathfrak{L}_{H,S}(t,z(t),
\delta_{\psi^0(t)})\big\|_{\mathcal{L}_2(\ell^2
 ,\mathbb{H})}
\|u_n(t)\|
dt
\nonumber\\&\leq\|\xi\|\max_{t\in[0,T]}|\phi(t)|
\|u_n\|_{L^2([0,T],\ell^2)}\bigg(\int_{0}^T \big\|\mathfrak{L}_{H,S}(t,\psi_{u_n}(t),
\delta_{\psi^0(t)})-
\mathfrak{L}_{H,S}(t,z(t),
\delta_{\psi^0(t)})
\big\|^2_{\mathcal{L}_2(\ell^2
 ,\mathbb{H})}dt\bigg)^{1/2}
\nonumber\\
&\leq
K_4(T)
\bigg(\int_{0}^T
\Big(1+\|\psi_{u_n}(t)\|_p^{\frac{p}{2}}
+\|z(t)\|_p^{\frac{p}{2}}
+\|\psi_{u_n}(t)\|_q^{\frac{q}{2}}
 +\|z(t)\|_q^{\frac{q}{2}} \Big)\|\psi_{u_n}(t)-z(t)\|dt\bigg)^{1/2}
\nonumber\\
&\leq 2 ^{\frac{1}{2}} K_4(T)
\Bigg(\int_{0}^T
\Big(1+\|\psi_{u_n}(t)\|_p^{p}
+\|z(t)\|_p^{p}
 +\|\psi_{u_n}(t)\|_q^{q}
+\|z(t)\|_q^{q}
   \Big)dt\Bigg)^{1/4}  \|\psi_{u_n}-z
   \|^{\frac{1}{2}}_{L^2([0,T],\mathbb{H})}
\nonumber\\&   \leq K_5(T,\psi_0) \|\psi_{u_n}-z
   \|^{\frac{1}{2}}_{L^2([0,T],\mathbb{H})}
   \rightarrow0\ \ \mbox{as}\ \ n\rightarrow\infty,
\end{align}where $K_4(T)>0$ and $K_5(T,\psi_0)>0$ are constants independent of $n$.
Thus, \eqref{weak9b} follows from \eqref{apr--}.

Letting $n\rightarrow\infty$ in \eqref{weak9},
by \eqref{weak1}, \eqref{weak3}-\eqref{weak4} and \eqref{weak9c}, we find that for all
$\phi\in C_0^\infty(0,T)$
and $\xi\in \mathbb{H}\bigcap\mathbb{V}_1
\bigcap\mathbb{V}_2$,
\begin{align*}
-\int_0^T\langle z(t) ,\xi\rangle\phi'(t)dt
&=\sum_{i=1}^2
 \int_{0}^T\ _{\mathbb{V}_i^*}\big\langle \mathfrak{A}_i(t),
\xi\big  \rangle_{\mathbb{V}_i}  \phi(t)      dt
 +\int_{0}^T
 \big\langle \mathfrak{L}_{H,S}(t,z(t),
\delta_{\psi^0(t)})u(t),\xi \big\rangle\phi(t) dt.
\end{align*}
This indicates that for all $\xi\in \mathbb{H}\bigcap\mathbb{V}_1
\bigcap\mathbb{V}_2$, the equality
\begin{align}\label{weak10}
\frac{d}{dt} \langle z(t) ,\xi\rangle
=\sum_{i=1}^2
\ _{\mathbb{V}_i^*}\big\langle \mathfrak{A}_i(t),
\xi\big  \rangle_{\mathbb{V}_i}
 +\big\langle \mathfrak{L}_{H,S}(t,z(t),
\delta_{\psi^0(t)})u(t),\xi \big\rangle
\end{align}
holds in the sense of scalar distribution on $(0,T)$.

Given $\phi\in C^\infty([0,T])$
and $\xi\in \mathbb{H}\bigcap\mathbb{V}_1
\bigcap\mathbb{V}_2$. Since $\psi_{u_n}$ is the solution of \eqref{Int1--++}, we find that
\begin{align}\label{weak11}
&\langle \psi_{u_n}(T) ,\xi\rangle\phi(T)
-\langle \psi_0 ,\xi\rangle\phi(0)-\int_0^T\langle
\psi_{u_n}(t) ,\xi\rangle\phi'(t)dt
\nonumber\\&=\sum_{i=1}^2
 \int_{0}^T\ _{\mathbb{V}_i^*}\big\langle \mathbf{A}_i(t,\psi_{u_n}(t),
\delta_{\psi^0(t)}),
\xi\big  \rangle_{\mathbb{V}_i}  \phi(t)      dt
 +\int_{0}^T
 \big\langle \mathfrak{L}_{H,S}(t,\psi_{u_n}(t),
\delta_{\psi^0(t)})u_n(t),\xi \big\rangle\phi(t) dt.
\end{align}
Letting $n\rightarrow\infty$ in \eqref{weak11},
by \eqref{weak0}-\eqref{weak1}, \eqref{weak3}-\eqref{weak4} and \eqref{weak9c},  it follows that
\begin{align}\label{weak12}&
\langle z_{_{T}} ,\xi\rangle\phi(T)
-\langle \psi_0 ,\xi\rangle\phi(0)-\int_0^T\langle
z(t) ,\xi\rangle\phi'(t)dt
\nonumber\\&=\sum_{i=1}^2
 \int_{0}^T\ _{\mathbb{V}_i^*}\big\langle \mathfrak{A}_i(t),
\xi\big  \rangle_{\mathbb{V}_i}  \phi(t)      dt
 +\int_{0}^T
 \big\langle \mathfrak{L}_{H,S}(t,z(t),
\delta_{\psi^0(t)})u(t),\xi \big\rangle\phi(t) dt.
\end{align}
In addition, by \eqref{weak10}, we obtain that \begin{align}\label{weak13}&
\langle z(T) ,\xi\rangle\phi(T)
-\langle z(0) ,\xi\rangle\phi(0)
-\int_0^T\langle
z(t) ,\xi\rangle\phi'(t)dt
\nonumber\\&=\sum_{i=1}^2
 \int_{0}^T\ _{\mathbb{V}_i^*}\big\langle \mathfrak{A}_i(t),
\xi\big  \rangle_{\mathbb{V}_i}  \phi(t)      dt
 +\int_{0}^T
 \big\langle \mathfrak{L}_{H,S}(t,z(t),
\delta_{\psi^0(t)})u(t)),\xi \big\rangle\phi(t) dt.
\end{align}
As a result of \eqref{weak12}-\eqref{weak13}, we get
\begin{align}\label{weak14}&
\langle z_{_{T}} ,\xi\rangle\phi(T)
-\langle \psi_0 ,\xi\rangle\phi(0)=\langle z(T) ,\xi\rangle\phi(T)
-\langle z(0) ,\xi\rangle\phi(0).
\end{align}
By choosing $\phi\in C^\infty([0,T])$ with  $\phi(0)=1$ and $\phi(T)=0$, we find from \eqref{weak14} that $\langle \psi_0 ,\xi\rangle=\langle z(0) ,\xi\rangle$, and hence  $z(0)=\psi_0$.
Similarly,
By choosing $\phi\in C^\infty([0,T])$ with  $\phi(T)=1$ and $\phi(0)=0$,
we get  $z(T)=z_{_{T}}$. This, along with \eqref{weak0}, implies that
\begin{align}\label{weak15}
&\|z(T)\|=\|z_{_{T}}\|\leq
\liminf_{n\rightarrow\infty}
\|\psi_{u_n}(T)\|.
\end{align}

\underline{\emph{Step 7}}: \emph{$z$
is a solution of \eqref{Int1--++}}.
Since $\psi_{u_n}$ is the solution of \eqref{Int1--++}, we get the energy equation:
\begin{align}\label{weak16}
\|\psi_{u_n}(T)\|^2=\|\psi_0\|^2 &+2\sum_{i=1}^2\int_0^T
\ _{\mathbb{V}_i^*}\big\langle \mathbf{A}_i(t,\psi_{u_n}(t),
\delta_{\psi^0(t)}),
\psi_{u_n}(t)\big  \rangle_{\mathbb{V}_i}       dt
\nonumber\\& + 2 \int_0^T\big\langle\mathfrak{L}_{H,S}
 (t,\psi_{u_n}(t),
\delta_{\psi^0(t)})u_n(t) , \psi_{u_n}(t)\big\rangle dt.
\end{align}
In order to deal with the last term in \eqref{weak16}, we first prove:
\begin{align}\label{weak16a}
&\lim_{n\rightarrow\infty} \int_0^T\big\langle\mathfrak{L}_{H,S}
 (t,\psi_{u_n}(t),
\delta_{\psi^0(t)})u_n(t) , \psi_{u_n}(t)-z(t)\big\rangle dt=0,
\\&
\label{weak16b}
\lim_{n\rightarrow\infty}\int_{0}^T
 \big\langle \mathfrak{L}_{H,S}(t,z(t),
\delta_{\psi^0(t)})(u_n(t)-u(t)), z(t) \big\rangle dt=0,
\\\label{weak16c}
&\lim_{n\rightarrow\infty} \int_0^T\big\langle\big(\mathfrak{L}_{H,S}
 (t,\psi_{u_n}(t),
\delta_{\psi^0(t)})-\mathfrak{L}_{H,S}
 (t,z(t),
\delta_{\psi^0(t)})\big)u_n(t) , z(t)\big\rangle dt=0,
\end{align}
and
\begin{align}\label{weak16d}
&\lim_{n\rightarrow\infty} \int_0^T\big\langle\mathfrak{L}_{H,S}
 (t,\psi_{u_n}(t),
\delta_{\psi^0(t)})u_n(t) , \psi_{u_n}(t)\big\rangle dt= \int_0^T\big\langle\mathfrak{L}_{H,S}
 (t,z(t),
\delta_{\psi^0(t)})u(t) , z(t)\big\rangle dt.
\end{align}
Note that \eqref{weak16a}-\eqref{weak16c} implies \eqref{weak16d}. Since $z\in
L^\infty([0,T],\mathbb{H})$, following  the proof of \eqref{weak9b}, one can
obtain \eqref{weak16c}.
Then we only  need to prove \eqref{weak16a}-\eqref{weak16b}.

Note that $\psi_{u_n}\rightarrow z$ strongly in $L^{4}([0,T],\mathbb{H})$ by step 5. This,
together with \eqref{B1-}, implies that
\begin{align}\label{weak16e}
& \int_0^T\big\langle\mathfrak{L}_{H,S}
 (t,\psi_{u_n}(t),
\delta_{\psi^0(t)})u_n(t) , \psi_{u_n}(t)-z(t)\big\rangle dt
\nonumber\\
&\leq\int_0^T
\|\mathfrak{L}_{H,S}
 (t,\psi_{u_n}(t),
\delta_{\psi^0(t)})\|_{\mathcal{L}_2(\ell^2
,\mathbb{H})}\|u_n(t)\|_{\ell^2}
\| \psi_{u_n}(t)-z(t)\| dt
\nonumber\\
&\leq\|u_n
   \|_{L^2([0,T],\ell^2 )}
\bigg(\int_0^T
\|\mathfrak{L}_{H,S}
 (t,\psi_{u_n}(t),
\delta_{\psi^0(t)})\|^2_{\mathcal{L}_2(\ell^2
,\mathbb{H})}\|\psi_{u_n}(t)-z(t)\|^2 dt\bigg)^{1/2}
\nonumber\\
&\leq\sqrt{C_6(1)}\|u_n
   \|_{L^2([0,T],\ell^2 )}
\bigg(\int_0^T
\Big(\big(1+\|\psi_{u_n}(t)\|^{p/2}_{p}+
 \| \psi_{u_n}(t)\|^{q/2}_{q}\big)\|\psi_{u_n}(t)\|
\nonumber\\
& \ \ \ \ \ \ \ \ \ \ \ \ \    \   \ \ \ \ \ \ \ \ \ \ \ \ \ \ \   \   \ \ \ \ \ \ \ \ \ \ \ \ \ \ \  +\big(1+\|\psi^0(t)\|^2\big)\Big)
\|\psi_{u_n}(t)-z(t)\|^2  dt\bigg)^{1/2}
\nonumber\\
&\leq\sqrt{C_6(1)}\|u_n
   \|_{L^2([0,T],\ell^2 )}\big(1+\|\psi^0
   \|_{L^\infty([0,T],\mathbb{H})}+\|\psi_{u_n}
   \|^{1/2}_{L^\infty([0,T],\mathbb{H})}\big)
\nonumber\\
&\ \times\bigg(\int_0^T
\big(2+\|\psi_{u_n}(t)\|^{p/2}_{p}+
 \| \psi_{u_n}(t)\|^{q/2}_{q}\big)
\|\psi_{u_n}(t)-z(t)\|^2  dt\bigg)^{1/2}
\nonumber\\
&\leq\sqrt{\sqrt{3}C_6(1)}\|u_n
   \|_{L^2([0,T],\ell^2 )}\big(1+\|\psi^0
   \|_{L^\infty([0,T],\mathbb{H})}+\|\psi_{u_n}
   \|^{1/2}_{L^\infty([0,T],\mathbb{H})}\big)
\nonumber\\
&\ \times\|\psi_{u_n}
-z\|_{L^4([0,T],\mathbb{H})}\bigg(\int_0^T
\big(4+\|\psi_{u_n}(t)\|^{p}_{p}+
 \| \psi_{u_n}(t)\|^{q}_{q}\big)
 dt\bigg)^{1/4}
\nonumber\\
&\leq K_6(T,\psi_0)\|\psi_{u_n}
-z\|_{L^4([0,T],\mathbb{H})}\ \ \mbox{as}\ \ n\rightarrow\infty,
\end{align}where $K_6(T,\psi_0)>0$ is a constant independent of $n$. Therefore, \eqref{weak16a} follows from \eqref{weak16e}.

To  prove \eqref{weak16b}, we define an operator
$\mathcal{J}:L^2([0,T],\ell^2 )\rightarrow \mathbb{R}$ by
\begin{align}\label{weak9d-}
\mathcal{J}(\widehat{u})=\int_{0}^T\big\langle
  \mathfrak{L}_{H,S}(s,z(s),
\delta_{\psi^0(s)})\widehat{u}(s),z(s)\big\rangle ds,\ \
\forall\ \widehat{u}\in L^2([0,T],\ell^2 ).
\end{align}
As in \eqref{weak9e}, we infer that  for every
$\widehat{u}\in L^2([0,T],\ell^2 )$,
\begin{align}\label{weak9e-}
|\mathcal{J}(\widehat{u})|&=\Big|
\int_{0}^T\big\langle
  \mathfrak{L}_{H,S}(s,z(s),
\delta_{\psi^0(s)})\widehat{u}(s),z(s)\big\rangle ds\Big|
\nonumber\\&\leq \int_{0}^T
  \|\mathfrak{L}_{H,S}(s,z(s),
\delta_{\psi^0(s)})\|_{\mathcal{L}_2(\ell^2
 ,\mathbb{H})} \|\widehat{u}(s)\|_{\ell^2}\|z(s)\|ds
\nonumber\\&\leq \|z\|_{L^\infty([0,T],\mathbb{H})} \|\widehat{u}\|_{L^2([0,T],\ell^2 )}
\|\mathfrak{L}_{H,S}(\cdot,z,
\delta_{\psi^0})\|_{L^2([0,T],\mathcal{L}_2(\ell^2,
\mathbb{H}))}
\nonumber\\& \leq K_7(T,\psi_0)\|\widehat{u}\|_{L^2([0,T],\ell^2 )},
\end{align}
where $K_7(T,\psi_0)>0$ is a constant.
 From \eqref{weak9d-} and \eqref{weak9e-}, one can find that the operator $\mathcal{J}:L^2([0,T],\ell^2 )\rightarrow \mathbb{R} $ is a  bounded linear
functional on $L^2([0,T],\ell^2 )$. Since $u_n\rightarrow u$ weakly in $L^2([0,T],
\ell^2)$, we find that $\mathcal{J}(u_n)\rightarrow
\mathcal{J}(u)$ in $\mathbb{R}$. This implies
\eqref{weak16b}.

Taking the inferior limit in \eqref{weak16} as $n\rightarrow\infty$, we infer from \eqref{weak15} and \eqref{weak16d} that
\begin{align}\label{weak17}
\|z(T)\|^2\leq
\liminf_{n\rightarrow\infty}
\|\psi_{u_n}(T)\|^2=\|\psi_0\|^2 &+2\liminf_{n\rightarrow\infty}\sum_{i=1}^2\int_0^T
\ _{\mathbb{V}_i^*}\big\langle \mathbf{A}_i(t,\psi_{u_n}(t),
\delta_{\psi^0(t)}),
\psi_{u_n}(t)\big  \rangle_{\mathbb{V}_i}       dt
\nonumber\\& + 2 \int_0^T\big\langle\mathfrak{L}_{H,S}
 (t,z(t),
\delta_{\psi^0(t)})u(t) , z(t)\big\rangle dt.
\end{align}
Moreover, by \eqref{weak10}, \eqref{weak1}-\eqref{weak2a} and $z(0)=\psi_0$, it follows that
\begin{align}\label{weak18}
 \|z(T)\|^2=\|\psi_0\|^2
+2\sum_{i=1}^2\int_0^T
\ _{\mathbb{V}_i^*}\big\langle \mathfrak{A}_i(t),
z(t)\big  \rangle_{\mathbb{V}_i}dt
 +2\int_0^T\big\langle\mathfrak{L}_{H,S}
 (t,z(t),
\delta_{\psi^0(t)})u(t) , z(t)\big\rangle dt.
\end{align}
As a result of \eqref{weak17}-\eqref{weak18}, we obtain the inequality:
\begin{align}\label{weak19}
\sum_{i=1}^2\int_0^T
\ _{\mathbb{V}_i^*}\big\langle \mathfrak{A}_i(t),
z(t)\big  \rangle_{\mathbb{V}_i}dt\leq
\liminf_{n\rightarrow\infty}\sum_{i=1}^2\int_0^T
\ _{\mathbb{V}_i^*}\big\langle \mathbf{A}_i(t,\psi_{u_n}(t),
\delta_{\psi^0(t)}),
\psi_{u_n}(t)\big  \rangle_{\mathbb{V}_i}       dt.
\end{align}

Next, we use the pseudo monotone technique (see e.g., \cite{Zeidler}) to show that
\begin{align}&\label{weak23a}
\mathfrak{A}_1+\mathfrak{A}_2=
\mathbf{A}_1(\cdot,z,
\delta_{\psi^0})+\mathbf{A}_2(\cdot,z,
\delta_{\psi^0}).
\end{align}
By the monotonicity of $\mathbf{A}_1$ and $\mathbf{A}_2$ in Lemma \ref{Monotonicity}, we find that for all $v\in
 L^\infty ([0,T], \mathbb{H} )\bigcap
 L^p([0,T], \mathbb{V}_1  )\bigcap L^q([0,T], \mathbb{V}_2  )$,
\begin{align}\label{weak20}
&
\int_0^T
(\| \phi_6(t)\|_\infty +
\| \widehat{\phi} _6(t)\|_\infty
) \| \psi_{u_n}(t)-v(t)\|^2 dt
\nonumber\\
&
\geq\sum_{i=1}^2\int_0^T\ _{\mathbb{V}_i^*}\big\langle \mathbf{A}_i(t,\psi_{u_n}(t),\delta_{\psi^0(t)})
-\mathbf{A}_i(t,v(t),\delta_{\psi^0(t)})
,
\psi_{u_n}(t)-v(t)\big  \rangle_{\mathbb{V}_i}
\nonumber\\&=
\sum_{i=1}^2\int_0^T\bigg(
\ _{\mathbb{V}_i^*}\big\langle \mathbf{A}_i(t,\psi_{u_n}(t),\delta_{\psi^0(t)}),
\psi_{u_n}(t)\big  \rangle_{\mathbb{V}_i} -
\ _{\mathbb{V}_i^*}\big\langle \mathbf{A}_i(t,v(t),
\delta_{\psi^0(t)}),
\psi_{u_n}(t)\big  \rangle_{\mathbb{V}_i}
\nonumber\\&\ - \ _{\mathbb{V}_i^*}\big\langle
\mathbf{A}_i(t,\psi_{u_n}(t),
\delta_{\psi^0(t)}),
v(t)\big  \rangle_{\mathbb{V}_i}
+\ _{\mathbb{V}_i^*}\big\langle
\mathbf{A}_i(t,v(t),
\delta_{\psi^0(t)}),
v(t)\big  \rangle_{\mathbb{V}_i}\bigg)dt,
\end{align}
where
$\mathbf{A}_1(\cdot,v,\delta_{\psi^0})
\in L^{\widehat{p}}([0,T], \mathbb{V}^*_1  )$ and
$\mathbf{A}_2(\cdot,v,\delta_{\psi^0})
\in L^{\widehat{q}}([0,T], \mathbb{V}^*_2  )$ by Lemma \ref{Dissipativeness}.

Letting $n\rightarrow\infty$ in \eqref{weak20}, by  the convergence in Step 5,
the boundednes of $\{\psi_{u_n}\}_{n=1}^\infty$
in $C([0,T], \mathbb{H})$,
\eqref{weak19} and \eqref{weak2}-\eqref{weak4}, we obtain that
\begin{align}
 \label{weak21}
&
\int_0^T
(\| \phi_6(t)\|_\infty +
\| \widehat{\phi} _6(t)\|_\infty
) \| z(t)-v(t)\|^2 dt
\nonumber\\
&
\geq
\sum_{i=1}^2\int_0^T
\ _{\mathbb{V}_i^*}\big\langle \mathfrak{A}_i(t)-\mathbf{A}_i(t,v(t),
\delta_{\psi^0(t)}),
z(t)-v(t)\big  \rangle_{\mathbb{V}_i}dt.
\end{align}
Letting $v:=z-k^{-1}\phi (t) \xi $ with $k\in \mathbb{N}$,
$\phi \in C^\infty_0 ([0,T]) $
and $\xi \in  \mathbb{H}
\bigcap
 \mathbb{V}_1   \bigcap   \mathbb{V}_2   $ in \eqref{weak21}, multiplying   both sides by $k$, and letting $k\rightarrow\infty$, by the Lebesgue dominated convergence theorem, and
the hemicontinuity of $\mathbf{A}_1$ and $\mathbf{A}_2$ in
Corollary
 \ref{cHemicontinuity}, one obtains that
\begin{align}&\label{weak22}
0\geq
\sum_{i=1}^2\int_0^T
\ _{\mathbb{V}_i^*}\big\langle \mathfrak{A}_i(t)-\mathbf{A}_i(t,z(t),
\delta_{\psi^0(t)}),
\phi (t) \xi \big  \rangle_{\mathbb{V}_i}dt.
\end{align}
Replacing $\xi $ by $-\xi$  in \eqref{weak22}, one arrives at
\begin{align}&\label{weak23}
\sum_{i=1}^2\int_0^T
\ _{\mathbb{V}_i^*}\big\langle \mathfrak{A}_i(t)-\mathbf{A}_i(t,z(t),
\delta_{\psi^0(t)}),
\phi (t) \xi \big  \rangle_{\mathbb{V}_i}dt=0.
\end{align}
By the arbitrariness of $\phi$ and $\xi$,
  in
\eqref{weak23}, we obtin  \eqref{weak23a}.
This, together with \eqref{weak10},
implies that $z$
is a solution of \eqref{Int1--++}.

\underline{\emph{Step 8}}:\emph{ \emph{$\psi_{u_n}\rightarrow \psi_{u}$ strongly in $L^{2\kappa}([0,T],\mathbb{H})$ with $\kappa\in[1,p\wedge q]$}}. This is a direct consequence
Step 5, Step 7 and the uniqueness of
solutions of \eqref{Int1--++}.

\underline{\emph{Step 9}}: \emph{proof of  \eqref{weak000}}. Since  $\psi_{u_n}$ and $\psi_u$
are two solutions of \eqref{Int1--++},   by
 Lemma \ref{Monotonicity} and the argument of
  \eqref{weak16e}, we deduce that for all $t\in[0,T]$,
\begin{align}\label{weak24}&\|\psi_{u_n}(t)-
\psi_u(t)
\|^2+\Big(\frac{1}{2^pK}\wedge\frac{\beta}{2^{p-1}}
\Big)\int_0^t\|\psi_{u_n}(r)-
\psi_u(r)\|^p_{\mathbb{V}_1}dr
+2^{1-q}\widehat{\beta}
\int_0^t\|\psi_{u_n}(r)-
\psi_u(r)\|_{\mathbb{V}_2}^{q}dr
\nonumber\\&\leq
2\int_0^T\langle
    \mathfrak{L}_{H,S}(r,\psi_{u_n}(r),
\delta_{\psi^0(r)})u_n(r) - \mathfrak{L}_{H,S}(r,\psi_{u}(r),
\delta_{\psi^0(r)})u(r) ,\psi_{u_n}(r)-
\psi_u(r)
\rangle dr
\nonumber\\&+
2 \int_0^T
\big(\|\phi_6 (t)\| _{L^\infty(\mathbb{R}^d)}+
\|\widehat{\phi}_6 (t) \|_{ L^\infty(\mathbb{R}^d)}\big)
\|\psi_{u_n} (t) -
\psi_u  (t)
\|^2 dt
\nonumber\\&\leq K_8(T,\psi_0)
\|\psi_{u_n}
-z\|_{L^4([0,T],\mathbb{H})}
+
2 \int_0^T
\big(\|\phi_6 (t)\| _{L^\infty(\mathbb{R}^d)}+
\|\widehat{\phi}_6 (t) \|_{ L^\infty(\mathbb{R}^d)}\big)
\|\psi_{u_n} (t) -
\psi_u  (t)
\|^2 dt  ,
\end{align}
where $K_8(T,\psi_0)>0$ is a constant independent of $n$.
Since
$\psi_{u_n}\rightarrow \psi_{u}$ strongly in $L^{4}([0,T],\mathbb{H})$, By the Lebesgue
dominated convergence theorem, we infer that
the right-hand side of \eqref{weak24} converges
to zero as $n\to \infty$,
which implies
  \eqref{weak000},
  and thus completes the proof.
\end{proof}

\subsection{Uniform convergence in probability of solutions of stochastic controlled equations}
Let  $\psi^\epsilon(\cdot,\psi_0)$ be the solution of
the  stochastic
equation  \eqref{Int1--}
on $[0,T]$ with initial value $\psi_0\in \mathbb{H}$.
As in \cite[Theorem 3.6]{LR4} and
\cite[Lemma 4.1]{Chenzhang4b}
it follows from Theorem \ref{Theorem4},
 that  for every $\epsilon>0$ and $\psi_0\in \mathbb{H}$, there exists a Borel measurable map \begin{align}&\label{WW2}
\mathfrak{M}_{\psi_0,\mathcal{L}_{\psi^\epsilon}}^\epsilon: C([0,T],\mathcal{U})\rightarrow C([0,T],\mathbb{H} )\bigcap L^p([0,T], \mathbb{V}_1  )\bigcap L^q([0,T], \mathbb{V}_2)
\end{align}
such that $\psi^\epsilon(\cdot,\psi_0)$ can be written as
\begin{align}\label{WW3}
\psi^\epsilon(\cdot,\psi_0)=
\mathfrak{M}_{\psi_0,\mathcal{L}_{\psi^\epsilon}}^\epsilon(\mathcal{W})
, \ \ \mbox{$\mathbf{P}$-a.s.}.
\end{align}
In addition, for every control $u\in L^2([0,T],\ell^2)$,
 $\psi^\epsilon_u=\mathfrak{M}_{\psi_0,\mathcal{L}_{\psi^\epsilon}}^\epsilon
\Big(\mathcal{W}+\frac{1}{\sqrt{\epsilon}}
\int_0^\cdot u(t)dt\Big)$ is the unique solution of the stochastic   equation:
\begin{equation}\label{Int1---}
\left\{
  \begin{aligned}
  &d \psi_u^\epsilon(t)+ \mathfrak{L}_{\mathcal{K}_{p}^\alpha} \psi_u^\epsilon(t) dt
=\sum_{i=1}^2\textbf{A}_i(t,\psi_u^\epsilon(t),
\mathcal{L}_{
\psi^\epsilon(t)})dt\\&+
\mathfrak{L}_{H,S}
(t,\psi_u^\epsilon(t),
\mathcal{L}_{
\psi^\epsilon(t)})u(t)dt+ \sqrt{\epsilon}\mathfrak{L}_{H,S}
(t,\psi_u^\epsilon(t),
\mathcal{L}_{
\psi^\epsilon(t)})d\mathcal{W}(t), \ \ t>0, \\
  &\psi_u^\epsilon(0)=\psi_0\in \mathbb{H}.\\
  \end{aligned}
\right.
\end{equation}

We also consider an operator
\begin{align}&\label{WW4}
\mathfrak{M}_{\psi_0}^0: C([0,T],\mathcal{U})\rightarrow C([0,T],\mathbb{H} )\bigcap L^p([0,T], \mathbb{V}_1  )\bigcap L^q([0,T], \mathbb{V}_2),
\end{align}
which is defined by, for $\upsilon\in C([0,T],\mathcal{U})$,
\begin{align}\label{WW5}
\mathfrak{M}_{\psi_0}^0(\upsilon)=\left\{
          \begin{array}{ll}
            \psi_u (\cdot, \psi_0), & \mbox{if}\ \upsilon=\int_0^\cdot u(t)dt\ \mbox{for some $u\in L^2([0,T],\ell^2)$}\ ;  \\
            0, & \mbox{otherwise},
          \end{array}
        \right.
        \end{align}
where $\psi_u(\cdot,\psi_0)$ is the solution of problem \eqref{Int1--++}
on $[0,T]$ with initial value $\psi_0\in \mathbb{H}$ and control $u\in L^2([0,T],\ell^2)$.

We first derive uniform estimates of solutions of \eqref{Int1---} in  $L^2(\Omega, C([0,T],\mathbb{H}))\bigcap L^p([0,T]\times\Omega, \mathbb{V}_1)\bigcap L^q([0,T]\times\Omega, \mathbb{V}_2)$.

\begin{lemma}\label{SCE1}
Let conditions  {\bf A} and {\bf B1}
be satisfied. Then for every $T>0$, $R>0$ and $N>0$, there exists $C_3(T,R,N)>0$ such that for all $\psi_{0}\in \overline{B}_{R}(\mathbb{H})$, $u\in \mathfrak{A}_N$ and $\epsilon\in(0,1)$,
the solution $\psi^\epsilon_u=\mathfrak{M}_{\psi_0,\mathcal{L}_{\psi^\epsilon}}^\epsilon
\Big(\mathcal{W}+\frac{1}{\sqrt{\epsilon}}
\int_0^\cdot u(t)dt\Big)$ of \eqref{Int1---} satisfies
\begin{align*}
&\mathbf{E}\Big[\|\psi^\epsilon_u
\|^2_{C([0,T],\mathbb{H} )}\Big]+\mathbf{E}\Big[
\|\psi^\epsilon_u\|^p_{L^p([0,T],\mathbb{V}_1 )}\Big]+\mathbf{E}\Big[\|\psi^\epsilon_u\|^q_{L^q([0,T], \mathbb{V}_2)}\Big]\leq C_3(T,R, N),
\end{align*}
\end{lemma}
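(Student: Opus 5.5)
Apply It\^{o}'s formula to $\|\psi^\epsilon_u(t)\|^2$ for the solution of \eqref{Int1---}, using the energy equality available from Theorem \ref{Theorem4}. This gives
\begin{align*}
\|\psi^\epsilon_u(t)\|^2=\|\psi_0\|^2&+2\sum_{i=1}^2\int_0^t {}_{\mathbb{V}_i^*}\langle \mathbf{A}_i(s,\psi^\epsilon_u,\mathcal{L}_{\psi^\epsilon(s)}),\psi^\epsilon_u\rangle_{\mathbb{V}_i}ds
+2\int_0^t\langle \mathfrak{L}_{H,S}(s,\psi^\epsilon_u,\mathcal{L}_{\psi^\epsilon(s)})u(s),\psi^\epsilon_u\rangle ds\\
&+\epsilon\int_0^t\|\mathfrak{L}_{H,S}(s,\psi^\epsilon_u,\mathcal{L}_{\psi^\epsilon(s)})\|^2_{\mathcal{L}_2(\ell^2,\mathbb{H})}ds
+2\sqrt{\epsilon}\int_0^t\langle \psi^\epsilon_u,\mathfrak{L}_{H,S}(s,\psi^\epsilon_u,\mathcal{L}_{\psi^\epsilon(s)})d\mathcal{W}\rangle .
\end{align*}

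\textbf{Drift terms.} These are bounded by Lemma \ref{Dissipativeness}. Their measure terms are $\mathcal{L}_{\psi^\epsilon(s)}(\|\cdot\|^2)=\mathbf{E}\|\psi^\epsilon(s)\|^2$. By \eqref{G8}, this is bounded by $M(T)(1+\|\psi_0\|^2)\le M(T)(1+R^2)$ uniformly in $\epsilon$, because $M(T)$ is independent of $\epsilon$. This is the key point that decouples the McKean--Vlasov dependence: the law entering \eqref{Int1---} is that of the uncontrolled solution $\psi^\epsilon$, so it is a deterministic, uniformly bounded quantity.

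\textbf{Control term.} Bound it as in \eqref{qqqqqq3} by
\[
\|u(s)\|^2_{\ell^2}\|\psi^\epsilon_u\|^2+\tfrac14\|\mathfrak{L}_{H,S}\|^2_{\mathcal{L}_2(\ell^2,\mathbb{H})}.
\]

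\textbf{It\^{o} correction.} Since $\epsilon<1$, bound it by $\|\mathfrak{L}_{H,S}\|^2_{\mathcal{L}_2(\ell^2,\mathbb{H})}$. Then absorb both Hilbert--Schmidt contributions via \eqref{h2-} with $\varepsilon_1=\beta/2$ and $\varepsilon_2=\widehat\beta/2$ into the dissipative terms $\beta\|\psi^\epsilon_u\|_p^p$ and $\widehat\beta\|\psi^\epsilon_u\|_q^q$. This leaves
\[
\|\psi^\epsilon_u(t)\|^2+c\int_0^t\big(\|\psi^\epsilon_u\|^p_{\mathbb{V}_1}+\|\psi^\epsilon_u\|^q_{\mathbb{V}_2}\big)ds\le \|\psi_0\|^2+\int_0^t \|u\|^2_{\ell^2}\|\psi^\epsilon_u\|^2ds+C\int_0^t\mathbb{G}(s)ds+2\sqrt\epsilon\,|\mathcal{M}(t)|,
\]
where $\mathcal{M}$ is the stochastic integral and $\mathbb{G}$ is integrable on $[0,T]$ as in \eqref{qqqqqq4}.

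\textbf{Gronwall with random control.} Because $u$ is random but $\int_0^T\|u\|^2_{\ell^2}ds\le N^2$ a.s., apply a pathwise Gronwall inequality to the running supremum. This gives
\[
\sup_{r\le t}\|\psi^\epsilon_u(r)\|^2+c\int_0^t(\cdots)\le e^{N^2}\Big(R^2+C(T,R)+2\sup_{r\le t}|\mathcal{M}(r)|\Big)
\]
with a deterministic factor $e^{N^2}$. Alternatively, multiply by $e^{-\int_0^t\|u\|^2}$ before applying It\^{o}'s formula. To make the expectation finite a priori, first localize with a stopping time $\tau_m=\inf\{t:\|\psi^\epsilon_u(t)\|>m\}$ and let $m\to\infty$ at the end by Fatou's lemma.

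\textbf{Stochastic integral.} This is the main obstacle, because the superlinear diffusion prevents a crude bound. Use the Burkholder--Davis--Gundy inequality:
\[
\mathbf{E}\sup_{r\le t}|\mathcal{M}(r)|\le C\,\mathbf{E}\Big(\int_0^t\|\psi^\epsilon_u\|^2\|\mathfrak{L}_{H,S}\|^2_{\mathcal{L}_2(\ell^2,\mathbb{H})}ds\Big)^{1/2}\le \tfrac{1}{4e^{N^2}}\mathbf{E}\sup_{r\le t}\|\psi^\epsilon_u\|^2+C\,\mathbf{E}\int_0^t\|\mathfrak{L}_{H,S}\|^2_{\mathcal{L}_2(\ell^2,\mathbb{H})}ds.
\]
The last term is again controlled by \eqref{h2-}. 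Choose $\varepsilon_1,\varepsilon_2$ small relative to $c\,e^{-N^2}$ so that it is absorbed by the dissipative integral on the left, plus $C(1+R^2)$ from the measure term. Absorbing the supremum term into the left then yields the claimed bound with $C_3(T,R,N)$ independent of $\epsilon$, $\psi_0\in\overline B_R(\mathbb{H})$ and $u\in\mathfrak A_N$.

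The delicate point is the order of the constants. The Gronwall factor $e^{N^2}$ multiplies the BDG term, so $\varepsilon_1,\varepsilon_2$ in \eqref{h2-} must be chosen after $N$ is fixed. This is possible because \eqref{h2-} holds for arbitrary $\varepsilon_1,\varepsilon_2$ at the cost of a larger constant $C_2$.
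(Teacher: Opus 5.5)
Your proposal is correct and follows the route the paper indicates. The paper omits the details and only says the proof mirrors Theorem \ref{Con1} and Lemma \ref{ffTheorem4kk}; your outline supplies them: It\^{o}'s formula, Lemma \ref{Dissipativeness}, absorbing the Hilbert--Schmidt terms via \eqref{h2-}, the $\epsilon$-uniform bound \eqref{G8} for the frozen measure $\mathcal{L}_{\psi^\epsilon(t)}$, a pathwise Gronwall using $\int_0^T\|u\|^2_{\ell^2}ds\le N^2$ a.s., and BDG with $\varepsilon_1,\varepsilon_2$ chosen after $N$.

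One point needs tightening. Your stopping time should also cap $\int_0^t(\|\psi^\epsilon_u\|^p_{\mathbb{V}_1}+\|\psi^\epsilon_u\|^q_{\mathbb{V}_2})ds$. Absorbing the BDG remainder needs $\mathbf{E}\int_0^{t\wedge\tau_m}(\|\psi^\epsilon_u\|^p_p+\|\psi^\epsilon_u\|^q_q)ds<\infty$, and a cap on $\|\psi^\epsilon_u(t)\|$ alone does not give that.
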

\begin{proof}
The proof is similar to that of Theorem \ref{Con1}  and  Lemma  \ref{ffTheorem4kk},
and hence omitted here.
\end{proof}

We then show that, as $\epsilon\rightarrow0$, the solution  $\psi^\epsilon$ of \eqref{Int1--} converges to the solution $\psi^0$ of \eqref{Int1--++++} in $L^2(\Omega, C([0,T],\mathbb{H}) )\bigcap L^p([0,T]\times\Omega, \mathbb{V}_1)\bigcap L^q([0,T]\times\Omega, \mathbb{V}_2)$.
\begin{lemma}\label{SCE2}
Let conditions  {\bf A} and {\bf B1}
be satisfied. Then for every $T>0$ and $R>0$,
the solution  $\psi^\epsilon$ of \eqref{Int1--} and the solution $\psi^0$
of \eqref{Int1--++++}  satisfy that, for all $\psi_{0}\in \overline{B}_{R}(\mathbb{H})$ and $\epsilon\in(0,1)$,
\begin{align*}
&\mathbf{E}\Big[\|\psi^\epsilon-\psi^0
\|^2_{C([0,T],\mathbb{H} )}\Big]
+\mathbf{E}\Big[
\|\psi^\epsilon-\psi^0\|^p_{L^p([0,T],\mathbb{V}_1 )}\Big]+\mathbf{E}\Big[\|\psi^\epsilon-\psi^0\|^q_{L^q([0,T], \mathbb{V}_2)}\Big]
\leq \epsilon C_4(T,R),
\end{align*}
where $C_4(T,R)>0$ is a constant
independent of $\epsilon$.
\end{lemma}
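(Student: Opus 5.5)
We apply Itô's formula to $\|\psi^\epsilon(t)-\psi^0(t)\|^2$, with an exponential weight to absorb the Lipschitz-type terms. Set $w:=\psi^\epsilon-\psi^0$. Then
\begin{align*}
dw=\sum_{i=1}^2\Big(\mathbf{A}_i(t,\psi^\epsilon,\mathcal{L}_{\psi^\epsilon(t)})-\mathbf{A}_i(t,\psi^0,\delta_{\psi^0(t)})\Big)dt+\sqrt{\epsilon}\,\mathfrak{L}_{H,S}(t,\psi^\epsilon,\mathcal{L}_{\psi^\epsilon(t)})d\mathcal{W}(t),\qquad w(0)=0.
\end{align*}
Itô's formula (the energy equality as in \eqref{enHRTrFYDDRergy5}) gives
\begin{align*}
\|w(t)\|^2=2\sum_{i=1}^2\int_0^t{}_{\mathbb{V}_i^*}\langle\cdots,w\rangle_{\mathbb{V}_i}dr+\epsilon\int_0^t\|\mathfrak{L}_{H,S}(r,\psi^\epsilon,\mathcal{L}_{\psi^\epsilon(r)})\|^2_{\mathcal{L}_2(\ell^2,\mathbb{H})}dr+2\sqrt{\epsilon}\int_0^t\langle w,\mathfrak{L}_{H,S}(\cdots)d\mathcal{W}\rangle.
\end{align*}
For the drift term we use the strong monotonicity in Lemma \ref{Monotonicity}. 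This produces the dissipative quantities $-c\|w\|^p_{\mathbb{V}_1}-c\|w\|^q_{\mathbb{V}_2}$ (with $c=\frac{1}{2^pK}\wedge\frac{\beta}{2^{p-1}}\wedge 2^{1-q}\widehat\beta$), plus $(\|\phi_6\|_\infty+\|\widehat\phi_6\|_\infty)\|w\|^2$, plus the measure term $(\|\phi_7\|_1+\|\widehat\phi_7\|_1)d^2_{\mathcal{P}_2}(\mathcal{L}_{\psi^\epsilon(r)},\delta_{\psi^0(r)})$. The key observation is that the coupling $(\psi^\epsilon(r),\psi^0(r))$ yields
\[
d^2_{\mathcal{P}_2}(\mathcal{L}_{\psi^\epsilon(r)},\delta_{\psi^0(r)})\le \mathbf{E}\|w(r)\|^2
\]
by \eqref{Wassersteinaa}, since $\psi^0$ is deterministic. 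Taking expectations therefore gives a Gronwall-type inequality for $\mathbf{E}\|w(t)\|^2$ with integrable coefficient $\widehat{\mathbb{G}}$. The Itô correction is handled by \eqref{h2-} and Theorem \ref{Theorem4}: $\epsilon\,\mathbf{E}\int_0^T\|\mathfrak{L}_{H,S}\|^2dr\le \epsilon\,\mathbf{E}\int_0^T(\|\psi^\epsilon\|_p^p+\|\psi^\epsilon\|^q_q+C(1+\mathbf{E}\|\psi^\epsilon\|^2))dr\le \epsilon C(T)(1+R^2)$. Here the bound \eqref{G8} is uniform in $\epsilon$ and in $\psi_0\in\overline{B}_R(\mathbb{H})$. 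Gronwall then gives $\sup_t\mathbf{E}\|w(t)\|^2+\mathbf{E}\int_0^T(\|w\|^p_{\mathbb{V}_1}+\|w\|^q_{\mathbb{V}_2})\le \epsilon C$.

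To upgrade to $\mathbf{E}\sup_t$, we take the supremum before expectations and apply the Burkholder–Davis–Gundy inequality to the martingale term:
\[
2\sqrt\epsilon\,\mathbf{E}\sup_{t\le T}\Big|\int_0^t\langle w,\mathfrak{L}_{H,S}d\mathcal{W}\rangle\Big|\le C\sqrt\epsilon\,\mathbf{E}\Big(\sup_t\|w\|^2\int_0^T\|\mathfrak{L}_{H,S}\|^2dr\Big)^{1/2}\le \tfrac12\mathbf{E}\sup_t\|w\|^2+C\epsilon\,\mathbf{E}\int_0^T\|\mathfrak{L}_{H,S}(r,\psi^\epsilon,\mathcal{L}_{\psi^\epsilon(r)})\|^2dr.
\]
The last term is again $O(\epsilon)$ by the previous paragraph. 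Absorbing the $\tfrac12$ term, and using the already obtained bounds on $\int_0^T\widehat{\mathbb{G}}(r)\mathbf{E}\|w(r)\|^2dr$ together with the measure terms, yields the claim with $C_4(T,R)$ independent of $\epsilon$.

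The main subtlety is the superlinear diffusion. We deliberately do not estimate $\mathfrak{L}_{H,S}(\psi^\epsilon)-\mathfrak{L}_{H,S}(\psi^0)$; we keep the full $\epsilon\|\mathfrak{L}_{H,S}(\psi^\epsilon)\|^2$, which is bounded by \eqref{h2-} using $L^p$ and $L^q$ norms of $\psi^\epsilon$ controlled via \eqref{G8}. This avoids any need for \eqref{h2--} here. The only care needed is that the constants in \eqref{G8} depend only on $T$ and $\mathbf{E}\|\psi_0\|^2\le R^2$. A secondary point is justifying the Itô formula in the non-Gelfand setting on $\mathbb{R}^d$. For this we would either invoke the energy equality already obtained for solutions in the proof of Theorem \ref{Theorem4}, or pass through the bounded-domain approximations $\psi_k$ and take limits using lower semicontinuity.
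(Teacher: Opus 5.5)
Your argument is correct and is exactly the standard proof the paper omits. It uses It\^{o}'s formula for $\psi^\epsilon-\psi^0$, the strong monotonicity of Lemma \ref{Monotonicity}, the identity $d^2_{\mathcal{P}_2}(\mathcal{L}_{\psi^\epsilon(r)},\delta_{\psi^0(r)})=\mathbf{E}\|\psi^\epsilon(r)-\psi^0(r)\|^2$, Gronwall, and BDG for the supremum, with the factor $\epsilon$ coming only from the It\^{o} correction $\epsilon\|\mathfrak{L}_{H,S}(r,\psi^\epsilon,\mathcal{L}_{\psi^\epsilon(r)})\|^2$ controlled by \eqref{h2-} and the $\epsilon$-uniform bound \eqref{G8}. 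These are the same ingredients as the paper's proof of Lemma \ref{SCE3}, and keeping the diffusion term unsplit (so that \eqref{h2--} is not needed) is the right simplification here.
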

\begin{proof}
The proof is is standard,  and we   do not
repeat  the details  here.
\end{proof}

By Lemmas \ref{SCE1} and \ref{SCE2}, we finally show that, as $\epsilon\rightarrow0$, the solution $\psi^\epsilon_u$ of \eqref{Int1---} converges in probability to the solution $\psi_u$ of problem \eqref{Int1--++} in $ C([0,T],\mathbb{H}) \bigcap L^p([0,T], \mathbb{V}_1)\bigcap L^q([0,T], \mathbb{V}_2)$.
\begin{lemma}\label{SCE3}
Let conditions  {\bf A} and {\bf B1}
be satisfied. Then for every $T>0$, $\delta>0$, $R>0$ and $N>0$, we have
\begin{align*}
&\lim_{\epsilon\rightarrow0}\sup_{\psi_{0}\in \overline{B}_{R}(\mathbb{H})}
\sup_{u\in \mathfrak{A}_N}\mathbf{P}\Big\{ \|\psi^\epsilon_u-\psi_u\|_{C([0,T],\mathbb{H}) \bigcap L^p([0,T], \mathbb{V}_1)\bigcap L^q([0,T], \mathbb{V}_2)}>\delta\Big\}=0,
\end{align*}
where $\psi^\epsilon_u=
\mathfrak{M}_{\psi_0,\mathcal{L}_{
\psi^\epsilon}}^\epsilon
\big(\mathcal{W}+\frac{1}{\sqrt{\epsilon}}
\int_0^\cdot u(t)dt\big)$ and $\psi_u=
\mathfrak{M}_{\psi_0}^0
\big(
\int_0^\cdot u(t)dt\big)$ are solutions of \eqref{Int1---} and \eqref{Int1--++}, respectively.
\end{lemma}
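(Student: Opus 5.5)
Set $\eta^\epsilon:=\psi^\epsilon_u-\psi_u$. Subtracting \eqref{Int1--++} from \eqref{Int1---} gives
\begin{align*}
d\eta^\epsilon
&=\sum_{i=1}^2\Big(\mathbf{A}_i(t,\psi^\epsilon_u,\mathcal{L}_{\psi^\epsilon(t)})-\mathbf{A}_i(t,\psi_u,\delta_{\psi^0(t)})\Big)dt\\
&\quad+\Big(\mathfrak{L}_{H,S}(t,\psi^\epsilon_u,\mathcal{L}_{\psi^\epsilon(t)})-\mathfrak{L}_{H,S}(t,\psi_u,\delta_{\psi^0(t)})\Big)u\,dt
+\sqrt{\epsilon}\,\mathfrak{L}_{H,S}(t,\psi^\epsilon_u,\mathcal{L}_{\psi^\epsilon(t)})\,d\mathcal{W},
\end{align*}
with $\eta^\epsilon(0)=0$. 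We apply It\^o's formula to $\|\eta^\epsilon\|^2$.

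\emph{Drift term.} Lemma \ref{Monotonicity} gives the coercive quantities $-c\|\eta^\epsilon\|^p_{\mathbb{V}_1}-c\|\eta^\epsilon\|^q_{\mathbb{V}_2}$. It also gives the weighted terms $-\frac{\beta}{4}\int(|\psi^\epsilon_u|^{p-2}+|\psi_u|^{p-2})|\eta^\epsilon|^2$ (and the $q$-analogue). In addition it produces $\|\phi_6\|_\infty\|\eta^\epsilon\|^2$ and $\|\phi_7\|_1 d^2_{\mathcal{P}_2}(\mathcal{L}_{\psi^\epsilon(t)},\delta_{\psi^0(t)})$. The last quantity is bounded by $\mathbf{E}\|\psi^\epsilon(t)-\psi^0(t)\|^2\le\epsilon C_4(T,R)$, using the coupling $(\psi^\epsilon(t),\psi^0(t))$ together with Lemma \ref{SCE2}.

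\emph{Control term.} Split $\langle(\cdots)u,\eta^\epsilon\rangle\le\frac12\|u\|^2_{\ell^2}\|\eta^\epsilon\|^2+\frac12\|\mathfrak{L}_{H,S}(\psi^\epsilon_u,\mathcal{L}_{\psi^\epsilon})-\mathfrak{L}_{H,S}(\psi_u,\delta_{\psi^0})\|^2_{\mathcal{L}_2}$. Bound the second piece by \eqref{h2--} with $\varepsilon_1=\beta/4$, $\varepsilon_2=\widehat\beta/4$. Its superlinear part is then absorbed by the weighted monotonicity terms, and what remains is $C\|\eta^\epsilon\|^2+C\,d^2_{\mathcal{P}_2}$.

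\emph{It\^o correction.} The term $\epsilon\|\mathfrak{L}_{H,S}(\psi^\epsilon_u,\mathcal{L}_{\psi^\epsilon})\|^2_{\mathcal{L}_2}$ is controlled through \eqref{h2-} by $\epsilon C(1+\|\psi^\epsilon_u\|_p^p+\|\psi^\epsilon_u\|_q^q+\mathbf{E}\|\psi^\epsilon\|^2)$. Its time integral has expectation $\le\epsilon C(T,R,N)$ by Lemma \ref{SCE1} and Theorem \ref{Theorem4}.

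\emph{Main obstacle.} The coefficient $\|u(t)\|^2_{\ell^2}$ is random. However, $\int_0^T\|u\|^2\le N^2$ almost surely, so Gronwall can be applied pathwise with the weight $e^{-\int_0^t(\|u\|^2+G)}$, whose inverse is bounded by $e^{N^2+\int_0^T G}$. This yields, almost surely,
\[
\sup_t\|\eta^\epsilon\|^2+\int_0^T\big(\|\eta^\epsilon\|^p_{\mathbb{V}_1}+\|\eta^\epsilon\|^q_{\mathbb{V}_2}\big)\le C\Big(\epsilon\!\int_0^T\!\|\mathfrak{L}_{H,S}\|^2+\epsilon+\sup_t|M_t|\Big),
\]
where $M_t=2\sqrt\epsilon\int_0^t e^{(\cdot)}\langle\eta^\epsilon,\mathfrak{L}_{H,S}\,d\mathcal{W}\rangle$.

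\emph{Martingale term.} We estimate $\mathbf{E}\sup|M_t|$ by the BDG inequality:
\[
\mathbf{E}\sup|M_t|\le C\sqrt\epsilon\,\mathbf{E}\Big[\sup_t\|\eta^\epsilon\|\Big(\int_0^T\|\mathfrak{L}_{H,S}\|^2\Big)^{1/2}\Big]\le \tfrac12\mathbf{E}\sup_t\|\eta^\epsilon\|^2+C\epsilon\,\mathbf{E}\int_0^T\|\mathfrak{L}_{H,S}\|^2 .
\]
If we want to avoid assuming finiteness of $\mathbf{E}\sup\|\eta^\epsilon\|^2$ before absorbing, we can instead bound $\mathbf{E}\sup_t\|\eta^\epsilon\|^2\le C\sqrt\epsilon$ directly via Lemma \ref{SCE1}, which suffices in either case.

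\emph{Conclusion.} The previous steps give $\mathbf{E}[\|\eta^\epsilon\|^2_{C([0,T],\mathbb{H})}+\|\eta^\epsilon\|^p_{L^p(\mathbb{V}_1)}+\|\eta^\epsilon\|^q_{L^q(\mathbb{V}_2)}]\le C(T,R,N)\sqrt\epsilon$, uniformly in $\psi_0\in\overline{B}_R(\mathbb{H})$ and $u\in\mathfrak{A}_N$. Chebyshev's inequality then gives the claimed uniform convergence in probability.

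The delicate point throughout is handling the superlinear diffusion with the random control. Because of this we cannot take expectations before Gronwall, since $\|u\|^2$ multiplies $\|\eta^\epsilon\|^2$. The pathwise exponential weight described above, together with the almost sure bound $u\in\mathfrak{A}_N$, is what resolves this.
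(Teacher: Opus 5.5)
Your proposal is correct and follows essentially the paper's route: It\^o's formula for $\|\psi^\epsilon_u-\psi_u\|^2$, Lemma \ref{Monotonicity} together with \eqref{h2--} to absorb the superlinear part of the control term, and a pathwise Gronwall argument using $\int_0^T\|u\|^2_{\ell^2}dt\le N^2$ a.s. It then uses $d^2_{\mathcal{P}_2}(\mathcal{L}_{\psi^\epsilon(t)},\delta_{\psi^0(t)})=\mathbf{E}\|\psi^\epsilon(t)-\psi^0(t)\|^2$ with Lemma \ref{SCE2}, BDG with absorption (legitimate since $\mathbf{E}\sup_t\|\psi^\epsilon_u-\psi_u\|^2<\infty$ by Lemma \ref{SCE1} and Theorem \ref{Con1}), and Chebyshev; the only differences from the paper (an integrating factor instead of Gronwall applied after taking $\sup_t$ of the martingale term, and inserting the $\epsilon$-rate of Lemma \ref{SCE2} earlier) are cosmetic.
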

\begin{proof}
Since $\psi^\epsilon_u=
\mathfrak{M}_{\psi_0,\mathcal{L}_{
\psi^\epsilon}}^\epsilon
\big(\mathcal{W}+\frac{1}{\sqrt{\epsilon}}
\int_0^\cdot u(t)dt\big)$ and $\psi_u=
\mathfrak{M}_{\psi_0}^0
\big(
\int_0^\cdot u(t)dt\big)$ are solutions of \eqref{Int1---} and \eqref{Int1--++}, respectively,
it follows from  It\^{o}'s
formula and
 Lemma \ref{Monotonicity} that for all $t\in[0,T]$, $\psi_{0}\in \overline{B}_{R}(\mathbb{H})$,
$u\in \mathfrak{A}_N$ and $\epsilon\in(0,1)$,
\begin{align}\label{SCE5}
&\|\psi^\epsilon_u(t)
-\psi_u(t)
\|^2+2^{-p}K^{-1}\int_0^{t}
\|\psi^\epsilon_u(r)
-\psi_u(r)\|^p_{\dot{\mathbb{V}}_1}dr
\nonumber\\&\ +2^{1-p}\beta
\int_0^{t}
\|\psi^\epsilon_u(r)
-\psi_u(r)\|_{p}^{p}dr
+2^{1-q}\widehat{\beta}
\int_0^{t}\|\psi^\epsilon_u(r)
-\psi_u(r)\|_{\mathbb{V}_2}^{q}dr
\nonumber\\&\ +\frac{\beta}{2}\int_0^{t} \int_{\mathbb{R}^d}  \big(|\psi^\epsilon_u(r,x)|^{p-2}
+|\psi_u(r,x)|^{p-2}\big)
|\psi^\epsilon_u(r,x)-\psi_u(r,x)|^2dxdr
\nonumber\\&\ +\frac{\widehat{\beta}}{2}
\int_0^{t} \int_{\mathbb{R}^d}  \big(|\psi^\epsilon_u(r,x)|^{q-2}
+|\psi_u(r,x)|^{q-2}\big)
|\psi^\epsilon_u(r,x)-\psi_u(r,x)|^2dxdr
\nonumber\\&\leq
\underbrace{2
\int_0^{t}\big\langle
    \big(\mathfrak{L}_{H,S}(r,\psi^\epsilon_u(r),
\mathcal{L}_{
\psi^\epsilon(t)}) - \mathfrak{L}_{H,S}(r,\psi_u(r),
\delta_{\psi^0(r)})\big)u(r) ,\psi^\epsilon_u(r)
-\psi_u(r)
\big\rangle dr}_{\textbf{L}^\epsilon_9(t)}
\nonumber\\&\ +\underbrace{\epsilon\int_0^{t}
\|\mathfrak{L}_{H,S}(r,\psi^\epsilon_u(r),
\mathcal{L}_{
\psi^\epsilon(r)})
		\|^2_{\mathcal{L}_2(\ell^2,
			\mathbb{H})}dr}_{\textbf{L}^\epsilon_{10}(t)}
\nonumber\\&\ +\underbrace{2\sqrt{\epsilon}
\int_0^{t}\langle
    \psi^\epsilon_u(r)
-\psi_u(r),\mathfrak{L}_{H,S}(r,
\psi^\epsilon_u(r),
\mathcal{L}_{
\psi^\epsilon(r)})d\mathcal{W}(r)
\rangle}_{\textbf{L}^\epsilon_{11}(t)}
\nonumber\\&\ +
2\int_0^{t}
\big(\|\phi_6(r)\|_{\infty}+
\|\widehat{\phi}_6(r)\|_{\infty}\big)
\| \psi^\epsilon_u(r)
-\psi_u(r)
\|^2dr
\nonumber\\&\ +2\int_0^{t}
\big(\|\phi_7(r)\|_{1}+
\|\widehat{\phi}_7(r)\|_{1}\big)
d_{\mathcal{P}_2}^2
(\mathcal{L}_{
\psi^\epsilon(r)},\delta_{\psi^0(r)})dr.
\end{align}
For   $\textbf{L}^\epsilon_{9}(t)$ in \eqref{SCE5}, by \eqref{h2--} (for $M=1$, $\varepsilon_1=\beta/2$ and
$\varepsilon_2=\widehat{\beta}/2$) and \eqref{h2----0},
we deduce that for all $t\in[0,T]$, $\psi_{0}\in \overline{B}_{R}(\mathbb{H})$,
$u\in \mathfrak{A}_N$ and $\epsilon\in(0,1)$,
\begin{align}
\label{SCE6}
\textbf{L}^\epsilon_{9}(t)
&\leq
\int_0^{t}
\|u(r)\|^2\|\psi^\epsilon_u(r)
-\psi_u(r)
\|^2dr\nonumber\\&\ +
\int_0^{t}
    \big\|\mathfrak{L}_{H,S}(r,\psi^\epsilon_u(r),
\mathcal{L}_{
\psi^\epsilon(t)}) - \mathfrak{L}_{H,S}(r,\psi_u(r),
\delta_{\psi^0(r)})
\big\|^2 dr
\nonumber\\&\leq K_1
\int_0^{t}
\big(1
+\|u(r)\|^2\big)\|\psi^\epsilon_u(r)
-\psi_u(r)
\|^2dr
+ K_1\int_0^{t}
d_{\mathcal{P}_2}^2
(\mathcal{L}_{
\psi^\epsilon(r)},\delta_{\psi^0(r)})dr
\nonumber\\&\ +\frac{\beta}{2}\int_0^{t} \int_{\mathbb{R}^d}  \big(|\psi^\epsilon_u(r,x)|^{p-2}
+|\psi_u(r,x)|^{p-2}\big)
|\psi^\epsilon_u(r,x)-\psi_u(r,x)|^2dxdr
\nonumber\\&\ +\frac{\widehat{\beta}}{2}
\int_0^{t} \int_{\mathbb{R}^d}  \big(|\psi^\epsilon_u(r,x)|^{q-2}
+|\psi_u(r,x)|^{q-2}\big)
|\psi^\epsilon_u(r,x)-\psi_u(r,x)|^2dxdr,
\end{align}
where $K_1>0$ is a constant independent of $\epsilon$.

By \eqref{SCE5} and \eqref{SCE6}, we have
\begin{align}\label{SCE5-1}
&\sup_{r\in [0,t]}\|\psi^\epsilon_u(r)-\psi_u(r)\|^2
+2^{-p}K^{-1}\int_0^{t}
\|\psi^\epsilon_u(r)
-\psi_u(r)\|^p_{\dot{\mathbb{V}}_1}dr
\nonumber\\&\ +2^{1-p}\beta
\int_0^{t}
\|\psi^\epsilon_u(r)
-\psi_u(r)\|_{p}^{p}dr
+2^{1-q}\widehat{\beta}
\int_0^{t}\|\psi^\epsilon_u(r)
-\psi_u(r)\|_{\mathbb{V}_2}^{q}dr \nonumber\\
&\leq
2\int_0^{t}
\big(K_1
+K_1\|u(r)\|^2
+2\|\phi_6(r)\|_{\infty}+
2\|\widehat{\phi}_6(r)\|_{\infty}\big)\|\psi^\epsilon_u(r)
-\psi_u(r)
\|^2dr
\nonumber\\&\ + 2\int_0^{t}
\big(K_1+2 \|\phi_7(r)\|_{1}+
2\|\widehat{\phi}_7(r)\|_{1}\big)
d_{\mathcal{P}_2}^2
(\mathcal{L}_{
\psi^\epsilon(r)},\delta_{\psi^0(r)})dr
+2\textbf{L}^\epsilon_{10}(t)
+2\sup_{r\in[0,t]}|\textbf{L}^\epsilon_{11}(r)|.
\end{align}
Then
\begin{align}\label{SCE5-2}
&\sup_{r\in [0,t]}\|\psi^\epsilon_u(r)-\psi_u(r)\|^2
+2^{-p}K^{-1}\int_0^{t}
\|\psi^\epsilon_u(r)
-\psi_u(r)\|^p_{\dot{\mathbb{V}}_1}dr
\nonumber\\&\ +2^{1-p}\beta
\int_0^{t}
\|\psi^\epsilon_u(r)
-\psi_u(r)\|_{p}^{p}dr
+2^{1-q}\widehat{\beta}
\int_0^{t}\|\psi^\epsilon_u(r)
-\psi_u(r)\|_{\mathbb{V}_2}^{q}dr \nonumber\\
&\leq
e^{2\int_0^{t}
\big(K_1
+K_1\|u(r)\|^2
+2\|\phi_6(r)\|_{\infty}+
2\|\widehat{\phi}_6(r)\|_{\infty}\big)dr}
\nonumber\\&\ \ \cdot\Big( 2\int_0^{t}
\big(K_1+2 \|\phi_7(r)\|_{1}+
2\|\widehat{\phi}_7(r)\|_{1}\big)
d_{\mathcal{P}_2}^2
(\mathcal{L}_{
\psi^\epsilon(r)},\delta_{\psi^0(r)})dr
+2\textbf{L}^\epsilon_{10}(t)
+2\sup_{r\in[0,t]}|\textbf{L}^\epsilon_{11}(r)|
\Big)
\nonumber\\
&\leq
e^{2
\big(K_1 T
+K_1 N^2
+2 \int_0^T \left(\|\phi_6(r)\|_{\infty}+
\|\widehat{\phi}_6(r)\|_{\infty}\right)dr \big)}
\nonumber\\&\ \ \cdot \Big(2\int_0^{t}
\big(K_1+2 \|\phi_7(r)\|_{1}+
2\|\widehat{\phi}_7(r)\|_{1}\big)dr
\mathbf{E}\Big[
\| \psi^\epsilon-\psi^0\|^2_{C([0,T],\mathbb{H} )}\Big]
+2\textbf{L}^\epsilon_{10}(t)
+2\sup_{r\in[0,t]}|\textbf{L}^\epsilon_{11}(r)|
\Big).
\end{align}

For the  terms $\textbf{L}^\epsilon_{10}(t)$ and $\sup_{r\in[0,t]}|\textbf{L}^\epsilon_{11}(r)|$ in \eqref{SCE5-2}, by \eqref{h2-}, \eqref{ffnewA2a0}
and the BDG inequality,
we infer that for all $t\in[0,T]$, $\psi_{0}\in \overline{B}_{R}(\mathbb{H})$,
$u\in \mathfrak{A}_N$ and $\epsilon\in(0,1)$,
\begin{align}&
\label{SCE7}
2 e^{2
\big(K_1 T
+K_1 N^2
+2 \int_0^T \left(\|\phi_6(r)\|_{\infty}+
\|\widehat{\phi}_6(r)\|_{\infty}\right)dr \big)}
\left( \mathbf{E}[
\textbf{L}^\epsilon_{10}(t)]+
\mathbf{E}\Big[\sup_{r\in[0,t]}
|\textbf{L}^\epsilon_{11}(r)|\Big]
\right)
\nonumber\\&\leq\frac{1}{2}
\mathbf{E}\Big[\sup_{r\in[0,t]}
\|\psi^\epsilon_u(r)
-\psi_u(r)
\|^2\Big]+\epsilon K_2\mathbf{E}\bigg[\int_0^{t}
\|\mathfrak{L}_{H,S}(r,\psi^\epsilon_u(r),
\mathcal{L}_{
\psi^\epsilon(r)})
		\|^2_{\mathcal{L}_2(\ell^2,
			\mathbb{H})}dr\bigg]
\nonumber\\&\leq \frac{1}{2}\mathbf{E}\Big[\sup_{r\in[0,t]}
\|\psi^\epsilon_u(r)
-\psi_u(r)
\|^2\Big]+\epsilon K_3
\int_0^{t}
\Big(1+\textbf{E}[\|\psi^\epsilon_u(r)\|_{p
}^{p}]+\textbf{E}[\|\psi^\epsilon_u(r)
\|_{\mathbb{V}_2}^{q}]+
\textbf{E}[\|\psi^\epsilon(r)\|^2]\Big)dr,
\end{align}where $K_2$ and $K_3$ are constants independent of $\epsilon$.
It follows from \eqref{SCE5-2} and \eqref{SCE7}
 that for all $t\in[0,T]$, $\psi_{0}\in \overline{B}_{R}(\mathbb{H})$,
$u\in \mathfrak{A}_N$ and $\epsilon\in(0,1)$,
 \begin{align}\label{SCE8}
&\mathbf{E}\Big[\sup_{r\in[0,t]}
\|\psi^\epsilon_u(r)
-\psi_u(r)
\|^2\Big]+2^{1-p}K^{-1}\mathbf{E}
\bigg[\int_0^{t}
\|\psi^\epsilon_u(r)
-\psi_u(r)\|^p_{\dot{\mathbb{V}}_1}dr\bigg]
\nonumber\\&\ +2^{2-p}\beta\mathbf{E}
\bigg[
\int_0^{t}
\|\psi^\epsilon_u(r)
-\psi_u(r)\|_{p}^{p}dr\bigg]
+2^{2-q}\widehat{\beta}\mathbf{E}
\bigg[
\int_0^{t}\|\psi^\epsilon_u(r)
-\psi_u(r)\|_{\mathbb{V}_2}^{q}dr\bigg]
\nonumber\\&\leq
K_4 \int_0^{t}
\big(K_1+2 \|\phi_7(r)\|_{1}+
2\|\widehat{\phi}_7(r)\|_{1}\big)dr
\mathbf{E}\Big[
\| \psi^\epsilon-\psi^0\|^2_{C([0,T],\mathbb{H} )}\Big]
\nonumber\\&\ +\epsilon K_4
\int_0^{t}
\Big(1+\textbf{E}[\|\psi^\epsilon_u(r)\|_{p
}^{p}]+\textbf{E}[\|\psi^\epsilon_u(r)
\|_{\mathbb{V}_2}^{q}]+
\textbf{E}[\|\psi^\epsilon(r)\|^2]\Big)dr,
\end{align}
where $K_4>0$ is a constant independent of $\epsilon$. Then we find from \eqref{SCE8}, Theorem \ref{Theorem4} and Lemma \ref{SCE1} that
for all $t\in [0,T]$,
 \begin{align}\label{SCE9}
&\mathbf{E}\Big[\sup_{r\in[0,t]}
\|\psi^\epsilon_u(r)
-\psi_u(r)
\|^2\Big]+2^{1-p}K^{-1}\mathbf{E}
\bigg[\int_0^{t}
\|\psi^\epsilon_u(r)
-\psi_u(r)\|^p_{\dot{\mathbb{V}}_1}dr\bigg]
\nonumber\\&\ +2^{2-p}\beta\mathbf{E}
\bigg[
\int_0^{t}
\|\psi^\epsilon_u(r)
-\psi_u(r)\|_{p}^{p}dr\bigg]
+2^{2-q}\widehat{\beta}\mathbf{E}
\bigg[
\int_0^{t}\|\psi^\epsilon_u(r)
-\psi_u(r)\|_{\mathbb{V}_2}^{q}dr\bigg]
\nonumber\\&\leq
K_5\mathbf{E}\Big[\|\psi^\epsilon-\psi^0
\|^2_{C([0,T],\mathbb{H} )}\Big]
+\epsilon K_5,
\end{align}
where $K_5=K_5(T,N,R)>0$  is a constant independent of $\epsilon$.

By \eqref{SCE9} and Lemma \ref{SCE2}, we obtain
\begin{align}\label{SCE10}
&\lim_{\epsilon\rightarrow0}\sup_{\psi_{0}\in \overline{B}_{R}(\mathbb{H})}
\sup_{u\in \mathfrak{A}_N}
\mathbf{E}\Big[\sup_{r\in[0,T]}
\|\psi^\epsilon_u(r)
-\psi_u(r)
\|^2\Big]=0,
\\&\label{SCE11}\lim_{\epsilon\rightarrow0}
\sup_{\psi_{0}\in \overline{B}_{R}(\mathbb{H})}
\sup_{u\in \mathfrak{A}_N}\mathbf{E}
\bigg[\int_0^{T}
\|\psi^\epsilon_u(r)
-\psi_u(r)\|^p_{\mathbb{V}_1}dr\bigg]
=0,
\end{align}
and
\begin{align}\label{SCE12}
\lim_{\epsilon\rightarrow0}\sup_{\psi_{0}\in \overline{B}_{R}(\mathbb{H})}
\sup_{u\in \mathfrak{A}_N}\mathbf{E}
\bigg[
\int_0^{T}\|\psi^\epsilon_u(r)
-\psi_u(r)\|_{\mathbb{V}_2}^{q}dr\bigg]=0.
\end{align}
As a result of \eqref{SCE10}-\eqref{SCE12}, we find that
$$\lim_{\epsilon\rightarrow0}\sup_{\psi_{0}\in \overline{B}_{R}(\mathbb{H})}
\sup_{u\in \mathfrak{A}_N}
\mathbf{E}\Big[ \|\psi^\epsilon_u-\psi_u\|^2_{C([0,T],\mathbb{H}) \bigcap L^p([0,T], \mathbb{V}_1)\bigcap L^q([0,T], \mathbb{V}_2)}\Big]=0.$$
From this and Chebyshev inequality, we conclude the proof.
\end{proof}

\subsection{Freidlin-Wentzell uniform LDPs of \eqref{Int1--}}

With the results in previous subsections in mind, we now prove the Freidlin-Wentzell uniform
LDPs of  the stochastic equation
\eqref{Int1--} in
$C([0,T],\mathbb{H} )\bigcap L^p([0,T], \mathbb{V}_1  )\bigcap L^q([0,T], \mathbb{V}_2)$
in the sense of Definition \ref{LDP} with
the good rate function $\mathcal{I}_{\psi_0}:C([0,T],\mathbb{H} )\bigcap L^p([0,T], \mathbb{V}_1  )\bigcap L^q([0,T], \mathbb{V}_2)
\rightarrow[0,+\infty]$ defined by, for $\psi\in C([0,T],\mathbb{H} )\bigcap L^p([0,T], \mathbb{V}_1  )\bigcap L^q([0,T], \mathbb{V}_2)$,
\begin{align}
\label{main1}
\mathcal{I}_{\psi_0}(\psi)=\inf\left\{\frac{1}{2}
\int_{0}^{T}||u(s)||_{\ell^{2}}^{2}ds:
u\in L^{2}([0,T],\ell^{2})~
\text{such~that}~ \psi_u=\psi\right\},
\end{align}
where $\inf\emptyset=+\infty$, and
$\psi_u=
\mathfrak{M}_{\psi_0}^0
\big(
\int_0^\cdot u(t)dt\big)$ is the solution of \eqref{Int1--++}.
\\

\textbf{Proof of Theorem \ref{MianI}}.
  The proof
consists of  the following  three steps:

\emph{Step 1}. Prove that, for every $N\in(0,\infty)$, the set
\begin{align}
\label{main2}\bigg\{\psi_u=
\mathfrak{M}_{\psi_0}^0
\bigg(
\int_0^\cdot u(t)dt\bigg):u\in \overline{B}_N(L^2([0,T],\ell^2 ))\bigg\}:=\mathfrak{S}_{N,\psi_0}
\end{align}
is compact in $C([0,T],\mathbb{H} )\bigcap
L^p([0,T], \mathbb{V}_1  ) \bigcap L^q([0,T], \mathbb{V}_2)$. Let $\{\psi_{u_{n}}\}_{n=1}^{\infty}$ be a sequence   with $\{u_{n}\}_{n=1}^{\infty}\subseteq \overline{B}_N(L^2([0,T],\ell^2 ))$. Then, there exists $u\in \overline{B}_N(L^2([0,T],\ell^2 ))$ and a subsequence $\{u_{n_{j}}\}_{j=1}^{\infty}$ such that $u_{n_{j}}\to u$ weakly in $L^{2}([0,T],\ell^2)$. This, together with
Lemma \ref{Weak-to-strog}, shows that $\psi_{u_{n_{j}}}\to \psi_{u}$ in $C([0,T],\mathbb{H} )\bigcap
L^p([0,T], \mathbb{V}_1  ) \bigcap L^q([0,T], \mathbb{V}_2)$, and hence the set in \eqref{main2} is compact $C([0,T],\mathbb{H} )\bigcap
L^p([0,T], \mathbb{V}_1  ) \bigcap L^q([0,T], \mathbb{V}_2)$.

\emph{Step 2}. Prove that the function $\mathcal{I}_{\psi_0}:\mathbb{H}
\rightarrow[0,\infty]$ in \eqref{main1} is a good rate function for every $\psi_0\in \mathbb{H}$.
By the definitions of $\mathcal{I}_{\psi_0}$
and $\mathfrak{M}_{\psi_0}^0$, one can verify that
\begin{align}
\label{main3}&
I^N_{\psi_0}:=\big\{\psi\in C([0,T],\mathbb{H} )\cap
L^p([0,T], \mathbb{V}_1  ) \cap L^q([0,T], \mathbb{V}_2):
\mathcal{I}_{\psi_0}(\psi)
\leq N\big\}=\mathfrak{S}_{\sqrt{2N},\psi_0}.
\end{align}
By step 1, we find that
the level set is compact in $C([0,T],\mathbb{H} )\bigcap
L^p([0,T], \mathbb{V}_1  ) \bigcap L^q([0,T], \mathbb{V}_2)$. This means that $\mathcal{I}_{\psi_0}:\mathbb{H}
\rightarrow[0,\infty]$ is a good rate function.

\emph{Step 3}. By steps 1 and 2 and Lemma \ref{SCE3}, we complete the proof by Theorem \ref{Tattractor1}.

\subsection{Dembo-Zeitouni  uniform LDPs of \eqref{Int1--}}

We now prove the Dembo-Zeitouni uniform
LDPs of problem \eqref{Int1--} in
$C([0,T],\mathbb{H} )\bigcap L^p([0,T], \mathbb{V}_1  )\bigcap L^q([0,T], \mathbb{V}_2)$
in the sense of Definition \ref{LDP-}. To that end, we first show the continuity of level sets
$I^s_{\psi_0}$ (with $s\geq0$)
of the rate function $\mathcal{I}_{\psi_0}:\mathbb{H}
\rightarrow[0,\infty]$.

\begin{lemma}\label{continuityoflevelsets}
Let conditions  {\bf A} and {\bf B1}
be satisfied. If $\psi_{0,n}\rightarrow\psi_{0}$
in $\mathbb{H}$, then for every $s\geq0$, the family of level sets $\{I^s_{\psi_{0,n}}\}_{n=1}^\infty$
converges to the level set $I^s_{\psi_{0}}$ with respect to the Hausdorff metric of $C([0,T],\mathbb{H} )\bigcap L^p([0,T], \mathbb{V}_1  )\\ \bigcap L^q([0,T], \mathbb{V}_2)$:
\begin{align}
\label{main4}\lim_{n\rightarrow\infty}
\sup_{\psi\in I^s_{\psi_{0}}}\dist_{C([0,T],\mathbb{H} )\bigcap L^p([0,T], \mathbb{V}_1  )\bigcap L^q([0,T], \mathbb{V}_2)}\big(\psi,I^s_{\psi_{0,n}}\big)=0,
\end{align}
and
\begin{align}
\label{main5}\lim_{n\rightarrow\infty}
\sup_{\psi\in I_{\psi_{0,n}}^s}\dist_{C([0,T],\mathbb{H} )\bigcap L^p([0,T], \mathbb{V}_1  )\bigcap L^q([0,T], \mathbb{V}_2)}\big(\psi,I^s_{\psi_{0}}\big)=0.
\end{align}
\end{lemma}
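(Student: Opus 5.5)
The plan is to use the characterization \eqref{main3}: $I^s_{\psi_0}=\mathfrak{S}_{\sqrt{2s},\psi_0}=\{\psi_u(\cdot,\psi_0): u\in \overline{B}_{\sqrt{2s}}(L^2([0,T],\ell^2))\}$. This holds for every initial datum, including $\psi_{0,n}$. The key point is that the control ball is the \emph{same} set for all initial data. Only the solution map $\psi_0\mapsto \psi_u(\cdot,\psi_0)$ changes, and the Lipschitz estimate \eqref{Con5} in Theorem \ref{Con1} controls that change uniformly in the control. Throughout, write $\mathfrak{E}:=C([0,T],\mathbb{H})\cap L^p([0,T],\mathbb{V}_1)\cap L^q([0,T],\mathbb{V}_2)$, and set $R:=\max\{\sqrt{2s},\sup_n\|\psi_{0,n}\|,\|\psi_0\|\}<\infty$.

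For \eqref{main4}, fix $\psi\in I^s_{\psi_0}$. Then $\psi=\psi_u(\cdot,\psi_0)$ for some $u$ with $\|u\|_{L^2([0,T],\ell^2)}\le\sqrt{2s}$. With the same $u$, the function $\psi_u(\cdot,\psi_{0,n})$ lies in $I^s_{\psi_{0,n}}$. Applying \eqref{Con5} with $u_1=u_2=u$ gives
\[
\|\psi_u(\cdot,\psi_{0})-\psi_u(\cdot,\psi_{0,n})\|^2_{C([0,T],\mathbb{H})}+\|\cdot\|^p_{L^p([0,T],\mathbb{V}_1)}+\|\cdot\|^q_{L^q([0,T],\mathbb{V}_2)}\le C_2(T,R)\|\psi_{0,n}-\psi_0\|^2 .
\]
Hence the distance $\dist_{\mathfrak{E}}(\psi,I^s_{\psi_{0,n}})$ is bounded by
\[
\big(C_2\|\psi_{0,n}-\psi_0\|^2\big)^{1/2}+\big(C_2\|\psi_{0,n}-\psi_0\|^2\big)^{1/p}+\big(C_2\|\psi_{0,n}-\psi_0\|^2\big)^{1/q}.
\]
This bound does not depend on $u$ or $\psi$, so taking the supremum over $\psi\in I^s_{\psi_0}$ and letting $n\to\infty$ yields \eqref{main4}.

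For \eqref{main5}, the argument is symmetric. Any $\psi\in I^s_{\psi_{0,n}}$ equals $\psi_u(\cdot,\psi_{0,n})$ with $u$ in the same ball. Its partner $\psi_u(\cdot,\psi_0)$ lies in $I^s_{\psi_0}$, and the same bound from \eqref{Con5} applies uniformly. The only point to check is that the constant $C_2(T,R)$ is uniform in $n$. This follows because a convergent sequence $\psi_{0,n}$ is bounded in $\mathbb{H}$, so all initial data lie in $\overline{B}_R(\mathbb{H})$ and all controls in $\overline{B}_R(L^2([0,T],\ell^2))$.

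I expect no serious obstacle here. The only subtlety is justifying the identity \eqref{main3}: that the level set is exactly the image of the closed ball, with the infimum in \eqref{main1} attained. Attainment follows from the weak compactness of the ball together with the weak-to-strong continuity in Lemma \ref{Weak-to-strog}, and Step 2 of the proof of Theorem \ref{MianI} already records this. Once \eqref{main4}–\eqref{main5} are established, Lemma \ref{Tattractor1-} with a compact $\mathcal{K}$ transfers Theorem \ref{MianI} to Theorem \ref{MianII}.
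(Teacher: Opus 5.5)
Your proposal is correct and is essentially the paper's proof. Both use \eqref{main3} to write each level set as the image of the same control ball $\overline{B}_{\sqrt{2s}}(L^2([0,T],\ell^2))$, pair $\psi_u(\cdot,\psi_0)$ with $\psi_u(\cdot,\psi_{0,n})$ for the same $u$, bound the distance uniformly in $u$ via the Lipschitz estimate \eqref{Con5} of Theorem \ref{Con1} (with a uniform radius from the boundedness of $\{\psi_{0,n}\}$), and treat \eqref{main5} symmetrically. Your exponents $1$, $2/p$, $2/q$ on $\|\psi_{0,n}-\psi_0\|$ are the correct ones from taking roots in \eqref{Con5}; the paper's display writes $p/2$ and $q/2$, but either version tends to zero.
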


\begin{proof}
Given $s\geq0$, and suppose that $\psi_{0,n}\rightarrow\psi_{0}$
in $\mathbb{H}$. Given $\psi\in I^s_{\psi_{0}}$.  Then by \eqref{main2}-\eqref{main3}, we find that $\psi\in\mathfrak{S}_{\sqrt{2s},\psi_{0}}$, and hence there exists
$u\in\overline{B}_{\sqrt{2s}}(L^2([0,T],\ell^2 ))$ such that $\psi=\psi_u(\cdot,\psi_0)$, where $\psi_u(\cdot,\psi_0)$ is the solution of \eqref{Int1--++}
on $[0,T]$ with initial value $\psi_0\in \mathbb{H}$ and control $u$.
Since $\psi_u(\cdot,\psi_{0,n})$ is the solution of \eqref{Int1--++}
on $[0,T]$ with initial value $\psi_{0,n}\in \mathbb{H}$ and control $u\in\overline{B}_{\sqrt{2s}}(L^2([0,T],\ell^2 ))$,
by \eqref{main2}-\eqref{main3}, we find that $\psi_u(\cdot,\psi_{0,n})\in \mathfrak{S}_{\sqrt{2s},\psi_{0,n}}
=I^{s}_{\psi_{0,n}}$. Therefore, it follows
from Theorem \ref{Con1} that
\begin{align}&\label{main6}
\sup_{\psi\in I^s_{\psi_{0}}}\dist_{C([0,T],\mathbb{H} )\bigcap L^p([0,T], \mathbb{V}_1  )\bigcap L^q([0,T], \mathbb{V}_2)}\big(\psi,I^s_{\psi_{0,n}}\big)
\nonumber\\&\leq\sup_{\psi\in I^s_{\psi_{0}}}\dist_{C([0,T],\mathbb{H} )\bigcap L^p([0,T], \mathbb{V}_1  )\bigcap L^q([0,T], \mathbb{V}_2)}\big(\psi_u(\cdot,\psi_0),
\psi_u(\cdot,\psi_{0,n})\big)
\nonumber\\&\leq M(T,\psi_{0})\big(\|\psi_{0,n}-\psi_{0}\|
+\|\psi_{0,n}-\psi_{0}\|^{p/2}
+\|\psi_{0,n}-\psi_{0}\|^{q/2}\big),
\end{align}
where $M(T,\psi_{0})>0$ is a constant independent of $n$. Since $\psi_{0,n}\rightarrow\psi_{0}$
in $\mathbb{H}$, by \eqref{main6}, we
find \eqref{main4}. By the same method, we can also prove \eqref{main5}.
\end{proof}

\textbf{Proof of Theorem \ref{MianII}}:

\begin{proof}
The proof is a direct consequence of Lemma \ref{continuityoflevelsets} and Theorem \ref{MianI} based on the theoretical results in Lemma \ref{Tattractor1-}.
\end{proof}

\section*{
Competing interests and declarations}
We declare that the authors has no competing interests or other interests that might be perceived to influence the results and/or discussion reported in this paper. We declare that
this manuscript has no associated data.

\section*{Acknowledgements}
Renhai Wang was supported by National Natural Science
Foundation of China(No. 12301299), the Guiyang City Science and Technology Plan Project (No. [2024]2-17), and the Natural Science Research Project of Guizhou Provincial Department of Education (No. QJJ[2024]322).  Zhang Chen was supported by the NNSF of China (11971260, 12471167).

\end{document}